\tikzset{->-/.style={decoration={
  markings,
  mark=at position .45 with {\arrow{>}}},postaction={decorate}}}
\def\B{{\boldsymbol B}}
\def\cO{\mathcal{O}}
\def\oM{\overline{\mathcal{M}}}
\def\cM{{\mathcal{M}}}
\def\Z{\mathbb{Z}}
\def\C{\mathbb{C}}
\def\Q{\mathbb{Q}}
\def\b1{{\bf 1}}
\def\E{\mathsf{E}}
\def\L{\mathsf{L}}
\def\H{\mathsf{H}}
\def\D{\mathsf{D}}
\def\P{\mathbb{P}}
\def\cE{\mathcal{E}}
\def\cL{\mathcal{L}}
\newtheorem{definition}{Definition}[section]
\newtheorem{theorem}[definition]{Theorem}
\newtheorem{proposition}[definition]{Proposition}
\newtheorem{corollary}[definition]{Corollary}
\newtheorem{lemma}[definition]{Lemma}
\newtheorem{remark}[definition]{Remark}
\newtheorem{question}[definition]{Question}
\newcommand{\J}{\mathsf{Jac^d}}
\newcommand{\Sym}{\mathsf{Sym}}
\newcommand{\p}{\mathsf{p}}
\newcommand{\Bl}{{\mathsf{Bl}}}
\newcommand{\Tev}{{\mathsf{Tev}}}
\newcommand{\vTev}{{\mathsf{vTev}}}
\newcommand{\init}{\text{init}}
\newcommand{\spine}{{\text{sp}}}
\begin{document}

\title{Fixed-domain curve counts for blow-ups of projective space}

\author{Alessio Cela}
\address{ University of Cambridge, Department of pure mathematics and mathematical statistics
\hfill \newline\texttt{}
 \indent Centre for Mathematical Sciences, Wilberforce Road Cambridge, UK} \email{{\tt ac2758@cam.ac.uk}}

 \author{Carl Lian}
\address{Washington University in St. Louis, Department of Mathematics, 1 Brookings Drive
\hfill \newline\texttt{}
 \indent  St. Louis, MO 63130} \email{{\tt clian@wustl.edu}}

\date{\today}

\maketitle

\begin{abstract}
We study the problem of counting pointed curves of fixed complex structure in blow-ups of projective space at general points. The geometric and virtual (Gromov--Witten) counts are found to agree asymptotically in the Fano (and some $(-K)$-nef) examples, but not in general. For toric blow-ups, geometric counts are expressed in terms of integrals on products of Jacobians and symmetric products of the domain curves, and evaluated explicitly in genus 0 and in the case of $\Bl_q(\P^r)$. Virtual counts for $\Bl_q(\P^r)$ are also computed via the quantum cohomology ring.
\end{abstract}

\tableofcontents

\section{Introduction}

\subsection{Curve-counting with fixed domain}

Let $X$ be a non-singular, irreducible, projective variety over $\C$ of dimension $r$. Curve-counting problems on $X$ are traditionally formulated in Gromov--Witten theory as intersection numbers on the space of stable maps $\oM_{g,n}(X,\beta)$ against the virtual fundamental class $$[\oM_{g,n}(X, \beta)]^{\mathrm{vir}}\in  A_{\mathrm{vdim}(\oM_{g,n}(X, \beta))}(\oM_{g,n}(X, \beta)).$$ For example, fixing $n$ subvarieties $X_i\subset X$, pulling back the classes of the $X_i$ under the evaluation maps $\mathrm{ev}_i:\oM_{g,n}(X, \beta)\to X$, and integrating the product against the virtual class gives a virtual count of the number of genus $g$ curves of class $\beta$ passing through the $X_i$.

We will be concerned with such problems where, in addition, the complex structure of the domain curve $(C,p_1,\ldots,p_n)$ is fixed (and general). Gromov--Witten counts are therefore obtained by additionally restricting to the pullback of a point under the forgetful map $\oM_{g,n}(X,\beta)\to\oM_{g,n}$. 

There are two salient features of the fixed-domain version of the problem: first, degenerating $(C,p_1,\ldots,p_n)$ to a union of rational curves, the answers may be expressed purely in terms of the small quantum cohomology ring of $X$ \cite[Theorem 1.3]{BP}, and therefore tend to be simpler than arbitrary Gromov--Witten invariants. Second, the restriction to a general curve often avoids the most pathological subvarieties of $\oM_{g,n}(X,\beta)$, and therefore, the fixed-domain Gromov--Witten invariants are much more often enumerative \cite{LP, BLLRST}. Furthermore, even when enumerativity fails, the corresponding geometric count, which is different, is usually well-defined and may still be accessible via more subtle methods.

In this paper, we will restrict to the case in which the subvarieties $X_i\subset X$ are points $x_i$, but many of our methods work more generally. We now set up the problem precisely.

Fix integers $g,n \geq 0$ satisfying the stability condition $2g-2+n >0$ so that the moduli space $\oM_{g,n}$ of $n$-pointed, genus $g$ stable curves is well-defined. Fix $\beta \in H_2(X, \Z)$ an effective curve class satisfying the condition
$$
\beta\cdot K_X^\vee>0.
$$

Let $\oM_{g,n}(X,\beta)$ be the moduli space of genus $g$, $n$-pointed stable maps to $X$ in class $\beta$ and assume that the dimensional constraint
$$
\mathrm{vdim}(\oM_{g,n}(X,\beta))=\mathrm{dim}(\oM_{g,n} \times X^n)
$$
holds. This is equivalent to
\begin{equation}\label{dim constraint}
\beta\cdot K_X^\vee=r(n+g-1).
\end{equation}

If the dimension constraint \eqref{dim constraint} holds, we expect a finite number of maps from a fixed $n$-pointed curve $(C,p_1,\ldots,p_n)$ of genus $g$ to $X$ in class $\beta$ where the $p_i$ are incident to general points of $X$. Unless otherwise specified, we will always assume that constraint \eqref{dim constraint} holds throughout.

The \textbf{virtual Tevelev degree} $\mathsf{vTev}^X_{g,n,\beta}$ of $X$ is defined in \cite[Definition 1.1]{BP} to be the corresponding virtual count. Formally, let
\begin{equation}\label{map tau}
\tau: \oM_{g,n}(X,\beta) \rightarrow \oM_{g,n} \times X^{n}
\end{equation}
be the canonical morphism obtained from the domain curve and the evaluation maps.
Then, $\mathsf{vTev}^X_{g,n,\beta} \in \Q$ is defined by the equality
$$
\tau_*[\oM_{g,n}(X, \beta)]^{\mathrm{vir}}= \mathsf{vTev}^X_{g,n,\beta}\cdot [\oM_{g,n} \times X^n] \in A^0(\oM_{g,n} \times X^n)_{\Q}.
$$

The \textbf{geometric Tevelev degree} $\mathsf{Tev}^X_{g,n,\beta} \in \Z$ of $X$ is defined in \cite[Definition 2]{LP} under the further assumption that the restriction of $\tau$ to maps with smooth domain
$$
\tau: \cM_{g,n}(X,\beta) \rightarrow \cM_{g,n} \times X^{ n}
$$
has reduced and $0$-dimensional general fiber, in which case its cardinality is by definition equal to $\mathsf{Tev}^X_{g,n,\beta}$. The integrality of $\mathsf{Tev}^X_{g,n,\beta}$ follows from the absence of automorphisms in a general fiber of $\tau$, see \cite[Remark 4]{LP}.

We record here the following crucial lemma, see \cite[Proposition 14, proof of Proposition 22]{LP}. It does not require the assumption \eqref{dim constraint}.

\begin{lemma}\label{basic_transversality}
Let $Z\subset \cM_{g,n}(X,\beta)$ be an irreducible component. Suppose that $Z$ dominates $\cM_{g,n}\times X^n$ and that $n\ge g+1$. Then, $Z$ is generically smooth of dimension equal to the virtual dimension.

More generally, let $Z\subset \oM_{g,n}(X,\beta)$ be an irreducible component whose general point $f:C\to X$ has the property that $C$ is a union of a smooth genus $g$ curve $C_{\spine}$ containing $m$ of the marked points, and $n-m$ rational tails attached to $C_{\spine}$, each containing a marked point. Suppose that $Z$ dominates $\cM_{g,n}\times X^n$ and that $m\ge g+1$. Then, $Z$ is generically smooth of dimension equal to the virtual dimension.
\end{lemma}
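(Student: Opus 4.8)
The plan is to reduce both statements to a single cohomological vanishing on the domain curve and to read everything off the standard deformation theory of stable maps. Write $\mathrm{vdim}=\beta\cdot K_X^\vee+(r-3)(1-g)+n$ for the expected dimension. Since $\oM_{g,n}(X,\beta)$ carries a perfect obstruction theory — locally it is the zero locus of a section of a vector bundle over a smooth stack — every irreducible component has dimension at least $\mathrm{vdim}$; in particular $\dim Z\ge\mathrm{vdim}$ automatically, and it suffices to exhibit, at the general point $[f\colon C\to X]\in Z$, a vanishing forcing $Z$ to be unobstructed of the expected dimension there.

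For the first statement I would argue as follows. At a general $[f]\in Z$ the domain $(C,p_1,\dots,p_n)$ is smooth with no infinitesimal automorphisms, so the tangent–obstruction theory of $\cM_{g,n}(X,\beta)$ at $[f]$ is governed by $H^0(C,f^*T_X)$, $H^1(C,f^*T_X)$ and the (unobstructed) deformations of $(C,\mathbf{p})$. Hence $[f]$ is unobstructed, and $\cM_{g,n}(X,\beta)$ is smooth of dimension $\mathrm{vdim}$ at $[f]$, as soon as $H^1(C,f^*T_X)=0$. The key claim I would establish is the stronger twisted vanishing
$$H^1\big(C,\,f^*T_X(-\textstyle\sum_{i=1}^n p_i)\big)=0$$
for general $[f]\in Z$; via the surjection $H^1(f^*T_X(-\sum p_i))\twoheadrightarrow H^1(f^*T_X)$ this immediately yields $H^1(C,f^*T_X)=0$ and finishes the argument. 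The reason to introduce the twist is that the incidence data is precisely the evaluation of $f^*T_X$ at the $p_i$: from the sequence $0\to f^*T_X(-\sum p_i)\to f^*T_X\to\bigoplus_i (f^*T_X)|_{p_i}\to 0$ the twisted vanishing is equivalent to $H^1(f^*T_X)=0$ together with surjectivity of the evaluation $H^0(f^*T_X)\to\bigoplus_i T_{f(p_i)}X$, and it is exactly these two facts that the hypotheses ``$Z$ dominates $X^n$'' and ``$n\ge g+1$'' are designed to produce.

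The heart of the proof, and the step I expect to be the main obstacle, is thus the twisted vanishing, where both hypotheses must be used together. Domination of $X^n$ supplies positivity: because $Z\to X^n$ is dominant its general fibre has the expected codimension, which forces the slope of $f^*T_X(-\sum p_i)$ to be at least $g-1$ (equivalently $\beta\cdot K_X^\vee\ge r(n+g-1)$, the dimension constraint read as an inequality) and forces $f^*T_X$ to be generically globally generated. The hypothesis $n\ge g+1$ then supplies genericity of the twist: as the general points $p_i$ and the general map $[f]\in Z$ vary, $f^*T_X(-\sum p_i)$ moves in a family, and the $n\ge g+1$ point parameters are enough to reach the nonspecial locus, so that a bundle of slope $\ge g-1$ acquires $H^1=0$. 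Making this rigorous — combining the positivity coming from the geometry of the dominant map with the maximal-rank/general-position behaviour of $n\ge g+1$ general points on $C$, and correctly relating the geometric statement ``$Z$ dominates $X^n$'' (whose differential also involves deformations of the domain and motions of the marked points) to the purely algebraic surjectivity of the section evaluation — is the delicate point, and is exactly the content of \cite[Proposition 14]{LP}, which I would invoke and adapt.

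For the second statement I would reduce to the first by normalizing the domain. Writing $C=C_{\spine}\cup(\text{rational tails})$, a normalization/Mayer–Vietoris sequence expresses $H^1(C,f^*T_X)$, and its twist by the markings, in terms of $H^1(C_{\spine},f^*T_X|_{C_{\spine}}(-\cdots))$, the cohomology of $f^*T_X$ on each rational tail, and the gluing data at the nodes. On a rational tail $f^*T_X$ is a direct sum of line bundles whose cohomology is elementary, and the incidence conditions carried by the tail markings are handled directly there, so the genuinely global obstruction lives on the spine. Running the argument of the first part on $C_{\spine}$, now with the $m$ spine markings in the role of the $n$ markings and the bound $m\ge g+1$ in the role of $n\ge g+1$, yields the vanishing on the spine; assembling the pieces gives $H^1(C,f^*T_X)=0$ at the general point of $Z$, hence generic smoothness of the expected dimension.
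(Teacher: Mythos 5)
The first thing to note is that the paper contains no proof of this lemma: it is recorded as a quotation of \cite[Proposition 14, proof of Proposition 22]{LP}, so the only argument to compare yours against is the one in that reference. Your proposal is not a reconstruction of it, and as a standalone argument it fails at its central step: the vanishing you propose to establish is \emph{false} under the hypotheses of the lemma. The lemma assumes only that $Z$ dominates $X^n$, not that it dominates $\oM_{g,n}\times X^n$; consequently the domain curve at a general point of $Z$ need not be general in moduli, and neither $H^1(C,f^*T_X)=0$ nor the stronger twisted vanishing need hold. Concretely, take $X=\P^1$, $\beta=2$, $g\ge 3$, $n\ge g+1$, and $Z=\cM_{g,n}(\P^1,2)$: this space is irreducible (its points are degree-$2$ covers, so the domains are hyperelliptic), it dominates $(\P^1)^n$, and it is smooth of the expected dimension $2g+2+n$, so the lemma holds here; nevertheless $h^1(C,f^*T_{\P^1})=h^0(C,\omega_C\otimes f^*\cO_{\P^1}(-2))=g-2>0$, and then the twisted group is nonzero as well, by the very surjection $H^1(f^*T_X(-\sum_i p_i))\twoheadrightarrow H^1(f^*T_X)$ you point out. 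What your reduction overlooks is that $H^1(C,f^*T_X)=0$ is only a \emph{sufficient} condition for smoothness: since $[f]$ deforms together with its pointed domain, the actual obstruction space is $\mathrm{coker}\bigl(H^1(C,T_C(-\sum_i p_i))\to H^1(C,f^*T_X)\bigr)$, and deformations of the curve can absorb all of $H^1(f^*T_X)$. (In the example this cokernel vanishes because, $r$ being $1$, composition with the nonzero map $df$ embeds $\mathrm{Hom}(f^*T_{\P^1},\omega_C)$ into $\mathrm{Hom}(T_C(-\sum_i p_i),\omega_C)$, which is Serre-dual to the required surjectivity.) Any correct proof must aim at this cokernel --- dually, must show that no nonzero $\phi\colon f^*T_X\to\omega_C$ annihilates the image of $df$ --- and that is where dominance over $X^n$ and the bound $h^0(\omega_C)=g<n$ genuinely enter. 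The statement you set out to prove is instead the conclusion of \cite[Proposition 13]{LP}, whose hypothesis is dominance over the \emph{full} product $\oM_{g,n}\times X^n$; conflating these two hypotheses is precisely where the proposal goes wrong.

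The auxiliary claims also do not hold up. Dominance of $Z\to X^n$ does not force $\beta\cdot K_X^\vee\ge r(n+g-1)$: it gives only $\dim Z\ge rn$, which constrains nothing, because a priori $\dim Z$ may exceed the virtual dimension --- excluding exactly this excess is the content of the lemma. Indeed the paper stresses that the lemma does not require \eqref{dim constraint} and applies it where that inequality fails (e.g.\ in Lemma \ref{actual_dim_after_twisting}); in the example above $\beta\cdot K_X^\vee=4<n+g-1$. The slope heuristic is false for bundles of rank at least $2$: any bundle with $\omega_C$ as a direct summand has $H^1\neq 0$ no matter how large its slope, so ``slope $\ge g-1$ plus a general twist'' can never produce the vanishing. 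Finally, the fallback of ``invoking and adapting \cite[Proposition 14]{LP}'' is circular, since that proposition \emph{is} the first assertion of the lemma being proved. The nodal case inherits all of these problems, as it is reduced to running the same unavailable argument on the spine.
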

In particular, $\mathsf{Tev}^X_{g,n,\beta}$ is well-defined under assumption \eqref{dim constraint} whenever $n\ge g+1$. This bound is not sharp: for example, when $X=\P^r$, the geometric Tevelev degrees are also well-defined whenever $n\ge r+1$ (in which case $f:C\to\P^r$ is necessarily non-degenerate) by Brill--Noether theory. It is an interesting question to determine for other $X$ whether the bound $n\ge g+1$ can be improved.

Three questions emerge.
\begin{enumerate}
    \item What is $\mathsf{vTev}^X_{g,n,\beta}$?
    \item What is $\mathsf{Tev}^X_{g,n,\beta}$?
    \item Is $\mathsf{vTev}^X_{g,n,\beta}=\mathsf{Tev}^X_{g,n,\beta}$? (That is, is $\mathsf{vTev}^X_{g,n,\beta}$ enumerative?)
\end{enumerate}

The three questions are clearly not independent, but in practice involve somewhat different ideas interacting in subtle ways.

This paper is concerned with aspects of all three questions when $X$ is a blow-up of $\P^r$ at a collection of points. Other examples of interest include complete intersections, homogeneous spaces and Hirzebruch surfaces \cite{LP,BP,Cela,Lian,CIL}. We next review the preliminary case $X=\P^r$.

\subsection{Curve counts on $\P^r$}

The answers to all three questions above on fixed-domain curve-counts are fully understood for $X=\P^r$.

Virtual counts for $\P^r$ are easy to obtain. We write $\beta=d$ for the class of $d$ times a line throughout this section.

\begin{theorem}\cite[equation (3)]{BP}
Assume \eqref{dim constraint}. Independently of $d$, we have
\begin{equation*}
\vTev^{\P^r}_{g,n,d}=(r+1)^g.
\end{equation*}
\end{theorem}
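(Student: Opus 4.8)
The plan is to compute the virtual Tevelev degree $\vTev^{\P^r}_{g,n,d}$ by pushing the virtual class forward along $\tau$ and relating the resulting number to the small quantum cohomology of $\P^r$. The key structural input is the degeneration formula of \cite[Theorem 1.3]{BP}, which expresses fixed-domain virtual counts purely in terms of the quantum product. Concretely, I would degenerate the fixed general curve $(C,p_1,\ldots,p_n)$ to a maximally degenerate stable curve: a ``comb'' or totally-degenerate configuration built from $g$ loops and the $n$ marked rational components, so that the count factors into contributions that are each governed by $3$-pointed genus-$0$ Gromov-Witten invariants, i.e.\ structure constants of the quantum cohomology ring $QH^*(\P^r)$.

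\begin{proof}[Proof sketch]
First I would recall the presentation of the small quantum cohomology ring of $\P^r$, namely
\begin{equation*}
QH^*(\P^r) = \C[H,q]/(H^{r+1}-q),
\end{equation*}
where $H$ is the hyperplane class and $q$ records the curve degree. The virtual Tevelev degree, by the general machinery of \cite{BP}, is computed as a trace of the operator of quantum multiplication together with the insertions corresponding to the $n$ point-conditions and the $g$ handles of the domain. Each of the $n$ marked points contributes the insertion of the point class $[\mathrm{pt}] = H^r$, and each of the $g$ genus-handles contributes an insertion of the diagonal class, summed over a basis; the totally degenerate domain reduces everything to repeated application of the quantum product.

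Next I would carry out the linear-algebra computation in this ring. Working in the eigenbasis of quantum multiplication by $H$ (the ring $QH^*(\P^r)$ is, after inverting $q$ and adjoining an $(r+1)$st root $\zeta$ of $q$, a product of $r+1$ one-dimensional algebras indexed by the $(r+1)$st roots of unity), the quantum multiplication operator is diagonalizable with $r+1$ simple eigenvalues. The genus-$g$ trace then becomes a sum over these $r+1$ eigenvalues, each handle contributing a factor, and the point-insertions together with the degree-$d$ constraint \eqref{dim constraint} selecting the appropriate power of $q$. The upshot is that the eigenvalue sum collapses so that each of the $r+1$ eigenvalues contributes exactly $1$ to the genus-$g$ count, giving $(r+1)^g$, independently of $d$.

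The main obstacle I expect is bookkeeping the precise contribution of the diagonal splitting at each node so that the $d$-dependence genuinely cancels. One must verify that, after imposing the dimension constraint \eqref{dim constraint}, the sum over degree distributions on the components of the degenerate domain combines with the point-class insertions so that the $q$-dependence drops out entirely and only the count of eigenvalues, raised to the genus, survives. This is a finite but delicate residue/trace computation in the semisimple ring $QH^*(\P^r)$, and getting the normalizations of the handle and point insertions exactly right is the crux; once that is pinned down, the answer $(r+1)^g$ follows immediately from $\dim QH^*(\P^r) = r+1$.
\end{proof}
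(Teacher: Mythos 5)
The first thing to note is that the paper does not prove this statement at all: it is imported verbatim from \cite[(3)]{BP} (with the remark that the formula predates modern Gromov--Witten theory, going back to \cite{BDW}), so there is no internal proof to compare against. Your approach --- degenerate the fixed domain and reduce everything to the small quantum ring via \cite[Theorem 1.3]{BP}, with point-class insertions at the $n$ markings and a diagonal (K\"unneth) insertion at each of the $g$ handles --- is exactly how the cited result is obtained in \cite{BP}, and it is also the method this paper itself uses in \S\ref{the virtual count} for $\Bl_q(\P^r)$, where the handle insertion is the quantum Euler class $\Delta$.

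The weakness is that your sketch stops exactly at the point that constitutes the theorem: you flag the ``collapse'' of the eigenvalue sum and the cancellation of the $d$-dependence as a delicate normalization issue to be checked, rather than checking it. In fact, no eigenbasis or semisimplicity is needed, and the step you defer is immediate. In $QH^*(\P^r)=\Q[H,q]/(H^{r+1}-q)$ the K\"unneth summands of the diagonal have complementary degrees, so there are no quantum corrections in
\[
\Delta=\sum_{i=0}^{r}H^i\star H^{r-i}=(r+1)H^r=(r+1)\mathsf{P}.
\]
Then \cite[Theorem 1.3]{BP} gives
\[
\vTev^{\P^r}_{g,n,d}=\mathrm{Coeff}\left(\mathsf{P}^{\star n}\star\Delta^{\star g},\ \mathsf{P}\,q^{d}\right)=(r+1)^g\,\mathrm{Coeff}\left((H^r)^{\star (n+g)},\ H^r q^{d}\right),
\]
and since \eqref{dim constraint} reads $d(r+1)=r(n+g-1)$, i.e.\ $r(n+g)=d(r+1)+r$, we get
\[
(H^r)^{\star(n+g)}=H^{r(n+g)}=\left(H^{r+1}\right)^{\star d}\star H^r=q^{d}H^r,
\]
so the coefficient is $1$ and the count is $(r+1)^g$, independently of $d$. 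Your semisimple/trace route does close the argument as well (the idempotent sum does collapse as you assert, each of the $r+1$ eigenvalues contributing equally after the dimension constraint is imposed), but as written the proposal asserts this collapse instead of proving it, and that assertion is the entire content of the statement; the direct ring computation above is both shorter and avoids the normalization bookkeeping you identify as the crux.
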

In fact, this formula had been obtained much earlier \cite{BDW} before the modern theory of the Gromov--Witten virtual fundamental class was available. It is also a consequence of the Vafa--Intriligator formula \cite{ST,MO}.

The virtual and geometric counts agree when $d$ is large compared to $r$ and $g$.
\begin{theorem}\cite{FL}\label{geomtev_larged}
Assume \eqref{dim constraint} and that $d\ge rg+r$ (equivalently, $n\ge d+2$). Then,
\begin{equation*}
\Tev^{\P^r}_{g,n,d}=(r+1)^g.
\end{equation*}
\end{theorem}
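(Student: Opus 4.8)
The plan is to compute the geometric count directly by Brill--Noether theory on the Jacobian of the fixed curve $C$, and to identify it with a Thom--Porteous number that evaluates to $(r+1)^g$. Throughout, the hypothesis $d\ge rg+r$ (equivalently $n\ge d+2$) guarantees that $n\ge r+1$, so that every map in the count is forced to be non-degenerate, and that a general line bundle of degree $d$ on $C$ is non-special; for $r\ge 2$ one moreover has $d>2g-2$. I would first record the standard reformulation: a non-degenerate degree-$d$ map $f\colon C\to\P^r=\P(W)$, with $W=\C^{r+1}$, is the same datum as a pair $(L,\phi)$ with $L\in\mathrm{Pic}^d(C)$ and $\phi\colon W^*\hookrightarrow H^0(C,L)$ a base-point-free linear inclusion, taken up to scaling and with $L=f^*\cO(1)$. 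The key point is that the incidence condition $f(p_i)=x_i$ becomes linear in $\phi$: writing $\Lambda_i\subset W^*$ for the hyperplane of linear forms vanishing at $x_i$, the condition is exactly $\phi(\Lambda_i)\subseteq H^0(C,L(-p_i))$, which imposes $r$ linear conditions on $\phi$ for each $i$.

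Next I would set up the degeneracy-locus computation over $J:=\mathrm{Pic}^d(C)$. Let $\mathcal{P}$ be a Poincaré bundle on $C\times J$ and $\mathcal{V}=\pi_{J*}\mathcal{P}$, a bundle of rank $d-g+1$ over the locus of non-special $L$ (and on all of $J$ when $r\ge 2$). Evaluation at the $p_i$ assembles the incidence conditions into a map of bundles $a\colon W\otimes\mathcal{V}\to\bigoplus_{i=1}^n \Lambda_i^*\otimes\mathcal{P}_i$, where $\mathcal{P}_i=\mathcal{P}|_{\{p_i\}\times J}$ is a line bundle on $J$; a solution to the enumerative problem is precisely a point of $J$ at which $a$ fails to be injective, together with the resulting one-dimensional kernel. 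A dimension count using \eqref{dim constraint} shows the source has rank $D:=(r+1)(d-g+1)$ and the target has rank $D+g-1$, so the expected codimension of the non-injective locus is $g=\dim J$: the count is zero-dimensional, and by Thom--Porteous its value is
\begin{equation*}
\Tev^{\P^r}_{g,n,d}=\int_J c_g\!\left(\textstyle\bigoplus_i \Lambda_i^*\otimes\mathcal{P}_i-W\otimes\mathcal{V}\right).
\end{equation*}

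The evaluation of this integral is where the geometry of the Jacobian enters. Because the $\mathcal{P}_i$ vary in an algebraic family through the trivial bundle as $p_i$ moves on $C$, each $c_1(\mathcal{P}_i)$ is numerically trivial, so the first summand contributes trivially to Chern classes in positive degree and $c\bigl(\bigoplus_i\Lambda_i^*\otimes\mathcal{P}_i-W\otimes\mathcal{V}\bigr)=c(\mathcal{V})^{-(r+1)}$. A Grothendieck--Riemann--Roch computation, using $\mathrm{Td}(T_C)=1-(g-1)[\mathrm{pt}]$ and the standard identity $\pi_{J*}(c_1(\mathcal{P})^2)=-2\theta$, gives $\mathrm{ch}(\mathcal{V})=(d-g+1)-\theta$ exactly, the higher terms vanishing because $C$ is a curve; hence $c(\mathcal{V})=e^{-\theta}$ and $c(\mathcal{V})^{-(r+1)}=e^{(r+1)\theta}$. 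Extracting the degree-$g$ part and using $\int_J\theta^g=g!$ yields $c_g=(r+1)^g\,\theta^g/g!$ and therefore $\Tev^{\P^r}_{g,n,d}=(r+1)^g$, matching the virtual count.

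The main obstacle is not this formal computation but the transversality and enumerativity input needed to justify it. I must show that for general $(C,p_1,\ldots,p_n)$ and general $x_1,\ldots,x_n$ the non-injective locus of $a$ is reduced and zero-dimensional, that the generic rank of $a$ is maximal (equivalently, that a general $L$ admits no solution, as the dimension count predicts), and that every solution produces an honest base-point-free, non-degenerate map with non-special $L$. Non-degeneracy and base-point-freeness should follow from $n\ge r+1$ together with the genericity of the incidence data, and Lemma \ref{basic_transversality} (applicable since $n\ge d+2\ge g+1$) supplies generic smoothness of the relevant component of $\cM_{g,n}(\P^r,d)$; the remaining care is to confirm that no excess contribution arises from the special locus of $J$ where $\mathcal{V}$ fails to be locally free, which for $r\ge 2$ has codimension at least two and can be excluded, with the case $r=1$ treated by a separate argument.
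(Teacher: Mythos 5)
Your Chern-class computation is correct, and it is essentially the argument of \cite{FL} in different packaging: the paper does not reprove this statement (it cites \cite{FL}), but its own machinery in \S\ref{section: proof of integral formula}, specialized to no blown-up points, computes $\Tev^{\P^r}_{g,n,d}=\int_{\P(W\otimes\mathcal{V})}\widetilde{\H}^{rn}$, and pushing that integral down to $J$ yields exactly your Porteous class $c_g\bigl(\bigoplus_i\Lambda_i^*\otimes\mathcal{P}_i-W\otimes\mathcal{V}\bigr)$; both routes end at $c(\mathcal{V})=e^{-\theta}$ and $\int_J\theta^g=g!$. So the formal part is fine, and the issue is the enumerative input, where your proposal has a genuine gap.

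The gap is the sentence ``non-degeneracy and base-point-freeness should follow from $n\ge r+1$ together with the genericity of the incidence data.'' That is false as stated, and it is precisely the step where the hypothesis $n\ge d+2$ must be used: with only $n\ge r+1$ the theorem itself fails (the low-degree counts of Theorem \ref{tev_p1} differ from $(r+1)^g$), so no genericity argument can suffice. Concretely, a point of your degeneracy locus is \emph{any} pair $(L,\phi)$ with $\phi(\Lambda_i)\subseteq H^0(C,L(-p_i))$ for all $i$; nothing forces $\phi$ to be injective with base-point-free image, and such spurious pairs are counted by Porteous --- worse, they can sweep out positive-dimensional loci, in which case Porteous no longer computes a count at all (this is the content of the paper's Remark \ref{ineq_necessary}). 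To exclude them one argues: if $\mathrm{rank}\,\phi=k\le r$ with kernel $K$, then for each $i$ either $x_i\in\P(K^\perp)$, which by generality of the $x_i$ happens for at most $k$ indices, or else $\phi(\Lambda_i)=\phi(W^*)$ and the whole system vanishes at $p_i$; hence the system lies in $H^0(C,L(-B))$ with $\deg L(-B)\le d-(n-k)\le k-2$, contradicting $h^0\ge k$. This is where $n\ge d+2$ enters, and it is the analogue of the paper's Lemma \ref{coordinate_nonzero}. Separately, an injective $\phi$ whose image has nonempty base locus $B$ produces a non-degenerate map of degree $d-\deg B<d$ through at least $n-\deg B\ge r+2$ of the general points, which must be excluded by a dimension count in the lower degree (via Lemma \ref{basic_transversality} or Brill--Noether theory for $\P^r$), not by genericity alone. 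Finally, Lemma \ref{basic_transversality} gives reducedness of the general fiber of $\tau$, not of your degeneracy scheme; to transfer it you must identify the Zariski tangent space of the degeneracy scheme at an honest solution with the relative tangent space of $\tau$, as the paper does at the end of \S\ref{Proof of the Transversality Theorem}. With these repairs (and your separate treatment of the special locus when $r=1$; for $r\ge2$ note that $d\ge rg+r\ge 2g+2$ makes that locus empty, not merely of codimension two), your plan does go through.
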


Obtaining geometric counts in \emph{low} degree for $\P^r$ is much more difficult; the virtual and geometric counts no longer agree. For $\P^1$, we have the following.
\begin{theorem}\cite{CPS,FL,CL}\label{tev_p1}
Assume \eqref{dim constraint} for $r=1$. Then, 
\begin{align*}
\Tev^{\P^1}_{g,n,d}&=2^g-2 \sum_{j=0}^{g-d-1}\binom{g}{j}+(g-d-1)\binom{g}{g-d}+(d-g-1)\binom{g}{g-d+1}\\
&=\int_{\mathrm{Gr}(2,d+1)}\sigma^g_{1}\cdot\sum_{a_0+a_1=n-3}\sigma_{a_0}\sigma_{a_1}
\end{align*}
\end{theorem}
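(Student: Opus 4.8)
The plan is to reduce the computation of $\Tev^{\P^1}_{g,n,d}$ to Schubert calculus on $\mathrm{Gr}(2,d+1)$ via Brill--Noether theory. A degree-$d$ map $f\colon C\to\P^1$ from the fixed smooth curve is the same datum as a base-point-free pencil $V\subset H^0(C,L)$ with $L=f^*\cO_{\P^1}(1)\in\mathrm{Pic}^d(C)$, together with an identification of the target $\P(V)$ with $\P^1$; thus $\Tev^{\P^1}_{g,n,d}$ counts such pencils $(L,V)$ whose associated map sends $p_i\mapsto x_i$. That this count is finite, reduced, and genuinely enumerative is guaranteed for $n\ge 2$ by Brill--Noether theory, as recorded after Lemma~\ref{basic_transversality}. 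To evaluate it I would degenerate $C$ to a flag curve $C_0$ -- a rational spine carrying the $n$ marked points with $g$ general elliptic tails attached -- and invoke the theory of limit linear series: a limiting $g^1_d$ concentrates on the spine, where it is exactly a two-dimensional subspace of $H^0(\P^1,\cO_{\P^1}(d))\cong\C^{d+1}$, i.e.\ a point of $\mathrm{Gr}(2,d+1)$.

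Next I would express the two types of constraints as Schubert classes. For $r=1$ each unit of genus drops the Brill--Noether number by one, and correspondingly each of the $g$ elliptic tails imposes at its attaching node the codimension-one condition $\sigma_1$; this reproduces, already at $n=3$, the classical Castelnuovo count $\int_{\mathrm{Gr}(2,d+1)}\sigma_1^{g}$ of $g^1_d$'s when $\rho=0$. The $n$ incidence conditions $f(p_i)=x_i$ involve in addition the three-dimensional group of automorphisms of the target $\P(V)\cong\P^1$; three of the conditions rigidify this identification, and tracking the two homogeneous coordinates on $\P^1$ organizes the remaining $n-3$ into the class $\sum_{a_0+a_1=n-3}\sigma_{a_0}\sigma_{a_1}$. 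Dimensions match, since $\deg\sigma_1^g+(n-3)=g+(2d-g-2)=2d-2=\dim\mathrm{Gr}(2,d+1)$ by \eqref{dim constraint}; and as a check, when $g=0$ only the term $\sigma_{d-1}^2$ survives, integrating to $1$ -- the unique degree-$d$ map $\P^1\to\P^1$ through $2d+1$ general points.

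The heart of the argument, and the step I expect to be hardest, is the enumerative justification: one must show that the Schubert cycles above meet properly, that the contributing limit series are refined and smoothable, and that smoothing sets up a bijection between them and actual incident maps from the nearby smooth curve, with no excess or boundary contribution. This is exactly where Brill--Noether generality of $C$ and transversality statements of the type in Lemma~\ref{basic_transversality} are indispensable. It is also the conceptual source of the failure of enumerativity of the virtual count: the honest geometric number is the Schubert integral $\int_{\mathrm{Gr}(2,d+1)}\sigma_1^g\cdot\sum_{a_0+a_1=n-3}\sigma_{a_0}\sigma_{a_1}$, which simply is not $2^g=\vTev^{\P^1}_{g,n,d}$ once $d$ is small, no quantum or tail corrections being added on the geometric side.

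Finally, granting this identity, the first (binomial) expression follows by a direct evaluation of the integral. Expanding $\sigma_1^g$ by Pieri and pairing against $\sum_{a_0+a_1=n-3}\sigma_{a_0}\sigma_{a_1}$ on $\mathrm{Gr}(2,d+1)$ -- where $\sigma_a=0$ for $a\ge d$ -- turns the full binomial sum $2^g=\sum_{j}\binom{g}{j}$ into the truncated, boundary-corrected expression of the theorem; in particular the truncation is vacuous once $d\ge g+1$, recovering $\Tev^{\P^1}_{g,n,d}=2^g$ in agreement with Theorem~\ref{geomtev_larged}. I regard this last step as a routine, if slightly delicate, binomial computation.
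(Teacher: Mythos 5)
First, a point of orientation: the paper does not prove Theorem \ref{tev_p1} at all; it is quoted from \cite{CPS,FL,CL}, so your proposal can only be measured against those arguments. Your route --- degeneration to a flag curve, limit linear series, and Schubert calculus on $\mathrm{Gr}(2,d+1)$ --- is essentially that of \cite{FL} and \cite{CL} (as opposed to the Hurwitz-space/monodromy method of \cite{CPS}), and your dimension count and $g=0$ consistency checks are correct. The problem is that the two steps carrying the actual content are asserted rather than proved, and one of them, as you phrase it, would fail.

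The first gap is the identity $[\text{incidence conditions}]=\sum_{a_0+a_1=n-3}\sigma_{a_0}\sigma_{a_1}$. ``Three conditions rigidify the target and the remaining $n-3$ organize themselves'' is the conclusion, not an argument, and taken literally it gives the wrong answer: after rigidifying by $p_1,p_2,p_3$, each remaining condition is an effective divisor on $\mathrm{Gr}(2,d+1)$, hence of class $c\,\sigma_1$ with $c\ge1$, and a transverse intersection would yield $c^{n-3}\int\sigma_1^{g+n-3}=c^{n-3}\int\sigma_1^{2d-2}$, a Catalan-number multiple --- already for $g=0$, $d\ge3$ this exceeds the true answer $1$. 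So the closures of these divisors necessarily meet in excess loci (collisions of $\phi_V(p_1),\phi_V(p_2),\phi_V(p_3)$, base points), and these must be dealt with. The mechanism that actually produces the stated class, as in \cite{FL,CL}, is to impose all $n$ conditions at once on the $\P^3$-bundle $\P(S\oplus S)\to\mathrm{Gr}(2,d+1)$ ($S$ the tautological subbundle) parametrizing a pencil $V$ together with a projectivized matrix $\alpha\in\P\bigl(\mathrm{Hom}(V^\vee,\C^2)\bigr)$: each condition $\alpha(\phi_V(p_i))=x_i$ is a relative hyperplane class $\zeta$, the pushforward of $\zeta^n$ is the Segre class $s_{n-3}(S\oplus S)=\sum_{a_0+a_1=n-3}\sigma_{a_0}\sigma_{a_1}$ (using $s(S)=c(Q)=1+\sigma_1+\sigma_2+\cdots$), and one must separately show that the boundary (rank-one $\alpha$, base points) contributes nothing. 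Note this is structurally identical to what the present paper does for blow-ups in Proposition \ref{thm:integral formula} and Proposition \ref{Understanding V(x1,...,xn)}. The second gap is your transversality/smoothing step, deferred to ``statements of the type in Lemma \ref{basic_transversality}'': that lemma requires $n\ge g+1$, which under \eqref{dim constraint} for $r=1$ means $d\ge g$ --- precisely the range where all correction terms vanish and $\Tev^{\P^1}_{g,n,d}=2^g$ is already Theorem \ref{geomtev_larged}. The substance of Theorem \ref{tev_p1} is the range $d<g$, where one needs the regeneration/smoothing theory for limit linear series compatible with the incidence conditions (the main technical work of \cite{FL}), or Brill--Noether--Gieseker--Petri-type inputs; without this, there is no bijection between limit solutions on the flag curve and honest maps from the general curve, and the Schubert integral is not yet an enumerative count.
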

Binomial coefficients $\binom{g}{j}$ with $j<0$ are interpreted to vanish, recovering Theorem \ref{geomtev_larged} when $d\ge r+1$. See \cite[\S 2]{FL} for notation for the Schubert cycles appearing in the last formula.

Finally, the computation of geometric counts $\Tev^{\P^r}_{g,n,d}$ is completed in \cite{lian_pr}, given in terms of Schubert calculus. We do not restate the formula here.

\subsection{New results}

In this paper, we study Tevelev degrees of blow-ups of projective spaces at (general) points. We obtain new results on all three aspects of the question: enumerativity, geometric, and virtual calculations.

\subsubsection{Enumerativity}

Fixed-domain virtual curve counts are much more likely to be enumerative than arbitrary Gromov--Witten invariants. For example, we have already seen that virtual and geometric counts for $\P^r$ agree in large degree, whereas higher-genus Gromov--Witten invariants of $\P^r$ typically fail to be enumerative. This phenomenon was first studied systematically in \cite{LP}.


\begin{definition}\label{def:SAE}
We say that $X$ satisfies \textbf{strong asymptotic enumerativity (SAE)} if :
    \begin{enumerate}
        \item For all $g \geq 0$, there exists a constant $C(X,g)$  (depending only on $X$ and $g$, but not on $\beta$) for which, if $n\ge C(X,g)$ and the dimensional constraint \eqref{dim constraint} is satisfied, then the general fiber of the map \eqref{map tau} $$\tau: \oM_{g,n}(X,\beta) \rightarrow \oM_{g,n} \times X^{n}$$ is contained in $\cM_{g,n}(X,\beta)$, and
        \item one can take $C(X,0)=3$.
    \end{enumerate}
\end{definition}

Recall that we require $2g-2+n>0$, so when $g=0$, it must be the case that $n\ge 3$. Thus, (2) above asserts that when $g=0$, the condition on the general fiber holds for all values of $n$ that we consider. If $\tau$ has general fiber contained in $\cM_{g,n}(X,\beta)$ and furthermore $n \geq g+1$, then by Lemma \ref{basic_transversality}, the general fiber of $\tau$ is reduced and $0$-dimensional, so $\mathsf{Tev}^X_{g,n,\beta}=\mathsf{vTev}^X_{g,n,\beta}$, i.e. $\mathsf{vTev}^X_{g,n,\beta}$ is enumerative. Thus, we have the following.
\begin{proposition}\label{SAE_implies_enum}
Suppose that $X$ satisfies SAE. Then, virtual Tevelev degrees are enumerative:
\begin{itemize}
\item in arbitrary genus whenever $n$ (equivalently, $\beta\cdot K_X^\vee$) is sufficiently large (depending on $g$), and 
\item \emph{always} when $g=0$.
\end{itemize}
\end{proposition}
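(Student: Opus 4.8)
The plan is to verify directly that, under the stated hypotheses on $n$, the general fiber of $\tau$ simultaneously computes the virtual and the geometric Tevelev degree, so that the two numbers must coincide. Concretely, I want to arrange three things: (i) the general fiber of $\tau$ is contained in $\cM_{g,n}(X,\beta)$; (ii) $n\ge g+1$, so that Lemma \ref{basic_transversality} applies; and (iii) the conclusion of that lemma upgrades to the statement that $\oM_{g,n}(X,\beta)$ is smooth of the expected ($=$ virtual) dimension along the general fiber, with $\tau$ generically étale there. Granting (i)--(iii), the argument sketched just before the statement goes through: the general fiber is reduced and $0$-dimensional, and its cardinality equals both $\Tev^X_{g,n,\beta}$ by definition and $\vTev^X_{g,n,\beta}$ by the class comparison below.

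First I would dispose of the numerical bookkeeping in the two cases. Fix $g$ and any effective $\beta$ satisfying \eqref{dim constraint}. In arbitrary genus, take $n\ge\max(C(X,g),\,g+1)$; then condition (1) of SAE gives (i) and $n\ge g+1$ gives (ii), and by \eqref{dim constraint} the hypothesis ``$n$ large'' is equivalent to ``$\beta\cdot K_X^\vee$ large,'' as in the statement. For $g=0$, the stability condition $2g-2+n>0$ forces $n\ge 3$; since condition (2) of SAE permits $C(X,0)=3$, we have $n\ge C(X,0)$ for \emph{every} admissible $n$, giving (i), while $n\ge 3\ge g+1$ gives (ii). Thus the $g=0$ case holds with no further restriction on $n$.

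Next I would establish (iii) together with the comparison of classes. Let $Z_1,\dots,Z_k$ be the irreducible components of $\oM_{g,n}(X,\beta)$ whose general point lies in $\cM_{g,n}(X,\beta)$ and which dominate $\oM_{g,n}\times X^n$; by condition (1) of SAE the general fiber of $\tau$ meets only these. Each such $Z_i$ dominates $X^n$ in particular, so with $n\ge g+1$ Lemma \ref{basic_transversality} shows $Z_i$ is generically smooth of the expected dimension, which by \eqref{dim constraint} equals $\mathrm{vdim}=\dim(\oM_{g,n}\times X^n)$. The locus of $Z_i$ where it is singular, fails to have expected dimension, or meets another component has dimension strictly smaller than $\dim Z_i$, so its $\tau$-image avoids a general point of $\oM_{g,n}\times X^n$; hence over such a point the fiber lies in the smooth, unobstructed locus, and $\tau$ is there a quasi-finite morphism of smooth varieties of equal dimension. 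Since the virtual class coincides with the ordinary fundamental class on the unobstructed locus, and the fiber is reduced and $0$-dimensional (so $\tau$ is in fact étale over a general point), the degree $\tau_*[\oM_{g,n}(X,\beta)]^{\mathrm{vir}}$ over such a point equals the cardinality of the fiber; that is, $\vTev^X_{g,n,\beta}=\Tev^X_{g,n,\beta}$.

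The one step requiring genuine care is this last identification of virtual and geometric contributions: that smoothness of $\oM_{g,n}(X,\beta)$ of the expected dimension along the general fiber forces the Behrend--Fantechi virtual class to agree with the ordinary fundamental class there, and that pushforward under a generically étale morphism of the correct relative dimension records exactly the generic fiber cardinality. This is where I would be most careful, though both facts are standard. All remaining ingredients are either hypotheses---condition (1) of SAE, which locates the general fiber inside $\cM_{g,n}(X,\beta)$---or direct applications of Lemma \ref{basic_transversality} and the dimension constraint \eqref{dim constraint}.
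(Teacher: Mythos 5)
Your proof is correct and follows essentially the same route as the paper: SAE condition (1) together with Lemma \ref{basic_transversality} (applicable since $n\ge g+1$, which is automatic when $g=0$ because stability forces $n\ge 3$) gives a reduced, $0$-dimensional general fiber of $\tau$ contained in $\cM_{g,n}(X,\beta)$, whence $\Tev^X_{g,n,\beta}=\vTev^X_{g,n,\beta}$. The only difference is that you spell out the standard final step — that generic smoothness of the expected dimension forces the obstruction space to vanish, so the virtual class restricts to the fundamental class over a general point and the pushforward records the fiber cardinality — which the paper treats as immediate.
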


In \cite{LP}, SAE is proven for homogeneous spaces for linear algebraic groups (see \cite[Theorem 10]{LP}) and non-singular hypersurfaces of very low degree (see \cite[Theorem 11, Corollary 34]{LP}), implying the conclusion of Proposition \ref{SAE_implies_enum} in these examples. As it is difficult to imagine how one could obtain this conclusion for $X$ without SAE, the following question is natural.
 
 \begin{question}
What geometric conditions on (non-singular, projective) $X$ guarantee SAE?
 \end{question}

It was originally expected that all \emph{Fano} $X$ satisfy SAE \cite[Speculation 12 and \S 4]{LP}. However, several families of counterexamples to this speculation have been constructed in \cite[\S 3]{BLLRST}. A symplectic analog of \cite[Speculation 12]{LP} has more recently been established in \cite{CD}.

Our first main set of results concerns SAE for $X$ given by the blow-up of $\P^r$ at distinct points $q_1,\ldots,q_\ell$. We have the following negative result.

\begin{theorem}\label{enum_thm_negative}
Suppose that $r\ge4$ and $\ell \ge2$. Then, $X$ fails to satisfy SAE.
\end{theorem}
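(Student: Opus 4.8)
The plan is to exhibit, in genus $0$, a family of stable maps with \emph{reducible} domain that dominates $\oM_{0,n}\times X^n$ under $\tau$. This shows that over a general point of the target the fiber of $\tau$ contains a reducible map, so the general fiber is not contained in $\cM_{0,n}(X,\beta)$; since this already fails for genus $0$, clause (2) of SAE (and hence SAE) is violated. Write $X=\Bl_{q_1,\dots,q_\ell}(\P^r)$, let $H$ be the pullback of the hyperplane class, $E_1,\dots,E_\ell$ the exceptional divisors, $L$ the class of a general line, and $e_i$ the class of a line in $E_i\cong\P^{r-1}$. The crucial object is the strict transform $\widetilde{L}_{12}$ of the line through $q_1$ and $q_2$ (this is where $\ell\ge 2$ enters): it is a \emph{rigid} rational curve of class $L-e_1-e_2$, and from $K_X^\vee=(r+1)H-(r-1)\sum_i E_i$ one computes
$$
K_X^\vee\cdot\widetilde{L}_{12}=(r+1)-2(r-1)=3-r,
$$
which is negative exactly when $r\ge 4$. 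Multiple covers of this $K_X^\vee$-negative rational curve are the source of the excess that destroys enumerativity.

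Concretely, I would fix $\beta=(d+2)L-2e_1-2e_2$, with $d$ chosen so that the dimension constraint \eqref{dim constraint} holds for $(g,n)=(0,n)$ with $n$ large, and consider the locus $Z\subset\oM_{0,n}(X,\beta)$ whose general member is a map $f\colon C_0\cup R\to X$ where $C_0\cong\P^1$ carries all $n$ markings and maps with class $dL$ (so its image avoids the $E_i$, i.e.\ it is the strict transform of a degree-$d$ rational curve in $\P^r$ missing the $q_i$), while $R\cong\P^1$ is an unmarked tail attached to $C_0$ at a node $y$ and mapping to $\widetilde{L}_{12}$ as a \emph{double} cover, of class $2(L-e_1-e_2)$. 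The point of putting class $dL$ on the main component is that $f|_{C_0}$ is then a map to $\P^r$, hence unobstructed, which sidesteps the non-convexity of $X$.

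The heart of the argument is a fixed-domain count for the main component. Maps $\P^1\to\P^r$ of degree $d$ from a \emph{fixed} $n$-marked $\P^1$, sending the markings to $n$ general points and sending one additional point $y$ into the fixed line $\overline{q_1q_2}$, form a family of dimension
$$
\bigl[(r+1)d+r\bigr]-rn+1-(r-1)=(r+1)d-rn+2=r-4,
$$
after substituting $(r+1)d=rn+r-6$ from \eqref{dim constraint}. For $r\ge 4$ this is $\ge 0$, so over a general point of $\oM_{0,n}\times X^n$ there exists such a main curve meeting $\widetilde{L}_{12}$; attaching a double cover of $\widetilde{L}_{12}$ at $y$ (whose cover data contributes to the dimension of $Z$ but lies entirely in the fiber of $\tau$) produces a point of $Z$ in the corresponding fiber. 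Hence $\tau|_Z$ is dominant, the general fiber of $\tau$ meets $Z$, and since every member of $Z$ has reducible domain the general fiber is not contained in $\cM_{0,n}(X,\beta)$. Note that replacing the double cover by a single cover would change $r-4$ into $-1<0$, so the double cover — made possible precisely by the negativity $3-r<0$ — is essential, which is why the statement requires both $r\ge 4$ and $\ell\ge 2$.

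The main obstacle is the transversality and nonemptiness underlying this count: one must show that the $(r-4)$-dimensional \emph{expected} family of degree-$d$ rational curves through $n$ general points meeting $\widetilde{L}_{12}$ is genuinely nonempty and of the expected dimension, i.e.\ that the relevant incidence/evaluation maps for rational curves in $\P^r$ are dominant. Because $f|_{C_0}$ maps to the convex space $\P^r$, the ambient space of such maps is smooth of the expected dimension, so this reduces to a general-position statement for degree-$d$ rational curves through general points and meeting a fixed line, which holds for $d$ large and is arranged by taking $n$ large. One must also verify that $\beta$ can be chosen to satisfy \eqref{dim constraint} (which fixes the residue of $n$ modulo $r+1$, still leaving infinitely many admissible $n$) and is effective, and that the node $y$ may be taken in the interior of $\widetilde{L}_{12}$ (away from $E_1\cup E_2$), so that the gluing is consistent with the main curve avoiding the exceptional divisors.
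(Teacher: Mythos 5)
Your strategy coincides with the paper's (Proposition \ref{how to disprove} applied with $D$ the strict transform of the line $\overline{q_1q_2}$): both arguments rest on the computation $K_X^\vee\cdot D=3-r<0$ for $r\ge 4$, and both dominate $\oM_{g,n}\times X^n$ by boundary maps consisting of a spine of class $d\H^\vee$ (a curve pulled back from $\P^r$) with a multiple cover of $D$ attached at a point lying over $\pi(D)$. Restricting to genus $0$ is legitimate: your admissible $n$ form an infinite arithmetic progression, so failure there contradicts clause (2) (and clause (1) for $g=0$) of SAE.

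However, there is a genuine gap at exactly the step you call ``the main obstacle,'' and your sketch does not fill it. Non-negative expected dimension does not imply nonemptiness, and convexity of $\P^r$ does not help: it makes the space of degree-$d$ maps $\P^1\to\P^r$ smooth of the expected dimension, but it says nothing about whether the incidence conditions (the $n$ general point conditions \emph{plus} meeting the fixed line $\overline{q_1q_2}$) can be simultaneously satisfied. The problem is sharpest in the boundary case $r=4$, where your expected dimension is $r-4=0$: the family $M$ of degree-$d$ maps through the $n$ general points has dimension $2r-6=2$, and what you actually need is that the sweep $\bigcup_{f\in M}f(\P^1)\subset\P^4$ is a hypersurface --- if the evaluation $M\times\P^1\to\P^4$ had positive-dimensional fibers, the sweep would be a surface and a fixed line would in general miss it. Ruling this out is a genuine interpolation-type statement (it amounts to controlling the bundle $f^*T_{\P^r}(-p_1-\cdots-p_n)$ and its evaluation maps for $f$ in a general fiber of the evaluation map), not a consequence of ``general position for $d$ large.'' The paper's proof is engineered precisely to avoid this: the tail is taken to be a degree-$r$ cover of $D$ (with $r\mid d$), so that the spine problem, now with $m=n-K_X^\vee\cdot D=n+r-3$ marked points, satisfies the dimension constraint \eqref{dim constraint} for $\P^r$ on the nose; nonemptiness of the spine family then follows from the citable positivity $\Tev^{\P^r}_{g,m,d\H^\vee}>0$ of \cite{FL}, after which only one auxiliary marked point must be specialized onto $\pi(D)$ --- a far milder step, backed by a finite positive count. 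With your degree-$2$ cover the discrepancy $2(r-3)$ is not a multiple of $r$ (unless $r=6$), so your spine problem is never a balanced Tevelev problem and this positivity cannot be quoted; repairing your argument essentially forces you back to the paper's choice of cover degree, or to proving the interpolation statement above by hand.
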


Such $X$ are not Fano, and indeed, the existence of negative rational curves is used in an in essential way. In fact, we establish a more general criterion for failure of SAE which also applies to various special configurations of points $q_j$ (e.g., where some three of the $q_j$ are collinear), see Proposition \ref{how to disprove}. 

On the other hand, we have the following positive result.

\begin{theorem}\label{enum_thm_positive}
$X=\Bl_{q_1,\ldots,q_\ell}(\P^r)$ satisfies SAE in the following cases:
\begin{enumerate}
    \item (Theorem \ref{thm: SAE for Del Pezzo surfaces}) $X$ is del Pezzo, i.e. $r=2$, and the $\ell\le 8$ points satisfy the property that no three lie on a line, no six lie on a conic, and, if $\ell=8$, the points do not all lie on a cubic singular at one of the $q_j$.
    \item (Theorem \ref{SAE for blow-up of P3 at 4 pts}) $r=3,\ell\le4$, and the $q_j$ are (linearly) general.
    \item (Theorem \ref{SAE for blow-ups at 1 point}) $r$ is arbitrary and $\ell=1$.
\end{enumerate}
\end{theorem}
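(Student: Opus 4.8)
The plan is to prove SAE in each case by determining which boundary components of $\oM_{g,n}(X,\beta)$ can dominate $\oM_{g,n}\times X^n$. By the definition of SAE it suffices to show that, for $n\ge C(X,g)$ (and, in genus $0$, for all $n\ge 3$), no irreducible component $Z\subset\oM_{g,n}(X,\beta)$ whose general member has reducible domain dominates $\oM_{g,n}\times X^n$. For such a $Z$ to dominate, the image of its general member under the forgetful map must be a \emph{smooth} pointed curve, so I may assume the general $f\colon C\to X$ in $Z$ has a smooth genus $g$ spine $C_{\spine}$ carrying $m$ of the markings together with rational tails that are contracted under stabilization; write $\beta=\beta_0+\sum_i\beta_i$ for the splitting of the class into spine and tails. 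If $m\ge g+1$, the second part of Lemma \ref{basic_transversality} applies and gives that $Z$ is generically smooth of expected dimension $\dim(\oM_{g,n}\times X^n)-t$, where $t\ge 1$ is the number of tails; such $Z$ therefore cannot dominate. Hence I am reduced to strata with $m\le g$ spine markings — in particular, in genus $0$ \emph{every} marking lies on a tail.

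For these remaining strata the naive count fails, and the main work is a refined estimate of $\dim\tau(Z)$ rather than $\dim Z$. A standard deformation-theoretic count yields $\dim Z\le \dim(\oM_{g,n}\times X^n)-t+\sum_i\epsilon_i$, where $\epsilon_i\ge 0$ is the excess of the space of genus $0$ maps in class $\beta_i$ over its virtual dimension; this $\epsilon_i$ is positive exactly when class $\beta_i$ rational curves are obstructed, the model case being a $k$-fold cover of a rigid rational curve $C_0$ of low anticanonical degree (e.g.\ a $(-1)$-curve on a surface). The crucial point is that $\tau$ contracts the tails and records only the images of the markings they carry, so the internal moduli of a tail is invisible to $\tau$ unless it moves such a marking image. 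A multiple cover of a rigid $C_0$ confines any marking it carries to $C_0$, so it cannot send a marking to a general point of $X$; its excess is thus entirely wasted and \emph{strictly decreases} $\dim\tau(Z)$. Quantifying this, I expect to obtain $\dim\tau(Z)<\dim(\oM_{g,n}\times X^n)$ unless every tail carries markings whose images sweep out $X$ — i.e.\ unless $\beta_i$ admits a family of rational curves dominating $X$ of the expected dimension — and unless transferring class from the spine to such tails is ``free'', which forces $\beta_i$ to have small anticanonical degree $\beta_i\cdot K_X^\vee$. In short, a dominating boundary stratum requires a rational curve class that is simultaneously $X$-dominating and cheap in anticanonical degree. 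This is exactly the configuration detected by the negative-curve criterion behind Theorem \ref{enum_thm_negative} and Proposition \ref{how to disprove}, and it is what must be excluded in the three positive cases.

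It then remains to rule out such cheap dominating classes by classifying the rational curves on $X$. In the del Pezzo case $r=2$, ampleness of $K_X^\vee$ forces $\beta'\cdot K_X^\vee\ge 1$ for every curve class, and the only source of excess is multiple covers of the finitely many $(-1)$-curves, which are rigid and hence confined and invisible to $\tau$ by the previous paragraph; the hypotheses that no three of the $q_j$ are collinear, no six lie on a conic, and (for $\ell=8$) the octuple is not special are precisely what guarantees that $X$ is genuinely del Pezzo and that these are the only negative curves. For $X=\Bl_{q_1,\dots,q_4}(\P^3)$ with general $q_j$, the variety is only $(-K)$-nef: the lines $h-e_i-e_j$ through two of the points are $(-K)$-trivial, but generality makes their normal bundles balanced, so their covers carry \emph{no} excess, and I would check that every $X$-dominating rational class has $K_X^\vee$-degree at least $2$. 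For $\ell=1$ and arbitrary $r$, the variety is Fano, and I would exploit the projection from $q$, realizing $X$ as a $\P^1$-bundle over $\P^{r-1}$, to reduce the classification of rational curves to the homogeneous base $\P^{r-1}$ and confirm that all relevant families have the expected dimension.

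The main obstacle will be the refined non-domination estimate of the second paragraph: proving rigorously that all excess is confined to rigid curves whose covers and markings cannot reach general points, equivalently that no cheap $X$-dominating rational class exists, and carrying this out uniformly. This is most delicate in genus $0$, where the requirement $C(X,0)=3$ forces the bound for all $n\ge 3$ and hence demands an exhaustive, slack-free classification of the boundary strata. I expect the genericity and configuration hypotheses to enter exactly here, excluding the extra low-degree curves — lines through three collinear points, conics through six points, and so on — that would otherwise furnish cheap dominating classes and, as in Theorem \ref{enum_thm_negative}, break SAE.
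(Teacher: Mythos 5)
Your opening reduction does match the paper's (stratify the boundary by topological type; when the spine carries $m\ge g+1$ markings and every rational tree carries a marking, Lemma \ref{basic_transversality} kills the stratum), but everything after that---the part you yourself flag as ``the main obstacle''---is organized around the wrong dichotomy, and that is where the theorem actually lives. The paper's working notions are \emph{ordinariness} (dominance of the evaluation map $\cM_{0,1}(X,\beta')\to X$) and anticanonical degree, packaged as conditions (*) and (**) of \S\ref{Section: How to prove SAE for Fano varieties}: trees of degree $\ge r+1$ against $K_X^\vee$ are deleted (Proposition \ref{prop2: how to prove SAE}), and once all tree components are ordinary one runs an obstruction bound on the spine (condition (*), via Lemmas \ref{h1 nice curve classes} and \ref{inequality d and ks for ordinary}). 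Rigidity is not the right invariant: the non-ordinary classes in these examples, namely $-k'\E_j^\vee$, move in the $(r-1)$-dimensional exceptional divisor, and can even be the class of the \emph{spine}---a configuration your $\P^1$-bundle reduction for $\Bl_q(\P^r)$ never confronts, and which occupies Case 1 of the paper's proof of Theorem \ref{SAE for blow-ups at 1 point} (there one needs ordinariness of the trees to force $k'\le d-k$ before the $h^1$ bound closes the estimate). Moreover a tree is not a single cover: it can contain a non-ordinary component \emph{and} an ordinary component carrying its marking to a general point of $X$, so your claim that confinement prevents a tree's marking from reaching general $x_i$ fails tree-by-tree; condition (**) exists precisely to dispose of such mixed trees by total degree.

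The gap is fatal in case 2, where $X$ is not Fano. First, your assertion that the $(-K)$-trivial lines (class $\H^\vee+\E_j^\vee+\E_{j'}^\vee$) have balanced normal bundle ``so their covers carry no excess'' is false: the normal bundle is $\cO(-1)\oplus\cO(-1)$, so a degree-$m$ cover has $h^1(f^*N)=2(m-1)$ and such covers move in a $(2m-2)$-dimensional family against virtual dimension $0$. Second, and worse, these covers can be attached to the spine as \emph{unmarked} trees of anticanonical degree zero (note that Lemma \ref{basic_transversality} says nothing about strata with unmarked trees); attaching them changes neither $\beta\cdot K_X^\vee$ nor the dimension of $\oM_{g,n}\times X^n$, so no estimate of $\dim Z$ or $\dim\tau(Z)$ of the kind you propose can exclude them. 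The paper's Reduction 1 needs a genuinely new idea here: lift the spine to the blow-up $\widetilde{X}$ of $X$ along the fixed line, where the lifted class drops in anticanonical degree, and apply \cite[Proposition 13]{LP} on both $X$ and $\widetilde{X}$ to obtain incompatible dimension counts. Third, a marked tree consisting of a special line $\H^\vee+\E_h^\vee$ (degree $2$, ordinary, carrying the marking) together with fixed lines has total degree $2<r+1=4$, so it cannot be deleted by degree; the paper's Reduction 2 excludes it by the incidence fact that for general $x_i$ the strict transform of the line through $q_h$ and $x_i$ meets no fixed line. Neither of these arguments, nor any substitute, appears in your outline, so the $r=3$ case does not go through; only in the del Pezzo case is your sketch close to viable, and even there the paper's actual proof of condition (*) for $m<g+1$ runs through polyhedrality of the effective cone and a uniform bound on $h^1(C_{\spine},f^*T_X)$ rather than through a classification of $(-1)$-curves.
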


 In Theorem \ref{enum_thm_positive}, examples 1 and 3 are Fano; in fact, these are all examples of Fano varieties obtained by blowing up $\P^r$ at general points. In example 2, $X$ is merely $(-K_X)$-nef when $\ell\ge2$; these are the first examples of non-Fano varieties proven to satisfy SAE.

When the points $q_j$ are general, the only cases left open are when $r=2,\ell\ge9$ and $r=3,\ell\ge5$. We believe SAE should hold in these examples, but we do not see a way to obtain unconditional proofs. In dimension 2, 
one would at least need a bounded negativity statement for general blow-ups of $\P^2$, if not the full power of the SHGH conjecture (see \cite{H} for a survey), and the situation in dimension 3 seems even more hopeless.
    
\subsubsection{Geometric calculations}\label{geom_intro}

In \S\ref{section: proof of integral formula}, we prove an integral formula for the geometric Tevelev degrees of blow-ups $\pi:X \rightarrow \P^r$ at $\ell \leq r+1$ general points $q_1,\ldots,q_{\ell}$. Without loss of generality, we will take the $q_j$ to be fixed points under the standard action of a $(r+1)$-dimensional torus on $\P^r$.

Let $\H\in H^2(X)$ denote the pullback of the hyperplane class from $\P^r$, and let $\E_1,\ldots,\E_\ell\in H^2(X)$ be the classes of the exceptional divisors. Let $\H^\vee,\E^\vee_1,\ldots,\E^\vee_\ell\in H_2(X)$ be the corresponding dual basis, and let
$$
\beta= d \H^\vee + \sum_{i=1}^\ell k_i \E_i^\vee\in H_2(X,\Z)
$$
be an effective curve class.

We are interested in counting maps from a fixed general curve $(C,p_1,\ldots,p_n)$ to $X$ in class $\beta$. Equivalently, we count maps to $\P^r$ of degree $d$ and with multiplicities $k_1,\ldots,k_{\ell}$ at the points $q_1,\ldots,q_\ell$, respectively. Such a map is defined by $r+1$ sections of a line bundle $\cL$ of degree $d$, along with $\ell$ divisors $D_1,\ldots,D_\ell\subset C$ of degrees $k_1,\ldots,k_\ell$, respectively, which are required to map to $q_1,\ldots,q_\ell$, respectively. When $\ell\le r+1$, this is equivalent to the data of $r+1$ sections
$$f_i\in H^0\left(C,\cL\left(- \sum_{j \neq i} D_j\right)\right)$$
which have no common vanishing point \emph{viewed as sections of $H^0(C,\cL)$.} We will parametrize such data in what follows.

\vspace{5pt}
\noindent  \textbf{Setup}
Write $S:=\J(C) \times \Sym^{k_1}(C) \times \ldots  \times \Sym^{k_\ell}(C)$. Over $S\times C$, we have the following.
\begin{enumerate}
\item[$\bullet$] the pullback $\mathcal{P}$ from $\J(C) \times C$ of a Poincar\'{e} line bundle;
\item[$\bullet$] for all $i=1,\ldots.,\ell$, the pullback $\mathcal{D}_i$ from $\Sym^{k_i}(C) \times C$ of the universal divisor.
\end{enumerate}

Let $\nu : S \times C \rightarrow S$
be the projection and define
$$
\cE:=  \bigoplus_{i=1}^{r+1} \nu_* \left( \mathcal{P}\left(- \sum_{j \neq i} \mathcal{D}_j\right) \right).
$$
We will assume that
\begin{equation}\label{stronger inequality d, ks}
d - \sum_{i \in I} k_i > 2g-1 \text{ for all } I \subseteq \{1,\ldots,\ell \} \text{ such that  } |I| \leq r,
\end{equation}
in order to ensure that $\cE$ is a vector bundle. We will see later in \S\ref{section: proof of integral formula} that we will also want this to remain true upon replacing $\cL$ with $\cL(-p_i)$, hence the appearance of $2g-1$ rather than $2g-2$ on the right hand side.

Let $\eta: \P:=\P(\cE) \rightarrow S$
be the corresponding projective bundle corresponding to \emph{lines} (not 1-dimensional quotients) in the fibers of $\cE$. Therefore, a point of $\P$ consists of the data of a point of $S$ and sections $f_j$ as above, taken up to simultaneous scaling.

Our first result expresses Tevelev degrees of $X$ as integrals on $\P$. Before stating the result, we require some additional notation. We set
\begin{equation}\label{Definition tilde(H)}
\widetilde{\H}= c_1( \mathcal{O}_\P(1)) -\eta_1-\cdots-\eta_\ell \in H^*(\P,\Z)
\end{equation}
where $\eta_i$ is the pullback from $\Sym^{k_i}(C)$ of the class of the divisor $N_i=\{D: D-p \geq 0 \} $ (here $p$ is any fixed point in $C$). 

Then, we have the following.

\begin{proposition}[Integral formula]\label{thm:integral formula}
    Assume conditions \eqref{dim constraint}, \eqref{stronger inequality d, ks} hold, and in addition that 
\begin{equation}\label{eqn: range for d to have enumerativity}
n-d\ge g+1.
\end{equation}

Then, $\mathsf{Tev}_{g,n,\beta}$ is well-defined and the following integral formula holds:
    \begin{equation}\label{eqn:integral formula}
        \mathsf{Tev}_{g,n,\beta}^X= \int_{\P} (\widetilde{\H}^r+\sigma_1 \widetilde{\H}^{r-1}+\cdots+ \sigma_{r})^n,
    \end{equation}
    where $\sigma_i$ is the $i$-th elementary symmetric function in $\eta_1,\ldots,\eta_\ell$ (so $\sigma_i=0$ for $i>\ell$).
\end{proposition}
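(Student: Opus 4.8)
The plan is to reduce $\Tev^X_{g,n,\beta}$ to a count of honest maps, to realize that count as an intersection number of $n$ incidence cycles on $\P$, and to compute the class of a single incidence cycle fibrewise. For the reduction, note that \eqref{eqn: range for d to have enumerativity} and $d\ge0$ give $n\ge g+1$, so Lemma \ref{basic_transversality} applies: for general $(C,p_1,\ldots,p_n)$ and general $x_1,\ldots,x_n\in X$ the maps to be counted have smooth domain and form a reduced, finite, general fibre of $\tau$, so $\Tev^X_{g,n,\beta}$ is well-defined. As each $x_i$ lies off the exceptional divisors, $f(p_i)=x_i$ is equivalent to $\phi(p_i)=\pi(x_i)$ for $\phi=\pi\circ f\colon C\to\P^r$, and by the parametrization recalled above each such $\phi$ is a point of the open locus $U\subseteq\P$ where the $r+1$ sections have no common zero and the $p_i$ avoid the $D_j$; here \eqref{stronger inequality d, ks} (and its twist-by-a-point strengthening) guarantees $\cE$ is a bundle and the parametrization survives imposing the incidences. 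Thus the count equals $\#\bigcap_{i=1}^n Z_i$, where $Z_i:=\{\phi(p_i)=\pi(x_i)\}\subseteq U$.

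\textbf{Universal evaluation.} Pulling the tautological inclusion $\cO_\P(-1)\hookrightarrow\eta^*\cE$ back to $\P\times C$ yields, for each $a$, a section $\tilde f_a$ of $\cO_\P(1)\otimes\mathcal{P}(-\sum_{j\neq a}\mathcal{D}_j)$; multiplying by the canonical section $s_{\neq a}$ of $\cO(\sum_{j\neq a}\mathcal{D}_j)$ produces $f_a=s_{\neq a}\tilde f_a$, sections of the common bundle $\cO_\P(1)\otimes\mathcal{P}$ defining the rational evaluation. Restricting to $\P\times\{p_i\}$ and normalizing the Poincaré bundle so that $\mathcal{P}|_{\P\times\{p_i\}}$ is cohomologically trivial, I would record that $\tilde f_a(p_i)$ is a section of a line bundle $B_a$ with $c_1(B_a)=\widetilde{\H}+\eta_a$ for $a\le\ell$ and $c_1(B_a)=\widetilde{\H}$ for $a>\ell$, where $\eta_a$ is the class of $\{D_a\ni p_i\}$, in agreement with \eqref{Definition tilde(H)}; in particular the classes are independent of $i$.

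\textbf{The incidence class.} Writing $h=c_1(\cO_\P(1))$, the condition $\phi(p_i)=\pi(x_i)$ represents $\mathrm{ev}_i^*[\mathrm{pt}]$; pulling the point back as an intersection of $r$ hyperplanes realizes $Z_i$ through the divisors $\{f_a(p_i)=0\}$, each of class $h$. Each such divisor splits into a \emph{moving} part $\{\tilde f_a(p_i)=0\}$ of class $c_1(B_a)$ and a \emph{fixed} part supported on $\bigcup_{j\neq a}\{D_j\ni p_i\}$; on $\{D_j\ni p_i\}$ the map sends $p_i\mapsto q_j$, so the honest $Z_i$ meets only the moving parts and lies in $U$. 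I would conclude $[\overline{Z_i}]=\widetilde{\H}^r+\sigma_1\widetilde{\H}^{r-1}+\cdots+\sigma_r$. For $\ell\le r$ this is cleanest by computing the point-independent class $\mathrm{ev}_i^*[\mathrm{pt}]$ at a coordinate point $q_{a_0}$ with $a_0>\ell$ (not blown up), which identifies $Z_i$ with the transverse zero locus of $(\tilde f_a(p_i))_{a\neq a_0}$, a section of $\bigoplus_{a\neq a_0}B_a$, so that $[\overline{Z_i}]=\prod_{a\neq a_0}c_1(B_a)=\widetilde{\H}^{r-\ell}\prod_{j=1}^\ell(\widetilde{\H}+\eta_j)$, which expands to the asserted symmetric combination.

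\textbf{Main obstacle and assembly.} The crux is this last computation in the remaining symmetric case $\ell=r+1$, where no coordinate point is available: because the divisors $\{f_a(p_i)=0\}$ share the forced components $\{D_j\ni p_i\}$, their naive product $h^r$ on $\P$ overcounts, and one must show that discarding the excess supported on $\bigcup_j\{D_j\ni p_i\}$ leaves exactly $\widetilde{\H}^r+\sigma_1\widetilde{\H}^{r-1}+\cdots+\sigma_r$ — equivalently that $h^r-(\widetilde{\H}^r+\cdots+\sigma_r)$ lies in the ideal generated by the $\eta_j$ and is accounted for by a residual/excess-intersection computation along those loci. Granting $[\overline{Z_i}]=\widetilde{\H}^r+\sigma_1\widetilde{\H}^{r-1}+\cdots+\sigma_r$ for all $i$, genericity of $(x_1,\ldots,x_n)$ together with Lemma \ref{basic_transversality} forces the cycles $\overline{Z_i}$ to meet transversally in finitely many honest points of $U$, with no contribution from $\P\setminus U$; since $rn=\dim\P$ by \eqref{dim constraint}, the count equals $\int_\P(\widetilde{\H}^r+\sigma_1\widetilde{\H}^{r-1}+\cdots+\sigma_r)^n$, proving \eqref{eqn:integral formula}.
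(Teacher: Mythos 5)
Your setup and reduction are fine, and your divisor classes $[\{\tilde f_a(p_i)=0\}]=\widetilde{\H}+\eta_a$ agree with the paper's Lemma \ref{div classes}; but both central steps of your argument have genuine gaps. First, the incidence class. Your computation rests on evaluating the ``point-independent class'' at a torus-fixed coordinate point $q_{a_0}$. Torus-equivariance of $\P$ does give constancy of $[\overline{Z_i(x)}]$ for $x$ in the \emph{open} orbit, but $q_{a_0}$ lies outside it: the limit of $\overline{Z_i(x)}$ as $x\to q_{a_0}$ is merely an effective cycle supported inside the zero scheme $\bigcap_{a\neq a_0}\{\tilde f_a(p_i)=0\}$, possibly with multiplicities and possibly missing components of that scheme, so $[\overline{Z_i(x_{\mathrm{gen}})}]=\prod_{a\neq a_0}\bigl(\widetilde{\H}+\eta_a\bigr)$ does not follow. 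Moreover the Chern-class identity itself requires the zero scheme to have pure codimension $r$, which you never verify (the paper needs a dedicated degeneration argument, Lemma \ref{lem:V_codim}, for exactly this). Finally, for $\ell=r+1$ you explicitly concede the computation as a ``main obstacle'' without resolving it; the paper handles this case by the recursion of Lemma \ref{recursion for [V]a}, whose proof requires a delicate set-theoretic decomposition involving the loci $N_a=\{p_i\in D_a\}$, and this is precisely the content of Proposition \ref{class of W(xi)}. So the class formula, which you need for all $\ell\le r+1$, is not established.

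Second, and more seriously, the assembly step. You assert that genericity of $(x_1,\ldots,x_n)$ together with Lemma \ref{basic_transversality} ``forces'' $\bigcap_i\overline{Z_i}$ to be finite, reduced, and contained in the honest locus $U$, ``with no contribution from $\P\setminus U$.'' There is no transitive group action on $\P$, so no Kleiman-type theorem applies: the cycles $\overline{Z_i}$ move only in the $rn$-dimensional family parametrized by the $x_i$, and a priori their boundaries $\overline{Z_i}\setminus Z_i$ (loci where the sections acquire base points) can intersect in positive dimension for \emph{every} choice of the $x_i$. Ruling this out is the hardest part of the theorem: it is exactly Proposition \ref{Understanding V(x1,...,xn)}, which the paper proves by the twisting algorithm, the lift to the permutohedral variety $Y$, and the dimension counts of Lemmas \ref{coordinate_nonzero}, \ref{exp_dim_after_twisting} and \ref{actual_dim_after_twisting}. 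Note that the hypothesis $n-d\ge g+1$ enters essentially there, whereas your argument uses it only to deduce $n\ge g+1$; since Remark \ref{ineq_necessary} exhibits spurious, even positive-dimensional, solution loci when $n-d\ge g+1$ fails, an argument that never invokes the full strength of this inequality cannot be complete.
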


The condition $n-d\ge g+1$ is needed to rule out extraneous contributions when the sections $f_j$ are identically zero (see Lemma \ref{coordinate_nonzero} and Remark \ref{ineq_necessary}), and also ensure dimensional transversality later (Lemma \ref{actual_dim_after_twisting}). The factor $\widetilde{\H}^r+\sigma_1 \widetilde{\H}^{r-1}+\cdots+ \sigma_{r}$ is the class of a subscheme $V(x_i)$ of $\P$ corresponding to the condition that $f(p_i)=x_i$. However, care is needed to deal with the possibility that the sections $f_j$ vanish simultaneously at $p_i$; a precise definition of $V(x_i)$ is given in \S\ref{section: proof of integral formula}. The computation of the class $[V(x_i)]$ and the fact that the $V(x_i)$ intersect transversely at points enumerated by $\mathsf{Tev}_{g,n,\beta}^X$ are both subtle.

Pushing forward to $S=\J(C) \times \Sym^{k_1}(C) \times \cdots  \times \Sym^{k_\ell}(C)$ and using Grothendieck-Riemann-Roch, we obtain (see \S\ref{coh_of_S} for notation) the following.

\begin{theorem}\label{thm:grr}
Assume conditions \eqref{dim constraint}, \eqref{stronger inequality d, ks} and \eqref{eqn: range for d to have enumerativity} hold. Then,
 $$
 \mathsf{Tev}_{g,n,\beta}^X=\sum_{m=0}^{\min(n,k_1,\ldots,k_{r+1})} \binom{n}{m}(-1)^m \int_{S} \prod_{i=1}^{r+1}(1+\eta_i)^{n-m-1+g-d+\overline{k}_i} \eta_i^m  \cdot \mathrm{exp}\left( \frac{ \overline{\tau}_i+ \Theta- \overline{x}_i}{1+\eta_i} \right).
$$
\end{theorem}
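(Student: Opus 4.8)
The plan is to start from the integral formula \eqref{eqn:integral formula} and push forward along the projective bundle $\eta:\P\to S$, trading the tautological class $\xi:=c_1(\cO_\P(1))$ for Segre classes of $\cE$, and then to evaluate those Segre classes on $S$ by computing the Chern character of each summand $\cE_i$ via Grothendieck--Riemann--Roch for $\nu:S\times C\to S$. The key structural observation is that $\cE=\bigoplus_{i=1}^{r+1}\cE_i$ is a direct sum, so its total Segre class factors as $\prod_i s(\cE_i)$; correspondingly I expect the whole computation to factor into a product over $i$, which is exactly the shape of the target formula. Writing $\widetilde{\H}=\xi-\sum_j\eta_j$ and using $\ell\le r+1$, the integrand factor $\widetilde{\H}^r+\sigma_1\widetilde{\H}^{r-1}+\cdots+\sigma_r$ can be rewritten so that the omitted-divisor pattern of the summands $\cE_i=\nu_*(\mathcal{P}(-\sum_{j\ne i}\mathcal{D}_j))$ is matched termwise; this is what will let the per-$i$ factors $\eta_i^m$ and the powers of $(1+\eta_i)$ be grouped with the correct summand.

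First I would run GRR on $\nu$. With $\mathrm{td}(T_\nu)=1-(g-1)f$ (where $f=[\mathrm{pt}]_C$, using $f^2=0$) and the standard intersection-theoretic relations on $\J(C)\times C$ and on each $\Sym^{k_j}(C)\times C$ (notably $c_1(\mathcal{P})=\gamma+df$ with $\gamma^2=-2\Theta f$, $\gamma f=0$, and $[\mathcal{D}_j]=\eta_j+k_jf+\psi_j$ with its symmetric-product analogues), the computation truncates and yields, for each $i$, a Chern character of the form $\mathrm{ch}(\cE_i)=e^{-E_i}(\mathrm{rk}_i-\Theta_i)$, where $E_i=\sum_{j\ne i}\eta_j$ is pulled back from $S$, $\mathrm{rk}_i=d-g+1-\overline{k}_i$ is the rank (consistent with Riemann--Roch and with hypothesis \eqref{stronger inequality d, ks} guaranteeing $R^1\nu_*=0$, so that $\cE_i$ is a bundle), and $\Theta_i$ is a theta-type class assembled from $\Gamma_i^2=-2\Theta_i f$. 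Tracking the pieces of $\Theta_i$ and the Poincar\'e normalization will identify precisely the classes $\overline{\tau}_i$, $\Theta$, and $\overline{x}_i$ of \S\ref{coh_of_S}.

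Next I would convert each $\mathrm{ch}(\cE_i)$ into a closed-form total Chern (hence Segre) class. In the pure-Jacobian case this is the classical identity $c(\nu_*\mathcal{P})=e^{-\Theta}$; here the universal-divisor twists deform it into an expression of the shape $(1+\eta_i)^{\mathrm{rk}_i}\exp(-(\overline{\tau}_i+\Theta-\overline{x}_i)/(1+\eta_i))$, the denominators $1/(1+\eta_i)$ being the source of the corresponding factor in the statement. Feeding the resulting $s(\cE_i)$ into the projective-bundle pushforward of $(\sum_a\sigma_a\widetilde{\H}^{r-a})^n$ and collecting powers of $\xi$ then produces the powers $(1+\eta_i)^{n-m-1+g-d+\overline{k}_i}$ together with the exponential factors; the binomial coefficient $\binom{n}{m}(-1)^m$ arises from a binomial expansion in this collection (morally, inclusion--exclusion over the $m$ marked points at which all coordinate sections vanish), and the summation range $m\le\min(n,k_1,\ldots,k_{r+1})$ is forced by the nilpotency $\eta_i^{k_i+1}=0$ on $\Sym^{k_i}(C)$ together with the factor $\binom{n}{m}$. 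Since well-definedness and enumerativity of the count were already secured by Proposition \ref{thm:integral formula} under \eqref{eqn: range for d to have enumerativity}, no further transversality input is needed here.

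The main obstacle I anticipate is not any single step but the bookkeeping that glues the projective-bundle pushforward to the GRR output while keeping all conventions consistent: fixing the lines-versus-quotients sign in $\eta_*(\xi^{r+t})=s_t(\cE)$, handling the non-uniform twists $\mu_i=E_i$ attached to the different summands (so that the naive ``twist the whole bundle by $\cO(-\sum_j\eta_j)$'' shortcut fails and one must work summand-by-summand), and above all establishing the exact closed form of $c(\cE_i)$ on the mixed space $\J(C)\times\prod_j\Sym^{k_j}(C)$ --- the generalization of $c(\nu_*\mathcal{P})=e^{-\Theta}$ that simultaneously accounts for $\Theta$, the symmetric-product classes $\overline{\tau}_i$, and the evaluation/normalization classes $\overline{x}_i$. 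Verifying the final identity in the degenerate cases (the pure-Jacobian case recovering $\P^r$, and $\ell\le r$, where the $m$-sum should collapse to $m=0$) would serve as the principal consistency check.
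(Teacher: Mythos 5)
Your overall route is the paper's: push the integral formula forward along $\P\to S$ so that powers of the relative hyperplane class become Segre classes (equivalently, $c(\cdot)^{-1}$ of the bundle), compute the Chern character of each rank-$(d-\overline{k}_i-g+1)$ summand by Grothendieck--Riemann--Roch on $\nu:S\times C\to S$ using exactly the relations $\gamma^2=-2\Theta\p$, $\overline{y}_i^2=-2\overline{\tau}_i\p$, $\gamma\overline{y}_i=\overline{x}_i\p$ you cite, exponentiate to a closed-form total Chern class, and generate the alternating sum over $m$ by a binomial expansion --- in the paper, of the identity $\widetilde{\H}^r+\sigma_1\widetilde{\H}^{r-1}+\cdots+\sigma_r=\bigl(\prod_{i=1}^{r+1}(\widetilde{\H}+\eta_i)-\sigma_{r+1}\bigr)/\widetilde{\H}$ raised to the $n$-th power --- with the truncation at $\min(n,k_1,\ldots,k_{r+1})$ forced by $\eta_i^{k_i+1}=0$, just as you say.

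The one genuine problem is the step you yourself flag as the main obstacle, and your diagnosis of it is backwards. You claim the ``twist the whole bundle'' shortcut fails and propose to work with the untwisted $\cE$; but the paper's proof \emph{is} that shortcut, with the opposite sign from the one you tried: it sets $\widetilde{\cE}:=\cE\otimes\cO(N)$ with $N=\sum_j N_j$, i.e.\ twisting \emph{up} by $\sum_j\eta_j$, not down. Because the paper uses the lines convention, $\P(\cE)\cong\P(\widetilde{\cE})$ canonically and $\widetilde{\H}=c_1(\cO_\P(1))-\sum_j\eta_j$ is precisely the tautological class of $\P(\widetilde{\cE})$, so $\eta_*(\widetilde{\H}^{r+t})$ is the degree-$t$ part of $c(\widetilde{\cE})^{-1}$ with no re-expansion in $\xi$ needed at all; moreover each twisted summand is $\nu_*(\mathcal{P}(\Delta_i))$ with $[\Delta_i]=\eta_i-\overline{y}_i-\overline{k}_i\p$, so among the $\eta_j$ only $\eta_i$ (with a plus sign) survives in the GRR computation. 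This is exactly what would reconcile the two halves of your own argument, which as written are inconsistent: your GRR output $\mathrm{ch}(\cE_i)=e^{-E_i}(\mathrm{rk}_i-\Theta_i)$ with $E_i=\sum_{j\neq i}\eta_j$ and $\Theta_i=\overline{\tau}_i+\Theta-\overline{x}_i$ exponentiates to $c(\cE_i)=(1-E_i)^{\mathrm{rk}_i}\exp\bigl(-\Theta_i/(1-E_i)\bigr)$, \emph{not} to the class $(1+\eta_i)^{\mathrm{rk}_i}\exp\bigl(-\Theta_i/(1+\eta_i)\bigr)$ that you then feed into the pushforward --- the latter is the Chern class of the \emph{twisted} summand $\cE_i\otimes\cO(N)$, which is the paper's Corollary \ref{chern_formula}. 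So either adopt the twist by $\cO(+N)$, after which your formulas and bookkeeping collapse to the paper's proof, or keep $\cE$ and consistently use $(1-E_i)$-type Segre classes together with the expansion of $\widetilde{\H}=\xi-\sigma_1$; mixing the two, as the proposal does, yields incorrect intermediate formulas even though the targeted end result is right.
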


We extract from Theorem \ref{thm:grr} relatively simple formulas in two special cases.

\begin{theorem}[$g=0$ specialization]\label{explicit formulas in genus 0}
    Let $X$ be the blow-up of $\P^r$ at $\ell \leq r+1$ points and let 
    $$
    \beta=d\H^\vee+ \sum_{i=1}^\ell k_i \E_i^\vee \in H_2(X,\Z)
    $$
    be an effective curve class. Assume conditions \eqref{dim constraint}, \eqref{stronger inequality d, ks}, and \eqref{eqn: range for d to have enumerativity}. Then,
    $$
        \mathsf{Tev}_{0,n,\beta}^X= \sum_{m=0}^{\min(k_1,\ldots,k_{r+1},n)} (-1)^m \binom{n}{m} \prod_{i=1}^{r+1}\binom{n-d+\sum_{j \neq i}k_j-1-m}{k_i-m}
    $$
    where we set $k_{\ell+1}=\cdots=k_{r+1}=0$ when $\ell < r+1$.
\end{theorem}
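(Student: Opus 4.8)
The plan is to specialize Theorem \ref{thm:grr} to $g=0$, where both $S$ and all the classes appearing in the formula become completely explicit. For $g=0$ the domain is $C=\P^1$, so $\J(C)$ is a single point and each symmetric product satisfies $\Sym^{k_i}(C)\cong\P^{k_i}$. Hence
$$
S=\J(C)\times\Sym^{k_1}(C)\times\cdots\times\Sym^{k_\ell}(C)\cong\P^{k_1}\times\cdots\times\P^{k_\ell},
$$
and the class $\eta_i$, defined as the divisor $N_i=\{D:D-p\ge0\}$ on $\Sym^{k_i}(C)$, is exactly the hyperplane class on the $i$-th factor. Thus $H^*(S)=\Z[\eta_1,\ldots,\eta_\ell]/(\eta_1^{k_1+1},\ldots,\eta_\ell^{k_\ell+1})$, and $\int_S$ reads off the coefficient of $\eta_1^{k_1}\cdots\eta_\ell^{k_\ell}$.

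The first step is to check that the exponential factor in Theorem \ref{thm:grr} collapses to $1$ in genus $0$. The theta class $\Theta$ vanishes because $\J(\P^1)$ is a point, while $\overline{\tau}_i$ and $\overline{x}_i$ are assembled (see \S\ref{coh_of_S}) from the Künneth components of the Poincaré and incidence classes lying in $H^1(C)$; as $H^1(\P^1)=0$, these variable parts vanish and hence so do $\overline{\tau}_i$ and $\overline{x}_i$. Therefore $\exp\!\left(\frac{\overline{\tau}_i+\Theta-\overline{x}_i}{1+\eta_i}\right)=1$, and, writing $\overline{k}_i=\sum_{j\ne i}k_j$, Theorem \ref{thm:grr} reduces to
$$
\mathsf{Tev}_{0,n,\beta}^X=\sum_{m}(-1)^m\binom{n}{m}\int_S\prod_{i=1}^{r+1}(1+\eta_i)^{n-m-1-d+\overline{k}_i}\,\eta_i^m.
$$

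Since the $i$-th factor of the product involves only the hyperplane class $\eta_i$ on the $i$-th factor of $S$ (the exponent $\overline{k}_i$ being a constant), the integral factorizes by Künneth into a product of one-variable computations, each a coefficient extraction:
$$
\int_{\P^{k_i}}(1+\eta_i)^{n-m-1-d+\overline{k}_i}\,\eta_i^m=\binom{n-m-1-d+\overline{k}_i}{k_i-m}=\binom{n-d+\sum_{j\ne i}k_j-1-m}{k_i-m}.
$$
Taking the product over $i=1,\ldots,r+1$ and summing against $(-1)^m\binom{n}{m}$ gives the asserted formula. The range of $m$ is pinned down by three truncations: $\binom{n}{m}=0$ for $m>n$; the relation $\eta_i^{k_i+1}=0$ kills any term with $m>k_i$; and when $\ell<r+1$ the convention $k_{\ell+1}=\cdots=k_{r+1}=0$ forces $\eta_i=0$ for $i>\ell$, leaving only $m=0$. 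Together these reproduce exactly the upper limit $\min(k_1,\ldots,k_{r+1},n)$.

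The one genuinely substantive point is the vanishing of $\overline{\tau}_i$ and $\overline{x}_i$ in genus $0$: this must be read off from their definitions in \S\ref{coh_of_S}, confirming in each case that the class is supported on the variable ($H^1(C)$) part of the universal data, which is empty for $C=\P^1$. Once this is in hand the rest is bookkeeping, the only remaining care being to keep the binomial-coefficient conventions (with possibly negative arguments) consistent so that the truncation of the $m$-sum is reproduced correctly.
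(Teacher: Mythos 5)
Your proposal is correct and follows essentially the same route as the paper: specialize Theorem \ref{thm:grr} to $g=0$, identify $\Sym^{k_i}(\P^1)\cong\P^{k_i}$ with $\eta_i$ the hyperplane class, observe that $\Theta$, $\overline{\tau}_i$, $\overline{x}_i$ vanish (being sums over $\alpha=1,\ldots,g$, empty in genus $0$), and extract the coefficient of $\eta_i^{k_i-m}$ factorwise to get the product of binomial coefficients. Your additional remarks on the truncation of the $m$-sum and the $\ell<r+1$ convention are consistent with the paper, which builds that truncation directly into the statement of Theorem \ref{thm:grr}.
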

In particular, when $\ell<r+1$, the summation goes away, and $\mathsf{Tev}_{0,n,\beta}^X$ is a product of binomial coefficients.

It is often the case that the formula above gives zero. For example, take $\ell=1$ and $\beta=d \H^\vee+k \E^\vee$ (where we write simply $\E_1^\vee=\E^\vee$), where $(r-1)k\le d$. Assuming \eqref{dim constraint}, the inequalities \eqref{stronger inequality d, ks}, and \eqref{eqn: range for d to have enumerativity} are satisfied. Then, Theorem \ref{explicit formulas in genus 0} reads simply
$$
\mathsf{Tev}_{0,n,\beta}^X=\binom{n-d-1}{k}.
$$
If we assume further that $(r-1)k\le d< (2r-1)k$, then we have $0\le n-d-1< k$, so we conclude that $\mathsf{Tev}_{0,n,\beta}^X=0$. In other words, the map 
$$\tau: \cM_{g,n}(X,\beta) \rightarrow \cM_{g,n} \times X^{n}$$
fails to be dominant. This is explained by the instability of the tangent bundle $T_X$ under the slope function given by
\begin{equation*}
    \mu_\beta(\cE)=\frac{\int_X c_1(\cE)\cdot\beta}{\text{rank}(\cE)}.
\end{equation*}
If $X$ is viewed as a $\P^1$ bundle over $\P^1$, then the relative tangent bundle is $\mu_\beta$-destabilizing. We thank the other authors of \cite{BLLRST} for providing this explanation.

Specializing Theorem \ref{thm:grr} to $\ell=1$ in arbitrary genus yields the following.

\begin{theorem}[$\ell=1$ specialization]\label{explicit formulas for 1 pt blow-up}
Let $X=\Bl_q(\P^r)$ and let 
    $$
    \beta=d\H^\vee+  k \E^\vee \in H_2(X,\Z)
    $$
    be an effective curve class. Assume that conditions \eqref{dim constraint}, \eqref{stronger inequality d, ks}, and \eqref{eqn: range for d to have enumerativity} hold. Then,
    $$
        \mathsf{Tev}_{g,n,\beta}^X= \sum_{m=0}^g (2r)^{g-m} (1-r)^m \binom{g}{m} \binom{n-d+g-m-1}{k}.
    $$
\end{theorem}

Obtaining explicit formulas beyond these two specializations is cumbersome in general. For example, we have checked that, if $X$ is the blow-up of $\P^2$ at two points and $\beta\cdot K_X^\vee$ is sufficiently large, then $\Tev^X_{g,n,\beta}$ is equal to
\begin{align*}
\sum_{\substack{a_1+b_1+b_2+b_3+a_3+a_4=g \\ b_1+b_2+b_3=a_2}} \binom{g}{a_1 \ a_2 \ a_3 \ a_4} \binom{a_2}{b_1 \ b_2 \ b_3}5^{a_1} (-1)^{b_1+a_3+a_4} \sum_{\ell=0}^{\mathrm{min}(k_1+a_3-a_2,k_2+a_4-a_2,b_1)} (-1)^\ell \binom{b_1}{\ell} \cdot \\ \binom{2d-(k_1-a_3)-g-n+1-b_2-b_3- \ell}{k_1-a_2-b_2-\ell} \cdot \binom{2d-(k_2-a_4)-g-n+1-b_2-b_3- \ell}{k_2-a_2-b_3-\ell}.
\end{align*}

Note that all of our geometric results require the inequalities \eqref{stronger inequality d, ks} and \eqref{eqn: range for d to have enumerativity}. If $g$ and $k_j$ are viewed as fixed, then both inequalities hold for sufficiently large $d$, so our results may be viewed as ``large degree'' counts in analogy with Theorem \ref{geomtev_larged}. The low degree counts seem, unsurprisingly, more difficult, and are left open.

We remark that when $d - (k_1+\cdots+k_r) < 0$ and $n\ge1$, a map $f:C\to X$ as above would need to have image contained in the strict transform of the torus-invariant hyperplane where the last coordinate is zero. On the other hand, as the $x_i$ are general, they may be chosen not to lie on this hyperplane, so it follows immediately that $\mathsf{Tev}_{g,n,\beta}^X=0$ in this case.

\subsubsection{Virtual calculations}

Via a computation in the quantum cohomology ring $QH^{*}(X)$, we also obtain the following.

\begin{theorem}\label{virtual formulas for 1 pt blow-up}
    Let $X=\Bl_q(\P^r)$ and let 
    $$
    \beta=d\H^\vee+  k \E^\vee \in H_2(X,\Z)
    $$
    be an effective curve class. Assume that \eqref{dim constraint} holds and that $n-d \geq 1$. Then,
    $$
        \mathsf{vTev}_{g,n,\beta}^X= \sum_{m=0}^g (2r)^{g-m} (1-r)^m \binom{g}{m} \binom{n-d+g-m-1}{k}.
    $$
\end{theorem}
Thus, the formula matches that of Theorem \ref{explicit formulas for 1 pt blow-up}, but holds in a wider range.

\subsection{Acknowledgments}

Portions of this work were completed during the first author's visits to HU Berlin in November 2022 (during the workshop ``Resonance varieties, topological invariants of groups, moduli'') and February 2023 (during the Northern German Algebraic Geometry Seminar) and the second author's visits to ETH Z\"{u}rich in July 2022 (during the ICM satellite meeting, Algebraic Geometry and Number Theory) and November 2022. We thank both institutions for their continued support. We also thank Gavril Farkas, Zhuang He, Woonam Lim, Rahul Pandharipande, Eric Riedl, and Johannes Schmitt for helpful discussions, and the referee for many useful corrections and suggestions.

A.C. was supported by SNF-200020-182181 and SNF-222363. C.L. was supported by an NSF postdoctoral fellowship, grant DMS-2001976, and the MATH+ incubator grant ``Tevelev degrees.'' 

\section{Enumerativity}

\subsection{Failure of SAE}\label{section:How to disprove SAE}
We give a general construction of $X$ failing SAE (Definition \ref{def:SAE}).

\begin{proposition}\label{how to disprove}
    Let $\pi: X \rightarrow \P^r$ be the blow-up of $\P^r$ at any set of points $q_1,\ldots,q_m\in\P^r$. Suppose that there exists a curve $\P^1\cong D\subset X$, such that either:
    \begin{enumerate}
        \item $D\cdot K_X^{\vee}<0$, or
        \item $D\cdot K_X^\vee=0$, $r=2$, and $\pi(D)$ is not a point.
    \end{enumerate}
    Then, $X$ fails to satisfy SAE.
\end{proposition}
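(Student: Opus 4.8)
The plan is to show that, for a fixed genus $g$ and arbitrarily large $n$ satisfying \eqref{dim constraint}, the space $\oM_{g,n}(X,\beta)$ contains an irreducible locus $Z$ whose general member is a stable map with \emph{reducible} (hence non-smooth) domain, and for which the restriction of $\tau$ is dominant onto $\oM_{g,n}\times X^n$. If so, the general fiber of $\tau$ contains a point of $Z$ with non-smooth domain, so it is not contained in $\cM_{g,n}(X,\beta)$; as this happens for arbitrarily large $n$, no constant $C(X,g)$ can exist and SAE fails. The maps I would use have domain a smooth genus $g$ spine carrying all $n$ marked points, together with a single unmarked rational tail $R\cong\P^1$ attached at one node, with $f|_R$ an $e$-fold cover of $D$ for a suitable $e\ge1$. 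Since $R$ carries no markings, it is contracted under stabilization of the domain, so the image of such a map in $\oM_{g,n}$ is exactly the smooth pointed curve $(C,p_1,\dots,p_n)$. Writing $\beta=\beta_0+e[D]$, the spine carries the class $\beta_0$, and $\beta_0\cdot K_X^\vee=r(n+g-1)-e(D\cdot K_X^\vee)\ge r(n+g-1)$, with equality precisely in case (2).

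I would organize $Z$ as the fiber product $\cM_{g,n+1}(X,\beta_0)\times_X\oM_{0,1}(X,e[D])$, gluing the extra $(n+1)$-st point of the spine to the marked point of the tail. The negativity of $D$ enters here: adjunction on the smooth rational curve $D$ gives $\deg N_{D/X}=D\cdot K_X^\vee-2\le-2<0$, so the sublocus of genuine $e$-fold covers of $D$ already has dimension $\dim\oM_{0,1}(D,e)=2e-1$, while $\mathrm{vdim}\,\oM_{0,1}(X,e[D])=e(D\cdot K_X^\vee)+r-2$. Thus $\oM_{0,1}(X,e[D])$ has excess dimension $\delta\ge e(2-D\cdot K_X^\vee)-r+1$, which is $\ge1$ as soon as $e\ge r/2$. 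Using that the spine maps of class $\beta_0$ through general points occur in the expected dimension (Lemma \ref{basic_transversality}, applicable since $\beta_0\cdot K_X^\vee$ is large and $n\ge g+1$), a bookkeeping of the fiber product gives
\[
\dim Z=\dim\big(\oM_{g,n}\times X^n\big)+(\delta-1)\ge\dim\big(\oM_{g,n}\times X^n\big).
\]

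It then remains to prove that $\tau|_Z$ is dominant, i.e.\ that over general $(C,p_1,\dots,p_n)$ and general $x_1,\dots,x_n$ the fiber is non-empty. This reduces to producing a spine map $f_C:C\to X$ of class $\beta_0$ with $f_C(p_i)=x_i$ whose image meets $D$: one then attaches a cover of $D$ over a point of $\mathrm{image}(f_C)\cap D$, and such covers through a fixed point of $D$ form a non-empty family. Existence of $f_C$ through the $n$ general points follows from the large-degree theory for curves on $X$ (again via Lemma \ref{basic_transversality} and non-vanishing of the count in class $\beta_0$); the incidence with $D$ is where the two hypotheses are used. In case (2) we have $r=2$ and $\pi(D)$ not a point, which forces $\beta_0\cdot D>0$, so every such $f_C$ automatically meets the divisor $D$; in case (1), the strict inequality $\beta_0\cdot K_X^\vee>r(n+g-1)$ furnishes $e\,|D\cdot K_X^\vee|$ extra moduli of spine maps, enough to meet the codimension $r-1$ locus $D$ (note that $D$ is automatically non-contracted, since a $\pi$-contracted rational curve has $D\cdot K_X^\vee>0$). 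I expect this final incidence step --- establishing genuine \emph{non-emptiness}, rather than merely the expected dimension, of the locus of spine maps through general points whose image meets $D$ --- to be the main obstacle, requiring a Brill--Noether/Tevelev-type existence input for the blow-up rather than a pure dimension count. Granting it, for arbitrarily large $n$ the general fiber of $\tau$ meets $Z$ and hence contains a map with reducible domain, so $X$ fails SAE.
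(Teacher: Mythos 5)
The key unproved step---producing, for general $(C,p_1,\ldots,p_n)$ and general $x_1,\ldots,x_n$, a spine map $f_C:C\to X$ in class $\beta_0$ with $f_C(p_i)=x_i$ whose image meets $D$---is a genuine gap, not a technicality, and it is exactly the step you flag as ``the main obstacle.'' Your appeal to ``large-degree theory for curves on $X$'' and ``non-vanishing of the count in class $\beta_0$'' is circular: no such non-vanishing statement for blow-ups is available at this stage (it is part of what the paper is trying to establish), and it is in fact \emph{false} for some classes satisfying all the numerical constraints. For instance, for $\ell=1$ and $\beta_0=d\H^\vee+k\E^\vee$ with $(r-1)k\le d<(2r-1)k$, Theorem \ref{explicit formulas in genus 0} gives $\Tev^X_{0,n,\beta_0}=\binom{n-d-1}{k}=0$, i.e.\ $\tau$ fails to be dominant for such classes. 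Similarly, your case 1 claim that the extra moduli are ``enough to meet the codimension $r-1$ locus $D$'' is a dimension heuristic, not a proof of non-emptiness; Lemma \ref{basic_transversality} only controls components \emph{assumed} to dominate $X^n$, and yields neither existence nor dominance.

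The paper closes precisely this hole by a choice your set-up leaves unspecified: the spine class is taken to be $\beta_0=d\H^\vee$, pulled back from $\P^r$, so that spine maps are exactly strict-transform lifts of degree-$d$ maps $C\to\P^r$, and existence through general points follows from the known positivity $\Tev^{\P^r}_{g,m,d}=(r+1)^g>0$ for $d$ large (Theorem \ref{geomtev_larged}). The incidence with $D$ is likewise engineered by marked points rather than by excess-dimension counting: in case 1, since $D\cdot K_X^\vee<0$, the class $d\H^\vee$ supports $m=n-D\cdot K_X^\vee>n$ general point conditions, so one imposes an extra marked point $p_{n+1}$ with $x_{n+1}\in\pi(D)$ general and attaches there a rational tail mapping to $D$ in class $rD$ (the factor $r$ fixing divisibility in \eqref{dim constraint}); in case 2, $m=n$ but $r=2$ makes $f(C)\cap\pi(D)\neq\emptyset$ automatic, since two curves in a surface meet. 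With this choice, your fiber-product dimension bookkeeping becomes unnecessary: dominance is exhibited directly by constructing a preimage of a general point of $\oM_{g,n}\times X^n$. If you replace your abstract $\beta_0$ by $d\H^\vee$ and your excess-dimension incidence argument by the extra-marked-point device, your argument becomes the paper's proof.
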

\begin{proof}
    We will prove that for every $g$, there is a divergent sequence $\{n_k=n_k[g]\}_k$ and a corresponding sequence of curve classes $\beta_k\in H_2(X)$ satisfying \eqref{dim constraint}, such that the general fiber of the map $\tau$ contains maps with singular domain for all $n_k$.
    
    We start by assuming that $D$ has negative degree against $K_X^{\vee}$. Fix $g \geq 0$, and for $d \geq 0$, set 
    $$
    \beta[d]=d \H^{\vee}+ r D \in H_2(X,\Z),
    $$
    where $\H^{\vee}$ is the pullback of the class of a line on $\P^r$. Notice that $\beta[d] \cdot K_X^\vee$ diverges for large $d$. Let $n$ be such that equation \eqref{dim constraint} is satisfied with $\beta=\beta[d]$, i.e. 
    $$
    d(r+1)+r (D\cdot K_X^\vee)=r(n+g-1);
    $$
    in order for this to be possible, we assume that $d$ is divisible by $r$.
    
    Let 
    $$
    \cM_{\Gamma}\subseteq \partial \oM_{g,n}(X,\beta)
    $$
    be the locally closed locus consisting of stable maps where the domain curve consists of a genus $g$ smooth curve mapping to $X$ with class $d \H^\vee$ and containing all $n$ of the markings, attached to a smooth rational tail mapping to $X$ with class $rD$. We claim that $\cM_{\Gamma}$ dominates $\oM_{g,n} \times X^n$, so that SAE fails.
    
    To see this, notice that $m=n-D\cdot K_X^\vee >n $ satisfies the dimensional constraint \eqref{dim constraint} for $\P^r$ with curve class $d\H^\vee$. Therefore, since $\mathsf{Tev}^{\P^r}_{g,m,d\H^\vee} >0$ (for example, we have $m\ge d+2$ for large $d$, so we can apply \cite[Theorem 1.1]{FL}), for general $(C,p_1,\ldots,p_m) \in \cM_{g,m}$ and $x_1,\ldots,x_m \in \P^r$ there exists 
    $$
    f: (C,p_1,\ldots,p_m) \rightarrow \P^r
    $$
    in class $d\H^\vee$ mapping $p_i$ to $x_i$ for all $i=1,\ldots,m$.

    We are also free to assume that $x_{n+1}\in \pi(D)$. From $f$, we construct a new map
    $$
    \tilde{f}:(\tilde{C},p_1,\ldots,p_n)  \rightarrow X
    $$
    where $\widetilde{C}$ is obtained from $C$ by attaching a $\P^1$ at $p_{n+1}$ and forgetting $p_i$ for $i>n+1$. The restriction $\widetilde{f}$ to $C$ is the unique morphism such that $\pi \circ \widetilde{f}=f$, and the restriction to $\P^1$ is a fixed degree $r$ cover of $D$. The map $[\widetilde{f}]$ lies in $\cM_\Gamma$, and maps to a general point of $\oM_{g,n} \times X^n$, as needed.
    
    The proof in the second case is analogous: we have $m=n$ above, so we cannot add an $(n+1)$-st point on $C$ constrained to map to $D$, but note that $f(C) \cap \pi(D) \neq \emptyset$ automatically, so we can construct a map $\tilde{f}$ as before.
\end{proof}

\begin{proof}[Proof of Theorem \ref{enum_thm_negative}]
Apply Proposition \ref{how to disprove} with $D$ equal to the strict transform of the line between any two of the blown up points.
\end{proof}

Similarly, any blow-up of $\P^r$ at a set containing three collinear points fails to satisfy SAE.

\subsection{Generalities for Fano varieties}\label{Section: How to prove SAE for Fano varieties}
We now collect general statements used to prove SAE for Fano varieties; in this subsection, we will always assume that $X$ is Fano. Let $r$ be the dimension of $X$.

\begin{definition}
    Let $ \beta \in H_2(X,\Z)$ be an effective curve class (possibly 0).  We say that $\beta$ is \textbf{ordinary} if the evaluation map $\cM_{0,1}(X,\beta) \rightarrow X$ is dominant.
\end{definition}

Fix a genus $g \geq 0$, a number $n$ of markings satisfying $2g-2+n>0$ and a non-zero effective curve class $ \beta$ on $X$. 

For integers $a\ge0$ and $m\in[0,n]$, define 
$$
\cM_\Gamma^{(m,a)} \subseteq  \oM_{g,n}(X,\beta)
$$
to be the locally closed locus parametrizing maps whose domain $C$ has the following topological type. $C$ contains a smooth \emph{spine} component $C_{\spine}$ of genus $g$, containing the marked points $p_{n-m+1},\ldots,p_n$. Attached to $C_{\spine}$, we have trees of rational curves $T'_1,\ldots,T'_a$, as well as trees of rational curves $T_1,\ldots,T_{n-m}$, such that the tree $T_{i}$ contains the marked point $p_i$ for $i=1,\ldots,n-m$. See also Figure \ref{singular_stable_map}. 
\begin{figure}
	\begin{center}
		\begin{tikzpicture} [xscale=0.36,yscale=0.36]
                \draw [ultra thick, black] (0,7) to (19,7);
                \node at (15,7) {$\bullet$};
                \node at (15,8) {$p_{n-m+1}$};
                \node at (18,7) {$\bullet$};
                \node at (18,8) {$p_n$};
                \node at (19,6) {$C_{\spine}$};
                 \draw [thick, dotted] (16,6) -- (17,6);
                \draw [ultra thick, black] (1,8) to (1,3);
                \draw [ultra thick, black] (0,5) to (2,2);
                \draw [ultra thick, black] (2,3) to (0,0);
                \node at (0,-1) {$T_1'$};
                \draw [ultra thick, dotted] (3,4) -- (4.5,4);
                \draw [ultra thick, black] (5,8) to (6,3);
                \draw [ultra thick, black] (6,5) to (5,1);
                \node at (5,0) {$T_a'$};
                \draw [ultra thick, black] (8,8) to (8,3);
                \node at (8,4) {$\bullet$};
                \node at (7.3,4) {$p_1$};
                \node at (8,2) {$T_1$};
                \draw [ultra thick, dotted] (9,4) -- (10.5,4);
                \draw [ultra thick, black] (13,8) to (11,3);
                \draw [ultra thick, black] (11,6) to (14,1);
                \draw [ultra thick, black] (14,3) to (12,0);
                \draw [ultra thick, black] (11.6,6) to (16,4);
                \node at (13,2.65) {$\bullet$};
                \node at (11.6,2.55) {$p_{n-m}$};
                \node at (12,-1) {$T_{n-m}$};
		\end{tikzpicture}
	\end{center}
	\caption{Topological type of the domain curves of points in $\cM_\Gamma^{(m,a)}$\label{figure-1}\label{singular_stable_map} }
\end{figure}
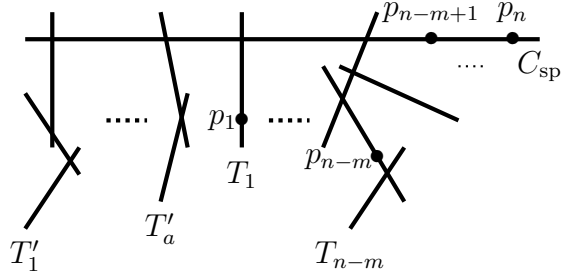

Up to permuting indices of the marked points, the $\cM_\Gamma^{(m,a)}$ contain all of the boundary strata that dominate $\oM_{g,n}$. Recall that we are interested in the question of whether the general fiber of the map $$\tau: \oM_{g,n}(X,\beta) \rightarrow \oM_{g,n} \times X^{n}$$ is contained in $\cM_{g,n}(X,\beta)$. This amounts to the question of whether $\cM_\Gamma^{(m,a)}$ fails to dominate $\oM_{g,n} \times X^n$ for all $(m,a)\neq (n,0)$. 





Let $Z$ be an irreducible component of $\cM_\Gamma^{(m,a)}$ and let $[f] \in Z$ be a general point. Consider the following two conditions on $f$.

\begin{enumerate}
    \item[(*)]\label{condition 1} $a=0$, and the pushforward under $f$ of each component of $T_i$ is an ordinary class for $i=1,\ldots,n-m$. Moreover, we have the inequality
    \begin{equation}\label{eq:dim-vdim}
    \mathrm{dim}_{[f|_{C_{\spine}}]}(\oM_{g,n}(X,f_*[C_{\spine}])) - \mathrm{vdim}(\oM_{g,n}(X,f_*[C_{\spine}]))\leq n-(g+1).
    \end{equation}
    \item[(**)]\label{condition 2} For all $i=1,\ldots,n-m$, either $f_*[T_i] \cdot K_X^\vee \geq r+1$ or the pushforward under $f$ of each component of $T_i$ is an ordinary class. 
\end{enumerate}


\begin{proposition} \label{prop1: how to prove SAE}
Suppose that $g,n,\beta$ satisfy
 \begin{equation} \label{inequality: dim constraint}
 \mathrm{vdim} (\oM_{g,n}(X,\beta)) \leq \mathrm{dim}(\oM_{g,n} \times X^n),
 \end{equation}
or equivalently that $\beta\cdot K_X^\vee\le r(n+g-1)$. Take $a=0$, and let $f\in Z\subset \cM_\Gamma^{(m,0)}$ be as above. Suppose further that if $m=n$ (i.e., $C$ is irreducible), then \eqref{inequality: dim constraint} is a \emph{strict} inequality.
    
Assume that condition (*) holds. Then, $Z$ fails to dominate $\oM_{g,n} \times X^n$.
\end{proposition}
\begin{proof}
Suppose first that $m<g+1$. We estimate $\mathrm{dim}_{[f]}(Z)$ as in \cite[Proof of Proposition 22]{LP}. First, the spine curve $C_{\spine}$ of $f$ moves in a family of dimension at most $n-(g+1)$ more than the expected, by \eqref{eq:dim-vdim}. The remaining components of $C$ are ordinary rational components, which can be attached to $C$ one at a time. Passing through the point of attachment imposes the expected number of conditions on each rational component, so it follows that 
\begin{equation*}
\mathrm{dim}_{[f]}(Z)-\mathrm{vdim}(\cM_\Gamma^{(m,a)}) \le n-(g+1)
\end{equation*}
as well. On the other hand, we have 
\begin{equation*}
\mathrm{vdim}(\cM_\Gamma^{(m,a)})\le \mathrm{vdim} (\oM_{g,n}(X,\beta))-(n-m)\le \mathrm{dim}(\oM_{g,n} \times X^n)-(n-m),
\end{equation*}
by \eqref{inequality: dim constraint}. Therefore,
\begin{equation*}
        \mathrm{dim}_{[f]}(Z) \leq \mathrm{dim}(\oM_{g,n} \times X^n) +m-(g+1)<\mathrm{dim}(\oM_{g,n} \times X^n),
    \end{equation*}
so $Z$ cannot dominate $\oM_{g,n} \times X^n$.

Suppose instead that $m\ge g+1$, and that $Z$ dominates $\oM_{g,n} \times X^n$. Then, by \cite[Proposition 13]{LP}, we have 
    $$
    \mathrm{dim}_{[f|_{C_{\spine}}]}(\oM_{g,n}(X,f_*[C_{\spine}])) = \mathrm{vdim}(\oM_{g,n}(X,f_*[C_{\spine}])).
    $$
Arguing as in the previous case, it follows after attaching rational components that $\mathrm{dim}(Z) = \mathrm{vdim}(\cM_\Gamma^{(m,a)})$. On the other hand, we also have $\mathrm{vdim}(\cM_\Gamma^{(m,a)})\le \mathrm{vdim} (\oM_{g,n}(X,\beta))$ with equality only if $m=n$. Combining with \eqref{inequality: dim constraint}, assumed to be strict if $m=n$, we conclude that $\dim(Z)<\mathrm{dim}(\oM_{g,n} \times X^n)$, which is a contradiction.
    \end{proof}

\begin{proposition}\label{prop2: how to prove SAE}
Let $X$ be a smooth, projective Fano variety. Assume that, for all $g\ge 0$, there exists an integer $C'(X,g)$ with the following property: if $n,\beta$ satisfy $n\ge C'(X,g)$ and the inequality \eqref{inequality: dim constraint}, then conditions (*) and (**) hold for all components $Z\subset \cM_{\Gamma}^{(m,a)}\subset \cM_{g,n}(X,\beta)$. Assume furthermore that we can take $C'(X,0)=3$.

Then, $X$ satisfies SAE.
\end{proposition}

\begin{proof}
For any $n,\beta$, consider the map
$$\tau: \oM_{g,n}(X,\beta) \rightarrow \oM_{g,n} \times X^{n}.$$
We will show that, if \eqref{inequality: dim constraint} holds, and if in addition
\begin{equation*}
    n\ge 
    \begin{cases}
    (r+1)(C'(X,g)+g) &\text{ if }g\ge 1\\
    3 &\text{ if }g\ge 0,
    \end{cases}
\end{equation*}
then every component $Z\subset \cM_{\Gamma}^{(m,a)}\subset \cM_{g,n}(X,\beta)$ fails to dominate $\oM_{g,n} \times X^{n}$, unless $(m,a)=(n,0)$ and the inequality \eqref{inequality: dim constraint} is an equality. This is sufficient to conclude SAE because \eqref{inequality: dim constraint} is weaker than \eqref{dim constraint}.

    Fix a component $Z$ and a general point $f\in Z$. Because $X$ is Fano, we have 
    $$
    f_*([T_i'])\cdot K_X^\vee > 0
    $$
    for all $i=1,\ldots,a$. Therefore, we may assume without loss of generality that $a=0$, by deleting the trees $T_i'$. The value of $n$ stays the same and $\beta \cdot K_X^\vee$ decreases strictly, so \eqref{inequality: dim constraint} still holds.

    If the pushforward under $f$ of each irreducible component of each $T_i$ for $i=1,\ldots,n-m$ is ordinary, then we are done by
    Proposition \ref{prop1: how to prove SAE}. 
    
    Suppose now without loss of generality that $T_1,\ldots,T_s$ with $0<s\leq n-m$ are trees each containing some component whose pushforward via $f$ is not ordinary. Then, by condition (**), we have 
    $$
    \mathrm{deg}(f_*[T_i]) \geq r+1
    $$
    for all $i=1,\ldots,s$. Let 
    $\hat{f}: \hat{C} \rightarrow X$ be the stable map obtained from $f$ by deleting $T_1,\ldots,T_s$ and $x_1,\ldots,x_s$ and let $\widehat{\beta}= \widehat{f}_*[\widehat{C}]$. Notice that $[\widehat{f}]$ belongs to an irreducible component $\widehat{Z}$ of
    $$
    \cM_{\Gamma}^{(m,0)} \subset \ \oM_{g,n-s}(X,\widehat{\beta})
    $$
    
    It is enough to show that $\widehat{Z}$ does not dominate $\oM_{g,n-s} \times X^{n-s}$, provided that $2g-2+(n-s)>0$. This will follow from Proposition \ref{prop1: how to prove SAE}, once we notice the following two facts. First, from condition (**) we have
         $$
      \beta\cdot K_X^\vee - \widehat{\beta}\cdot K_X^\vee = \sum_{i=1}^s f_*[T_i]\cdot K_X^\vee \ge (r+1)s,
     $$
    and so 
        $$
    \mathrm{vdim} (\oM_{g,n-s}(X,\widehat{\beta})) \le \mathrm{vdim} (\oM_{g,n}(X,\beta))-(r+2)s.
    $$
Combining with \eqref{inequality: dim constraint} yields
    $$
    \mathrm{vdim} (\oM_{g,n-s}(X,\widehat{\beta})) < \mathrm{dim}(\oM_{g,n-s} \times X^{n-s}).
    $$
    If $(g,n-s)=(0,2)$, then the same calculation shows that $\mathrm{vdim} (\oM_{0,2}(X,\widehat{\beta}))<\dim(X^2)-1$.
    
    Second, by \eqref{inequality: dim constraint} and condition (**), we have
    $$
    s \leq \frac{\beta\cdot K_X^\vee}{r+1} \le \frac{r}{r+1}\cdot n+\frac{r}{r+1}\cdot (g-1).
    $$
    If $g\ge 1$, then the assumption $n\ge (r+1)(C'(X,g)+g)$ gives $n-s\ge C'(X,g)$, so we conclude by Proposition \ref{prop1: how to prove SAE}, applied with $n-s$ in place of $n$.

   If $g=0$, then $n-s\ge \frac{n+r}{r+1}$, hence $n-s\ge 2$. We conclude again by Proposition \ref{prop1: how to prove SAE} unless $n-s=2$. If $n-s=2$, then all components (including the spine) of $\hat{C}$ are ordinary, so
       $$
    \mathrm{dim}_{\hat{f}}(\oM_{0,2}(X,\widehat{\beta})) = \mathrm{vdim} (\oM_{0,2}(X,\widehat{\beta}))<\dim (X^2),$$
    and $\oM_{0,2}(X,\widehat{\beta})$ fails to dominate $X^2$.
    

\end{proof}



\subsection{Blow-ups of $\P^r$}

Let $X$ be the blow-up of $\P^r$ at $\ell$ general points. The next results will later be useful to deal with conditions (*) and (**).

\begin{lemma}\label{inequality d and ks for ordinary}
Let 
$$
\beta= d\H^\vee + \sum_{i=1}^\ell k_i \E_i^\vee
$$
be a non-zero effective curve in $H_2(X,\Z)$. If $\beta$ is ordinary, then
$$
d \geq \sum_{i \in I} k_i \text{ for all } I \subseteq \{1,\ldots,\ell \} \text{ with } |I| \leq r.
$$
\end{lemma}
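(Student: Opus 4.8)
The plan is to reinterpret the inequality intersection-theoretically and then exploit that ``ordinary'' is a statement about maps with \emph{irreducible} domain. Fix $I\subseteq\{1,\ldots,\ell\}$ with $|I|\le r$. Since at most $r$ general points of $\P^r$ always lie on a hyperplane, there is a hyperplane $\Lambda\subset\P^r$ containing $\{q_i\}_{i\in I}$; this is exactly where the hypothesis $|I|\le r$ is used. For a general such $\Lambda$ (passing through none of the remaining $q_j$), its strict transform $\widetilde{\Lambda}\subset X$ has class $[\widetilde{\Lambda}]=\H-\sum_{i\in I}\E_i$: the total transform of $\Lambda$ has class $\H$, and it equals $\widetilde{\Lambda}$ plus the exceptional divisors over the $q_i$, $i\in I$. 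Pairing with $\beta$ and using $\H^\vee\cdot\H=1$, $\E_j^\vee\cdot\E_i=\delta_{ij}$, and the vanishing of the mixed products gives $\beta\cdot\widetilde{\Lambda}=d-\sum_{i\in I}k_i$. Thus the claim is equivalent to the single inequality $\beta\cdot\widetilde{\Lambda}\ge0$.

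To prove this, I would use that the evaluation map in the definition of ordinary is defined on $\cM_{0,1}(X,\beta)$, i.e.\ on maps with \emph{smooth} (hence irreducible, rational) domain. If $\beta$ is ordinary, then for a general $x\in X$ there is a map $f:\P^1\to X$ in class $\beta$ whose image $\Gamma:=f(\P^1)$ is an irreducible curve through $x$, with $\beta=e\,[\Gamma]$ for some $e\ge1$. Because $x$ is general I may assume $x$ lies on no exceptional divisor and, crucially, that $\pi(x)\notin\Lambda$: the points $\{q_i\}_{i\in I}$ span a linear space of dimension $\le r-1$ not containing the general point $\pi(x)$, so a general hyperplane through them avoids $\pi(x)$. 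Then $x\in\Gamma$ but $x\notin\widetilde{\Lambda}$ (as $\pi$ is a local isomorphism near the general point $x$ and $\pi(x)\notin\Lambda$), so $\Gamma\not\subseteq\widetilde{\Lambda}$. Since $\widetilde{\Lambda}$ is an irreducible effective divisor and $\Gamma$ is an irreducible curve not contained in it, $[\Gamma]\cdot\widetilde{\Lambda}\ge0$, and therefore $\beta\cdot\widetilde{\Lambda}=e\,[\Gamma]\cdot\widetilde{\Lambda}\ge0$, as desired.

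The step requiring the most care—and the reason the argument works at all—is the use of irreducibility. Had ``ordinary'' been phrased using the full space $\oM_{0,1}(X,\beta)$ of stable maps, a reducible domain could carry one component trapped inside $\widetilde{\Lambda}$, contributing negatively to $\beta\cdot\widetilde{\Lambda}$, while a second component passes through the general point $x$; then the clean conclusion $\beta\cdot\widetilde{\Lambda}\ge0$ would fail. Indeed, a negative curve such as the strict transform of the line through $q_1,q_2$ (class $\H-\E_1-\E_2$ when $r=2$) violates the inequality but is rigid, hence not ordinary—so the hypothesis is genuinely needed. Working with smooth domains forces a single irreducible curve $\Gamma$ through $x$, which pins down the sign. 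The remaining ingredients are routine general-position facts: that a general hyperplane through $\le r$ general points misses both the general point $\pi(x)$ and the other $q_j$, and the elementary fact that an irreducible curve meets an effective divisor not containing it nonnegatively.
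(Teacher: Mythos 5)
Your proof is correct and is essentially the paper's own argument: both proofs pair $\beta$ against the strict transform of a hyperplane through the points $q_i$, $i\in I$ (class $\H-\sum_{i\in I}\E_i$), and use that an ordinary class admits a rational curve through a general point of $X$ — hence with image not contained in that divisor — to conclude $0\le\beta\cdot\widetilde{\Lambda}=d-\sum_{i\in I}k_i$. If anything, your write-up is slightly more careful than the paper's: the paper only fixes $|I|=r$ and uses the hyperplane spanned by those points, whereas you choose a general hyperplane through the $q_i$ for any $|I|\le r$, a worthwhile precision since the $k_i$ may be negative and the case $|I|<r$ does not formally follow from the case $|I|=r$.
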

\begin{proof}
Fix $I \subseteq \{1,\ldots,r+1\}$ such that $|I| = r$. Let $f:\P^1 \rightarrow X$ be a curve in class $\beta$; because $\beta$ is ordinary, we may assume that the image of $f$ is not contained the strict transform $\Lambda_I$ of the hyperplane in $\P^r$ generated by the points $\pi(\E_i)$ for $i \in I$. In particular, we have
$$
0\le \beta\cdot\Lambda_I=d-\sum_{i \in I} k_i.
$$
as desired.
\end{proof}

\begin{lemma}\label{min degree ordinary curves}
     Let $0 \neq \beta \in H_2(X,\Z)$ be an effective ordinary curve class. Then, $\beta\cdot K_X^\vee \geq 2$.
\end{lemma}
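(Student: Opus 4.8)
The plan is to bypass the combinatorics of the multiplicities $k_i$ entirely and argue via deformation theory, using only that $X$ is smooth and projective. The point is that an ordinary class is represented by a rational curve through a general point of $X$, and such a curve is automatically free; freeness pins down the dimension of the relevant moduli space to be the virtual one, after which the bound falls out of the requirement that the evaluation map be dominant. This has the advantage of working uniformly in $\ell$ (in particular for $\ell>r$, e.g. del Pezzo surfaces with many points), where the purely numerical inequalities of Lemma~\ref{inequality d and ks for ordinary} no longer control $\sum_i k_i$.

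Concretely, I would proceed as follows. Since $\beta$ is ordinary, there is an irreducible component $\cM'\subseteq\cM_{0,1}(X,\beta)$ whose general point maps dominantly to $X$ under the evaluation map $\mathrm{ev}:\cM'\to X$. Because the underlying family of maps $\P^1\to X$ dominates $X$ and we work in characteristic $0$, generic smoothness of the total evaluation morphism shows that at a general member $f:\P^1\to X$ the sheaf $f^*T_X$ is globally generated at a general point of $\P^1$; writing $f^*T_X=\bigoplus_i\mathcal{O}(a_i)$, this forces all $a_i\ge0$, i.e. $f$ is free (this is the standard fact that a rational curve through a general point of a smooth variety is free, as in the books of Kollár and Debarre). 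In particular $H^1(\P^1,f^*T_X)=0$, so $\oM_{0,1}(X,\beta)$ is smooth at $[f]$ of dimension equal to its virtual dimension
$$\mathrm{vdim}\,\oM_{0,1}(X,\beta)=(r-3)+\beta\cdot K_X^\vee+1=\beta\cdot K_X^\vee+r-2.$$
Hence $\dim\cM'=\beta\cdot K_X^\vee+r-2$, and since $\mathrm{ev}:\cM'\to X$ is dominant with $\dim X=r$, we must have $\beta\cdot K_X^\vee+r-2\ge r$, that is, $\beta\cdot K_X^\vee\ge2$. The hypothesis $\beta\neq0$ enters to guarantee that $f$ is non-constant, so that the degree computation $\deg f^*T_X=\beta\cdot K_X^\vee$ and the dimension formula above are the relevant ones.

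I expect the one genuinely delicate point to be the freeness step: one must establish that the general member of a dominating family of maps from a smooth $\P^1$ is free, and keep careful track of the distinction between $\cM_{0,1}(X,\beta)$ (smooth domain) and $\oM_{0,1}(X,\beta)$, so that the curve in question is an honest rational curve to which the freeness result applies. Everything after that is a one-line dimension count.

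As a consistency check in the range $\ell\le r$, one can instead argue directly on the blow-up: from $K_X^\vee=(r+1)\H-(r-1)\sum_i\E_i$ one computes $\beta\cdot K_X^\vee=d(r+1)-(r-1)\sum_i k_i$, and Lemma~\ref{inequality d and ks for ordinary} applied to $I=\{1,\dots,\ell\}$ gives $d\ge\sum_i k_i$. If some $k_i>0$ this yields $\beta\cdot K_X^\vee\ge 2\sum_i k_i\ge2$, while if all $k_i=0$ then $\beta=d\H^\vee$ with $d\ge1$ and $\beta\cdot K_X^\vee=d(r+1)\ge2$. For $\ell>r$ this elementary bound is insufficient, which is precisely why the deformation-theoretic argument is the robust one to record.
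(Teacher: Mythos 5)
Your proposal is correct and takes essentially the same route as the paper: the paper's proof is exactly the dimension count that a component of $\oM_{0,1}(X,\beta)$ dominating $X$ has dimension $\beta\cdot K_X^\vee+r-2$, which must be at least $\dim X=r$. The only difference is that the paper cites \cite[Proposition 13]{LP} for the statement that dominating components have the expected dimension, whereas you re-derive that input from scratch via the standard freeness argument for rational curves through a general point.
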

\begin{proof}
    By \cite[Proposition 13]{LP}, every irreducible component of $\oM_{0,1}(X,\beta)$ dominating $X$ has dimension equal to $\beta\cdot K_X^\vee+r-2$, which must be greater than or equal to the dimension of $X$.
\end{proof}

\begin{lemma}\label{h1 nice curve classes}
    Assume that $\ell \leq r+1$. Let $f: C \rightarrow X$ be a map from a smooth genus $g$ curve in class $d \H^\vee + \sum_{i=1}^\ell k_i \E_i^\vee$ with either $d>0$ or $d=k_1=\ldots=k_\ell=0$. Then, $h^1(C, f^*T_X) \leq rg$.
\end{lemma}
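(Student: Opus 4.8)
The plan is to compare $f^*T_X$ with the pullback of the tangent bundle of $\P^r$ and to isolate the contribution of the exceptional divisors. First I would dispose of the trivial case $d=k_1=\cdots=k_\ell=0$: here $f$ is constant, $f^*T_X\cong\cO_C^{\oplus r}$, and $h^1=rg$, so the bound holds with equality. Assume then $d>0$ and write $\bar f=\pi\circ f\colon C\to\P^r$, a morphism of degree $d$ whose image meets each exceptional divisor $E_i\cong\P^{r-1}$ in the finite scheme $D_i=\bar f^{-1}(q_i)$ of length $k_i$. I would use the standard tangent sequence of the blow-up,
$$0\to T_X\to\pi^*T_{\P^r}\to\bigoplus_{i=1}^\ell(\iota_i)_*\mathcal Q_i\to 0,$$
where $\iota_i\colon E_i\hookrightarrow X$ and $\mathcal Q_i$ is the universal quotient bundle on $E_i$ (rank $r-1$, with $c_1=\cO_{E_i}(1)$); the sequence is pinned down by $c_1$, using $c_1(T_X)=(r+1)\H-(r-1)\sum_i\E_i$. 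Since $d>0$, $f(C)$ lies in no $E_i$, so $f^*T_X\to\bar f^*T_{\P^r}$ is a generic isomorphism of rank-$r$ bundles, hence injective, and I obtain
$$0\to f^*T_X\to\bar f^*T_{\P^r}\to\mathcal C\to 0,$$
with $\mathcal C$ a torsion sheaf supported on $\bigcup_i D_i$ of length $(r-1)\sum_i k_i$ (by comparing degrees).

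Because $\mathcal C$ is torsion, $H^1(\mathcal C)=0$, and the long exact sequence gives
$$h^1(C,f^*T_X)=h^1(C,\bar f^*T_{\P^r})+\dim\operatorname{coker}\big(\mu\colon H^0(\bar f^*T_{\P^r})\to H^0(\mathcal C)\big).$$
It therefore suffices to establish two facts: (A) the $\P^r$-bound $h^1(C,\bar f^*T_{\P^r})\le rg$, and (B) the surjectivity of the evaluation map $\mu$.

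For (A), I would pull back the Euler sequence to obtain $0\to\cO_C\to\cL^{\oplus(r+1)}\to\bar f^*T_{\P^r}\to 0$ with $\cL=\bar f^*\cO(1)$ of degree $d$, giving $h^1(\bar f^*T_{\P^r})=(r+1)\,h^1(\cL)-\operatorname{rank}\delta$, where $\delta\colon H^1(\cO_C)\to H^1(\cL)^{\oplus(r+1)}$ is cup product with the coordinate sections $\bar f^*x_0,\dots,\bar f^*x_r$. Since $\deg\cL=d\ge 0$ we have $h^1(\cL)=h^0(K_C\otimes\cL^{-1})\le g$ by Clifford's theorem, while base-point-freeness of the $\bar f^*x_i$ makes $\delta$ injective, i.e.\ $\sum_i\bar f^*x_i\cdot H^0(K_C\otimes\cL^{-1})=H^0(K_C)$ for a morphism; this is the classical bound $h^1(\bar f^*T_{\P^r})\le(r+1)g-g=rg$, valid for any map from a smooth genus $g$ curve.

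The main obstacle is (B): the surjectivity of $\mu$. Concretely $\mu$ records, at each point of each $D_i$, the normal component (in $\mathcal Q_i$) of an infinitesimal deformation of $\bar f$ inside $\P^r$, and its surjectivity asserts that these jet conditions at the $q_i$ are imposed independently. This is precisely where positivity of $\beta$ must enter: surjectivity holds once $\bar f^*T_{\P^r}$ stays sufficiently positive after twisting down by the $D_i$, which is governed by the inequalities $d\ge\sum_{j\in I}k_j$ for $|I|\le r$. For the ordinary (``nice'') classes entering conditions (*) and (**) these are furnished by Lemma \ref{inequality d and ks for ordinary}, and their role is essential: for a multiple cover of a negative curve, where such an inequality fails, one computes $h^1(C,f^*T_X)>rg$, so $\mu$ is not surjective there. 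When $\bar f$ is degenerate I would instead factor $f$ through the strict transform of the linear span of its image and induct on $r$, the base case $r=1$ being immediate.
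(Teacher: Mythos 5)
Your reduction to the blow-down is internally consistent as far as it goes: for $d>0$ the sequence $0\to f^*T_X\to\bar f^*T_{\P^r}\to\mathcal{C}\to 0$ is exact with $\mathcal{C}$ torsion of length $(r-1)\sum_i k_i$, and it yields $h^1(C,f^*T_X)=h^1(C,\bar f^*T_{\P^r})+\dim\operatorname{coker}(\mu)$. But this identity shows that your comparison with $\P^r$ runs in the \emph{wrong} direction: it gives $h^1(C,f^*T_X)\ge h^1(C,\bar f^*T_{\P^r})$, so the entire content of the lemma has been repackaged into the surjectivity of $\mu$, i.e.\ your step (B) --- and (B) is never proved. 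You assert that it holds once $\bar f^*T_{\P^r}$ is ``sufficiently positive after twisting down by the $D_i$,'' invoking the inequalities $d\ge\sum_{i\in I}k_i$, but these are not hypotheses of the lemma, no argument is supplied even granting them, and the degenerate case is deferred to an unproven induction on $r$. There is also a local flaw in (A): $\delta$ need not be injective (it is the zero map whenever $h^1(\cL)=0$ and $g>0$), although the bound $h^1(C,\bar f^*T_{\P^r})\le rg$ itself is true --- for instance by choosing $r$ generic global sections of $T_{\P^r}$, which give a generically surjective map $\cO_C^{\oplus r}\to\bar f^*T_{\P^r}$ and hence a surjection on $H^1$. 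As it stands, the proposal is not a proof.

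The paper avoids the cokernel problem entirely by arguing on $X$ itself: for $\ell\le r+1$, the vector fields on $\P^r$ vanishing at the $q_j$ lift to $X$ and generate $T_X$ away from the exceptional divisors, so one can choose $r$ sections of $f^*T_X$ independent at some point of $C$, obtain $\cO_C^{\oplus r}\to f^*T_X$ surjective outside finitely many points, and conclude $H^1(C,\cO_C)^{\oplus r}\twoheadrightarrow H^1(C,f^*T_X)$, i.e.\ $h^1\le rg$, with no correction term to control. Note also that your closing remark about multiple covers of negative curves is sharper than you make it: for $\ell\ge 2$, a degree-$m$ cover of the strict transform of the line through $q_1,q_2$ has class $m(\H^\vee+\E_1^\vee+\E_2^\vee)$, hence $d=m>0$, yet $h^1=(r-1)(m-1)>rg$ when $g=0$ and $m\ge2$. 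Such a map satisfies the stated hypotheses while violating the conclusion, so any correct argument must use, at least implicitly, that the image of $f$ is not contained in the linear span of the blown-up points; this is exactly the locus off of which the global generation used in the paper's proof is valid. Within your framework, however, this observation only confirms that (B) --- the crux --- fails in the stated generality and is not established in any generality, so the gap is fatal.
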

\begin{proof}
    When $d=k_1=\ldots=k_\ell=0$, we have an equality. Assume that $d>0$.

    If $\ell\leq r+1$, then 
    $$
    H^0(X,T_X) \otimes \mathcal{O}_X \rightarrow T_X
    $$
    is surjective on $X\smallsetminus \bigcup_{i=1}^\ell \E_i$, and so also 
    $$
    H^0(C,f^*T_X) \otimes \mathcal{O}_C \rightarrow f^*T_X
    $$ 
    is surjective on $C \smallsetminus \bigcup_{i=1}^\ell f^{-1}(\E_i)$, which is non-empty. Choosing $r$ global sections of $H^0(C,f^*T_X)$, linearly independent upon restriction to some point of $C$, yields a morphism
    $$
    \mathcal{O}_C^{\oplus r} \rightarrow f^*T_X
    $$
    which is surjective outside finitely many points on $C$. In particular, the induced map
    $$
    H^1(C,\mathcal{O}_C)^{\oplus r}  \twoheadrightarrow H^1(C,f^*T_X)
    $$
    is surjective, and this gives the stated conclusion.
\end{proof}

\subsection{SAE for $\Bl_q(\P^r)$}\label{Section:SAE for the blow-ups at one point}

Let $X=\Bl_q(\P^r)$ be the blow-up of $\P^r$ at one point. Write $\H,\E$ for the divisor classes of the pulled-back hyperplane class and the exceptional divisor, respectively, and write $\H^\vee,\E^\vee$ for the corresponding dual basis of $H_2(X)$. 

\begin{theorem}\label{SAE for blow-ups at 1 point}
    $X$ satisfies SAE.
\end{theorem}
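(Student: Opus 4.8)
The plan is to follow the strategy behind Propositions \ref{prop1: how to prove SAE} and \ref{prop2: how to prove SAE}: I will show that every irreducible component $Z \subseteq \cM_\Gamma^{(m,a)}$, with general point $[f\colon C \to X]$, fails to dominate $\cM_{g,n} \times X^n$, for $n$ large (depending on $g$) and for all $n \geq 1$ when $g = 0$. Components for which some marked tail $T_i$ maps entirely into the exceptional divisor $\E \cong \P^{r-1}$ are dismissed immediately, since then $f(p_i) \in \E$ cannot be a general point of $X$. For the remaining components I will verify Conditions (*) and (**) and invoke Proposition \ref{prop2: how to prove SAE}. The key preliminary input is a classification of non-ordinary classes: using Lemma \ref{inequality d and ks for ordinary} and the fact that a general point of $X$ lies off $\E$, I expect that a non-ordinary irreducible effective class must be of the form $-j\E^\vee$ with $j \geq 1$, i.e.\ an irreducible rational curve contained in $\E$, while every effective irreducible class with positive $\H^\vee$-coefficient (necessarily $d\H^\vee + k\E^\vee$ with $d \geq 1$ and $0 \le k \le d$) is ordinary.

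With this classification, Condition (**) follows from a degree count using $K_X^\vee = (r+1)\H - (r-1)\E$: the exceptional line $-\E^\vee$ has $K_X^\vee$-degree $r-1$, whereas the minimal ordinary class, the strict transform $\H^\vee + \E^\vee$ of a line through $q$, has degree $2$. Suppose a marked tail $T_i$ contains a component lying in $\E$. Since tails contained entirely in $\E$ have already been discarded, the tree $T_i$ must also contain a component meeting $X \smallsetminus \E$; such a component is ordinary of degree $\geq 2$, so $f_*[T_i] \cdot K_X^\vee \geq (r-1) + 2 = r+1$ and (**) holds through its degree clause. If no component of $T_i$ lies in $\E$, then all components of $T_i$ are ordinary and (**) holds directly.

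For Condition (*) I bound the spine excess by $\dim_{[f|_{C_\spine}]}(\oM_{g,n}(X, f_*[C_\spine])) - \mathrm{vdim}(\oM_{g,n}(X, f_*[C_\spine])) \leq h^1(C_\spine, f^* T_X)$. If $C_\spine$ is not contained in $\E$ (its class has positive $\H^\vee$-coefficient, or $C_\spine$ is contracted), Lemma \ref{h1 nice curve classes} gives $h^1 \leq rg$, and (*) becomes $rg - n \leq -(g+1)$, which holds for $n \geq (r+1)g + 1$ and, when $g = 0$, for $n \geq 1$. If $C_\spine$ maps into $\E$ with class $-j\E^\vee$, I use the normal bundle sequence $0 \to f^*T_\E \to f^*T_X \to f^*\mathcal{O}_\E(-1) \to 0$: here $h^1(C_\spine, f^* T_\E) \leq (r-1)g$ by the argument of Lemma \ref{h1 nice curve classes} applied to $\P^{r-1}$, while $h^1(C_\spine, f^*\mathcal{O}_\E(-1)) = j + g - 1$ by Riemann--Roch, so $h^1(C_\spine, f^*T_X) \leq rg + j - 1$. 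Condition (*) is only needed when $m \leq g$, and its hypothesis provides $a = 0$ with the $n-m$ tails ordinary, hence of degree $\geq 2$; the dimension constraint \eqref{dim constraint} then forces $j(r-1) \leq r(n+g-1) - 2(n-m)$, so $j$ grows at most like $\tfrac{r-2}{r-1}\,n$. The inequality $rg + j - 1 \leq n - g - 1$ therefore holds for $n$ large and, when $g = 0$, for all $n \geq 1$.

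Combining these verifications with Proposition \ref{prop2: how to prove SAE} (and the direct dismissal of tails trapped in $\E$) shows that no $Z$ dominates $\cM_{g,n} \times X^n$, which is exactly SAE. The main obstacle is the negativity of the exceptional divisor: although $X$ is Fano, the exceptional line has $K_X^\vee$-degree only $r - 1 < r+1$, so Condition (**) genuinely fails for it and must be salvaged either by the incidence argument (a marked point trapped in $\E$) or by pairing with an ordinary component that exits $\E$. Correspondingly, a spine contained in $\E$ falls outside the scope of Lemma \ref{h1 nice curve classes}, and the required $h^1$ bound must come from the normal bundle sequence above, where the crucial point is that ordinariness of the tails bounds the spine degree $j$ sublinearly in $n$ with slope $\tfrac{r-2}{r-1} < 1$. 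Controlling these exceptional curves, rather than any difficulty on $X \smallsetminus \E$ (which behaves like $\P^r$), is the heart of the argument.
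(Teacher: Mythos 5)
Your proposal is correct and follows essentially the same route as the paper's proof: the framework of Propositions \ref{prop1: how to prove SAE}--\ref{thm: how to prove S(A)E}, the classification of non-ordinary irreducible classes as negative multiples of $\E^\vee$, the degree count $(r-1)+2=r+1$ for Condition (**), and $h^1$-bounds via Lemma \ref{h1 nice curve classes} for Condition (*). The only (immaterial) variation is in the spine-in-$\E$ case of (*): the paper identifies $\oM_{g,n}(X,-k'\E^\vee)$ with $\oM_{g,n}(\P^{r-1},k')$ and bounds $k'$ using Lemma \ref{inequality d and ks for ordinary}, whereas you bound $h^1(C_{\spine},f^*T_X)$ by the normal bundle sequence and bound $j$ by the tail-degree count from Lemma \ref{min degree ordinary curves} --- numerically the same estimate.
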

\begin{proof}
We adopt the setup of Proposition \ref{prop2: how to prove SAE}. The assumption \eqref{inequality: dim constraint} here is 
\begin{equation}\label{eq:dim constraint_bl_pt}
    n\ge \frac{r+1}{r}\cdot d-\frac{r-1}{r}\cdot k-g+1.
\end{equation}

    \begin{enumerate}[label=\underline{Step \arabic*}]
        \item We estimate the quantity 
        \begin{equation*}
        \mathrm{dim}_{[f|_{C_{\spine}}]}(\oM_{g,n}(X,f_*[C_{\spine}]))-\mathrm{vdim}(\oM_{g,n}(X,f_*[C_{\spine}])),
        \end{equation*}
         appearing in \eqref{eq:dim-vdim}, under the assumptions of (*). There are two cases.
        \begin{enumerate}[label=\underline{Case \arabic*}]
            \item Suppose that $f_*[C_{\spine}]= -k'\E^\vee$, with $k'>0$. Notice that, since the pushforward under $f$ of each component of $T_i$ is an ordinary class for $i=1,\ldots,n-m$, by Lemma \ref{inequality d and ks for ordinary}, we must have 
             \begin{equation}\label{bound of k' for blow-up 1 pts}
                 d-(k+k') \geq 0.
             \end{equation} 
            
            On the other hand, we have
            $$
            \oM_{g,n}(X,-k'\E^\vee)=\oM_{g,n}(\P^{r-1},k'),
            $$
            and so by Lemma \ref{h1 nice curve classes},
            \begin{equation*}
            \mathrm{dim}_{[f|_{C_{\spine}}]} \oM_{g,n}(X,-k'\E^\vee) \leq  \mathrm{vdim}(\oM_{g,n}(\P^{r-1},k')) +(r-1)g. \\
            \end{equation*}
            
            Therefore,
            \begin{align*}
           & \mathrm{dim}_{[f|_{C_{\spine}}]} (\oM_{g,n}(X,f_*[C_{\spine}]))-  \mathrm{vdim}(\oM_{g,n}(X,f_*[C_{\spine}])) \\
             \leq & \, \mathrm{vdim}(\oM_{g,n}(\P^{r-1},k')) -\mathrm{vdim}(\oM_{g,n}(X,f_*[C_{\spine}])) + (r-1)g \\
             = & \, [r(k'-g+1)-1]-[(r+1)(-g+1)+ k'(r-1)-1]+(r-1)g \\
             = & \, g-1+k'+(r-1)g \\
             \leq & \, rg-1+d-k. 
            \end{align*}
            Now, toward condition (*), we wish to verify that, for all sufficiently large $n$, the inequality 
            \begin{equation}\label{eq:bl_pt_ineq}
                rg-1+d-k\le n-(g+1) \Leftrightarrow n-d+k \ge (r+1)g
            \end{equation}
 holds. We claim that \eqref{eq:bl_pt_ineq} holds as long as $n\ge (r^2+3r+1)g$. Indeed, if $d\ge r(r+2)g$, then by \eqref{eq:dim constraint_bl_pt}, we have
            \begin{equation*}
                n-d+k\ge \frac{d}{r}+\frac{k}{r}-g+1 \ge (r+1)g.
            \end{equation*}
If $d<r(r+2)g$, then instead we have 
\begin{equation*}
    n-d+k\ge (r^2+3r+1)g-(r^2+2r)g=(r+1)g.
\end{equation*}
            
            \item If instead $f_*[C_{\spine}]=d'\H^\vee + k' \E^\vee$ with $d' \geq k' \geq 0$ and $d'>0$ or $d'=k'=0$, then Lemma \ref{h1 nice curve classes} gives that $h^1(C_{\spine},f^{*}T_X)\le rg$. Therefore,
                \begin{equation*}
    \mathrm{dim}_{[f|_{C_{\spine}}]}(\oM_{g,n}(X,f_*[C_{\spine}])) - \mathrm{vdim}(\oM_{g,n}(X,f_*[C_{\spine}]))\leq h^1(C_{\spine},f^{*}T_X)\le n-(g+1).
    \end{equation*}
    as long as $n\ge rg+g+1$.
        \end{enumerate}
\, \\        
Combining cases 1 and 2, we see that condition (*) holds for 
\begin{equation*}
    n\ge C'(X,g):=\max((r^2+3r+1)g,rg+g+1)
\end{equation*}
for all $g$, and in particular for $n\ge 3$ if $g=0$.
        
        \item We verify condition (**). Here, we require no assumption on $n$. If some tree $T_i$ for $i \in \{1,\ldots,n-m\}$ contains a component $T_i^0$ whose pushforward via $f$ is not ordinary, then it must be 
        $$
        f_*[T_i^0]= -k' \E^\vee \text{ for some }  k'>0 
        $$
        which has degree $k'(r-1) \geq r-1$. Also, $T_i$ has to contain a component $\overline{T_i}$ whose pushforward via $f$ is ordinary and which, by Lemma \ref{min degree ordinary curves}, has degree at least $2$. Therefore, $f_*[T_i]$ has at least degree $r+1$.
    \end{enumerate}
  Therefore the criterion of Proposition \ref{prop2: how to prove SAE} holds, and $X$ satisfies SAE.  
\end{proof}

\subsection{SAE for del Pezzo surfaces}\label{Section: SAE for Del Pezzo surfaces}

\begin{theorem}\label{thm: SAE for Del Pezzo surfaces}
Let $X$ be a del Pezzo surface. Then, $X$ satisfies SAE.
\end{theorem}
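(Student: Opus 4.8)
The plan is to apply Proposition \ref{thm: how to prove S(A)E}: since every del Pezzo surface is Fano, it suffices to verify that every irreducible component $Z\subseteq\cM_\Gamma^{(m,a)}$ dominating $\oM_{g,n}\times X^n$ satisfies Conditions (*) and (**), for $n$ large (and for all admissible $n$ when $g=0$). (The cases $X=\P^2$ and $X=\P^1\times\P^1$ are homogeneous, so SAE already follows from \cite[Theorem 10]{LP}; the remaining del Pezzo surfaces are the blow-ups of $\P^2$ at the prescribed points, and the argument below is uniform over all Fano surfaces.) I would fix a general $[f]\in Z$ and write $\beta_{\spine}=f_*[C_{\spine}]$.

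Condition (**) I would handle exactly as in the proof of Theorem \ref{SAE for blow-ups at 1 point}. If a tail $T_i$ carries a component whose $f$-image class is not ordinary, that class is nonzero and hence has anticanonical degree $\ge 1$, because $X$ is Fano. On the other hand, the marked point $p_i\in T_i$ has general image, so the component of $T_i$ through $p_i$ moves to cover $X$; its class is therefore ordinary and, by Lemma \ref{min degree ordinary curves}, has anticanonical degree $\ge 2$. These are distinct components of $T_i$, so $f_*[T_i]\cdot K_X^\vee\ge 1+2=3=r+1$, which is (**).

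For Condition (*) I may assume $a=0$ and that every tail component has ordinary image. Using the standard bound $\dim_{[f|_{C_{\spine}}]}\oM_{g,n}(X,\beta_{\spine})-\mathrm{vdim}\le h^1(C_{\spine},(f|_{C_{\spine}})^*T_X)$, the condition reduces to $h^1(C_{\spine},(f|_{C_{\spine}})^*T_X)\le n-g-1$, and I would split on $m$. If $m\ge 1$, I exploit dominance of $Z$: for a general domain $(C,\vec p)$ the maps in $Z$ over it still evaluate dominantly to $X^n$, so $H^0(C_{\spine},(f|_{C_{\spine}})^*T_X)\to((f|_{C_{\spine}})^*T_X)_{p_i}$ is surjective for the spine markings $p_i$; thus $(f|_{C_{\spine}})^*T_X$ is globally generated at a general point of $C_{\spine}$, and two general sections give $\cO_{C_{\spine}}^{2}\hookrightarrow(f|_{C_{\spine}})^*T_X$ with torsion cokernel, whence $h^1\le h^1(\cO_{C_{\spine}}^{2})=2g$. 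This is the analogue of Lemma \ref{h1 nice curve classes}, but the global generation now comes from the curve moving through a general point. If instead $m=0$, then the dimensional constraint \eqref{inequality: dim constraint} together with the $n$ ordinary tails (each of anticanonical degree $\ge 2$) forces $\beta_{\spine}\cdot K_X^\vee\le 2g-2$; this is impossible for $g=0$ (vacuous case), and for $g\ge 1$ it confines $\beta_{\spine}$ to finitely many classes, on which $h^1$ is bounded by a constant $B(X,g)$ by finite-typeness. In all cases $h^1\le\max(2g,B(X,g))\le n-g-1$ for $n$ large, while for $g=0$, $m\ge1$ one has $h^1\le 0=n-g-1-(n-1)\le n-g-1$ for every admissible $n$.

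The main obstacle is precisely this $h^1$-bound in the case $m\ge 1$: for del Pezzo surfaces with $\ell\ge 4$ blown-up points, $T_X$ is no longer globally generated, so Lemma \ref{h1 nice curve classes} is unavailable and one cannot produce sections of $(f|_{C_{\spine}})^*T_X$ from global vector fields on $X$. The key idea is to recover global generation from the hypothesis that $Z$ dominates, via the fixed-general-domain reduction, which converts dominance of evaluation into surjectivity of section-evaluation at the spine markings and thereby avoids the spurious rank coming from sliding the markings along $C_{\spine}$. I expect the remaining care to be in checking that this step is insensitive to whether $f|_{C_{\spine}}$ is an immersion, a multiple cover, or partly contracted, and that the finite-type boundedness invoked when $m=0$ is uniform over the finitely many admissible spine classes.
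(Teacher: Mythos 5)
Your proposal is correct and shares the paper's skeleton: apply Proposition \ref{thm: how to prove S(A)E} and verify Conditions (*) and (**) for components $Z$ dominating $\oM_{g,n}\times X^n$. Your verification of (**) is the paper's (with the small caveat that the component of $T_i$ \emph{through} $p_i$ could be contracted; one should say, as the paper does, that \emph{some} component of $T_i$ whose image contains the general point $x_i$ is ordinary and nonzero, hence of degree $\geq 2$ by Lemma \ref{min degree ordinary curves}), and your $m=0$ case of (*) is the paper's polyhedrality/boundedness argument. The genuine difference is your treatment of (*) for $m\geq 1$. The paper splits at $m\geq g+1$, where \cite[Proposition 13]{LP} gives $h^1(C_{\spine},f^*T_X)=0$ outright, versus $m\leq g$, where it falls back on bounded spine degree, polyhedrality of the effective cone, and finite-typeness to bound $h^1$ by a constant $b[X,g]$; you instead handle all $m\geq1$ uniformly by showing that dominance of $Z$ over a fixed general domain forces $H^0(C_{\spine},(f|_{C_{\spine}})^*T_X)\to((f|_{C_{\spine}})^*T_X)_{p_i}$ to be surjective at a spine marking, and then running the two-sections/torsion-cokernel argument of Lemma \ref{h1 nice curve classes} to get $h^1(C_{\spine},f^*T_X)\leq 2g$. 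This key step is sound: tangent vectors to $Z$ killed by $dZ\to T\oM_{g,n}$ fix the spine together with its marked points (these are part of the stabilized domain), hence restrict to deformations of $f|_{C_{\spine}}$ with fixed domain, so the evaluation differential factors through $H^0(C_{\spine},(f|_{C_{\spine}})^*T_X)$; generic smoothness in characteristic $0$ makes the composite surjective at a general point, and surjectivity onto a direct sum implies surjectivity on the kernel of the first projection. What it buys is that generation of $f^*T_X$ at a point is extracted from the moving family rather than from global vector fields on $X$ (unavailable once $\geq 4$ points are blown up) or from $g+1$ general points as in \cite[Proposition 13]{LP}, so a single spine marking suffices; the cost is the explicit threshold $n\geq 3g+1$ in place of $h^1=0$, which is harmless since (*) only requires $h^1\leq n-g-1$. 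Both proofs still need polyhedrality of the effective cone for the no-spine-marking case, and both yield the same conclusions (all $n\geq1$ when $g=0$, and $n$ large otherwise). Your residual worries are unfounded: the factoring step is insensitive to whether $f|_{C_{\spine}}$ is an immersion, a multiple cover, or constant, and $h^1$ is bounded on each of the finitely many finite-type spaces of spine maps by upper semicontinuity on a noetherian space.
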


\begin{proof}
    We adopt the setup of Proposition \ref{prop2: how to prove SAE} again. 
    \begin{enumerate}[label=\underline{Step \arabic*}]
        \item First, we consider condition (*). If $m \geq  g+1$, then by \cite[Proposition 13]{LP}, we have $h^1(C_{\spine},f^*T_X)=0$, and so 
        $$
        \mathrm{dim}_{[f|_{C_{\spine}}]}(\oM_{g,n}(X,f_*[C_{\spine}]))=  \mathrm{vdim}(\oM_{g,n}(X,f_*[C_{\spine}]))
        $$
        and condition (*) holds. 
        
        If instead $m < g+1$, then $n-m > n-g-1$. Observe that, for all $i=1,\ldots,n-m$, the pushforward of some component $\overline{T_i}$ of $T_i$ must be an ordinary class different from $0$. Therefore, by Lemma \ref{min degree ordinary curves}, we have 
        $$
        f_*([T_i])\cdot K_X^\vee \geq 2 \text{ for } i=1,\ldots,n-m,
        $$ 
        and so
        \begin{equation} \label{bound deg betaspine}
        f_*[C_{\spine}]\cdot K_X^\vee \leq \beta\cdot K_X^\vee-2(n-m)  < 2( (g+1)+g-1),
        \end{equation}
        where in the second inequality we used \eqref{inequality: dim constraint}. In particular, when $g=0$, the right hand side is $0$, yielding a contradiction. Therefore, for $g=0$, it must be that $m \geq 1= g+1$ and $h^1(C_{\spine},f^*T_X)=0$. 
        
        For $g$ arbitrary, \eqref{bound deg betaspine} implies that $f_*[C_{\spine}]$ has uniformly bounded degree against $K_X^\vee$, so there are only finitely many possibilities for $f_*[C_{\spine}]$ by the polyhedrality of the effective cone of $X$. Therefore, $h^1(C_{\spine},f^*T_X)$ is bounded by a constant $b[X,g] \in \Z_{\geq 0}$ (depending only on $X$ and $g$), and we have
        $$
        \mathrm{dim}_{[f|_{C_{\spine}}]}(\oM_{g,n}(X,f_*[C_{\spine}]))-  \mathrm{vdim}(\oM_{g,n}(X,f_*[C_{\spine}])) \leq  b[X,g].
        $$
        This implies that condition (*) holds, for all $n$ sufficiently large (when $g$ is arbitrary), and for all $n\ge 3$ (when $g=0$).
        \item We now deal with condition (**). Let $T_i$ be a tree containing a component whose pushforward under $f$ is not ordinary. It still must contain a component $\overline{T_i}$ such that $f_*([\overline{T_i}])$ is an ordinary curve (and so of degree at least $2$ by Lemma \ref{min degree ordinary curves}), and at least one more irreducible component, of degree at least $1$ because $X$ is Fano. Therefore, $\mathrm{deg}(f_*([T_i]) \geq 3=r+1$  and we are done. 
    \end{enumerate}
\end{proof}

\subsection{SAE in dimension $3$}
Let $\pi: X \rightarrow \P^3$ be the blow-up of $\P^3$ at $\ell \leq 4$ general points. Write $\H,\E_1,\ldots,\E_\ell$ for the divisor classes of the pulled-back hyperplane class and the exceptional divisors, respectively, and write $\H^\vee,\E^\vee_1,\ldots,\E^\vee_\ell$ for the corresponding dual basis of $H_2(X)$.

In this section, we prove that $X$ satisfies SAE (Theorem \ref{SAE for blow-up of P3 at 4 pts}). Note that such $X$ are not Fano; this is, to our knowledge, the first example of a non-Fano variety whose Tevelev degrees are asymptotically enumerative. 

\subsubsection{Low degree effective curves in $\P^3$}
We begin by recalling a result about the effective cone of $X$.

\begin{lemma}\cite[Proposition 4.1]{CLO}
 The effective cone of $X$ is linearly generated by $1$-dimensional linear spaces in the exceptional divisors and the strict transforms of $1$-dimensional linear subspaces of $\P^3$, possibly passing through the $\ell$ blown up points.
\end{lemma}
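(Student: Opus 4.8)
The plan is to exploit the fact that, for $\ell\le 4$ general points, $X$ is a smooth projective toric threefold, and to read off its Mori cone from the fan. Since $\ell\le 4$ points in general position in $\P^3$ are linearly independent, a projective automorphism carries $q_1,\ldots,q_\ell$ to (a subset of) the four torus-fixed points of $\P^3$; as the effective cone depends only on $X$ up to isomorphism, we may assume the $q_j$ are torus-fixed, exactly as in the setup of \S\ref{geom_intro}. Blowing up a torus-fixed point is the star subdivision of the corresponding maximal cone of the fan of $\P^3$ at the ray through the sum of its generators, so $X$ is the smooth complete toric variety attached to this subdivided fan.

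First I would invoke the toric cone theorem: for a complete toric variety, $\overline{\mathrm{NE}}(X)$ is generated by the classes of the torus-invariant curves, one for each wall (codimension-one cone) $\tau$ of the fan. Thus it suffices to enumerate the walls of the subdivided fan, compute the class $[V(\tau)]\in H_2(X,\Z)$ of each invariant curve, and check that every such class is one of the asserted generators.

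Next I would carry out this enumeration, splitting the walls into two families. The star subdivision of a maximal cone introduces new interior walls whose invariant curves are exactly the torus-invariant lines of the exceptional $\P^2$, each of class $\E_i^\vee$; these are the \emph{$1$-dimensional linear spaces in the exceptional divisors}. The remaining walls are the (subdivisions of the) original walls of the fan of $\P^3$, whose invariant curves are the strict transforms of the six coordinate lines $\overline{q_iq_j}$. Computing $[V(\tau)]$ by intersecting against $\H$ and the $\E_i$ (equivalently, by the standard wall-crossing/linear-relation computation), such a strict transform has class $\H^\vee$, $\H^\vee-\E_i^\vee$, or $\H^\vee-\E_i^\vee-\E_j^\vee$ according to whether none, one, or both of $q_i,q_j$ are blown up. These are precisely the \emph{strict transforms of $1$-dimensional linear subspaces of $\P^3$, possibly through the blown-up points}, which completes the identification and hence the proof.

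The step I expect to be most delicate is the bookkeeping in the last paragraph: verifying that the enumeration of walls is complete and that each invariant-curve class is computed correctly, in particular distinguishing the coordinate lines joining two blown-up points from those joining one or none, and handling $\ell<4$, where fewer maximal cones are subdivided. A purely hands-on alternative --- taking an irreducible curve $C$ with $\pi(C)$ of degree $\delta$ and multiplicities $m_i$ at the $q_i$, so that $[V(\tau)]$ would read $[C]=\delta\H^\vee-\sum_i m_i\E_i^\vee$, and writing this as a nonnegative combination of the generators --- is possible, but it requires B\'ezout-type bounds such as $m_i+m_j\le\delta$ for two blown-up points and is less transparent than the toric count; I would use it only as a consistency check.
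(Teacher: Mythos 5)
The paper does not prove this statement at all: it is imported verbatim as a citation to \cite{CLO}, Proposition 4.1 (which treats effective cones of cycles of every dimension on blow-ups of $\P^n$ at up to $n+1$ general points). Your toric argument is therefore a genuinely independent, self-contained proof of the one-cycle case that the paper actually uses, and its skeleton is sound: for $\ell\le 4$ general points one may assume the $q_j$ are torus-fixed, the blow-up is the star subdivision you describe, and the toric cone theorem reduces everything to enumerating walls. The invariant curves are the exceptional lines and the strict transforms of the six coordinate lines; these are among the asserted generators, and conversely every asserted generator is an effective class, so the two cones coincide --- this closing containment argument is needed (the asserted generators include strict transforms of lines through zero or one of the $q_j$, which are \emph{not} torus-invariant curves when $\ell$ is large), and your plan does accommodate it. This is in effect the same mechanism used in the cited source, so the content matches. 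One correction: your displayed curve classes use the opposite sign convention from this paper. Here $\E_i^\vee$ is \emph{dual} to $\E_i$, so a line inside the exceptional divisor over $q_i$ has class $-\E_i^\vee$ (it meets $\E_i$ in degree $-1$), and the strict transform of a line through $q_i$ and $q_j$ has class $\H^\vee+\E_i^\vee+\E_j^\vee$, not $\H^\vee-\E_i^\vee-\E_j^\vee$; compare the paper's definitions of ``exceptional'' and ``fixed line'' classes, and the cone $\mathsf{C}=\Z_{\geq 0}(\H^\vee+\E^\vee)\oplus\Z_{\geq 0}(-\E^\vee)$ recorded for the one-point blow-up. Your own prescription (computing $[V(\tau)]$ by intersecting against $\H$ and the $\E_i$) yields exactly these signs, so only the written formulas need fixing, not the structure of the argument.
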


\begin{corollary}
 The anticanonical class $K_X^\vee=4 \H -2 (\E_1 +\cdots+\E_\ell)$ is nef.
\end{corollary}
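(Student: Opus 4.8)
The plan is to use the preceding Lemma, which identifies explicit generators of the cone of effective curves: to prove that $K_X^\vee$ is nef it suffices to verify $K_X^\vee \cdot C \geq 0$ for $C$ ranging over (i) lines contained in the exceptional divisors $\E_i\cong\P^2$, and (ii) strict transforms of lines in $\P^3$ (possibly through some of the $q_j$). I would first fix the intersection pairing conventions coming from the dual bases: $\H\cdot\H^\vee=1$, $\H\cdot\E_i^\vee=0$, $\E_i\cdot\H^\vee=0$, and $\E_i\cdot\E_j^\vee=\delta_{ij}$. The whole proof then reduces to computing the class of each generator in terms of $\H^\vee,\E_1^\vee,\ldots,\E_\ell^\vee$ and pairing against $K_X^\vee=4\H-2(\E_1+\cdots+\E_\ell)$.

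For a line $\lambda$ inside an exceptional divisor $\E_i$, I would argue that $\H\cdot[\lambda]=0$ (since $\pi(\lambda)=q_i$ is a point), that $\E_j\cdot[\lambda]=0$ for $j\neq i$ (disjoint exceptional loci), and that $\E_i\cdot[\lambda]=-1$, the last using that the normal bundle of $\E_i\cong\P^2$ in $X$ is $\mathcal{O}_{\P^2}(-1)$ for a point blown up in a threefold. Hence $[\lambda]=-\E_i^\vee$, and a direct computation gives
$$
K_X^\vee\cdot[\lambda]=\bigl(4\H-2\textstyle\sum_k\E_k\bigr)\cdot(-\E_i^\vee)=2\sum_k\E_k\cdot\E_i^\vee=2\geq0.
$$

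For the strict transform $\widetilde{L}$ of a line $L\subset\P^3$ passing through the blown-up points indexed by $I\subseteq\{1,\ldots,\ell\}$, with $k=|I|$, I would compute $\H\cdot[\widetilde{L}]=\deg L=1$ (projection formula) and $\E_j\cdot[\widetilde{L}]=1$ exactly when $j\in I$, so that $[\widetilde{L}]=\H^\vee+\sum_{j\in I}\E_j^\vee$ and
$$
K_X^\vee\cdot[\widetilde{L}]=4-2k.
$$
The one place the hypotheses genuinely enter—and the step I expect to be the crux—is bounding $k$: since $\ell\le4$ and the $q_j$ are general, no three are collinear, so any line passes through at most two of them, i.e. $k\le2$ and $4-2k\geq0$. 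This also explains why $K_X^\vee$ is nef but not ample (lines through exactly two points give $K_X^\vee\cdot[\widetilde{L}]=0$), consistent with $X$ being non-Fano. Combining the two computations shows $K_X^\vee$ pairs nonnegatively with every generator of the effective cone, so $K_X^\vee$ is nef.
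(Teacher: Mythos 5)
Your proposal is correct and is exactly the argument the paper leaves implicit: the corollary follows from the preceding lemma by pairing $K_X^\vee$ against the generators of the effective cone of curves, namely $-\E_i^\vee$ for lines in exceptional divisors (giving $2$) and $\H^\vee+\sum_{j\in I}\E_j^\vee$ for strict transforms of lines through $|I|\le 2$ of the general points (giving $4-2|I|\ge 0$). Your class computations and sign conventions agree with those used elsewhere in the paper (e.g.\ in Lemma \ref{lemma: degree 0 curves in P3}), so there is nothing to correct.
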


\begin{remark}
    
   For any curve class $\beta$, the intersection number $\beta \cdot K_X^\vee$ is always even; we will use this fact later.
\end{remark}

\begin{lemma}\label{lemma: degree 0 curves in P3}
    Let $C \subset X$ be an irreducible curve of geometric genus 0 and with non-positive degree against $K_X^\vee$. Then, $\beta:=[C] \in H_2(X,\Z)$ is the class of the strict transform of a line through two of the $q_j$. (In particular, $C$ has degree 0 against $K_X^\vee$.)
\end{lemma}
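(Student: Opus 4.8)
The plan is to derive the statement from the nefness of $K_X^\vee$ (the Corollary above) together with elementary Bézout estimates; along the way it will turn out that the genus-$0$ hypothesis is in fact automatic.

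First, since $K_X^\vee$ is nef we have $[C]\cdot K_X^\vee\ge 0$, and combined with the hypothesis $[C]\cdot K_X^\vee\le 0$ this forces $[C]\cdot K_X^\vee=0$; this already yields the parenthetical claim. Next I would note that $C$ is not contracted by $\pi$: otherwise $\pi(C)$ is a point, forcing $C\subseteq\E_i$ for some $i$, so $[C]$ is a positive multiple of the class of a line in $\E_i\cong\P^2$, which has strictly positive degree against $K_X^\vee=4\H-2(\E_1+\cdots+\E_\ell)$, a contradiction. Hence $C':=\pi(C)$ is an irreducible curve of degree $d:=[C]\cdot\H\ge 1$, and since $\pi$ is an isomorphism away from the $\E_i$ while $C\not\subseteq\bigcup_i\E_i$, the map $C\to C'$ is birational. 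Writing $m_i:=[C]\cdot\E_i$, which equals the multiplicity $\mathrm{mult}_{q_i}(C')$ and is $\ge 0$ because $C\not\subseteq\E_i$, the relation $[C]\cdot K_X^\vee=0$ reads $4d-2\sum_i m_i=0$, i.e. $\sum_i m_i=2d$.

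The heart of the argument, and the step I expect to be the main obstacle, is to show $d=1$; I would do this by Bézout, crucially using that the $q_j$ are general. Suppose $d\ge 2$, so $C'$ is not a line. Let $p:=\#\{i:m_i\ge 1\}$ be the number of blown-up points lying on $C'$. A single multiplicity satisfies $m_i\le d$, so $p\le 1$ would give $\sum_i m_i\le d<2d$, a contradiction; hence $p\ge 2$. For any two such points, the line $L_{ij}$ through $q_i,q_j$ is not a component of $C'$ (as $C'$ is irreducible and not a line), so Bézout gives $m_i+m_j\le L_{ij}\cdot C'=d$. Summing over all $\binom{p}{2}$ pairs and using $\sum_i m_i=2d$ yields $2(p-1)d\le\binom{p}{2}d$, i.e. $p\ge 4$; as $p\le\ell\le 4$ this forces $p=4$ (in particular $\ell=4$) with equality in every pair, so $m_1=\cdots=m_4=d/2$. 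Finally, because the four points are general they are not coplanar, so $C'$ is not contained in the plane $\Pi$ spanned by $q_1,q_2,q_3$; Bézout for $C'$ and $\Pi$ then gives $d=\Pi\cdot C'\ge m_1+m_2+m_3=\tfrac{3d}{2}>d$, a contradiction. Therefore $d=1$.

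It remains to conclude. With $d=1$, $C'$ is a line $L$ and $C$ is its strict transform. The relation $\sum_i m_i=2d=2$ together with $m_i\in\{0,1\}$ (a line is smooth, hence has multiplicity at most $1$ at any point) shows that $L$ passes through exactly two of the points $q_j$; since the points are general no three are collinear, so this is consistent, and $[C]$ is precisely the class of the strict transform of the line through those two points, as claimed. I would double-check the two standard intersection-theoretic inputs — that $[C]\cdot\E_i=\mathrm{mult}_{q_i}(C')$ for a strict transform under a point blow-up, and that the local intersection multiplicity of $C'$ with a plane (or line) at $q_i$ is at least $\mathrm{mult}_{q_i}(C')$ — but both are routine. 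Note that the genus-$0$ hypothesis is never used directly: it follows a posteriori, since $C'$ is forced to be a line.
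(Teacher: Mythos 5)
Your proof is correct, and it takes a genuinely different route from the paper's. The paper's proof reduces to a surface: using the torus $\mathbb{T}$ fixing the $q_j$, it shows that if $C$ does not lie in the strict transform of a torus-invariant plane, then translating its normalization by $\mathbb{T}$ exhibits $[C]$ as an \emph{ordinary} class, whence $[C]\cdot K_X^\vee\ge2$ by Lemma \ref{min degree ordinary curves}; this is exactly where the genus-$0$ hypothesis enters (ordinariness is a statement about $\cM_{0,1}(X,\beta)$), and it rests on the deformation theory behind that lemma. Once $C$ lies in the strict transform of a plane, i.e.\ in a blow-up of $\P^2$ at at most three of the $q_j$, the paper quotes the planar inequality $d\ge k_i+k_j$ for irreducible curves other than lines through two of the points and concludes numerically. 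You instead work directly in $\P^3$, using only nefness of $K_X^\vee$ together with B\'ezout estimates: the pairwise bounds $m_i+m_j\le d$, and non-coplanarity of the four general points to rule out $m_1=\cdots=m_4=d/2$. Your approach buys three things: it is more elementary (no ordinariness or deformation theory), it makes explicit where generality of the $q_j$ is used, and it proves a strictly stronger statement --- as you note, the genus-$0$ hypothesis is never needed, whereas in the paper's argument it is essential. What the paper's approach buys is that it runs inside the framework (ordinary classes, Lemma \ref{min degree ordinary curves}, torus translation) already set up for the SAE analysis in this section. One small imprecision to fix in your write-up: in $\P^3$ the symbol $L_{ij}\cdot C'$ is not an intersection number, since a line and a curve in a threefold have no well-defined intersection product; the statement you actually need --- that the multiplicities of $C'$ at the points of $L_{ij}\cap C'$ sum to at most $d$ --- follows by intersecting $C'$ with a plane containing $L_{ij}$ but not containing $C'$ (such a plane exists since the planes through $L_{ij}$ form a pencil), which is the same device as in your final coplanarity step.
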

\begin{proof}
    Write 
    $$
    \beta=d \H^\vee + \sum_{j=1}^\ell k_j \E_j^\vee;
    $$
    we may assume $d,k_1,\ldots,k_\ell \geq 0$.
    
    Let $\mathbb{T} \subset \P^3$ be a torus acting on $\P^3$ in such a way that the $q_j$ are the torus-fixed points. We also denote by $\mathbb{T} \subset X$ the pullback to $X$, so that $\mathbb{T}$ also acts on $X$. We claim that $C$ lies in the strict transform $\Lambda$ of some torus-fixed plane in $\P^3$. Suppose instead that this is not the case. Then, we will show that $\beta$ would be ordinary, and so by Lemma \ref{min degree ordinary curves}, would have anti-canonical degree at least 2, a contradiction.
    
    Indeed, let $\eta: \widetilde{C} \rightarrow C$ the normalization map and let $p \in \widetilde{C}$ be such that $\eta(p)$ does not belong to the strict transform of any torus fixed hyperplane of $\P^3$.  Then, the morphism 
    \begin{align*}
    \mathbb{T} \times \widetilde{C} & \rightarrow X \\
    (t, x)& \mapsto t\cdot\eta(x)
    \end{align*}
    would yield a dominant map
    $$
    \mathbb{T} \rightarrow \cM_{0,1}(X,[C]) \xrightarrow{\mathrm{ev}_p} X
    $$
    implying that $[C]$ is ordinary.
    
    Therefore, the curve $C$ lies in the strict transform $\Lambda\subset X$ of some torus-fixed plane in $P\subset \P^3$, where $P$ contains $\ell'\le 3$ of the points $q_j\in\P^3$. Without loss of generality, we assume that these points are $q_1,\ldots,q_{\ell'}$. In particular, $\Lambda$ is isomorphic to the blow-up of $\P^2$ at $q_1,\ldots,q_{\ell'}$. Furthermore, the class $\beta\in H_2(X,\Z)$ is the pushforward of a class $\beta_\Lambda \in H_2(\Lambda,\Z)$. 
    
    We claim now that $C$ must be equal to the proper transform of a line in $P$ through two of the points $q_j$, where $j\le \ell'$. Suppose that this were not the case. Then, we would have
    \begin{equation}\label{impossible equation}
    d\geq k_i+k_j \text{ for all } 1 \leq i <j \leq 3
    \end{equation}
(we take $k_j=0$ for $j>\ell$ if $\ell\le 2$), whence
    \begin{align*}
        1 &\geq K_X^\vee\cdot\beta \\
        &=4d-2(k_1+k_2+k_3)\\
        &=4d -(k_1+k_2)-(k_1+k_3)-(k_2+k_3) \\
        &\geq d.
    \end{align*}
    On the other hand, the degree against $K_X^\vee$ of the classes $\H^\vee+\E_i^\vee$ and $-\E_j^\vee$ is $2$, so we find no possibilities for $\beta$. We have reached a contradiction, completing the proof.
\end{proof}

\begin{lemma}\label{lemma: degree 2 curves in P3}
    Let $\beta \in H_2(X,\Z)$ be an effective ordinary curve class such that $K_X^\vee\cdot C =2$. Then, $\beta =\H^\vee +E^\vee_j$ for some $j\in \{1,\ldots,\ell \}$.
\end{lemma}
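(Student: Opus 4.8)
The plan is to determine $\beta = d\H^\vee + \sum_{i=1}^\ell k_i\E_i^\vee$ entirely from the single numerical equation $K_X^\vee\cdot\beta = 2$ together with the inequalities that ordinariness imposes via Lemma \ref{inequality d and ks for ordinary}. The first step is to record that ordinariness forces $k_i \ge 0$ for every $i$. Indeed, since $\beta$ is ordinary there is a map $f:\P^1\to X$ of class $\beta$ whose image passes through a general point $x$ of $X$; as $x\notin\E_i$, the image is not contained in $\E_i$, so $k_i = \E_i\cdot\beta = \deg f^*\mathcal{O}_X(\E_i)\ge 0$. This is exactly the kind of reasoning already used in the proof of Lemma \ref{inequality d and ks for ordinary}.

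The second step is the numerical computation. Using $K_X^\vee = 4\H - 2(\E_1+\cdots+\E_\ell)$ and the fact that the $\E_i^\vee$ are dual to the $\E_i$, one gets $K_X^\vee\cdot\beta = 4d - 2\sum_{i=1}^\ell k_i$, so the hypothesis becomes
$$\sum_{i=1}^\ell k_i = 2d-1.$$
In particular $\sum_i k_i \ge 0$ already forces $d\ge 1$. Now I apply Lemma \ref{inequality d and ks for ordinary}, which yields $d\ge\sum_{i\in I}k_i$ for every $I$ with $|I|\le 3$. Choosing $I$ to consist of the $\min(\ell,3)$ largest of the $k_i$ should pin down $d=1$: when $\ell\le 3$ we get $d\ge\sum_i k_i = 2d-1$ directly; when $\ell=4$, the three largest of the four nonnegative $k_i$ sum to at least $\tfrac34\sum_i k_i$, giving $d\ge\tfrac34(2d-1)$. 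Either inequality forces $d\le 1$, hence $d=1$ and $\sum_i k_i = 1$. Since the $k_i$ are nonnegative integers summing to $1$, exactly one $k_j$ equals $1$ and the rest vanish, so $\beta = \H^\vee+\E_j^\vee$, as claimed.

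The one case demanding real care is $\ell=4$: there Lemma \ref{inequality d and ks for ordinary} only controls sums of three of the $k_i$, not the full sum, so the bound $d\ge\sum_{i=1}^4 k_i$ is unavailable. The elementary observation that the three largest of four nonnegative numbers already make up at least three-quarters of their total is what closes this gap. Everything else is bookkeeping, the only genuinely geometric input being the nonnegativity $k_i\ge 0$ extracted from ordinariness at the start.
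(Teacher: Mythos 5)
Your proof is correct and follows essentially the same route as the paper: both combine the numerical identity $K_X^\vee\cdot\beta = 4d - 2\sum_i k_i = 2$ with the ordinariness inequalities of Lemma \ref{inequality d and ks for ordinary} (your ``three largest of four'' averaging for $\ell=4$ is exactly the paper's derivation of $4d\ge 3(k_1+\cdots+k_\ell)$), differing only in elementary bookkeeping at the end. If anything, your explicit justification that ordinariness forces $k_i\ge 0$ is a point the paper leaves implicit, and it is genuinely needed to rule out classes like $-\E_j^\vee$, which also have anticanonical degree $2$.
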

\begin{proof}
Write 
$$
\beta= d \H^\vee + \sum_{j=1}^\ell k_j \E_j^\vee.
$$
Then, by hypothesis,
$$
2=\beta\cdot K_X^\vee =4 d-2(k_1+\cdots+k_\ell)
$$
and, since $\beta$ is ordinary, by Lemma \ref{inequality d and ks for ordinary} we have
$$
d \geq \sum_{j \in J} k_j \text{ for all } J \subseteq \{1,\ldots,\ell\} \text{ with } |J|=3. 
$$

Therefore, we have $4d \geq 3(k_1+\cdots+k_\ell)$ and so
$$
2-(k_1+\cdots+k_\ell) =4d-3(k_1+\cdots+k_\ell) \geq 0
$$
and we obtain $k_1+\cdots+k_\ell \in \{0,1,2\}$, from which the lemma follows easily.
\end{proof}

The previous lemmas motivate the following definition.

\begin{definition}
Let $0 \neq \beta \in H_2(X,\Z)$ be an effective curve class. We say that:
   \begin{enumerate}  
        \item[$\bullet$] $\beta$ is \textbf{exceptional} if $\beta=- k \E_j^\vee$ for some $k>0$ and $j\in \{1,\ldots,\ell \}$,
        \item[$\bullet$] $\beta$ is a \textbf{fixed line} if it is a positive multiple of the strict transform of a line in $\P^3$ through two of the blown-up points, and
        \item[$\bullet$] $\beta$ is a \textbf{pivoting line} if it is not a fixed line and is the strict transform of a line in $\P^3$ through one of the blown-up points.
   \end{enumerate}
\end{definition}

We remark that exceptional classes and fixed lines are not ordinary, but pivoting lines are.

\subsubsection{Proof of SAE}

Let $\pi:X \rightarrow \P^3$ be the blow-up at $\ell \leq 4$ points $q_1,\ldots,q_\ell$. Fix a genus $g \geq 0$ and a non-zero effective curve class $\beta$. We will follow the setting of \S\ref{Section: How to prove SAE for Fano varieties}, but need to make suitable modifications now that $X$ is not Fano.

For integers $a\ge0$ and $s,m\ge0$ such that $s+m\le n$, define
$$
\cM_{\Gamma}^{(s,m,a)}
 \subseteq \oM_{g,n}(X,\beta)
 $$
 to be the locally closed locus parametrizing maps whose domain $C$ has the following topological type. $C$ contains a smooth \emph{spine} component $C_{\spine}$ of genus $g$, containing the marked points $p_{n-m+1},\ldots,p_n$. Attached to $C_{\spine}$, we have trees of rational curves $T'_1,\ldots,T'_a$, as well as trees of rational curves $T_1,\ldots,T_{n-m}$, such that the tree $T_{i}$ contains the marked point $p_i$ for $i=1,\ldots,n-m$. Furthermore, the integer $s$ is such that $T_i$ contains a non-ordinary component if and only if $i\le s$. See also Figure \ref{singular_stable_map 2}.
 \begin{figure}
	\begin{center}
		\begin{tikzpicture} [xscale=0.36,yscale=0.36]
                \draw [ultra thick, black] (-10,7) to (19,7);
                \node at (15,7) {$\bullet$};
                \node at (15,8) {$p_{n-m+1}$};
                \node at (18,7) {$\bullet$};
                \node at (18,8) {$p_n$};
                \node at (19,6) {$C_{\spine}$};
                 \draw [thick, dotted] (16,6) -- (17,6);
                \draw [ultra thick, black] (1,8) to (1,3);
                \draw [ultra thick, black] (0,5) to (2,2);
                \draw [ultra thick, black] (2,3) to (0,0);
                \node at (0,-1) {$T_1$};
                \node at (1,5.5) {$\bullet$};
                \node at (2,5.5) {$p_1$};
                \draw [ultra thick, dotted] (3,4) -- (4.5,4);
                \draw [ultra thick, black] (5,8) to (6,3);
                \draw [ultra thick, black] (6,5) to (5,1);
                \node at (5,0) {$T_s$};
                \node at (5.3,2) {$\bullet$};
                \node at (4.8,2.5) {$p_s$};
                \draw [ultra thick, black] (8.4,8) to (8.4,3);
                \node at (8.4,4) {$\bullet$};
                \node at (7.2,4) {$p_{s+1}$};
                \node at (8,2) {$T_{s+1}$};
                \draw [ultra thick, dotted] (9,4) -- (10.5,4);
                \draw [ultra thick, black] (13,8) to (11,3);
                \draw [ultra thick, black] (11,6) to (14,1);
                \draw [ultra thick, black] (14,3) to (12,0);
                \draw [ultra thick, black] (11.6,6) to (16,4);
                \node at (13,2.65) {$\bullet$};
                \node at (11.6,2.55) {$p_{n-m}$};
                \node at (12,-1) {$T_{n-m}$};
                \draw [ultra thick, black] (-3.5,8.5) to (-1,2);
                \draw [ultra thick, black] (-2,6) to (-3,2);
                \draw [ultra thick, black] (-1,5) to (-3,1);
                \node at (-3,0) {$T_a'$};
                \draw [ultra thick, black] (-6,8) to (-6,1);
                \draw [ultra thick, black] (-5,5) to (-7,1);
                \draw [ultra thick, black] (-1,5) to (-3,1);
                \node at (-6,0) {$T_1'$};
                \draw [ultra thick, dotted] (-5,4) -- (-3.5,4);
                \draw [ decorate,decoration={brace,amplitude=5pt,mirror,raise=1pt}] (-7,-1.5) -- (-2,-1.5) ;
                \node at (-5,-3) {\scriptsize $a$ rational trees};
                \node at (-5,-4){\scriptsize without markings};
                \node at (-5,-5) {\scriptsize on them};
                \draw [ decorate,decoration={brace,amplitude=5pt,mirror,raise=1pt}] (-0.9,-1.5) -- (6,-1.5) ;
                \node at (3,-3) {\scriptsize $s$ marked rational };
                \node at (3,-4){\scriptsize trees with at};
                \node at (3,-5) {\scriptsize least one non-};
                \node at (3,-6){\scriptsize ordinary component};
                \draw [ decorate,decoration={brace,amplitude=5pt,mirror,raise=1pt}] (7,-1.5) -- (17,-1.5) ;
                \node at (12.5,-3) {\scriptsize$n-m-s$ marked ratio- };
                \node at (12.5,-4){\scriptsize nal trees with only};
                \node at (12.5,-5) {\scriptsize ordinary components};
		\end{tikzpicture}
	\end{center}
	\caption{Topological type of the domain curves of points in $\cM_\Gamma^{(m,s,a)}$\label{figure-2}\label{singular_stable_map 2} }
\end{figure}
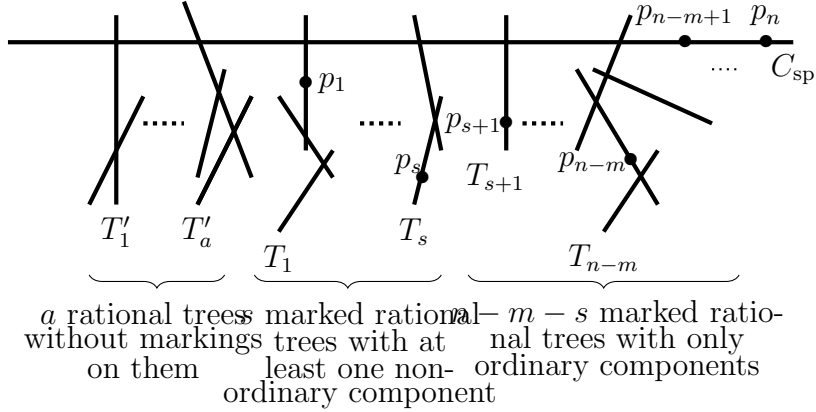 

Again, we assume that if $(n-m)+a=0$, that is, if $C$ is irreducible, then  \eqref{inequality: dim constraint} is a \emph{strict} inequality.
 
 Let $Z$ be an irreducible component of $\cM_{\Gamma}^{(s,m,k)}$. We will prove the following.
 
 \begin{proposition}\label{non-dominance for blow-ups of P3}
Assume that, for all $g\ge 0$, there exists an integer $C'(X,g)$ with the following property. Suppose that $n,\beta$ satisfy $n\ge C'(X,g)$ and that the inequality 
 \begin{equation} \label{inequality: dim constraint dim3}
 \mathrm{vdim} (\oM_{g,n}(X,\beta)) \leq \mathrm{dim}(\oM_{g,n} \times X^n)
 \end{equation}
holds. For any $s,m,k$, let $Z\subset \cM_{\Gamma}^{(s,m,k)}\subset \oM_{g,n}(X,\beta)$ be an irreducible component. Suppose further that if $(m,a)=(n,0)$, then \eqref{inequality: dim constraint dim3} is a \emph{strict} inequality. Then, $Z$ fails to dominate $\oM_{g,n}\times X^n$.
 \end{proposition}
 
The inequality \eqref{inequality: dim constraint dim3} is equivalent to $\beta\cdot K_X^\vee\le 3(n+g-1)$. As an immediate corollary to Proposition \ref{non-dominance for blow-ups of P3}, we obtain the following.

 \begin{theorem}\label{SAE for blow-up of P3 at 4 pts}
    $X$ satisfies SAE.
 \end{theorem}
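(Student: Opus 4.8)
The statement is an immediate corollary of Proposition \ref{non-dominance for blow-ups of P3}, exactly as Theorems \ref{SAE for blow-ups at 1 point} and \ref{thm: SAE for Del Pezzo surfaces} follow from Proposition \ref{thm: how to prove S(A)E}: up to relabeling the marked points, the loci $\cM_\Gamma^{(s,m,a)}$ exhaust the boundary strata of $\oM_{g,n}(X,\beta)$ dominating $\oM_{g,n}$, so once no component of any of them dominates $\oM_{g,n}\times X^n$ (for $n$ large depending on $X,g$, and for all $n\ge1$ when $g=0$), the general fiber of $\tau$ lies in $\cM_{g,n}(X,\beta)$, which with Lemma \ref{basic_transversality} is SAE. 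Thus the content is Proposition \ref{non-dominance for blow-ups of P3}, which I would prove by a dimension count refining Propositions \ref{prop1: how to prove SAE} and \ref{prop2: how to prove SAE}. The essential new difficulty is that $X$ is only $(-K_X)$-nef, so the positivity inputs of the Fano proofs must be replaced by rigidity statements for the $K_X^\vee$-trivial curves (the fixed lines).

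The plan is to simplify an arbitrary component $Z\subseteq\cM_\Gamma^{(s,m,a)}$ by discarding domain components that carry no marked point. The key observation is that any rational tail not carrying a marked point is contracted by the forgetful map $\oM_{g,n}(X,\beta)\to\oM_{g,n}$ and contributes to no evaluation $x_i$, so deleting it leaves the image of $[f]$ in $\oM_{g,n}\times X^n$ unchanged; hence if $Z$ dominates, so does the component $\hat Z$ of the resulting stratum. I would first delete the unmarked trees $T_1',\ldots,T_a'$: by nefness of $K_X^\vee$ each has nonnegative degree, a positive-degree one strictly lowers $\beta\cdot K_X^\vee$ upon deletion (as in Proposition \ref{prop2: how to prove SAE}), while a degree-$0$ one is, by Lemma \ref{lemma: degree 0 curves in P3}, a union of fixed lines and may be deleted with $\beta\cdot K_X^\vee$ unchanged; this reduces to $a=0$. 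Next, for a marked tree $T_i$ with a non-ordinary component, note that if $p_i$ itself lies on a non-ordinary component, then $x_i=f(p_i)$ is forced into the fixed locus and $Z$ cannot dominate $X^n$; so I may assume $p_i$ lies on an ordinary component of degree $\ge2$ (Lemma \ref{min degree ordinary curves}), and the non-ordinary components of $T_i$ carry no marked point.

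It remains to eliminate these markless non-ordinary components, and this is where rigidity enters and is the main obstacle. An exceptional component of class $-k\E_j^\vee$ has degree $2k\ge2$ and is confined to the fixed divisor $\E_j\cong\P^2$; as a markless leaf it is deleted as above, lowering $\beta\cdot K_X^\vee$. A fixed line $D$ is the strict transform of the unique line through two of the $q_j$, with $D\cdot K_X^\vee=0$, so $\deg N_{D/X}=K_X^\vee\cdot D-2=-2$ and, $D$ being the unique such line, $N_{D/X}\cong\cO(-1)^{\oplus2}$ is rigid. A markless fixed-line leaf is again simply deleted; the genuinely delicate case is an \emph{internal} fixed-line bridge, which cannot be contracted (its two nodes have distinct images in $D$). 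Here I would argue directly: because $D$ is rigid it contributes essentially no moduli, while the adjacent ordinary components of $T_i$ are forced to meet the fixed curve $D$, a codimension-one incidence condition in the threefold with no compensating parameters. Quantifying this rigidity deficit, uniformly over the finitely many low-degree classes allowed by Lemmas \ref{lemma: degree 0 curves in P3} and \ref{lemma: degree 2 curves in P3}, is the step I expect to require the most care.

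After these reductions one is left with an all-ordinary configuration (plus, on internal bridges, a nonnegative count of rigidity deficits), and I would close the argument as in Theorem \ref{thm: SAE for Del Pezzo surfaces}. The spine excess $\mathrm{dim}_{[f|_{C_{\spine}}]}(\oM_{g,n}(X,f_*[C_{\spine}]))-\mathrm{vdim}$ is at most $rg=3g$ by Lemma \ref{h1 nice curve classes}, with the case $f_*[C_{\spine}]=-k'\E_j^\vee$ of a spine contracted into an exceptional divisor handled, as in Case 1 of Theorem \ref{SAE for blow-ups at 1 point}, via $\oM_{g,n}(\P^2,k')$ and Lemma \ref{inequality d and ks for ordinary}. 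Using that every ordinary tree has degree $\ge2$ (Lemma \ref{min degree ordinary curves}) and that bounded-degree classes are finite in number (polyhedrality of the effective cone) to treat $m<g+1$, Condition (*) of Proposition \ref{prop1: how to prove SAE} holds for $n$ large, forcing $\mathrm{dim}_{[f]}Z<\mathrm{dim}(\oM_{g,n}\times X^n)$; when $g=0$ the spine excess vanishes and the argument already applies for $n\ge1$. This yields Proposition \ref{non-dominance for blow-ups of P3} and hence the theorem.
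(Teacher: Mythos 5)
Your high-level architecture matches the paper's: Theorem \ref{SAE for blow-up of P3 at 4 pts} is deduced from Proposition \ref{non-dominance for blow-ups of P3}, which is proved by discarding unmarked components and then running a dimension count. But your first reduction has a genuine gap, at precisely the step where the paper needs its one genuinely new idea. You claim that an unmarked tree of $K_X^\vee$-degree $0$ (a multiple of a fixed line) ``may be deleted with $\beta\cdot K_X^\vee$ unchanged; this reduces to $a=0$.'' The deletion logic only rules out dominance of $Z$ if you can rule out dominance of the stratum you land in after deleting. When $m=n$ (all marked points on the spine) and the only other component is such a degree-$0$ tree, deletion lands you in the \emph{open} stratum $\cM_{g,n}(X,\overline\beta)$ with $\overline\beta\cdot K_X^\vee=\beta\cdot K_X^\vee=r(n+g-1)$ --- exactly the configuration excluded by the strictness assumption in the definition of $\cM_\Gamma^{(s,m,a)}$, and one that is permitted (indeed, required, for the Tevelev count to be nonzero) to dominate $\oM_{g,n}\times X^n$. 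So no contradiction follows. What deletion throws away is the nontrivial constraint that the spine must pass through the rigid fixed line at the attaching node, a codimension-one condition in a threefold. Note that your argument, if valid, would apply verbatim to $\Bl_{q_1,q_2,q_3}(\P^2)$ with the $q_j$ collinear (the strict transform of the line through them also has $K^\vee$-degree $0$) and would ``prove'' SAE there; but SAE fails in that case by case (2) of Proposition \ref{how to disprove}, and the dominating boundary stratum constructed in its proof is precisely a spine carrying all markings with an unmarked degree-$0$ tail attached. The paper's Reduction 1 handles this case by blowing up $X$ along the fixed line, lifting the spine to $\widetilde{X}$ (the lift meets the new exceptional divisor positively, so has strictly smaller anticanonical degree), and applying \cite[Proposition 13]{LP} twice to obtain a dimension contradiction.

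A second, smaller divergence: you leave the ``internal fixed-line bridge'' inside a marked tree $T_i$ as an admittedly incomplete rigidity computation. The paper's Reduction 2 shows that no dimension count is needed there at all: by Lemmas \ref{lemma: degree 0 curves in P3} and \ref{lemma: degree 2 curves in P3}, the marked point must lie on a component mapping to the strict transform of the line through some $q_h$ and the general point $x_i$, and for general $x_i$ this curve is disjoint from every fixed line, so a marked tree containing a fixed-line component simply does not occur over a general point of $X^n$. Ironically, the rigidity heuristic you sketch for bridges (meeting a rigid curve is a codimension-one condition with no compensating parameters) is the right moral idea --- but for the spine-attached unmarked trees above, not for the marked trees; making it rigorous is what the paper's blow-up argument accomplishes. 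Replacing your degree-$0$ deletion with that argument, and your bridge analysis with the general-position argument, would bring your outline in line with a complete proof.
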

 
 We prove Proposition \ref{non-dominance for blow-ups of P3} by first making several reductions. Let $[f] \in Z$ be a general point.

\vspace{5pt}
\noindent  \textbf{Reduction 1:} \emph{It is enough to prove Proposition \ref{non-dominance for blow-ups of P3} for $a=0$}.
\begin{proof}
Because $f_*[T_i']\cdot K_X^\vee \geq 0$, this is clear when $m<n$, as we may simply delete the components $T'_i$, so assume that $m=n$. For the same reason, we can also assume that $a=1$, that $T_1'$ is irreducible, and that $f_*[T_1']\cdot K_X^\vee =0$. It follows from Lemma \ref{lemma: degree 0 curves in P3} and stability that 
$$
f_*[T_1']=m (\H^\vee+\E_j^\vee+ \E_{j'}^\vee) \text{ for some } m\in \Z_{> 0} \text{ and } 1 \leq j < j' \leq \ell.
$$

Let $\L$ be the line in $\P^3$ through $\pi(\E_j)$ and $\pi(\E_{j'})$, and $\rho: \widetilde{X} \rightarrow X$ be the blow-up of $X$ along the strict transform of $\L$. Call $\overline{\beta}=\beta-f_*[T_1']$ and let $\overline{Z} \subset \cM_{g,n}(X,\overline{\beta}) $ be the irreducible component to which $[f|_{C_{\spine}}]$ belongs. From $\overline{Z}$ we get an irreducible substack
$$
\widetilde{Z} \subseteq \cM_{g,n}(\widetilde{X},\widetilde{\beta})
$$
dominating $\overline{Z}$. Here, $\widetilde{\beta} \in H_2(\widetilde{X},\Z)$ is an effective curve class with the property that
$$
\widetilde{\beta} \cdot \E >0
$$
where $\E $ is the class of the exceptional divisor in $\widetilde{X}$. 

If $Z$ dominates $X^n \times \cM_{g,n}$, then $\widetilde{Z}$ dominates $\widetilde{X}^n \times \cM_{g,n}$, and applying \cite[Proposition 13]{LP} twice, we obtain
$$
\mathrm{dim}(Z)= n+ \beta\cdot K_X^\vee > n+\widetilde{\beta}\cdot K_{\widetilde{X}}^\vee \geq \mathrm{dim}(\widetilde{Z}),
$$
contradicting the fact that $\widetilde{Z}$ dominates $Z$.
\end{proof}

Assume henceforth that $a=0$.

\vspace{5pt}
\noindent  \textbf{Reduction 2:} \emph{It is enough to prove Proposition \ref{non-dominance for blow-ups of P3} for $s=0$}.

\begin{proof}
If, for some $i \in \{1,\ldots,s\}$, we have $f_*[T_i]\cdot K_X^\vee \geq 4= \mathrm{dim}(X)+1$, then arguing as in the proof of Proposition \ref{prop2: how to prove SAE} above, we can delete $T_i$ from the domain. 

Thus, we may assume that, for every $i=1,\ldots,s$, we have $f_*[T_i]\cdot K_X^\vee \leq 2$ (recall that curves on $X$ always have even anti-canonical degree). Let
\begin{equation}\label{dec irr T_i}
    T_i = \bigcup_{j=0}^{k_i} T_i^j
\end{equation}
be the decomposition in irreducible components of $T_i$. Then, by Lemmas \ref{lemma: degree 0 curves in P3} and \ref{lemma: degree 2 curves in P3}, $f_*[T_i^j]$ is a fixed line for all but one $j \in \{ 0,\ldots.,k_i\}$, say except for $j=0$, and $f_*[T_i^0]$ is a pivoting line $\H^\vee+\E_h^\vee$ for some $h\in \{1,\ldots,\ell\}$. 

Now, for $[f]\in Z$ such that $f(p_i)=x_i$, we need $x_i \in f(T_i^0)$. Thus, the image $f(T_i^0)$ is the strict transform of the unique line between $q_h$ and $x_i$, while for $j\ge1$, the reduced image $f(T_i^j)^{\mathrm{red}}$ is either a point or the strict transform of the unique line in $\P^3$ through two of the points $q_1,\ldots,q_\ell$. By assumption, there is at least a $j$ for which $[f(T_i^j)^{\mathrm{red}}]$ is a fixed line. However, for general $x_i$, two such curves do not meet in $X$. Thus, $Z$ cannot dominate $\oM_{g,n}\times X^n$ unless $s=0$.
\end{proof}

Assume henceforth that $a=s=0$.

\vspace{5pt}
\noindent  \textbf{Reduction 3:} \emph{We can assume that each $T_i$ is irreducible and that $f_*[T_i]$ is a pivoting line for $i=1,\ldots.,n-m$.}
\begin{proof}
    If $f_*[T_i]\cdot K_X^\vee \geq 4$, then arguing as in the proof of Proposition \ref{prop2: how to prove SAE}, we can delete $T_i$. By Lemma \ref{min degree ordinary curves}, we can assume that if \eqref{dec irr T_i} is the decomposition in irreducible components of $T_i$, then $f(T_i^j)$ is a point unless $j=0$, and $f(T_i^0)$ is the strict transform of a line through one of the points $q_1,\ldots,q_\ell$. In particular, replacing $T_i$ with $T_i^0$, it is enough to prove Proposition \ref{non-dominance for blow-ups of P3} when $T_i=T_i^0$.
\end{proof}

Assume henceforth that each $T_i$ is irreducible and that $f_*[T_i]$ is a pivoting line for $i=1,\ldots.,n-m$.

\vspace{5pt}
\noindent  \textbf{Reduction 4:} 
It is enough to prove the following analog of condition (*).
\begin{enumerate}
    \item[(*')] The following inequality holds.
    \begin{equation*}
    \mathrm{dim}_{[f|_{C_{\spine}}]}(\oM_{g,n}(X,f_*[C_{\spine}])) - \mathrm{vdim}(\oM_{g,n}(X,f_*[C_{\spine}]))\leq n-(g+1).
    \end{equation*}
\end{enumerate}

\begin{proof}
    The proof is the same as that of Proposition \ref{prop1: how to prove SAE}.
\end{proof}

\begin{proof}[Proof of Proposition \ref{non-dominance for blow-ups of P3}]
It is enough to prove that condition (*') holds whenever $n$ is large (depending on $X$ and $g$) and whenever $g=0$ and $n \geq 3$. There are two cases.
\begin{enumerate}[label=\underline{Case \arabic*}]
    \item Suppose that $f_*[C_{\spine}]= -k'\E_h^\vee$ with $k'>0$, for some $h \in \{1,\ldots, \ell \}$. Then, $f(T_i)$ must be the strict transform of some line in $\P^3$ through $q_h$, and $C_{\spine}$ cannot contain any marked point (i.e., $m=0$). In particular, we have
    $$
    \beta=f_*[C]=n \H^\vee + (n-k') \E_h^\vee,
    $$
    and so by \eqref{inequality: dim constraint dim3},
    $$
    3(n+g-1)\ge\beta\cdot K_X^\vee =4n-2(n-k')=2 n+ 2k',
    $$
    from which we deduce 
    $$
    k'\le \frac{n}{2}+\frac{3}{2}(g-1).
    $$
    Proceeding as in Theorem \ref{SAE for blow-ups at 1 point}, we find
                \begin{align*}
           & \mathrm{dim}_{[f|_{C_{\spine}}]}(\oM_{g,n}(X,f_*[C_{\spine}])) - \mathrm{vdim}(\oM_{g,n}(X,f_*[C_{\spine}])) \\
             \leq & \, \mathrm{vdim}(\oM_{g,n}(\P^{2},k')) -\mathrm{vdim}(\oM_{g,n}(X,f_*[C_{\spine}])) + 2g \\
             = & \, [3(k'-g+1)-1]-[4(-g+1)+2k'-1]+2g \\
             = & \, 3g-1+k' \\
             \leq & \, \frac{9}{2}g-\frac{5}{2}+\frac{1}{2}n
            \end{align*}
    which implies the required inequality (*') if $n\ge 11g-7$ or if $g=0$ and $n\ge 3$.
    \item If $f_*[C_{\spine}]= d' \H^\vee+k_1'\E_1^\vee+\cdots+k_\ell'\E_\ell^\vee$ with $d'>0$ or it is the $0$ class, then condition (*') for sufficiently large $n$ follows from Lemma \ref{h1 nice curve classes}.
\end{enumerate}
This concludes the proof.
\end{proof}

\section{Geometric counts}\label{Explicit computations} 

\subsection{Integral formula on $\P$}\label{section: proof of integral formula}

In this section, we prove Proposition \ref{thm:integral formula}. We first recall the setup of \S\ref{geom_intro}.

Let $\pi:X \rightarrow \P^r$ be the blow-up of $\P^r$ at $\ell \leq r+1$ torus-fixed points, and let
$$
\beta= d \H^\vee + \sum_{i=1}^\ell k_i \E_i^\vee\in H_2(X,\Z)
$$
be an effective curve class.

Let $S:=\J(C) \times \Sym^{k_1}(C) \times \ldots  \times \Sym^{k_\ell}(C)$, and let $\nu:\P\to S$ be the projective bundle whose fiber over $(\cL,D_1,\ldots,D_\ell)\in S$ parametrizes $(r+1)$-tuples of sections 
$$f_i\in H^0\left(C,\cL\left(- \sum_{j \neq i} D_j\right)\right)$$ up to simultaneous scaling. We will write $$\cL':=\cL\left(- \sum_{j=1}^{\ell} D_j\right).$$

Recall also the notation $$\widetilde{\H}= c_1( \mathcal{O}_\P(1)) -\eta_1-\cdots-\eta_\ell \in H^*(\P,\Z)$$
where $\eta_i$ is the pullback from $\Sym^{k_i}(C)$ of the class of the divisor $N_i=\{D: D-p \geq 0 \} $ (here $p$ is any fixed point in $C$).

Fix general points $x_1,\ldots,x_n \in X$. We can assume that $x_i$ lies in the locus where $\pi$ is an isomorphism and identify  $x_i \in \P^r$, so that we can write
$$
x_i=[x_{i,1}:\cdots:x_{i,r+1}].
$$
We wish to cut out subschemes of $V(x_i)\subset\P$ corresponding to the conditions $f(p_i)=x_i$, but care must be taken when the $p_i$ vanishes at some of the $f_j$. For a given point $[f_1:\cdots:f_{r+1}]$ of $\P$, in order for $f(p_i)=x_i$, we need
\begin{equation}\label{def V(xi)}
    x_{i,j}\cdot f_h(p_i)-x_{i,h}\cdot f_j(p_i)=0
\end{equation}
for all $j,h$. In order for the equation \eqref{def V(xi)} to make sense, we need to regard $f_h,f_j$ as sections of the same line bundle; the line bundle of minimal degree for which this is possible is $\mathcal{L}'(D_h+D_j)$.

More generally, for all $i=1,\ldots,n$ and $a\le r$, we define $V(x_i)_a$ to be the closed subscheme of points $$(\mathcal{L},D_1,\ldots,D_\ell,\{f=[f_1:\cdots:f_{r+1}]\})\in\P$$
cut out by the equations \eqref{def V(xi)} for all $h,j$ with $1 \leq h < j \leq a+1$, where we interpret $f_h$ and $f_j$ as sections of $\mathcal{L}'(D_h+D_j)$, and then restrict to $p_i$. That is, $V(x_i)_a$ is the locus of $f$ where the first $a+1$ coordinates of $f(p_i)$ agree with those of $x_i$, where the $h$-th and $j$-th coordinates of $f(p_i)$ and $x_i$ are compared by regarding $f_h,f_j$ as sections of $\mathcal{L}'(D_h+D_j)$.

Write $V(x_i)=V(x_i)_r$, and define now
$$
V(x_1,\ldots,x_n)= \bigcap_{i=1}^n V(x_i).
$$

Theorem \ref{thm:integral formula} is implied by the following two statements.

\begin{proposition}\label{class of W(xi)}
    For all $i=1,\ldots,n$, we have  
    $$
    [V(x_i)_a]=\widetilde{\H}^a+\sigma_1 \widetilde{\H}^{a-1}+\cdots+ \sigma_{a} \text{ in } H^{2a}(\P).
    $$ 
    where $\sigma_i=\sigma_i(\eta_1,\ldots,\eta_{a+1})$ for $i=1,\ldots,a$.
\end{proposition}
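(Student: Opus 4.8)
The plan is to compute the class $[V(x_i)_a]$ by induction on $a$, exploiting the fact that each successive equation \eqref{def V(xi)} cuts down the dimension by one. First I would set up the geometry: the subscheme $V(x_i)_a$ is defined by the vanishing of the $2\times 2$ minors $x_{i,j}f_h(p_i)-x_{i,h}f_j(p_i)$ for $1\le h<j\le a+1$, which geometrically expresses the condition that the truncated section vector $(f_1(p_i),\ldots,f_{a+1}(p_i))$ is proportional to $(x_{i,1},\ldots,x_{i,a+1})$. The key observation is that, on $\P=\P(\cE)$, evaluating $f_j$ at $p_i$ and twisting appropriately gives a section of a line bundle, so each equation \eqref{def V(xi)} is the vanishing of a section of a line bundle on $\P$, and I would identify that line bundle's first Chern class.

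The central computation is to determine the class of the evaluation map $f_j \mapsto f_j(p_i)$ as a section. I would argue that $f_j$, as a section of $\cE_j=\nu_*\bigl(\mathcal{P}(-\sum_{k\neq j}\mathcal{D}_k)\bigr)$, tautologically gives rise to a section of $\cO_\P(1)\otimes\eta^*(\text{something})$, and evaluating at the fixed point $p_i$ lands in a line bundle whose class I would compute in terms of $c_1(\cO_\P(1))$ and the $\eta_k$. The role of $\widetilde{\H}=c_1(\cO_\P(1))-\eta_1-\cdots-\eta_\ell$ is precisely to normalize the evaluation: the class $\eta_i$ is the pullback of $N_i=\{D:D-p\ge0\}$, and the twist by $D_j$ needed to compare $f_h$ and $f_j$ as sections of a common bundle $\cL'(D_h+D_j)$ introduces exactly the divisor classes $\eta_j$. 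I would make this precise, concluding that the difference $x_{i,j}f_h(p_i)-x_{i,h}f_j(p_i)$ is a section of a line bundle with first Chern class $\widetilde{\H}+\eta_h+\eta_j$ (or a closely related expression), so that cutting by the $(a+1)$-st equation contributes a factor whose leading term is $\widetilde{\H}$.

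For the inductive step, I would show $V(x_i)_a$ is obtained from $V(x_i)_{a-1}$ by imposing one more proportionality equation, and that this equation is generically transverse, so that $[V(x_i)_a]$ is obtained by multiplying $[V(x_i)_{a-1}]$ by the relevant class and adding a correction coming from the locus where $f_{a+1}(p_i)=0$. The cleanest formulation is to recognize that $\widetilde{\H}^a+\sigma_1\widetilde{\H}^{a-1}+\cdots+\sigma_a$ is exactly the Chern-class expansion $\prod_{k=1}^{a+1}(\widetilde{\H}+\eta_k)$ collapsed by the elementary symmetric functions $\sigma_i=\sigma_i(\eta_1,\ldots,\eta_{a+1})$ — or, equivalently, the degree-$a$ part of $\prod(\widetilde{\H}+\eta_k)/\widetilde{\H}$-type generating series — and I would verify that the resultant/Sylvester-type locus cut out by the minors has precisely this class via a standard determinantal or excess-intersection argument. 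This matches the claim since $V(x_i)_a$ is a codimension-$a$ complete-intersection-like locus cut out by the proportionality of an $(a+1)$-vector.

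\textbf{Main obstacle.} The hard part will be handling the degenerate locus where some of the sections $f_j$ vanish at $p_i$, since there the naive proportionality equations degenerate and the subscheme $V(x_i)_a$ may fail to be a transverse complete intersection; the scheme structure defined by \eqref{def V(xi)} must be shown to carry the expected class despite these degenerations. I expect this to require a careful local analysis — either showing the excess contributions vanish for dimension reasons (using the inequality \eqref{eqn: range for d to have enumerativity}, as flagged in the surrounding text via Lemma \ref{coordinate_nonzero}), or computing the class on a resolution/blow-up and pushing forward. Correctly bookkeeping which $\eta_j$ twists appear, and confirming that the symmetric-function expression $\widetilde{\H}^a+\sigma_1\widetilde{\H}^{a-1}+\cdots+\sigma_a$ emerges with the right indices $\sigma_i=\sigma_i(\eta_1,\ldots,\eta_{a+1})$ rather than the full $\sigma_i(\eta_1,\ldots,\eta_\ell)$, is the delicate combinatorial core of the argument.
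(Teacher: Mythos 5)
Your setup is correct as far as it goes: each pairwise equation \eqref{def V(xi)} is the vanishing of a tautological section of a line bundle of class $\widetilde{\H}+\eta_h+\eta_j$ (this is exactly the paper's Lemma \ref{div classes}), and you rightly flag the degeneration at points where sections vanish at $p_i$ as the crux. But the proposal has a genuine gap precisely there: you never identify what the correction term is, and the two routes you offer for handling it do not work. Intersecting $V(x_i)_{a-1}$ with the divisor $W_{a,a+1}=\{f_a(p_i)=f_{a+1}(p_i)\}$ yields, set-theoretically, $V(x_i)_a$ \emph{together with} a second genuine codimension-$a$ component, namely $V(x_i)_{a-2}\cap W_a\cap N_a$, where $W_a$ is the divisor $\{f_a(p_i)=0\}$ (with $f_a$ viewed as a section of $\cL'(D_a)$) and $N_a$ is the locus $\{p_i\in D_a\}$: once $p_i\in D_a$, the twisting maps into $\cL'(D_h+D_a)$ kill the terms twisted through $D_a$, so the equations involving the index $a$ degenerate. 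This gives the paper's recursion (Lemma \ref{recursion for [V]a})
$$[V]_a=[V]_{a-1}(\widetilde{\H}+\eta_a+\eta_{a+1})-[V]_{a-2}(\widetilde{\H}+\eta_a)\eta_a,$$
and the subtracted term is essential: if the excess vanished, one would get $[V]_a=\prod_{k=1}^{a}(\widetilde{\H}+\eta_k+\eta_{k+1})$, which already for $a=2$ differs from the claimed formula by $\eta_2(\widetilde{\H}+\eta_2)$. So your suggested route of ``showing the excess contributions vanish for dimension reasons'' would prove a false statement. Note also that the excess is governed by $f_a$ and the condition $p_i\in D_a$, not by ``$f_{a+1}(p_i)=0$'' as you wrote, and the hypothesis powering this proposition is \eqref{stronger inequality d, ks} (the $2g-1$ bound, via Cohomology and Base Change), not \eqref{eqn: range for d to have enumerativity} and Lemma \ref{coordinate_nonzero}, which enter only in the transversality statement, Proposition \ref{Understanding V(x1,...,xn)}.

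The second missing ingredient is the justification that every cycle involved --- $V(x_i)_a$ itself, $V_{a-1}\cap W_{a,a+1}$, and $V_{a-2}\cap W_a\cap N_a$ --- is generically reduced and irreducible of the expected codimension, so that the intersection products compute classes with multiplicity one. The paper does this in Lemma \ref{lem:V_codim}: the open locus where $p_i\notin D_v$ for all $v$ (on which the conditions are honest linear conditions on fibers of the projective bundle) is shown to be dense in $V(x_i)_a$ by a deformation argument over a discrete valuation ring, using surjectivity of restriction maps guaranteed by \eqref{stronger inequality d, ks}. Your asserted ``codimension-$a$ complete-intersection-like locus'' and the appeal to a ``standard determinantal or excess-intersection argument'' do not substitute for this: the would-be $2\times(a+1)$ matrix has entries valued in line bundles that vary with the column, so Thom--Porteous does not apply off the shelf, and the loci $N_k$ where the comparison maps degenerate carry exactly the excess cycles above. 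In short: right framework and right identification of the danger, but the actual content of the paper's proof --- the decomposition $(V_{a-1}\cap W_{a,a+1})=V_a\cup(V_{a-2}\cap W_a\cap N_a)$, the generic reducedness and irreducibility of each piece, and the recursion plus symmetric-function identity that together yield $\widetilde{\H}^a+\sigma_1\widetilde{\H}^{a-1}+\cdots+\sigma_a$ --- is absent.
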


\begin{proposition}[Transversality]\label{Understanding V(x1,...,xn)}
    The scheme $V(x_1,\ldots,x_n)$ is reduced and $0$-dimensional. Moreover, its $\C$-points are in bijection with the set of maps enumerated in $\mathsf{Tev}^X_{g,n,\beta}$. (In particular, the $f_j$, when regarded as sections of $\cL$, do not simultaneously vanish at any $p\in C$.)
\end{proposition}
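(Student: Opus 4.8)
The goal is to prove Proposition \ref{Understanding V(x1,...,xn)}, which asserts that the intersection $V(x_1,\ldots,x_n)$ is reduced, $0$-dimensional, and its $\C$-points match the maps counted by $\mathsf{Tev}^X_{g,n,\beta}$.

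\textbf{Overview of the approach.} The plan is to split the statement into two independent assertions: first, a \emph{set-theoretic} identification of the $\C$-points of $V(x_1,\ldots,x_n)$ with the genuine Tevelev count, and second, a \emph{transversality/reducedness} statement showing the intersection is $0$-dimensional and each point appears with multiplicity one. The most delicate conceptual issue, flagged already in the introduction, is the possibility that the sections $f_1,\ldots,f_{r+1}$ vanish simultaneously at some point $p \in C$ (in particular at one of the $p_i$); such a point does not define a genuine map to $\P^r$, so I must rule it out, and this is where I expect the hypotheses \eqref{stronger inequality d, ks} and especially \eqref{eqn: range for d to have enumerativity} to enter decisively.

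\textbf{Set-theoretic identification.} First I would show that a general point of $\P$ satisfying the equations \eqref{def V(xi)} for all $i$ does give a well-defined map $f : C \to \P^r$ of the correct numerical type, and conversely. Unwinding the definitions, a point of $\P$ is the data of $\cL$, effective divisors $D_1,\ldots,D_\ell$, and sections $f_j \in H^0(C, \cL(-\sum_{j'\neq j} D_{j'}))$ up to scaling; regarding the $f_j$ as sections of $\cL$, the tuple $[f_1 : \cdots : f_{r+1}]$ defines a rational map to $\P^r$, which is a morphism precisely on the locus where the $f_j$ do not all vanish. The equations \eqref{def V(xi)} with $a = r$ encode exactly the condition $f(p_i) = x_i$ when $f$ is defined at $p_i$. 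So the crux is the base-locus issue: I must invoke the auxiliary lemma (Lemma \ref{coordinate_nonzero}, cited in the text) to conclude that on the relevant locus the $f_j$ have no common zero, using $n - d \geq g + 1$ to control the degree of the common vanishing scheme against the number of imposed conditions. Once base-point-freeness is established, the divisors $D_i$ are recovered as the preimages of the exceptional loci, so the map to $X = \Bl(\P^r)$ has class $\beta$, and the bijection with $\mathsf{Tev}^X_{g,n,\beta}$ follows from the setup in \S\ref{geom_intro}.

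\textbf{Transversality and reducedness.} For the dimension count, I would use the class computation of Proposition \ref{class of W(xi)}: each $[V(x_i)]$ has codimension $r$ in $\P$, and $\dim \P = \mathrm{dim}(S) + (\mathrm{rk}\,\cE - 1)$, so that the expected dimension of $V(x_1,\ldots,x_n)$ is $0$ exactly when \eqref{dim constraint} holds. Thus it suffices to prove that the $n$ subschemes $V(x_i)$ meet transversely at each point of the intersection, for general $x_i$. The natural tool is a Kleiman-Bertini / generic-transversality argument: I would exhibit a group acting (fiberwise, the linear automorphisms rescaling and permuting the evaluation data at the $p_i$, together with the translation action making the $x_i$ general) with enough transitivity so that the general translates of the incidence conditions intersect transversely, and then verify that the resulting reduced finite scheme lies in the base-point-free locus so no multiplicity is lost. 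Here the stronger inequality \eqref{stronger inequality d, ks}, which guarantees $\cE$ is a vector bundle and remains so after twisting by $-p_i$ (hence the $2g-1$), is what lets me compute the evaluation map $H^0(\cL'(D_h + D_j)) \to (\text{fiber at } p_i)$ as a surjection, yielding transversality of each individual $V(x_i)$ and their mutual independence.

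\textbf{Main obstacle.} I expect the hard part to be the transversality/reducedness half rather than the set-theoretic half: controlling the scheme structure of the $V(x_i)$ near points where some (but not all) of the $f_j$ vanish at $p_i$ requires care, since there the equations \eqref{def V(xi)} degenerate and must be read after the twists by $D_h, D_j$. Showing that these twisted linear conditions remain independent and cut out a smooth point demands a careful local analysis of the evaluation maps on $H^0$, leaning on the surjectivity afforded by \eqref{stronger inequality d, ks}. I would organize this by first handling the open locus where $f$ is a genuine base-point-free morphism (where transversality reduces to the classical statement that imposing $f(p_i) = x_i$ for general $x_i$ is transverse, much as in \cite{LP}), and then separately checking that the boundary contributions — tuples with common zeros — are excluded by the dimension bound coming from \eqref{eqn: range for d to have enumerativity}, so that they contribute neither extra points nor excess dimension.
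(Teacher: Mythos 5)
Your proposal correctly isolates the central difficulty (common vanishing of the $f_j$) and correctly locates the reducedness argument on the base-point-free locus, but the mechanism you offer for the hard half does not work, and this is a genuine gap. You claim that Lemma \ref{coordinate_nonzero} lets you ``conclude that on the relevant locus the $f_j$ have no common zero.'' That lemma says something much weaker: each individual $f_j$ is not the \emph{identically zero} section of $\cL'(D_j)$. It says nothing about common vanishing \emph{points}, and excluding those, i.e.\ proving $V(x_1,\ldots,x_n)\subset\P^\circ$, is precisely what the paper identifies as ``the main difficulty'' and what occupies almost all of its proof. Your backup plan --- a dimension count ``controlling the degree of the common vanishing scheme against the number of imposed conditions'' via \eqref{eqn: range for d to have enumerativity} --- fails for a concrete reason: when a base point of the $f_j$ sits at a marked point $p_i$, the equations \eqref{def V(xi)} degenerate and can become partially or entirely vacuous (these are the ``inactive'' and ``wild'' points in the paper's terminology), so the incidence conditions no longer impose the naive number of constraints, and an expected-dimension count carries no force without an accompanying statement that actual dimensions equal expected ones. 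The paper's solution is a twisting algorithm (Steps 1--7) that converts any hypothetical base-pointed element of $V(x_1,\ldots,x_n)$ into an honest map to the permutohedral variety $Y$, carefully bookkeeping how each twist changes the degree data and the surviving incidence conditions (Lemmas \ref{new_degree_bound} and \ref{exp_dim_after_twisting}, via the quantity $\Omega$); the punchline is that the resulting map would move in a family of negative expected dimension, and the actual dimension matches the expected one by Lemma \ref{basic_transversality} --- applicable only because $n-d\ge g+1$ guarantees at least $g+1$ \emph{regular} marked points survive the twisting (Lemma \ref{actual_dim_after_twisting}). Nothing in your proposal substitutes for this machinery.

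A secondary problem: your Kleiman--Bertini plan for transversality is not available here, since there is no transitive group action on $X=\Bl_{q_1,\ldots,q_\ell}(\P^r)$ (or on $\P$) compatible with the incidence conditions; the torus action has fixed loci exactly where the trouble occurs. The paper instead derives both the $0$-dimensionality and the reducedness (absence of nontrivial tangent vectors) from Lemma \ref{basic_transversality} applied to the fiber of $\tau:\cM_{g,n}(X,\beta)\to\cM_{g,n}\times X^n$ over a general point, which is close to your parenthetical fallback ``much as in \cite{LP}'' --- so that half of your outline is salvageable once the containment $V(x_1,\ldots,x_n)\subset\P^\circ$ is actually established.
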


The proof of transversality is deferred to the next section. Here, we prove Proposition \ref{class of W(xi)}. Throughout the rest of this section, we work with a fixed $i$, and assume that $x_i=[1:\cdots:1]$. Also, by setting $\eta_i=0$ for $i>\ell$, we assume that $\ell=r+1$. 

\begin{lemma}\label{lem:V_codim}
    For all $a\le r$, the subscheme $V(x_i)_a\subset\P$ is generically reduced and irreducible of codimension $a$.
\end{lemma}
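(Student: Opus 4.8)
The plan is to analyze $V(x_i)_a$ fiber by fiber over $S$ via the projection $\eta\colon\P=\P(\cE)\to S$, reducing the statement to a single rank computation for the linear system \eqref{def V(xi)}. For a point $s=(\cL,D_1,\ldots,D_\ell)\in S$ the fiber $\eta^{-1}(s)=\P(\cE_s)$ parametrizes tuples $[f_1:\cdots:f_{r+1}]$, and since the equations \eqref{def V(xi)} are linear in the $f_h$, the fiber $V(x_i)_a\cap\eta^{-1}(s)$ is the projectivization of the kernel of a linear system. To set this up, for each $h$ I would use the evaluation map $\epsilon_h\colon\cE\to\mathcal{A}_h:=\mathcal{P}(-\sum_{j\neq h}\mathcal{D}_j)|_{S\times\{p_i\}}$ onto a line bundle on $S$. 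Here the hypothesis \eqref{stronger inequality d, ks}, precisely in its form with $2g-1$ on the right, guarantees $H^1(\cL(-\sum_{j\neq h}D_j-p_i))=0$ on every fiber, so by cohomology and base change each $\epsilon_h$ is surjective. Writing $c_m:=\sigma_{D_m}(p_i)$ for the value at $p_i$ of the canonical section of $\cO(\mathcal{D}_m)$ (which vanishes exactly along $N_m$), the equation indexed by $(h,j)$ becomes, after setting $x_i=[1:\cdots:1]$, the functional $f\mapsto c_j\,\epsilon_h(f)-c_h\,\epsilon_j(f)$.

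Next I would compute the rank of this system. Since the $\epsilon_h$ are supported on distinct summands of $\cE$ and each is surjective, the functionals $\epsilon_1,\ldots,\epsilon_{a+1}$ are everywhere linearly independent; hence the span of $\{c_j\epsilon_h-c_h\epsilon_j\}_{1\le h<j\le a+1}$ is exactly the orthogonal complement of the vector $(c_1,\ldots,c_{a+1})$, which has rank $a$ when $(c_1,\ldots,c_{a+1})\neq 0$ and rank $0$ otherwise. Thus the system has rank exactly $a$ away from the locus
$$
\Sigma:=\{\,s\in S:\ p_i\in\mathrm{Supp}(D_m)\ \text{for all}\ m=1,\ldots,a+1\,\}=N_1\cap\cdots\cap N_{a+1},
$$
which has codimension $a+1$ in $S$, while over $\Sigma$ all equations \eqref{def V(xi)} vanish identically and the fiber is all of $\P(\cE_s)$. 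Consequently, over the open set $U:=S\setminus\Sigma$ the bundle map $\Psi=(c_j\epsilon_h-c_h\epsilon_j)_{h<j}$ has constant rank $a$, so $\mathcal{K}:=\ker\Psi|_U$ is a subbundle of corank $a$ and $V(x_i)_a|_U=\P(\mathcal{K})$. As $S$ is smooth and irreducible and $\Sigma$ has codimension $\ge 2$ (for $a\ge 1$; the case $a=0$ is trivial since then $V(x_i)_0=\P$), the set $U$ is smooth and irreducible, so $\P(\mathcal{K})$ is smooth, irreducible, and of codimension $a$. This already yields generic reducedness and exhibits one codimension-$a$ component.

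The hard part will be ruling out a spurious component over $\Sigma$: there the fiber of $V(x_i)_a$ is the whole $\P(\cE_s)$, contributing a locus of dimension one less than the main part, and I must show it lies in the closure $\overline{\P(\mathcal{K})}$ rather than forming a separate component. I would argue this by a flat-limit computation. Fix a general $s_0\in\Sigma$, so $D_m=p_i+D_m'$ with $p_i\notin\mathrm{Supp}(D_m')$ for $m\le a+1$, and deform $s_0$ inside $S$ by moving the point $p_i$ occurring in each $D_m$ to a nearby $q_m(t)\to p_i$, which places $s_t\in U$. Along such a family $c_m(t)\to 0$, and moving each $q_m$ at first order $t$ gives $c_m(t)=\mu_m t+o(t)$ with $\mu_m\neq 0$; the direction $[\mu_1:\cdots:\mu_{a+1}]$ can be prescribed freely since the $q_m$ deform independently in distinct factors. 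The flat limit of the fibers $\P(\mathcal{K}_{s_t})$ is then the projectivized kernel of the rescaled system $\{\mu_j\epsilon_h-\mu_h\epsilon_j\}_{h<j}$, a codimension-$a$ linear subspace of $\P(\cE_{s_0})$ depending on $[\mu_1:\cdots:\mu_{a+1}]$. A direct check shows that as $[\mu_1:\cdots:\mu_{a+1}]$ ranges over $\P^a$ these subspaces sweep out all of $\P(\cE_{s_0})$ (any $f$ lies in the kernel for the choice $\mu_m=\epsilon_m(f)$). Hence $\overline{\P(\mathcal{K})}$ contains the entire fiber over each general point of $\Sigma$; since the non-general part of $\Sigma$ has strictly larger codimension, I conclude $V(x_i)_a=\overline{\P(\mathcal{K})}$ is irreducible of codimension $a$ and generically reduced, as claimed.
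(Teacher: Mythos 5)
Your strategy is sound and genuinely different in its mechanics from the paper's. The paper restricts to the smaller open set $\P_0\subset\P$ where $p_i\notin D_v$ for \emph{all} $v$ (there the fibers of $V(x_i)_a$ over $S$ are visibly linear subspaces of codimension $a$), and then proves that $\P_0\cap V(x_i)_a$ is dense in $V(x_i)_a$ by a deformation argument: given an \emph{arbitrary} point of $V(x_i)_a$, it constructs over a DVR $B$ a family of divisors moving off $p_i$ together with extensions $\widetilde{f}_t$ of the sections whose values along $\{p_i\}\times B$ agree, the key input being surjectivity of restriction to $(C\times 0)\cup(p_i\times B)$, i.e. exactly the $H^1$-vanishing coming from the $2g-1$ in \eqref{stronger inequality d, ks}. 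You instead stratify $S$ by the rank of the linear system: your observation that the span of $\{c_j\epsilon_h-c_h\epsilon_j\}$ is the orthogonal complement of $(c_1,\ldots,c_{a+1})$, hence of rank exactly $a$ away from $\Sigma=N_1\cap\cdots\cap N_{a+1}$ (codimension $a+1$, not merely away from $\bigcup_m N_m$), shows at once that $V(x_i)_a$ is a smooth projective subbundle over $S\setminus\Sigma$, localizing the whole difficulty at $\eta^{-1}(\Sigma)$, which you then handle by a flat-limit sweep. This gives a sharper structural picture than the paper's argument (which says nothing direct about points lying over $\bigcup_m N_m\setminus\Sigma$), at the cost of a more delicate limit analysis at $\Sigma$; both proofs consume \eqref{stronger inequality d, ks} at the same spot, namely surjectivity of evaluation at $p_i$.

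Two steps need repair, though both are fixable with what you already have. First, in the sweep: first-order motion of the $q_m$ only realizes directions $[\mu_1:\cdots:\mu_{a+1}]$ with \emph{all} coordinates nonzero, so the choice $\mu_m=\epsilon_m(f)$ is not available when some but not all of the $\epsilon_m(f)$ vanish. Either realize arbitrary directions by moving the $q_m$ at different orders of $t$ (e.g. $q_m(t)=p_i+t^{\gamma_m}v_m$, reparametrizing as needed), or note that the union of the limit spaces $\ker_\mu$ over all-nonzero $\mu$ contains the dense open locus $\{f:\epsilon_m(f)\neq0\ \text{for all}\ m\}$ together with $\{f:\epsilon(f)=0\}$, and conclude by taking closures. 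Second, and more seriously, your final inference is a non sequitur as stated: that the non-general part of $\Sigma$ has larger codimension does not prevent $\eta^{-1}(\Sigma^{\mathrm{bad}})$ from being a separate, lower-dimensional irreducible component of $V(x_i)_a$ -- a small closed set need not lie in the closure of the main part. The correct argument is short: $\Sigma$ is irreducible (a product of the irreducible divisors $N_m\cong\Sym^{k_m-1}(C)$ with the remaining factors of $S$), so its general locus $\Sigma^{\mathrm{gen}}$ is dense in it; since the fiber of $V(x_i)_a$ over \emph{every} point of $\Sigma$ is the full $\P(\cE_s)$, the locus $\eta^{-1}(\Sigma)\cap V(x_i)_a$ is a projective bundle over $\Sigma$, so the preimage of $\Sigma^{\mathrm{gen}}$ is dense in it, and therefore all of $\eta^{-1}(\Sigma)$ lies in $\overline{\eta^{-1}(\Sigma^{\mathrm{gen}})\cap V(x_i)_a}\subseteq\overline{\P(\mathcal{K})}$. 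With these two patches your proof is complete.
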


\begin{proof}
We first prove the lemma upon restriction to the open set $\P_0\subset\P$ where $p_i\notin D_v$ for all $v=1,2,\ldots,r+1$. In this case, the desired statement is already true upon restriction to the fibers of the projective bundle $\P$. That is, for fixed $\cL\in\J C$ and $D_v\in\Sym^{k_v}C$ not containing $p_i$, the restriction of $V(x_i)_a$ to the fiber of $\P$ over $(\cL,D_v)$ is reduced and irreducible of codimension $a$. To see this, we write
$$\P':=\P\left(\bigoplus_{t=1}^{r+1}H^0(C,\cL'(D_t))\right),$$
for the fiber of $\P$ over $(\cL,D_v)$. Then, the restriction of $V(x_i)_a$ to $\P'$ is the linear space cut out by the $a$ independent linear equations $f_1(p_i)=f_{v}(p_i)$ for $v=2,3,\ldots,a+1$, where we may as well consider $f_1,\ldots,f_{a+1}$ all as sections of the same line bundle $\cL$.

We now claim that $\P_0\cap V(x_i)_a$ is dense in $V(x_i)_a$, which will prove the lemma. We show that any point $$(\mathcal{L},D_1,\ldots,D_{r+1},\{f=[f_1:\cdots:f_{r+1}]\})\in V(x_i)_a$$ is a limit of points in $\P_0\cap V(x_i)_a$. 

Let $B$ be the spectrum of a discrete valuation ring, and let $\pi:C\times B\to B$ be the projection. Let $\widetilde{D}_1,\ldots,\widetilde{D}_{r+1}$ be divisors on $C\times B$ restricting to $D_1,\ldots,D_{r+1}$ on the special fiber, and such that $p_i\notin\widetilde{D}_1,\ldots,\widetilde{D}_{r+1}$ upon restriction to the generic fiber. For each $t$, we construct a section $$\widetilde{f}_t\in H^0(C\times B,\cL'(\widetilde{D}_t))\cong
H^0(B,\pi_{*}(\cL'(\widetilde{D}_t)))$$
restricting to $f_t$ on the special fiber, and for which the restrictions of $\widetilde{f}_t$ to the $\{p_i\}\times B$ are all equal to each other, when regarded as sections of $H^0(B,\cL\vert_{p_i})$. By the assumption that $p_i\notin\widetilde{D}_1,\ldots,\widetilde{D}_{r+1}$ on the generic fiber, the $\widetilde{f}_t$ and $\widetilde{D}_t$ define a $K(B)$-point of $\P_0\cap V(x_i)_a$, as needed.

The construction is as follows. First, consider the restriction map
$$\gamma:H^0(C\times B,\cL'(\widetilde{D}_1))\to H^0\left(C\times B,\left.\cL'(\widetilde{D}_1)\right\vert_{0}\right)\cong H^0\left(C,\cL'(D_1))\right),$$
where $0\in B$ is the closed point. We have $H^1(C\times B,\cL'(\widetilde{D}_1)(-C\times 0))=0$ by Cohomology and Base Change and the Leray spectral sequence, so $\gamma$ is surjective, and we may define $\widetilde{f}_1$ to be any section extending $f_1$.

Similarly, the restriction map $$\gamma':H^0(C\times B,\cL'(D_1))\to H^0\left(C\times B,\left.\cL'(D_1)\right\vert_{(C\times 0)\cup (p_i\times B)}\right)$$ is surjective, because $H^1(C\times B,\cL'(D_1)(-(C\times 0)-(p_i\times B)))=0$. Note here that in order to ensure that $R^1\pi_{*}(\cL'(D_1)(-(C\times 0)-(p_i\times B)))=0$, we need the right hand side of \eqref{stronger inequality d, ks} to be $2g-1$, rather than $2g-2$. Therefore, we can define all other $\widetilde{f}_t$ extending $f_t$ \emph{and} with $\widetilde{f}_1(p_i)=\widetilde{f}_t(p_i)$ in $\mathcal{L}$.
\end{proof}

For all $t=1,2,\ldots,r+1$, let $W_t\subset\P$ denote the divisor cut out by the equation $f_t(p_i)=0$, where $f_t$ is viewed as a section of $\cL'(D_t)$. Similarly, for all $t=1,2,\ldots,r$, let $W_{t,t+1}$ denote the locus on $\P$ of points satisfying $f_{t}(p_i)-f_{t+1}(p_i)=0$, where $f_t-f_{t+1}$ is viewed as a section of $\cL'(D_t+D_{t+1})$. Condition \eqref{stronger inequality d, ks} implies that $W_t$ and $W_{t,t+1}$ are indeed divisors.

\begin{lemma}\label{div classes}
We have $$[W_t]=\widetilde{\H}+\eta_{a}$$ and $$[W_{t,t+1}]=\widetilde{\H}+\eta_{a}+\eta_{a+1}$$ in $H^2(\P)$.
\end{lemma}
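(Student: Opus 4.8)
The plan is to exhibit each of $W_t$ and $W_{t,t+1}$ as the zero scheme of a section of a line bundle of the form $\mathcal{O}_\P(1)\otimes\nu^*M$, where $M$ is pulled back from $S$ along $\nu:\P\to S$. Once this is done the class computation reduces to computing $c_1(M)$, since a nonzero (hence regular) such section has zero locus of class $c_1(\mathcal{O}_\P(1))+\nu^*c_1(M)$; the hypothesis \eqref{stronger inequality d, ks} is exactly what guarantees these sections are not identically zero, so that $W_t$ and $W_{t,t+1}$ are honest divisors with the asserted classes.

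For $W_t$, write $\mathrm{pr}:S\times C\to S$ for the projection and $\iota_i:S\to S\times C$ for the section $s\mapsto(s,p_i)$, so that $\cE_t:=\mathrm{pr}_*(\mathcal{P}(-\sum_{j\neq t}\mathcal{D}_j))$ is the $t$-th summand of $\cE$. Restriction of sections to $p_i$ is a morphism $\cE_t\to M_t$ with $M_t:=\iota_i^*(\mathcal{P}(-\sum_{j\neq t}\mathcal{D}_j))$. I would compose the tautological inclusion $\mathcal{O}_\P(-1)\hookrightarrow\nu^*\cE$ with the projection to $\nu^*\cE_t$ and then with $\nu^*$ of this evaluation; this produces a section of $\mathcal{O}_\P(1)\otimes\nu^*M_t$ whose vanishing is precisely $W_t$, giving $[W_t]=c_1(\mathcal{O}_\P(1))+\nu^*c_1(M_t)$. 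To finish I compute $c_1(M_t)=\iota_i^*c_1(\mathcal{P})-\sum_{j\neq t}\iota_i^*c_1(\mathcal{O}(\mathcal{D}_j))$. Since the universal divisor restricts as $\mathcal{O}(\mathcal{D}_j)|_{S\times\{p_i\}}=\mathcal{O}(N_j)$ (taking $p=p_i$), we get $\iota_i^*c_1(\mathcal{O}(\mathcal{D}_j))=\eta_j$; combined with $c_1(\mathcal{O}_\P(1))=\widetilde{\H}+\eta_1+\cdots+\eta_\ell$ from \eqref{Definition tilde(H)}, this yields $[W_t]=\widetilde{\H}+\eta_t$, provided the Poincaré term $\iota_i^*c_1(\mathcal{P})$ vanishes.

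The case $W_{t,t+1}$ is entirely parallel: here I view $f_t$ and $f_{t+1}$ as sections of $\mathcal{P}(-\sum_{j\neq t,t+1}\mathcal{D}_j)$ via the tautological twisting inclusions by the effective divisors $\mathcal{D}_{t+1}$ and $\mathcal{D}_t$, form the difference-then-evaluate morphism $\cE\to M_{t,t+1}$ with $M_{t,t+1}:=\iota_i^*(\mathcal{P}(-\sum_{j\neq t,t+1}\mathcal{D}_j))$, and obtain $W_{t,t+1}$ as the zero scheme of the induced section of $\mathcal{O}_\P(1)\otimes\nu^*M_{t,t+1}$. The same Chern-class bookkeeping gives $c_1(M_{t,t+1})=-\sum_{j\neq t,t+1}\eta_j$, and hence $[W_{t,t+1}]=\widetilde{\H}+\eta_t+\eta_{t+1}$.

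The main obstacle I anticipate is the vanishing of $\iota_i^*c_1(\mathcal{P})$, i.e. the claim that the restriction of the Poincaré bundle to $\J(C)\times\{p_i\}$ is cohomologically trivial. I would settle this via the Künneth decomposition of $c_1(\mathcal{P})\in H^2(\J(C)\times C)$: restriction along $\iota_i$ annihilates the $H^1(\J)\otimes H^1(C)$ and $H^0(\J)\otimes H^2(C)$ components, since $H^{>0}(C)$ restricts to zero at a point, leaving only the $H^2(\J)\otimes H^0(C)$ component, which is independent of $p_i$ and equals $c_1(\mathcal{P}|_{\J\times\{p_0\}})=0$ under the standard normalization of $\mathcal{P}$. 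A secondary point to check is that the maps constructed above really are sections of the asserted twists of $\mathcal{O}_\P(1)$ — this is where the convention that $\P=\P(\cE)$ parametrizes \emph{lines} (so that $\mathcal{O}_\P(-1)$ is the tautological sub-line-bundle) is used — and that \eqref{stronger inequality d, ks} makes them nonzero, so the zero loci are divisors rather than loci of higher codimension.
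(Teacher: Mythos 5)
Your proposal is correct and matches the paper's proof in all essentials: both realize $W_t$ and $W_{t,t+1}$ as zero loci of tautological sections of $\mathcal{O}_\P(1)$ twisted by a line bundle pulled back from $S$, check via \eqref{stronger inequality d, ks} that these are honest divisors, and then do the same Chern-class bookkeeping with the restrictions of $\mathcal{P}$ and the $\mathcal{O}(\mathcal{D}_v)$ at $p_i$. Your Künneth argument for $\iota_i^* c_1(\mathcal{P})=0$ is in fact a slightly more careful treatment of the point the paper dispatches with the isomorphism $\mathcal{P}\vert_{\J(C)\times\{p_i\}}\cong\mathcal{O}_{\J(C)}$ (which literally holds only for the normalizing point, but whose first Chern class vanishes for every $p_i$, which is all that is used).
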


\begin{proof}
    We prove the second statement; the first is similar. The locus $W_{t,t+1}$ is cut out by a tautological section of the line bundle 
    $$
    \cO_\P(1)\otimes  \nu_{*}\left(\left.\mathcal{P}\left(-\sum_{v \neq t,t+1} \mathcal{D}_v\right)\right\vert_{p_i}\right)$$
    where $\nu:C\times S\to S$ is the projection. 
    Note now that 
    $$
    c_1\left(  \nu_{*}\left(\left.\mathcal{P}\left(-\sum_{v \neq t,t+1} \mathcal{D}_v\right)\right\vert_{p_i}\right) \right) =c_1 \left( \bigotimes_{v\neq t,t+1}\cO_{\Sym^{k_v}C}(-N_v)) \right) \in H^2(\P)
    $$
    
    This follows from the fact that 
    $$
    c_1(\mathcal{P}\vert_{\J(C) \times \{p\}} ) =0 \in H^2(\J(C))
    $$
    for all $p \in C$ and the isomorphism
    $$\mathcal{O}(-\mathcal{D}_v)\vert_{\Sym^{k_v}(C) \times \{p_i\}} \cong \cO_{\Sym^{k_v}C}(-N_v) \text{ for }v=1,...,r+1. 
    $$
    Therefore, the desired class is
    $$
    \H-\sum_{v \neq t,t+1} \eta_v= \widetilde{\H}+\eta_t+\eta_{t+1}\in H^2(\P).
    $$
\end{proof}

\begin{lemma}\label{recursion for [V]a}
Let $[V]_a\in H^{2a}(\P)$ be the class of $V(x_i)_a\subset\P$. Then, for all $a\le r$, we have
\begin{equation}
        [V]_{a}=[V]_{a-1}(\widetilde{\H}+\eta_a+\eta_{a+1}) -[V]_{a-2}(\widetilde{\H}+\eta_{a}) \eta_{a},
        \end{equation}
where we set $[V]_{-1}=0$ by convention.
\end{lemma}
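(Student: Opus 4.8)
The plan is to realize $V(x_i)_a$ as the ``main'' piece of the intersection of $V(x_i)_{a-1}$ with the divisor $W_{a,a+1}$, and to identify the residual piece precisely. By Lemma \ref{lem:V_codim}, $V(x_i)_{a-1}$ is integral of codimension $a-1$, and it is not contained in $W_{a,a+1}$, since its generic point lies where $f_a(p_i)\ne f_{a+1}(p_i)$. Hence $[V]_{a-1}\cdot[W_{a,a+1}]$ is represented by the effective Cartier divisor cut out by the section $\sigma_{a,a+1}$ defining $W_{a,a+1}$ on the variety $V(x_i)_{a-1}$. Using $[W_{a,a+1}]=\widetilde{\H}+\eta_a+\eta_{a+1}$ from Lemma \ref{div classes}, the recursion is equivalent to showing that this divisor equals $V(x_i)_a$ plus a residual cycle of class $[V]_{a-2}(\widetilde{\H}+\eta_a)\eta_a$.

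The key is an algebraic factorization on $V(x_i)_{a-1}$. Write $s_{D_t}(p_i)$ for the tautological section cutting out $N_t$, and $\sigma_{h,a+1}$ for the section whose vanishing imposes $f_h(p_i)=f_{a+1}(p_i)$ (defined as in Lemma \ref{div classes} for the non-consecutive pair $(h,a+1)$). The defining equation \eqref{def V(xi)} for the pair $(h,a)$ gives $f_a(p_i)\,s_{D_h}(p_i)=f_h(p_i)\,s_{D_a}(p_i)$ on $V(x_i)_{a-1}$, and substituting this into $\sigma_{a,a+1}=f_a(p_i)s_{D_{a+1}}(p_i)-f_{a+1}(p_i)s_{D_a}(p_i)$ yields, for every $h<a$, the identity of sections $\sigma_{a,a+1}\cdot s_{D_h}(p_i)=s_{D_a}(p_i)\cdot\sigma_{h,a+1}$. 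I would then work over the open set $\P_1\subset\P$ where $p_i\notin D_v$ for all $v\ne a$, so that each $s_{D_h}(p_i)$ with $h\ne a$ is invertible. There this exhibits $\sigma_{a,a+1}|_{V(x_i)_{a-1}}$ as $s_{D_a}(p_i)$ times a section cutting out $\{f_h(p_i)=f_{a+1}(p_i)\}\cap V(x_i)_{a-1}=V(x_i)_a$. Thus over $\P_1$ we get $W_{a,a+1}\cap V(x_i)_{a-1}=V(x_i)_a+\big(N_a\cap V(x_i)_{a-1}\big)$, each with multiplicity one; and a direct check (on $N_a$ the $(h,a)$-equations force $f_a(p_i)=0$ and collapse the remaining equations to those of $V(x_i)_{a-2}$) identifies the second summand over $\P_1$ with $V(x_i)_{a-2}\cap W_a\cap N_a$.

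It then remains to globalize and to pin down multiplicities. That $V(x_i)_a$ occurs with multiplicity one follows from transversality over $\P_0$, where all $s_{D_v}(p_i)$ are units and the defining conditions are linear on the fibers of $\P$, using the density statement from the proof of Lemma \ref{lem:V_codim}. To pass from $\P_1$ back to $\P$, I would observe that every codimension-$a$ component of $V(x_i)_a$ and of the proper intersection $V(x_i)_{a-2}\cap W_a\cap N_a$ has its generic point in $\P_1$, and then invoke $[W_a]=\widetilde{\H}+\eta_a$ and $[N_a]=\eta_a$ (Lemma \ref{div classes}) to rewrite the residual class as $[V]_{a-2}(\widetilde{\H}+\eta_a)\eta_a$, which is exactly the correction term in the recursion.

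The main obstacle is this last globalization step: a priori the divisor $W_{a,a+1}\cap V(x_i)_{a-1}$ could acquire extra codimension-$a$ components supported on the deeper strata $N_a\cap N_v$ with $v\ne a$, which lie outside $\P_1$ and are invisible to the factorization above. I expect this to be the delicate bookkeeping. For $a\ge 3$ these loci have codimension strictly greater than $a$, since the equations of $V(x_i)_{a-1}$ restricted to $N_a\cap N_v$ still cut the fibre dimension by the full expected amount, so they contribute nothing to the codimension-$a$ cycle. The only borderline case is $a=2$, $v=1$, where $N_1\cap N_2\subset V(x_i)_1$ genuinely has codimension $2=a$; here one checks directly that $\sigma_{2,3}=f_2(p_i)s_{D_3}(p_i)$ is not identically zero along $N_1\cap N_2$, so $N_1\cap N_2\not\subset W_{2,3}$ and again there is no spurious contribution.
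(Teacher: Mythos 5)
Your proposal is correct in substance and lands on exactly the decomposition the paper uses: the divisor cut by $\sigma_{a,a+1}$ on $V_{a-1}$ consists of $V_a$ together with the residual piece $V_{a-2}\cap W_a\cap N_a$, each with multiplicity one. The difference is one of packaging. The paper proves the set-theoretic equality $V_{a-1}\cap W_{a,a+1}=V_a\cup\left(V_{a-2}\cap W_a\cap N_a\right)$ \emph{globally}, by a pointwise case analysis on the equations \eqref{def V(xi)} (which is secretly your identity $\sigma_{a,a+1}\cdot s_{D_h}(p_i)=s_{D_a}(p_i)\cdot\sigma_{h,a+1}$ read off at a point), and then separately gets multiplicity one by showing each piece is generically reduced via its sub-projective-bundle structure, as in Lemma \ref{lem:V_codim}. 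You instead promote that algebra to an identity of sections on $V_{a-1}$, which over $\P_1$ produces the scheme-theoretic splitting of the Cartier divisor and the multiplicities in one stroke. What your version buys is a cleaner, more mechanical multiplicity statement; what it costs is the obligation to rule out components of the divisor lying entirely outside $\P_1$ --- a problem the paper's global set-theoretic argument never has to face.

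That boundary step is where your write-up has a genuine, though reparable, gap: the complement of $\P_1$ is $\bigcup_{v\neq a}N_v$, so a priori an extra component could have generic point in $N_v\setminus N_a$ for some $v\neq a$; your analysis only treats the strata $N_a\cap N_v$. The omission is repaired by your own factorization identity, which holds on all of $V_{a-1}$, not just on $\P_1$: on any irreducible component $Z$ of the divisor $\{\sigma_{a,a+1}=0\}\cap V_{a-1}$ one has $s_{D_a}(p_i)\cdot\sigma_{h,a+1}\equiv 0$ on $Z$ for every $h<a$, so by integrality either $Z\subseteq N_a$, or all $\sigma_{h,a+1}$ vanish on $Z$, in which case $Z$ satisfies every equation \eqref{def V(xi)} with $1\le h<j\le a+1$, hence $Z\subseteq V_a$ and then $Z=V_a$ by irreducibility and codimension; since the generic point of $V_a$ lies in $\P_0$, every component other than $V_a$ is contained in $N_a$, and any such component missed by $\P_1$ lies in some $N_a\cap N_v$. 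This legitimately reduces the boundary bookkeeping to exactly the strata you consider, and your codimension counts there (including the borderline case $a=2$, where $N_1\cap N_2\subset V_1$ but $N_1\cap N_2\not\subset W_{2,3}$) check out. With that one connecting sentence supplied, your proof is complete.
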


\begin{proof}
The claim follows from the following three statements, after applying Lemma \ref{div classes}.
\begin{enumerate}
\item[(1)] We have a set-theoretic equality
$$(V_{a-1}\cap W_{a,a+1})=V_a\cup(V_{a-2}\cap W_a\cap N_a)$$ in $\P$.
\item[(2)] The subschemes $V_{a-1}\cap W_{a,a+1}$ and and $V_{a-2}\cap W_a\cap N_a$ of $\P$ (in addition to $V_a$) are generically reduced and irreducible of codimension $a$.
\item[(3)] $V_{a-1}\cap W_{a,a+1}$ and $V_{a-2}\cap W_a\cap N_a$ are not equal.
\end{enumerate}

Consider (1). The inclusion $\supset$ is straightforward. A point in $f\in V_{a-1}\cap W_{a,a+1}$ satisfies \eqref{def V(xi)} whenever $1\le h<j\le a$, and in addition for $(h,j)=(a,a+1)$. If it is not the case that $f\in V_{a}$, then \eqref{def V(xi)} must fail for $j=a+1$ and some $h<a$; without loss of generality, take $h=1$. 

The sections $f_1\in\cL'(D_1)$ and $f_{a+1}\in\cL'(D_{a+1})$ are not equal at $p_i$ when regarded as sections of $\cL '(D_1+D_{a+1})$, but \emph{are} equal at $p_i$ when regarded as sections of $\cL'(D_1+D_a+D_{a+1})$, by applying \eqref{def V(xi)} for $(h,j)=(1,a),(a,a+1)$. Therefore, we must have $p_i\in D_a$, that is, $f\in N_a$. 

Because we furthermore have $f\in W_{a,a+1}$, we conclude that $f_a$ is zero at $p_i$ as a section of $\cL'(D_{a}+D_{a+1})$. Similarly, applying \eqref{def V(xi)} for $(h,j)=(1,a)$, we have that $f_a$ is zero at $p_i$ as a section of $\cL'(D_1+D_{a})$. Now, either $f_a$ is zero as a section of $\cL'(D_{a})$, in which case we are done, or $f\in N_1 \cap N_{a+1}$, in which case \eqref{def V(xi)} holds for $(h,j)=(1,a+1)$, contradicting the assumption at the beginning.

(2) is proven exactly as in Lemma \ref{lem:V_codim}, namely, $V_{a-1}\cap W_{a,a+1}$ is generically a sub-projective bundle of $\P$ of codimension $a$, and $V_{a-2}\cap W_a\cap N_a$ is generically a sub-projective bundle of the pullback of $\P$ over $N_a$ of codimension $a-1$. The details are omitted.

(3) is immediate, for example, from the fact that $N_a$ does not contain $V_{a-1}\cap W_{a,a+1}$.
\end{proof}

\begin{proof}[Proof of Proposition \ref{class of W(xi)}]
We proceed by induction on $a$. When $a=1$, this is Lemma \ref{div classes}. Suppose $a>1$. By the inductive hypothesis and Lemma \ref{recursion for [V]a}, we have
\begin{align*}
[V]_a=&[V]_{a-1}(\widetilde{\H}+\eta_a+\eta_{a+1})-[V]_{a-1}(\widetilde{\H}+\eta_a)\eta_a\\
=&\left( \widetilde{\H}^{a-1}+\sigma_1(\eta_1, \ldots, \eta_a) \widetilde{\H}^{a-2}+ \cdots+ \sigma_{a-1}(\eta_1,\ldots,\eta_{a}
) \right)(\widetilde{\H}+\eta_a+\eta_{a+1}) \\
&-\left( \widetilde{\H}^{a-2}+\sigma_1(\eta_1, \ldots, \eta_{a-1}) \widetilde{\H}^{a-3}+ \cdots+ \sigma_{a-2}(\eta_1,\ldots,\eta_{a-1}
) \right) (\widetilde{\H}+\eta_a)\eta_a \\
=& \widetilde{\H}^a+\sigma_1 (\eta_1, \ldots, \eta_{a+1})\widetilde{\H}^{a-1}+\cdots+ \sigma_{a}(\eta_1,\ldots,\eta_{a+1})
\end{align*}
where in the last equality we used the identities
\begin{align*}
(\eta_a+\eta_{a+1})\sigma_{i-1}(\eta_1,\ldots,\eta_a)+\sigma_i(\eta_1,&\ldots,  \eta_a)-\eta_a \sigma_{i-1}(\eta_1,\ldots,\eta_{a-1}) \\
&-\eta_a^2 \sigma_{i-2}(\eta_1,\ldots,\eta_{a-1})=\sigma_i(\eta_1,\ldots,\eta_{a+1})
\end{align*}
for $i=1,\ldots,a$.
\end{proof}

\subsubsection{Interlude on the permutohedral variety}\label{permutohedral_section}

For the proof of transversality, it will be convenient for formal reasons to pass from $X$ to a further blow-up $Y$, the \textit{permutohedral variety}, at higher-dimensional linear subspaces. We now describe this blow-up and its relevant properties.

 Fix a dimension $r\ge2$. Write $[r+1]=\{1,2,\ldots,r+1\}$. For a non-empty subset $S\subsetneq [r+1]$, let $\Lambda_S\subset\P^r$ denote the torus-invariant linear space of dimension $(\#S-1)$ given by the vanishing of the coordinates indexed by the complement $[r+1]\backslash S$.

 \begin{definition}
Let $\rho:Y\to X$ be the blow-up of $X$ (which is itself obtained by blowing up the torus-fixed points of $\P^r$) at the strict transforms of the $\binom{r+1}{2}$ torus-invariant lines of $\P^r$, followed by the strict transforms of the $\binom{r+1}{3}$ torus-invariant planes of $\P^r$, and so on, through the torus-invariant codimension 2 subspaces.
\end{definition}

 \begin{definition}
For $S\subset[r+1]$ of cardinality at most $r-1$, let $\E_S$ be the class of the strict transform of $\Lambda_S$. Let $\H^\vee,\E_S^\vee\in H_2(Y)$ be the basis of 1-cycles dual to the basis $\H,\E_S\in H^2(Y)$ of divisors.
 \end{definition}

 Note that $$K_Y=(r+1)\H-\sum_{\substack{S\subset[r+1] \\ 1\le\#S\le r-1}}(r-\#S)\E_S.$$

 A point $y\in Y$ can be expressed in coordinates as follows. First, take a point $y_0\in \P^r=\P(\C[[r+1]]),$ where $\C[[r+1]]$ denotes the vector space with basis given by the set $[r+1]=\{1,2,\ldots,r+1\}$. The point $y_0$ is the image of $y$ under the composite blow-up $Y\to\P^r$. Then, let $S^1\subsetneq [r+1]=:S^0$ be the subset of coordinates of $y_0$ which are equal to zero, corresponding to the minimal $T$-invariant subvariety of $\P^r$ in which $y_0$ lies. Then, let $y_1\in \P(\C[S^1])\cong\P^{\#S^1-1}$ be a point, representing a projectivized normal vector to $\Lambda_{(S^1)^c}$ at $y_0$. Define $S^2\subsetneq S^1$ analogously, as the set of coordinates of $y_1$ equal to zero, and continue until $y_k$ has all coordinates non-zero. Then, $y$ consists of the data of the points $(y_0,y_1,\ldots,y_k)$.

 Let $S\subset[r+1]$ be any non-empty subset. Then, we have a projection map $\rho_S:Y\to \P(\C[S])$ given by remembering the coordinates of $y=(y_0,y_1,\ldots,y_k)$ corresponding to $S$, at the unique point $y_{k'}$ for which $S\subseteq S^{k'}$ and the corresponding coordinates are not all zero. 

 Let $C$ be a smooth curve. A map $f:C\to Y$, which, upon post-composition with the blow-up $Y\to\P^r$, has image not contained in any torus-invariant subvariety, is given by the following data.

 \begin{itemize}
 \item a line bundle $\cL$ on $C$,
 \item for each $S\subset[r+1]$ with $1\le\#S\le r-1$, an effective divisor $D_S\subset C$, and
 \item for each $j\in[r+1]$, a non-zero section $$f_j\in H^0\left(C,\cL\left(-\sum_{j\notin S}D_S\right)\right),$$
 \end{itemize}
such that, when regarded as sections in $H^0(C,\cL)$, the $f_j$ have no common vanishing locus.
 
 The point $f(p)=(y_0,\ldots,y_k)$ may be computed as follows. First $y_0$ is the point $[f_0(p):\cdots:f_{r+1}(p)],$ where the $f_j$ are regarded as sections of $\cL$. Then, the subset $S^1\subset [r+1]$ is the set of indices $j$ for which $f_j(p)=0$. The point $y_1$ has coordinates given by the values of $f_j(p)$ after twisting $\cL$ down by the unique positive multiple of $p$ for which the $f_j(p)$ are well-defined and not all zero for $j \in S^1$. The rest of the $y_2,\ldots,y_k$ are then determined by further twists of $\cL$. 

\subsubsection{Transversality}\label{Proof of the Transversality Theorem}

In this section, we prove Proposition \ref{Understanding V(x1,...,xn)}. We continue to assume $\ell=r+1$. The main difficulty is to show that a point of $V(x_1,\ldots,x_n)$ represents an ``honest'' map in class $\beta$. More precisely, let $\P^\circ\subset\P$ be the open locus of $f=[f_1:\cdots:f_{r+1}]$ for which the $f_j$ share no common zeroes, when simultaneously viewed as sections of $\cL$. Then, we wish to show that $V(x_1,\ldots,x_n)\subset\P^\circ$. 

In order for our argument to work, we will first need the following lemma.

\begin{lemma}\label{coordinate_nonzero}
Suppose $f=[f_1:\cdots:f_{r+1}]$ is a point of $V(x_1,\ldots,x_n)$. Then, for each $j$, we have $f_j\neq0$ as a section of $\cL'(D_j)$ 
\end{lemma}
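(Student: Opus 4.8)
The plan is to argue by contradiction, exploiting the genericity of the points $x_i$ and a single degree estimate on $C$. Suppose that $f_j=0$ as a section of $\cL'(D_j)$ for some $j$; since the coordinates $1,\ldots,r+1$ play symmetric roles in the construction, after relabeling we may assume $j=r+1$. Because the $x_i$ are general points of $X$, we may identify them with points of $\P^r$ all of whose homogeneous coordinates are nonzero; in particular $x_{i,r+1}\neq 0$ for every $i$. Let $s_{r+1}\in H^0(C,\cO_C(D_{r+1}))$ be the tautological section vanishing exactly along $D_{r+1}$, and for $1\le h\le r$ set
$$
g_h:=f_h\cdot s_{r+1}\in H^0\!\left(C,\cL'(D_h+D_{r+1})\right)=H^0\!\left(C,\cL\Big(-\!\!\sum_{1\le j\le r,\,j\ne h}\!\!D_j\Big)\right),
$$
so that $g_h$ is precisely $f_h$ regarded as a section of $\cL'(D_h+D_{r+1})$ in the sense of \eqref{def V(xi)}.

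First I would extract the vanishing of the $g_h$ at the marked points. For each $i$, the defining equation \eqref{def V(xi)} of $V(x_i)=V(x_i)_r$ applied to the pair $(h,r+1)$ reads
$$
x_{i,r+1}\,g_h(p_i)-x_{i,h}\,\widetilde{f}_{r+1}(p_i)=0
$$
in the fiber $\cL'(D_h+D_{r+1})|_{p_i}$, where $\widetilde{f}_{r+1}$ denotes $f_{r+1}$ twisted up to $\cL'(D_h+D_{r+1})$. Since $f_{r+1}=0$ we have $\widetilde{f}_{r+1}=0$, and since $x_{i,r+1}\neq 0$ this forces $g_h(p_i)=0$. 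As $[f]\in V(x_1,\ldots,x_n)=\bigcap_i V(x_i)$, this holds for all $i=1,\ldots,n$, so each $g_h$ vanishes at every marked point $p_1,\ldots,p_n$.

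Finally I would close the argument with the degree count, which is where hypothesis \eqref{eqn: range for d to have enumerativity} enters. The line bundle carrying $g_h$ has degree $\deg\big(\cL(-\sum_{j\le r,\,j\ne h}D_j)\big)=d-\sum_{j\le r,\,j\ne h}k_j\le d$. Because $(C,p_1,\ldots,p_n)$ is a general pointed curve the $p_i$ are distinct, so in fact $g_h\in H^0\!\left(C,\cL(-\sum_{j\le r,\,j\ne h}D_j)(-p_1-\cdots-p_n)\right)$, a line bundle of degree at most $d-n<0$, the strict inequality following from $n-d\ge g+1\ge 1$. Hence $g_h=0$, and since $C$ is integral and $s_{r+1}\neq 0$ this forces $f_h=0$ for every $h=1,\ldots,r$. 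Together with $f_{r+1}=0$ this says all the $f_h$ vanish identically, contradicting the fact that $[f_1:\cdots:f_{r+1}]$ is a point of the projective bundle $\P$, where the sections are not allowed to vanish simultaneously.

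I expect the only genuinely delicate point to be the bookkeeping of the twist by $D_{r+1}$: one must correctly identify the line bundle $\cL(-\sum_{j\le r,\,j\ne h}D_j)$ in which the twisted section $g_h=f_h\cdot s_{r+1}$ naturally lives, since it is its degree (bounded by $d$, hence by $n-g-1$) that makes vanishing at the $n$ points $p_i$ fatal. Once that bundle is pinned down, the conclusion is a one-line negativity estimate, and no separate analysis of whether $p_i\in D_{r+1}$ is needed.
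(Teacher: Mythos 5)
Your proof is correct and follows essentially the same route as the paper's: genericity of the $x_i$ (all coordinates nonzero) makes the cross-equations \eqref{def V(xi)} force the twisted sections to vanish at all $n$ marked points, and the degree bound $d<n$ coming from \eqref{eqn: range for d to have enumerativity} then kills those sections. The only cosmetic difference is in bookkeeping: the paper fixes a single nonzero section $f_2$ at the outset (using that not all $f_j$ vanish) and contradicts the degree bound directly, whereas you conclude that every $f_h$ must vanish and contradict the definition of the projective bundle $\P$ at the end.
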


\begin{proof}
Suppose, without loss of generality, that $f_1=0$ and that $f_2\neq0$ as sections of $\cL'(D_1)$ and $\cL'(D_2)$, respectively (note that we cannot have all $f_j=0$). We may assume that none of the $x_i$ lie in the hyperplane in which $x_{i,1}=0$, so we must have that $f_2$ vanishes at \emph{all} $p_i$ as a section of $\cL'(D_1+D_2)$. In particular, we need $\deg(\cL'(D_1+D_2))\ge n$, that is, $$d-(k_3+\cdots+k_{r+1})\ge n,$$
which immediately contradicts the assumption that $n-d\ge g+1$.
\end{proof}

\begin{remark}\label{ineq_necessary}
We will later use the assumption $n-d\ge g+1$ again (see Lemma \ref{actual_dim_after_twisting}), but we point out that it is crucial for Lemma \ref{coordinate_nonzero} to hold, which in turn is necessary for the transversality. 

For example, when $g=0$ and $r\ge 3$, suppose that the class $\beta=d\H^\vee+k\E_1^\vee$ satisfies $0\le k\le d\le (r-1)k-r$. Then, we have $n\le d$, so one can take $f_2=\cdots=f_{r+1}=0$ and $f_1\in H^0(\P^1,\mathcal{O}(d))$ to vanish at all of the $p_1,\ldots,p_n$. This construction produces points of $V(x_1,\ldots,x_n)$ not lying in $\P^\circ$, and infinitely many such if $d>n$.

On the other hand, when $g=0$ and $r=2$, one can check that the assumption $n-d\ge 1$ can be dropped, also later in Lemma \ref{actual_dim_after_twisting}.
\end{remark}

We now turn to the main argument. The strategy is as follows: for any initial $f_\init\in V(x_1,\ldots,x_n)$, we describe an algorithm that essentially amounts to twisting down at base-points of $f_\init$ until we are able to define a map $f:C\to\P^r$. The map $f$ will also have various incidence conditions with respect to the torus-invariant loci of $\P^r$ and the $p_i,x_i$; a naive dimension count would immediately predict the nonexistence of such a map, unless $f\in\P^\circ$ to begin with. We show that this expectation holds by passing from $f$ to a map from $f:C\to Y$, where $Y$ is the permutohedral variety of the previous section, moving in a family of the expected dimension by Lemma \ref{basic_transversality}.

For the initial point $f_\init\in V(x_1,\ldots,x_n)$, denote the underlying line bundle by $\cL_\init$, the underlying divisors by $D_j$, and the underlying sections by
$$(f_j)_{\init}\in H^0\left(C,\cL_{\init}\left(-\sum_{v\neq j}D_v\right)\right).$$

Our procedure consists of a sequence of modifications to the following data.
\begin{itemize}
\item A line bundle $\cL$,
\item For every non-empty subset $S\subsetneq\{1,2,\ldots,r+1\}$, an effective divisor $D_S\subset C$, and
\item For all $j=1,2,\ldots,r+1$, a \emph{non-zero} (see Lemma \ref{coordinate_nonzero}) section $$f_j\in H^0\left(C,\cL\left(-\sum_{j\notin S}D_S\right)\right).$$
\end{itemize}

The notation $\cL,f_j$ contrasts with $\cL_\init,(f_j)_{\init}$, because $\cL,f_j$ will be modified throughout the algorithm, whereas $\cL_\init,(f_j)_{\init}$ are fixed throughout. Similarly, when $S=\{j\}$, we contrast the divisor $D_{\{j\}}$, which is modified throughout the algorithm, with $D_j$, which is fixed. The letter $f$ is only used at the end when we obtain a map $f:C\to\P^r$ after no base-points remain. 

The data is initialized as follows. 
\begin{itemize}
\item $\cL\leftarrow\cL_{\init}$,
\item $D_{\{j\}}\leftarrow D_j$ for $j=1,\ldots,r+1$, and $D_{S}\leftarrow 0$ if $\#S>1$, and 
\item $f_j\leftarrow(f_j)_{\init}$.
\end{itemize}

We will repeatedly consider the following conditions on $f_j$ given $p\in C$.
\begin{enumerate}
\item[(i)] $p\in D_{\{j\}}$
\item[(ii)] $f_j(p)=0$ as a section of $H^0\left(C,\cL\left(-\sum_{v\neq j}D_{\{v\}}\right)\right)$
\end{enumerate}

We now describe the algorithm. Write $C^\circ=C-\{p_1,\ldots,p_n\}$. The first three steps will be carried out independently for all points $p\in C^\circ$; we fix such a point $p$ throughout.

\begin{enumerate}[label=\underline{Step \arabic*}]

\item($p\in C^\circ$ satisfies (ii) for \emph{all} $j$)

Let $\alpha>0$ be the largest order to which $p$ vanishes at every $f_j$ (as a section of $\cL(-\sum_{v\neq j}D_{\{v\}})$) to order $\alpha$. Then, make the following modifications.
\begin{itemize}
\item $\cL$ is replaced by $\cL(-\alpha p)$,
\item each $D_S$ stays the same (in particular, all $D_S$ with $\#S>1$ remain empty), and
\item the new section $f_j\in H^0\left(C,\cL\left(-\sum_{v\neq j}D_{\{v\}}\right)\right)$ is taken to be the pre-image of the original section $f_j$ under the inclusion $$H^0\left(C,\cL(-\alpha p)\left(-\sum_{v\neq j}D_{\{v\}}\right)\right)\to H^0\left(C,\cL\left(-\sum_{v\neq j}D_{\{v\}}\right)\right).$$ (In other words, $f_j$ is ``twisted down by $\alpha$ at $p$.'')
\end{itemize}

After step 1, (ii) may still hold for some $j$, but will no longer hold for all $j$.

\item ($p\in C^\circ$ satisfies (i) and (ii) for \emph{some} $j$)

For any $p,j$ as above, let $\alpha_{p,j}>0$ be the maximal order to which both properties (i), (ii) hold, that is, $D_{\{j\}}$ contains $p$ with multiplicity $\alpha_{p,j}$ and $f_j$ vanishes at $p$ to order $\alpha_{p,j}$. Then, make the following modifications (independently for all $j$).
\begin{itemize}
\item $\cL$ is replaced by $\cL(-\alpha_{p,j}p)$,
\item $D_{\{j\}}$ is replaced by $D_{\{j\}}-\alpha_{p,j}p$, while all $D_S$ with $\#S>1$ remain empty, and
\item $f_j$ is twisted down by $\alpha_{p,j}$ at $p$.
\end{itemize}

After step 2, we have $\cL,D_S,f_j$ (still $D_S=0$ if $\#S>1$) with the property that, for any $p\in C^\circ$ and any $j\in[r+1]$, at most one of the properties (i), (ii) hold.

\item ($p\in C^\circ$ satisfies (i) for \emph{more than one} $j$)

 Let $S_1$ be the set of $j\in[r+1]$ for which (i) holds. Let $\alpha_j$ be the order to which $D_{\{j\}}$ contains $p$; in particular, $\alpha_j=0$ if $j\notin S_1$. Write $\alpha_{\max}$ for the largest of the $\alpha_j$, and write $\alpha_{\text{tot}}$ for the sum of all of the $\alpha_j$.

As a section of $\cL$, the order of vanishing of $f_j$ at $p$ is at least $\alpha_{\text{tot}}-\alpha_j$, and, after step 2, we have equality whenever $j\in S_1$. In particular, the common order of vanishing of the $f_j$ is equal to $\alpha_{\text{tot}}-\alpha_{\max}$, which is strictly positive exactly when $\#S_1\ge 2$. 

To remove the base-point at $p$, we wish therefore to twist down our sections $f_j\in H^0(\cL)$ by $\alpha_{\text{tot}}-\alpha_{\max}$ at $p$. After the twist, the new vanishing order of $f_j$ at $p$ (as a section of $H^0(\cL)$) will be $\alpha'_j:=\alpha_{\max}-\alpha_j$. We will keep track of this new vanishing condition via the divisors $D_S$.

This is achieved more precisely as follows. First, write $$0=\alpha'(1)<\cdots<\alpha'(t)$$ for the \emph{distinct} integers appearing among the $\alpha'_j$. (Note that $\alpha'(t)$ is equal to $\alpha_{\max}$ unless $S_1=[r+1]$.) Then, define the filtration
$$[r+1]=S^1\supsetneq S^2\supsetneq\cdots\supsetneq S^t$$
by $$S^m=\{j\in [r+1]\vert \alpha'_j\ge\alpha'(m)\}.$$
Note that $S^t=[r+1]-S_1$ if $S_1\subsetneq[r+1]$.

Finally, we make the following modifications to our data.
\begin{itemize}
\item $\cL$ is replaced by $\cL(-(\alpha_{\text{tot}}-\alpha_{\max})p)$,
\item each $D_{\{j\}}$ is (temporarily, possibly to be modified further in the bullet point below) replaced by $D_{\{j\}}-\alpha_jp$,
\item for $m=2,3,\ldots,t$, the divisor $D_{[r+1]\backslash S^m}$ is replaced by the divisor $D_{[r+1]\backslash S^m}+(\alpha'(m)-\alpha'(m-1))p$, and
\item $f_j$ stays the same. (Note that it is a section of the same line bundle as before; indeed, the multiplicity of $p$ in 
$$
 \sum_{S:j \notin S} D_S
$$
decreases by exactly $\alpha_{\text{tot}}-\alpha_{\max}$ in the previous two modifications.)
\end{itemize}

After step 3, no $p\in C^\circ$ is a base-point. Therefore, the $f_j$ (viewed as sections of $\cL$) define a map $f^\circ:C^\circ\to\P^r$, with the property that the divisor 
$$
D'_S=\sum_{S' \subseteq S}D_{S'}
$$
(when restricted to $C^\circ$) is constrained to map to the torus-invariant locus $\Lambda_S\subset \P^r$ defined earlier. There may be still additional vanishing; for example, the $f_j$ may have unexpected vanishing as sections of $\cL\left(\sum_{j\notin S}D_S\right)$, but we will not need to take this into account.

We now repeat the previous steps (with small modifications) on the points $p_i$. The steps will be carried out for each $p_i$ independently; we fix for the rest of this discussion an index $i$. We will furthermore assume for simplicity as in the previous section that $x_i=[1:\cdots:1]$. Care must be taken now to keep track of the effect of our twisting operations on the conditions $V(x_i)$.

\item($p_i$ satisfies (ii) for \emph{all} $j$)

This step is identical to step 1, with $p_i$ in place of $p$, and the $D_S$ with $\#S>1$ playing no role. After step 4, it will no longer be true that $p_i$ satisfies (ii) for all $j$.

\item
($p_i$ satisfies (i) for \emph{all} $j$)

This step is new. For a fixed $p_i$, $\alpha>0$ be the largest order to which $p_i$ is contained in all of the $D_j$ simultaneously. Then, make the following modifications.
\begin{itemize}
\item $\cL$ is replaced by $\cL((-\alpha r)p_i)$,
\item each $D_{\{j\}}$ is replaced by $D_{\{j\}}-\alpha p_i$, while all $D_S$ with $\#S>1$ remain unchanged, and
\item $f_j$ stays the same (note that it is a section of the same line bundle as before).
\end{itemize}

After step 5, it will no longer be true that $p_i$ satisfies (i) for all $j$. 

\begin{definition}
We say that $p_i$ is \emph{inactive} if either step 4 or 5 is run, that is, if $p_i$ initially satisfies (i) for all $j$ or (ii) for all $j$, and that $p_i$ is \emph{active} otherwise.
\end{definition}

If $p_i$ is inactive, then the equations $V(x_i)$ are automatically satisfied. Therefore, after the twisting of either step 4 or 5, $V(x_i)$ imposes no additional conditions on $f$.

\begin{lemma}\label{cond (i)=cond (ii) for active pi}
Suppose that $p_i$ is active. Then, condition (i) holds for $j$ if and only if (ii) does.
\end{lemma}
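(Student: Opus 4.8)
\emph{Plan.} The plan is to reduce the claim to a short computation with vanishing orders in a local coordinate at $p_i$. The first observation is that, since $p_i$ is active, neither Step 4 nor Step 5 is run at $p_i$, while Steps 1--3 alter the data only over points of $C^\circ$; consequently the line bundle, divisors, and sections agree near $p_i$ with the initial data attached to $f_\init$. I would therefore fix a local coordinate $t$ and a trivialization of $\cL$ at $p_i$, assume as permitted that $x_i=[1:\cdots:1]$, and attach to each index $j$ three non-negative integers: the multiplicity $a_j:=\mathrm{mult}_{p_i}(D_{\{j\}})$, the order $b_j:=\mathrm{ord}_{p_i}(g_j)$ of $f_j$ regarded (via the inclusion $\cL'(D_{\{j\}})\hookrightarrow\cL$) as a section $g_j$ of $\cL$, and the order $c_j:=\mathrm{ord}_{p_i}(f_j)$ of $f_j$ as a section of $\cL'(D_{\{j\}})=\cL(-\sum_{v\neq j}D_{\{v\}})$. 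Writing $a_{\mathrm{tot}}=\sum_v a_v$, the inclusion has order $a_{\mathrm{tot}}-a_j$ at $p_i$, so $b_j=c_j+(a_{\mathrm{tot}}-a_j)$. The key translation is then tautological: condition (i) for $j$ says $a_j>0$, while condition (ii) for $j$ says $c_j>0$, i.e. $b_j>a_{\mathrm{tot}}-a_j$. Moreover every $g_j$ is nonzero by Lemma \ref{coordinate_nonzero}, so all three quantities are finite.

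Next I would rewrite the defining equations of $V(x_i)$ in these terms. For a pair $(h,j)$ and $x_i=[1:\cdots:1]$, equation \eqref{def V(xi)} asserts that $f_h-f_j$ vanishes at $p_i$ as a section of $\cL'(D_h+D_j)$, where $f_h,f_j$ are twisted up by $D_j,D_h$ respectively. Passing through the inclusion $\cL'(D_h+D_j)\hookrightarrow\cL$ (which has order $a_{\mathrm{tot}}-a_h-a_j$ at $p_i$) carries these twisted sections to $g_h$ and $g_j$, so the equation becomes the single order inequality
\begin{equation*}
\mathrm{ord}_{p_i}(g_h-g_j)\geq a_{\mathrm{tot}}-a_h-a_j+1\qquad\text{for all }h\neq j.
\end{equation*}

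With this in hand, the two implications follow from short contradiction arguments that exploit the active hypothesis, namely that (since Step 5 does not run) there is an index $j_0$ with $a_{j_0}=0$, and (since Step 4 does not run) an index with $c=0$. For (i) $\Rightarrow$ (ii): if $a_j>0$ but $c_j=0$, then $b_j=a_{\mathrm{tot}}-a_j<a_{\mathrm{tot}}\leq b_{j_0}$, so $\mathrm{ord}_{p_i}(g_j-g_{j_0})=b_j=a_{\mathrm{tot}}-a_j$, contradicting the inequality for the pair $(j,j_0)$. For (ii) $\Rightarrow$ (i): if $c_j>0$ but $a_j=0$, then $b_j=c_j+a_{\mathrm{tot}}>a_{\mathrm{tot}}$, and for any index $h$ with $c_h=0$ one would have $b_h=a_{\mathrm{tot}}-a_h<b_j$, whence $\mathrm{ord}_{p_i}(g_h-g_j)=a_{\mathrm{tot}}-a_h$ again contradicts the inequality for $(h,j)$. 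Thus no index has $c_h=0$, contradicting the activity of $p_i$ via Step 4.

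The main obstacle, and the part demanding genuine care, is the bookkeeping in the second paragraph: the equations cutting out $V(x_i)$ naturally live in the twisted bundles $\cL'(D_h+D_j)$, and one must track precisely the order of each twist (equivalently, the order of the sheaf inclusion into $\cL$) in order to arrive at the clean inequality above. I would also carefully verify the opening reduction --- that for active $p_i$ the integers $a_j,b_j,c_j$ really are read off from the untouched initial data --- since the entire argument rests on conditions (i) and (ii) being computed from one and the same local picture.
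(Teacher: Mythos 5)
Your proof is correct and follows essentially the same route as the paper: both arguments reduce to the untouched initial data near $p_i$, and then play the $V(x_i)$ equation for the pair $(j,j')$ against an index supplied by activity (one with $p_i\notin D_{\{j'\}}$ for the forward direction, and the failure of (ii) for all indices for the converse). Your translation into explicit vanishing orders $a_j,b_j,c_j$ is just a coordinate-level rendering of the paper's sheaf-theoretic vanishing statements in the twisted bundles $\cL'(D_h+D_j)$, with the same logical skeleton.
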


\begin{proof}
For convenience of notation, we argue here in terms of the initial divisors $D_j$, have not yet been modified in a neighborhood of $p_i$, and write $\cL'=\cL_\init(-D_1-\cdots-D_{r+1})$.

Suppose that $p_i\in D_j$. We may also assume that there exists a $j'$ that $p_i\notin D_{j'}$. Then, $f_j(p_i)=f_{j'}(p_i)=0$ in $\cL'(D_j+D_{j'})$, and because $p_i\notin D_{j'}$, we in fact have $f_j(p_i)=0$ in $\cL'(D_j)$.

Conversely, if $f_j(p_i)=0$ in $\cL'(D_j)$, then, for all $j'\neq j$, we have $f_{j'}(p_i)=0$ in $\cL'(D_{j}+D_{j'})$. If $p_i\notin D_{j}$, then in fact $f_{j'}(p_i)=0$ in $\cL'(D_{j'})$, for all $j'$ (including $j$), contradicting the assumption that $p_i$ is active. 
\end{proof}

If $p_i$ is active, write $S^i_\circ:=\{j\mid f_j\text{ satisfies both (i) and (ii) at $p_i$}\}$. Then, for any two $j,j'\notin S^i_\circ$, then $f_j(p_i)=f_{j'}(p_i)\neq0$ as sections of $\cL'(D_j+D_{j'})$. 

\begin{definition}
If $p_i$ is active, we say that $p_i$ is \emph{regular} if $S^i_\circ=\emptyset$, that is, $p_i$ is not a base-point of $f$. We say that $p_i$ is \emph{wild} otherwise.
\end{definition}

The final two steps are only needed for $p_i$ that are either inactive or wild.

\item ($p_i$ satisfies (i) and (ii) for \emph{some} $j$.)

Repeat step 2 with $p_i$ in place of $p$. The modifications are made exactly for $j\in S^i_\circ$. Let $S^i_{\circ,1},S^i_{\circ,2}\subset S^i_\circ$ denote the set of $j$ still satisfying conditions (i), (ii) after step 6.

\item ($p_i$ satisfies (i) for \emph{more than one} $j$)

Repeat step 3 with $p_i$ in place of $p$. Here, the subset $S^i_{\circ,1}$ plays the role of $S_1$ in step 3 above. Note that, by Lemma \ref{cond (i)=cond (ii) for active pi}, step $7$ is run only if step $6$ is.
\end{enumerate}

After step 7, $f$ is finally base-point free everywhere, and therefore defines a map $f:C\to\P^r$. We have already analyzed the incidence conditions imposed on $C^\circ$ (after step 3); we now do so at the $p_i$.

First, suppose that $p_i$ is inactive. As we have noted, the condition $V(x_i)$ no longer imposes constraints on the $f_j,D_S$ post-twisting. On the other hand, after steps 6 and 7, the divisors $D_S$ containing $p_i$ impose conditions on the intersection of $\text{im}(f)$ with the torus-invariant boundary of $\P^r$ at $p_i$ in exactly the same way as at the $p\in C^\circ$. 
Then, we summarize the situation.

For all non-empty $S\subsetneq[r+1]$, we have a divisor $D_S\subseteq C$ for which 
\begin{equation}\label{inactive_constraint}
f(D_S)\subseteq \Lambda_S.
\end{equation}
(Note that $D_S \subseteq D_{S}'$.) As we have already observed, we may be forgetting various other constraints. For example, the $f_j$ may have non-generic vanishing at various points of $C^\circ$ (including the $D_S$), the $D_S$ may contain copies of the same points with various multiplicites, and the $D_S$ may include the fixed inactive points $p_i$ in its support. The constraint \eqref{inactive_constraint} will turn out to be sufficient.

\begin{lemma}\label{lemma: active_constraint}
Suppose that $p_i$ is active. Then, 
\begin{equation}\label{active_constraint}
f(p_i)\in\langle x_i,\Lambda_{S^i_\circ}\rangle=:\Lambda_i,
\end{equation}
where $\langle -\rangle$ denotes linear span (so that $\Lambda_i$ has dimension $\# S^i_\circ$).
\end{lemma}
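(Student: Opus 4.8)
The plan is to prove Lemma \ref{lemma: active_constraint} by carefully tracking how the condition $V(x_i)$ survives the twisting procedure at an active point $p_i$, and then interpreting the surviving linear equations geometrically. The key observation is that the defining equations \eqref{def V(xi)} for $V(x_i)$ are linear conditions relating the values $f_h(p_i)$ and $f_j(p_i)$, and these must be re-examined coordinate-by-coordinate after the sections are twisted down. Recall that $S^i_\circ$ is precisely the set of indices $j$ for which $f_j$ is a base-point contributor at $p_i$ (satisfying both (i) and (ii)), so by Lemma \ref{cond (i)=cond (ii) for active pi} these are exactly the coordinates that vanish at $p_i$ when the $f_j$ are regarded as sections of $\cL'(D_j)$.

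First I would separate the coordinates into those indexed by $j\in S^i_\circ$ and those indexed by $j\notin S^i_\circ$. For the coordinates $j\notin S^i_\circ$, we have seen in the discussion preceding the lemma that $f_j(p_i)=f_{j'}(p_i)\neq 0$ as sections of $\cL'(D_j+D_{j'})$ for any two such $j,j'$; this says precisely that, after evaluating, the resulting point of $\P^r$ has all coordinates outside $S^i_\circ$ equal to one another (up to the common nonzero scalar), i.e. it agrees with $x_i=[1:\cdots:1]$ in those coordinates. For the coordinates $j\in S^i_\circ$, the point of the twisting in steps 6 and 7 is exactly that, after twisting $\cL$ down so as to remove the base-point, the values $f_j(p_i)$ in these coordinates become unconstrained (the equations \eqref{def V(xi)} relating them to the nonzero coordinates become vacuous, since they held only at the level of the more heavily twisted line bundle). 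Hence, writing $f(p_i)=[c_1:\cdots:c_{r+1}]$, we find $c_j=c_{j'}$ (the common value of $x_i$) for $j,j'\notin S^i_\circ$, while the $c_j$ for $j\in S^i_\circ$ are arbitrary.

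Then I would translate this into the asserted membership. The locus of points $[c_1:\cdots:c_{r+1}]\in\P^r$ whose coordinates outside $S^i_\circ$ all coincide with the corresponding coordinates of $x_i$, and whose coordinates inside $S^i_\circ$ are free, is exactly the linear span $\langle x_i,\Lambda_{S^i_\circ}\rangle$: indeed $\Lambda_{S^i_\circ}$ is cut out by the vanishing of the coordinates \emph{outside} $S^i_\circ$, so adjoining the single point $x_i$ (which has all coordinates nonzero) to this linear space produces precisely the points allowed to deviate from $x_i$ only in the $S^i_\circ$-coordinates. This gives $f(p_i)\in\Lambda_i$, with $\dim\Lambda_i=\#S^i_\circ$ since $x_i\notin\Lambda_{S^i_\circ}$ adds exactly one to the dimension. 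The main obstacle I anticipate is the bookkeeping in the previous sentence's first clause: one must verify that after the twists of steps 6 and 7 the equations \eqref{def V(xi)} indexed by pairs $(h,j)$ with $h,j\notin S^i_\circ$ genuinely descend to honest nonzero equalities in $\cL'(D_h+D_j)$ at $p_i$ (rather than holding only vacuously), and that no spurious additional constraint among the $S^i_\circ$-coordinates is inherited from the original equations; this requires arguing, as in Lemma \ref{cond (i)=cond (ii) for active pi}, that the linear relations genuinely decouple along the filtration $S^1\supsetneq\cdots\supsetneq S^t$ produced in step 7.
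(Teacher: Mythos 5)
Your proposal follows the same route as the paper's proof: (i) the equalities $f_j(p_i)=f_{j'}(p_i)\neq 0$ for $j,j'\notin S^i_\circ$ recorded just before the lemma, (ii) persistence of these equalities through steps 6 and 7, and (iii) the observation that the locus of points whose coordinates outside $S^i_\circ$ all agree is exactly $\langle x_i,\Lambda_{S^i_\circ}\rangle$, of dimension $\#S^i_\circ$. Steps (i) and (iii) are fine as you state them. However, step (ii), which you explicitly defer as ``the main obstacle I anticipate,'' is not a side verification --- it is the entire content of the paper's proof --- and the mechanism you sketch for it (a decoupling of the linear relations along the filtration $S^1\supsetneq\cdots\supsetneq S^t$ of step 7) is not what makes it work. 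The correct observation is pure bookkeeping: every divisor modified at $p_i$ in steps 6 and 7 is of the form $D_S$ with $S\subseteq S^i_\circ$ (in step 6 the $D_{\{v\}}$ with $v\in S^i_\circ$; in step 7 the $D_{\{v\}}$ with $v\in S^i_{\circ,1}$ and the $D_{[r+1]\setminus S^m}$, where $[r+1]\setminus S^m\subseteq S^i_{\circ,1}$). For $j,j'\notin S^i_\circ$ every such $S$ satisfies $j,j'\notin S$, so these divisor modifications all occur inside the twist defining $\cL\bigl(-\sum_{S:\,j,j'\notin S}D_S\bigr)$ and they cancel exactly against the simultaneous twist of $\cL$ at $p_i$ (in step 7 one uses $\alpha'(t)=\alpha_{\max}$, which holds because $S^i_{\circ,1}\subsetneq[r+1]$). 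Hence that line bundle, the sections $f_j,f_{j'}$ of it, and in particular their values at $p_i$ are literally untouched by steps 6 and 7; since the injections into the final $\cL$ multiply both values by the same canonical section, the equality survives and $f(p_i)$ has all coordinates outside $S^i_\circ$ equal. No filtration argument is needed.

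A second, smaller inaccuracy: your parenthetical claim that the common value of the $c_j$, $j\notin S^i_\circ$, is ``the common value of $x_i$'' (i.e.\ nonzero) can fail for the final $\cL$. If $p_i$ is wild and step 7 twists at $p_i$, then the canonical section of $\sum_{S:\,j\notin S}D_S$ vanishes at $p_i$ and \emph{all} coordinates outside $S^i_\circ$ become $0$ there; similarly, your worry about ``spurious constraints among the $S^i_\circ$-coordinates'' is moot, since membership in $\Lambda_i$ imposes no condition on those coordinates. Neither point damages the conclusion, because only the \emph{equality} of the outside coordinates is needed: the all-zero case lands in $\Lambda_{S^i_\circ}\subseteq\Lambda_i$. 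So once (ii) is filled in as above, your argument closes, and it coincides with the paper's.
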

For regular $p_i$, note that \eqref{active_constraint} is simply the condition $f(p_i)=x_i$.
\begin{proof}
For any two $j,j'\notin S^i_\circ$, then $f_j(p_i)=f_{j'}(p_i)\neq0$ as sections of $$\cL\left(-\sum_{S\notin j,j'} D_S\right),$$ and this property remains unchanged by steps 6 and 7 (and upon injection into $\cL$).
\end{proof}

 We now come to the crux of the argument: combining the constraints \eqref{inactive_constraint} and \eqref{active_constraint} on $f$ enumerated above, we will pass from $f:C\to\P^r$ to a map $\bar{f}:C\to Y$, where $Y$ is the permutohedral variety of dimension $r$ defined in the previous section. Let $\bar{f}:C\to Y$ be unique lift of $f:C\to\P^r$ to $Y$; this $\bar{f}$ exists because the sections $f_j$ remain non-zero throughout the entire twisting process, so the image of $f$ is not contained in any torus-invariant subvariety.

 Write $\overline{\beta}:=\bar{f}_{*}[C]$, as well as $\overline{d}=\deg(\cL)$ (after twisting down all base-points) and $k_S=\deg(D_S)$ for the degree of the divisor $D_S$. The following lemma may be regarded as a numerical incarnation of \eqref{inactive_constraint}.

 \begin{lemma}\label{new_degree_bound}
We have 
\begin{equation*}
    \deg(\overline{\beta})\le (r+1)\overline{d} \; -\sum_{\substack{ S\subset[r+1] \\ 1\le\#S\le r-1}}(r-\#S)k_S,
\end{equation*}
where the degree is measured against $K_Y^\vee$.
 \end{lemma}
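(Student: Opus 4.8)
The plan is to compute $\deg(\overline{\beta})=\overline{\beta}\cdot K_Y^\vee$ by expanding in the basis $\H,\E_S$ of $H^2(Y)$ and bounding each intersection number in terms of the data $\overline{d}=\deg(\cL)$ and $k_S=\deg(D_S)$ produced by the twisting algorithm. Recall that $K_Y^\vee=(r+1)\H-\sum_{1\le\#S\le r-1}(r-\#S)\E_S$. By bilinearity of the intersection pairing,
$$
\deg(\overline{\beta})=(r+1)\,(\overline{\beta}\cdot\H)-\sum_{1\le\#S\le r-1}(r-\#S)\,(\overline{\beta}\cdot\E_S).
$$
Since every weight satisfies $r-\#S\ge 1$ when $\#S\le r-1$, it suffices to prove the two estimates $\overline{\beta}\cdot\H=\overline{d}$ and $\overline{\beta}\cdot\E_S\ge k_S$ for each $S$, and then substitute.

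The first is an equality and is the easy step. After the algorithm terminates, the sections $f_j$, viewed in $\cL$ via the inclusions $\cL(-\sum_{j\notin S}D_S)\hookrightarrow\cL$, have no common zero, so they define the map $f:C\to\P^r$ with $f^{*}\cO(1)\cong\cL$. As $\H$ is the pullback of the hyperplane class along $Y\to\P^r$ and $\bar{f}$ lifts $f$, we get $\overline{\beta}\cdot\H=\deg(\bar{f}^{*}\H)=\deg(f^{*}\cO(1))=\deg(\cL)=\overline{d}$.

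For the second estimate I would argue by effectivity. The image $\bar{f}(C)$ is not contained in any torus-invariant subvariety of $Y$ (this is built into the construction, since the $f_j$ remain non-zero throughout), and in particular $\bar{f}(C)\not\subseteq\E_S$; hence $\bar{f}^{*}\cO_Y(\E_S)\cong\cO_C(\bar{f}^{-1}(\E_S))$ for the effective preimage divisor, so that $\overline{\beta}\cdot\E_S=\deg(\bar{f}^{-1}(\E_S))$. The crux is then the containment $D_S\subseteq\bar{f}^{-1}(\E_S)$ of effective divisors. This is the numerical shadow of the constraint \eqref{inactive_constraint}, $f(D_S)\subseteq\Lambda_S$: a point recorded by $D_S$ meets the center $\Lambda_S$ of the blow-up to the order prescribed by its multiplicity, so its strict transform under $\bar{f}$ passes through the exceptional divisor $\E_S$ lying over $\Lambda_S$ to at least that order. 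Granting the containment, $\overline{\beta}\cdot\E_S=\deg(\bar{f}^{-1}(\E_S))\ge\deg(D_S)=k_S$, and substituting both estimates into the display above yields the claimed inequality.

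The main obstacle is precisely this containment $D_S\subseteq\bar{f}^{-1}(\E_S)$, together with the matching of multiplicities: one must trace the twisting algorithm (Steps 1--7) and the coordinate description of the permutohedral variety $Y$ to verify that the divisor $D_S$, as modified by the algorithm, genuinely records contact of the lifted map with $\E_S$ of order at least $\deg(D_S)$ at each point. The iterated nature of the blow-up makes this delicate, since a point of $C$ may meet several nested strata $\Lambda_{S'}$ simultaneously, and the correct lift $\bar{f}$ into $\E_S$ (rather than into a deeper exceptional divisor) must be disentangled from the coordinate recipe. The fact that we obtain an inequality, rather than the equality $\overline{\beta}\cdot\E_S=k_S$, reflects exactly the phenomenon already flagged in the construction: the algorithm may discard additional vanishing of the $f_j$, so $D_S$ can only undercount the true contact order, which is all that is needed here.
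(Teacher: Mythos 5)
Your reduction---writing $\deg(\overline{\beta})=(r+1)(\overline{\beta}\cdot\H)-\sum_{1\le\#S\le r-1}(r-\#S)(\overline{\beta}\cdot\E_S)$, noting $\overline{\beta}\cdot\H=\overline{d}$, and bounding each $\overline{\beta}\cdot\E_S$ below by $k_S$---is in substance the paper's own computation, with the paper's telescoping over the blow-up stages $Y_{r-1}\to\cdots\to Y_0=\P^r$ collapsed into a single identity; the skeleton and the easy step $\overline{\beta}\cdot\H=\overline{d}$ are fine. The genuine gap is that the estimate $\overline{\beta}\cdot\E_S\ge k_S$ is the entire content of the lemma, you explicitly assume it (``Granting the containment\ldots''), and the justification you sketch is false: the containment $D_S\le\bar{f}^{*}\E_S$ is \emph{not} a formal consequence of the constraint \eqref{inactive_constraint}, $f(D_S)\subseteq\Lambda_S$. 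Because $\E_S$ is a \emph{strict} transform, a point $p\in D_S$ whose image lies in a deeper stratum $\Lambda_{S'}$, $S'\subsetneq S$, may lift into $\E_{S'}$ and miss $\E_S$ entirely. Concretely, take $r=3$, $D_{\{1,2\}}=p$, all other $D_S=0$, and sections whose vanishing orders at $p$ as sections of $\cL$ are $(0,1,1,1)$: this is legitimate data of the required shape (only $f_3,f_4$ are forced to vanish at $p$, but nothing forbids $f_2$ from vanishing as well), and \eqref{inactive_constraint} holds since $f(p)=\Lambda_{\{1\}}\in\Lambda_{\{1,2\}}$. Yet $\bar{f}(p)$ is the point of $\E_{\{1\}}$ determined by the tangent direction of the branch, which for generic such data is not the point where the strict transform of $\Lambda_{\{1,2\}}$ meets $\E_{\{1\}}$; hence $\mathrm{mult}_p\,\bar{f}^{*}\E_{\{1,2\}}=0<1=\mathrm{mult}_p\,D_{\{1,2\}}$, and for generic such data one even has $\overline{\beta}\cdot\E_{\{1,2\}}=0<k_{\{1,2\}}$ globally.

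What makes your per-$S$ estimate true for the data the algorithm actually produces---and what your argument never invokes---is the exactness built into Steps 2/3 (and 6/7): at any point $p$, the vanishing order of $f_j$ equals the recorded value $\alpha'_j$ for every $j\in S_1$, and only indices sitting at the \emph{top} of the recorded filtration can vanish to higher order. This forces every $S$ with $\mathrm{mult}_p\,D_S>0$ to equal $[r+1]\setminus\{j:\mathrm{ord}_p f_j\ge v\}$ for a suitable threshold $v$, with the gap of actual orders across $v$ at least the recorded gap; that is exactly the pointwise inequality $\mathrm{mult}_p\,\bar{f}^{*}\E_S\ge\mathrm{mult}_p\,D_S$, and it is the content of the paper's assertion that $D_S$ maps into the strict transform of $\Lambda_S$ in $Y_{\#S-1}$ (the center of the next blow-up), not merely into $\Lambda_S\subset\P^r$. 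In your counterexample configuration the extra vanishing sits in the \emph{middle} of the filtration, which is precisely what the algorithm rules out. (Alternatively, the per-$S$ bound can be avoided entirely: since deeper strata carry strictly larger weights $r-\#S'$, a short counting argument shows the \emph{weighted} inequality $\sum_S(r-\#S)\,\mathrm{mult}_p\,\bar{f}^{*}\E_S\ge\sum_S(r-\#S)\,\mathrm{mult}_p\,D_S$ for arbitrary data of the given shape, which suffices for the lemma---but that is a different argument from the one you propose.) As written, your proposal identifies the correct crux and then assumes it, offering in its place a reason that does not hold.
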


 \begin{proof}
Let $Y=Y_{r-1}\to Y_{r-2}\to\cdots \to Y_0=\P^r$ be the sequence of blow-ups to obtain $Y$, where the blow-up $\rho_s:Y_{s+1}\to Y_s$ is at the strict transforms of the $s$-dimensional torus-invariant subvarieties of $\P^r$. Note that the map $f:C\to\P^r$ has degree $\overline{d}$.

In the unique lift $f_s:C\to Y_s$ of $f$ to $Y_s$, for each $S$ with $\#S=s+1$, the divisor $D_S$ is constrained to map to strict transform of $\Lambda_S$ in $Y_s$. Therefore, the blow-up $\rho_s$ decreases the degree of $C$ by at least $(r-\#S)k_S=(r-s-1)k_S$ for each such $S$, that is,
$$(f_{s})_{*}[C]\cdot K_{Y_s}^\vee-(f_{s+1})_{*}[C]\cdot K_{Y_{s+1}}^\vee\ge(r-s-1)\sum_{\#S=s+1}k_S,$$
from which the lemma follows.
 \end{proof}

\begin{lemma}
At active $p_i$, we have
\begin{equation}\label{active_constraint_new}
f(p_i)\in\widetilde{\Lambda_i}.
\end{equation}
where $\widetilde{\Lambda_i}$ is the strict transform of $\Lambda_i$ in $Y$.
\end{lemma}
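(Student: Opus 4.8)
The plan is to realize $\Lambda_i$ as a fiber of a linear projection and then transport this description to $Y$ through the toric projection maps of \S\ref{permutohedral_section}. Write $T:=S^i_\circ$ and $T^c:=[r+1]\setminus S^i_\circ$; since $p_i$ is active, $T\subsetneq[r+1]$, so $T^c\neq\emptyset$. Recall from the discussion preceding Lemma \ref{lemma: active_constraint} that $f_j(p_i)=f_{j'}(p_i)\neq0$ for all $j,j'\in T^c$, while $f_j(p_i)=0$ for $j\in T$. As $x_i=[1:\cdots:1]$, the subspace $\Lambda_i=\langle x_i,\Lambda_{S^i_\circ}\rangle$ is cut out in $\P^r$ exactly by the equations $a_j=a_{j'}$ for $j,j'\in T^c$. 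Equivalently, $\Lambda_i$ is the closure of the fiber over the all-ones point $o:=[1:\cdots:1]\in\P(\C[T^c])$ of the linear projection $\pi:\P^r\dashrightarrow\P(\C[T^c])$ away from the center $\Lambda_{S^i_\circ}$.

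I would next identify $\widetilde{\Lambda_i}$ with the fiber $\rho_{T^c}^{-1}(o)$, where $\rho_{T^c}:Y\to\P(\C[T^c])$ is the projection of \S\ref{permutohedral_section}. Over the open locus $U\subseteq\P^r$ away from all blow-up centers, the composite blow-up $\mu:Y\to\P^r$ is an isomorphism and $\rho_{T^c}=\pi\circ\mu$ on $\mu^{-1}(U)$; since $U$ avoids the indeterminacy locus $\Lambda_{S^i_\circ}$ of $\pi$, the closed set $\rho_{T^c}^{-1}(o)$ contains $\mu^{-1}(\Lambda_i\cap U)$, hence also its closure, the strict transform $\widetilde{\Lambda_i}$, which is irreducible of dimension $\#S^i_\circ$. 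On the other hand, $o$ has all coordinates nonzero, so it lies in the dense torus orbit of $\P(\C[T^c])$; because $\rho_{T^c}$ is the toric morphism resolving $\pi$ and its associated map of tori is the surjective coordinate projection, the fiber $\rho_{T^c}^{-1}(o)$ is irreducible of dimension $\dim Y-(\#T^c-1)=\#S^i_\circ$. An irreducible variety containing $\widetilde{\Lambda_i}$ and of the same dimension coincides with it, giving $\rho_{T^c}^{-1}(o)=\widetilde{\Lambda_i}$.

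It then remains to compute $\rho_{T^c}(\bar{f}(p_i))$. By construction, $\rho_{T^c}$ records the $T^c$-coordinates of $\bar{f}(p_i)=(y_0,\ldots,y_k)$ at the first level at which they are not all zero. Since the $T^c$-coordinates of $y_0=f(p_i)$ are all equal to a common nonzero value, this level is $y_0$ itself, so $\rho_{T^c}(\bar{f}(p_i))=[1:\cdots:1]=o$. Combining with the previous paragraph yields $\bar{f}(p_i)\in\rho_{T^c}^{-1}(o)=\widetilde{\Lambda_i}$, as desired.

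The main obstacle is the middle step, namely the clean identification $\widetilde{\Lambda_i}=\rho_{T^c}^{-1}(o)$. This rests on two properties of the permutohedral variety: that $\rho_{T^c}$ genuinely resolves the linear projection $\pi$ (so that $\widetilde{\Lambda_i}\subseteq\rho_{T^c}^{-1}(o)$), and that the fibers of the toric morphism $\rho_{T^c}$ over the open torus orbit of the base are irreducible of the expected dimension (so that the containment is an equality). Both are standard features of toric morphisms whose lattice map is surjective, but some care is needed to confirm that the coordinate description of $\rho_{T^c}$ in \S\ref{permutohedral_section} really coincides with $\pi$ over the domain of definition of $\pi$, and that $o$ lies in the open orbit. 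The remaining inputs — the explicit equations of $\Lambda_i$ and the equality $\rho_{T^c}(\bar{f}(p_i))=o$ — follow directly from Lemma \ref{lemma: active_constraint} and the definition of $\rho_{T^c}$.
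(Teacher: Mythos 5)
Your first two steps are sound: the description of $\Lambda_i$ by the equations $a_j=a_{j'}$ for $j,j'\in(S^i_\circ)^c$, and the identification $\widetilde{\Lambda_i}=\rho_{(S^i_\circ)^c}^{-1}(o)$ via irreducibility and a dimension count, are correct (the paper merely asserts this identification, so your justification is a welcome addition). The genuine gap is in the final step, where you claim that the $(S^i_\circ)^c$-coordinates of $y_0=f(p_i)$ are ``all equal to a common nonzero value,'' so that $\rho_{(S^i_\circ)^c}$ reads off its value already at level $y_0$. The statement you cite from the discussion before Lemma \ref{lemma: active_constraint} is that $f_j(p_i)=f_{j'}(p_i)\neq0$ \emph{as sections of} $\cL'(D_j+D_{j'})$, i.e., in terms of the data before the twisting at $p_i$; it does not persist as a nonvanishing statement for the final sections $f_j\in H^0(C,\cL)$ that define $f$ and hence $y_0$. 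Indeed, for a wild $p_i$, steps 6 and 7 twist $\cL$ down at $p_i$ by a total of $\sum_{j\in S^i_\circ}\alpha_{p_i,j}+(\alpha_{\mathrm{tot}}-\alpha_{\max})$, while the sections $f_j$ with $j\in(S^i_\circ)^c$ vanish at $p_i$ to order $\sum_{j\in S^i_\circ}\mathrm{mult}_{p_i}(D_j)$ as sections of $\cL_\init$; the difference is exactly $\alpha_{\max}$, which is strictly positive whenever $S^i_{\circ,1}\neq\emptyset$, i.e., whenever some $j\in S^i_\circ$ has $\mathrm{mult}_{p_i}(D_j)>\mathrm{ord}_{p_i}(f_j)$. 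Concretely, if $S^i_\circ=\{1\}$ with $\mathrm{mult}_{p_i}(D_1)=2$ and $\mathrm{ord}_{p_i}(f_1)=1$, then after the algorithm one finds $f_1(p_i)\neq0$ while $f_j(p_i)=0$ for all $j\neq1$, so $y_0=[1:0:\cdots:0]\in\Lambda_{S^i_\circ}$: the coordinates you need are \emph{all zero} at level $y_0$ (note this is also the opposite of your parenthetical claim that $f_j(p_i)=0$ for $j\in S^i_\circ$), and your argument breaks down.

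This situation is precisely what the lemma is designed to handle: it is the reason the conclusion concerns the strict transform in $Y$ rather than just the condition $f(p_i)\in\Lambda_i$ of Lemma \ref{lemma: active_constraint}, since when $f(p_i)$ falls into the center $\Lambda_{S^i_\circ}$ the strict transform remembers the deeper levels $y_1,y_2,\ldots$ of the point of $Y$. The correct argument, which is the paper's, is that what survives the twisting is not nonvanishing but equality of \emph{orders of vanishing}: the sections $f_j$, $j\in(S^i_\circ)^c$, all vanish at $p_i$ to the same order (namely $\alpha_{\max}$) as sections of the final $\cL$, with equal nonzero leading terms. Consequently, at the first level $y_{k'}$ of $\bar f(p_i)=(y_0,\ldots,y_k)$ at which the $(S^i_\circ)^c$-coordinates are not all zero --- and one may have $k'>0$ --- these coordinates are equal and nonzero, so $\rho_{(S^i_\circ)^c}(\bar f(p_i))=[1:\cdots:1]=o$. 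Replacing ``level $y_0$'' by ``level $y_{k'}$'' in your last paragraph, with this justification, repairs the proof; the rest of your argument can be kept as is.
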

Note that if $p_i$ is regular, then we may identify $\widetilde{\Lambda_i}$ with $x_i$.

\begin{proof}
As $x_i\in X$ is general, we may assume it does not lie in any torus-invariant subvariety, and identify it both with a point of $Y$ and of $\P^r$. Then, the subvariety $\widetilde{\Lambda_i}\subset Y$ may be described in the language of \S\ref{permutohedral_section} as the fiber containing $x_i$ of the projection $\rho_{(S_\circ^i)^c}:Y\to\P(\C[(S_\circ^i)^c])$. That is, $\widetilde{\Lambda_i}$ is the set of points of $Y$ whose coordinates corresponding to the complement of $S_\circ^i$ are equal to those of $x_i$. When we take $x_i=[1:\cdots:1]$, this is to say that these coordinates are all non-zero and equal to each other.

The claim now follows from the fact that, by construction, the sections $f_j\in H^0(C,\cL)$ comprising $f:C\to\P^r$ have the same order of vanishing for $j\in (S_\circ^i)^c$. Therefore, twisting $\cL$ down by this order of vanishing yields a point $y_{k'}$ in the sequence $y=(y_0,\ldots,y_k)$ for which the corresponding coordinates are all non-zero and equal to each other, for the same reason as in Lemma \ref{lemma: active_constraint}. 
\end{proof}

If one or both of $S^i_{\circ,1},S^i_{\circ,2}$ are empty after step 6, then in fact, $f(p_i)$ is further constrained to lie in a proper subvariety of $\widetilde{\Lambda_i}$; we will, however, not need this. Note in particular that the expected number of conditions on $\bar{f}$ imposed by \eqref{active_constraint_new} is $r-\#S^i_\circ$.

\begin{definition}
Let $$\tau':\cM_{g,n}(Y,\bar{\beta})\to\cM_{g,n}\times \prod_{p_i\text{ active}}\P(\C[(S^i_\circ)^c])$$
be the map obtained in the first factor by remembering the domain curve, and in the second by evaluating at an active marked point $p_i$, and then projecting via $\rho_{(S^i_\circ)^c}$.
\end{definition}

Note that the projection $\rho_{(S^i_\circ)^c}$ is simply the blow-up map $Y\to\P^r$ when $p_i$ is regular.

\begin{lemma}\label{exp_dim_after_twisting}
The expected relative dimension of $\tau'$,
$$\mathrm{vdim}(\cM_{g,n}(Y,\bar{\beta}))-(3g-3+n)-\sum_{p_i \ \text{active} } (r-\mathrm{\#}S^i_0),$$
is non-positive, and is zero only if our starting point $f_{\text{init}}\in\P$ was base-point free, that is, in $\P^\circ$.
\end{lemma}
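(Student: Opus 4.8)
The plan is to compute the expected relative dimension of $\tau'$ in closed form and exhibit it as $rn$ minus a manifestly non-negative ``defect'' that vanishes precisely when $f_\init$ is base-point free. Using $\mathrm{vdim}(\cM_{g,n}(Y,\overline{\beta}))=\deg(\overline{\beta})+(r-3)(1-g)+n$ (with the degree taken against $K_Y^\vee$) and $\dim\cM_{g,n}=3g-3+n$, the expected relative dimension in the statement simplifies, after the arithmetic $(r-3)(1-g)-(3g-3)=r(1-g)$, to
$$\deg(\overline{\beta})+r(1-g)-\sum_{p_i\text{ active}}(r-\#S^i_\circ).$$
By Lemma \ref{new_degree_bound} this is at most $\Phi_{\mathrm{final}}+r(1-g)-\sum_{p_i\text{ active}}(r-\#S^i_\circ)$, where I set $\Phi:=(r+1)\overline{d}-\sum_{1\le\#S\le r-1}(r-\#S)k_S$ for the current data, with $\overline{d}=\deg\cL$ and $k_S=\deg D_S$.

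Next I would track $\Phi$ through the entire twisting algorithm. At the start only the $D_{\{j\}}$ (of degree $k_j$) are non-zero, so $\Phi_{\mathrm{init}}=(r+1)d-(r-1)\sum_j k_j=\beta\cdot K_X^\vee=r(n+g-1)$ by \eqref{dim constraint}. The central computational claim is that $\Phi$ is non-increasing at every step: Steps 1, 4 and 5 drop $\Phi$ by $(r+1)\alpha$; Steps 2 and 6 drop it by $2\alpha_{p,j}$ per index $j$; and Steps 3 and 7 drop it by $\alpha_{\mathrm{tot}}-\alpha_{\max}+\min_j\alpha_j\ge 0$. This last computation is the crux and follows from the telescoping identity $\sum_{m=2}^t(\#S^m-1)(\alpha'(m)-\alpha'(m-1))=\sum_j\alpha'_j-\alpha'(t)$ together with $\sum_j\alpha'_j=(r+1)\alpha_{\max}-\alpha_{\mathrm{tot}}$ and $\alpha'(t)=\alpha_{\max}-\min_j\alpha_j$. (Divisors $D_S$ with $\#S=r$ that these steps may create carry coefficient $r-\#S=0$ and hence never affect $\Phi$, which is exactly why Lemma \ref{new_degree_bound} restricts to $\#S\le r-1$.) Writing $-\Delta\Phi\ge0$ for the drop at a given step, this yields $\deg(\overline{\beta})\le r(n+g-1)-\sum(-\Delta\Phi)$ and hence
$$\text{expected relative dimension}\le rn-\Big(\sum(-\Delta\Phi)+\sum_{p_i\text{ active}}(r-\#S^i_\circ)\Big).$$

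It then remains to prove the bracketed defect is $\ge rn$, by localizing it to the marked points. Steps 1--3 act at points of $C^\circ$ and contribute $\sum(-\Delta\Phi)\ge0$, strictly positive as soon as any such step runs. For each marked point $p_i$ I would treat three cases: (a) an inactive $p_i$ (Steps 4 or 5) contributes $(r+1)\alpha\ge r+1>r$ and nothing to the active sum; (b) a regular active $p_i$ undergoes no twisting and contributes exactly $r-\#S^i_\circ=r$; (c) a wild active $p_i$ undergoes Step 6 for each $j\in S^i_\circ$ with $\alpha_{p_i,j}\ge1$, so $-\Delta\Phi\ge 2\#S^i_\circ$, giving a total of at least $2\#S^i_\circ+(r-\#S^i_\circ)=r+\#S^i_\circ>r$. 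Summing over $i$, the defect is $\ge rn$, with equality forcing no twisting at any $p_i$ and none on $C^\circ$; conversely one checks directly that base-point-freeness of $f_\init$ makes every $p_i$ regular active, so the algorithm is trivial. This gives both the inequality (expected relative dimension $\le 0$) and the equality characterization ($=0$ only if $f_\init\in\P^\circ$).

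The main obstacle is the per-step analysis of $\Phi$ in Steps 3 and 7, where the filtration $S^1\supsetneq\cdots\supsetneq S^t$ and the newly produced higher-codimension divisors $D_{[r+1]\backslash S^m}$ must be accounted for correctly; establishing the telescoping identity and the vanishing-coefficient behavior of the size-$r$ divisors is the delicate point. A secondary subtlety is the localization of the global drop in $\Phi$ to individual marked points so as to extract the clean per-point lower bound of $r$, together with the careful tracking of strictness at each inequality needed to pin down the equality case.
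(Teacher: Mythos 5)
Your proposal is correct and follows essentially the same route as the paper: the paper tracks the single quantity $\Omega=\Phi-r(g-1)-\sum_{p_i\text{ active}}(r-\#S^i_\circ)$ (your $\Phi$ combined with the active-point sum), shows it starts at zero by \eqref{dim constraint} and strictly decreases under every step of the twisting algorithm, and bounds the expected relative dimension by its final value via Lemma \ref{new_degree_bound} --- with exactly your per-step drops $(r+1)\alpha$ and $2\alpha_{p,j}$, your Step 3/7 drop (which the paper writes as $\sum_m r_m\alpha(m)-\alpha(1)+\alpha(t)$, equal to your $\alpha_{\mathrm{tot}}-\alpha_{\max}+\min_j\alpha_j$), and your per-marked-point excesses $(r+1)\alpha-r>0$ and $2\#S^i_\circ-\#S^i_\circ>0$. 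The only cosmetic differences are your cleaner closed form for the Step 3 drop and your localization of the accounting to individual points at the end, versus the paper's running update of $\Omega$.
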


\begin{proof}
If $f_{\text{init}}\in\P$ to begin with, then $$\bar{\beta}\cdot K_Y^\vee\le \beta\cdot K_X^\vee,$$
 where $\beta$ is the original class $d\H^\vee+\sum_j k_j\E_j^\vee\in H_2(X)$, with equality whenever no two of the original sections $f_j\in H^0(C,\cL'(D_j))$ share a common vanishing point. Thus, the virtual dimension of $\cM_{g,n}(Y,\bar{\beta})$ is at most the dimension of $\cM_{g,n}\times X^{n}$ by \eqref{dim constraint}. In particular, the quantity in question is non-positive. 

In general, we will show that each of the steps of our twisting algorithm has the effect of decreasing the quantity
\begin{equation*}
\Omega:=\left[(r+1)\deg(\cL) \; -\sum_{\substack{S\subset[r+1] \\ 1\le\#S\le r-1}}(r-\#S)\deg(D_S)\right]-r(g-1)-\sum_{p_i\text{ active}}(r-\# S^i_\circ)
\end{equation*}

The quantities $\deg(\cL),\deg(D_S)$ are taken to be those that are changing throughout the twisting algorithm. As for the last term, we somewhat abusively set the values of $(r-\# S^i_\circ)$ to be equal to $r$ initially (as if all marked points $p_i$ are initially regular), so that the last summation is initially equal to $-rn$, and, by \eqref{dim constraint}, $\Omega$ is initially equal to zero. Then, if $p_i$ is inactive, the summand $(r-\# S^i_\circ)$ is removed after either step 4 or 5 (whichever is applied first). If $p_i$ is wild, then the value of $\#S^i_\circ$ is set to the correct one after step 6. If $p_i$ is regular, the summand $(r-\# S^i_\circ)$ remains equal to $r$ throughout.

By Lemma \ref{new_degree_bound}, the final value of $\Omega$ is an upper bound for the expected relative dimension of $\tau'$. Therefore, if $f_{\text{init}}\notin\P^\circ$, that is, at least one step of the algorithm is run, then the expected relative dimension of $\tau'$ will be strictly negative, as needed. 

\begin{enumerate}[label=\underline{Step \arabic*}]
\item We decrease $\deg(\cL)$ by $\alpha>0$ and make no other changes, which decreases $\Omega$ by $(r+1)\alpha$.

\item For each $j$, we decrease $\deg(\cL)$ and $\deg(D_{\{j\}})$ each by $\alpha_{p,j}>0$, which decreases $\Omega$ by $2\alpha_{p,j}$.

\item First, decreasing $\deg(\cL)$ by $\alpha_{\text{tot}}-\alpha_{\max}$ decreases $\Omega$ by $(r+1)(\alpha_{\text{tot}}-\alpha_{\max})$, and replacing each $D_{\{j\}}$ by $D_{\{j\}}-\alpha_j p$ increases it by $(r-1)\alpha_{\text{tot}}$. Finally, the modification of $D_{[r+1]\backslash S^m}$ decreases $\Omega$ by $(\#S^m-1)(\alpha'(m)-\alpha'(m-1))$. We thus need to show that
$$-2\alpha_{\text{tot}}+(r+1)\alpha_{\max}-\sum_{m=2}^{t}(\#S^m-1)(\alpha'(m)-\alpha'(m-1))<0.$$ 

Let $$
\alpha(1)>\alpha(2)>\cdots>\alpha(t)
$$ denote the distinct integers among the $\alpha_j$; note that $\alpha(t)=0$ if and only if $S_1\subsetneq[r+1]$. Let $r_1,\ldots,r_t$ denote the number of times $\alpha(1),\ldots,\alpha(t)$ appear among the $\alpha_j$, so that $r+1=r_1+\cdots+r_t$. We now have
\begin{align*}
&-2\alpha_{\text{tot}}+(r+1)\alpha_{\max}-\sum_{m=2}^{t}(\#S^m-1)(\alpha'(m)-\alpha'(m-1))\\
=&-2\sum_{m=1}^{t}\alpha(m) r_m+(r+1)\alpha(1)-\sum_{m=2}^{t}(r_m+\cdots+r_t-1)(\alpha(m-1)-\alpha(m))\\
=&-\sum_{m=1}^{t}r_m\alpha(m)+\alpha(1)-\alpha(t)<0.
\end{align*}

\item As in step 1, we decrease $\deg(\cL)$ by $\alpha>0$ and make no other changes, which decreases $\Omega$ by $(r+1)\alpha$. On the other hand, the term $(r-\#S^i_\circ)$ goes away. Thus, the expected relative dimension of $\tau'$ goes down by $(r+1)\alpha-r>0$.

\item We decrease all of the $\deg(D_{\{j\}})$ by $\alpha>0$ and $\deg(\cL)$ by $r\alpha$, decreasing $\Omega$ by $(r+1)\alpha$, and possibly remove the term $(r-\#S^i_\circ)$ (if this was not done in the previous step), so $\Omega$ goes down by at least $(r+1)\alpha-r>0$.
\end{enumerate}
We examine the effect of the last two steps depending on the type of $p_i$.
\begin{itemize}
\item if $p_i$ is inactive, then applications of steps 6 and 7 only decrease $\Omega$ further, by steps 2 and 3. 

\item if $p_i$ is regular, then no changes are made.

\item if $p_i$ is wild, then step 6 decreases $\deg(\cL)$ and $\deg(D_{\{j\}})$ together by at least 1 for each $j\in S^i_\circ$, decreasing $\Omega$ by at least $2\cdot\# S^i_\circ$. Furthermore, the $-(r-\#S^i_\circ)$ is updated to the correct value (from $-r$ originally) in this step. In total, the value of $\Omega$ goes down at least by $$2\cdot\# S^i_\circ+(r-\#S^i_\circ)-r>0$$
in step 6. In step 7, $\Omega$ can again only decrease, by step 3. (Note that step $7$ does not affect the contribution of $-(r-\#S^i_\circ)$.)
\end{itemize}
\end{proof}

\begin{lemma}\label{actual_dim_after_twisting}
Every irreducible component of $\cM_{g,n}(Y,\overline{\beta})$ dominating $\cM_{g,n} \times \prod_{p_i \ \text{regular}} X$ (where we may equivalently replace $X$ in the last factor with $Y$ or $\P^r$) has expected dimension.
\end{lemma}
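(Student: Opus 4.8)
The plan is to reduce the assertion to the vanishing $H^1(C,\bar{f}^{*}T_Y)=0$ at a general point $[\bar{f}]$ of such a component: this vanishing is equivalent to generic smoothness, and hence to the component having the expected dimension. Since the $x_i$ are general, they avoid the loci blown up in $Y\to X\to\P^r$, so the evaluation conditions at regular points are insensitive to whether the target factor is recorded as $X$, $Y$, or $\P^r$; we work throughout with $Y$.

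First I would bound the number of regular marked points from below. The point is that every non-regular marked point forces a strict drop of $\deg\cL$ localized at $p_i$: an inactive $p_i$ has $\deg\cL$ decreased by $\alpha\ge1$ in step 4, or by $\alpha r\ge1$ in step 5, while a wild $p_i$ has $\deg\cL$ decreased by $\sum_{j\in S^i_\circ}\alpha_{p_i,j}\ge1$ in step 6. These drops occur at the distinct points $p_i$ and are disjoint from the twists carried out at points of $C^\circ$ in steps 1--3. Hence, writing $\bar{d}=\deg\cL$ after the algorithm terminates, the number of non-regular points is at most the total degree drop $d-\bar{d}$. Since $\bar{f}$ is base-point free and non-degenerate we have $\bar{d}\ge0$, so the number of non-regular points is at most $d$, and by hypothesis \eqref{eqn: range for d to have enumerativity} there are at least $n-d\ge g+1$ regular points. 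This is exactly the role played by the inequality $n-d\ge g+1$, as anticipated in Remark \ref{ineq_necessary}.

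At each regular $p_i$ we have $\bar{f}(p_i)=x_i$ general, so the component dominates the factor $Y$ for every regular index. With at least $g+1$ regular marked points carrying general images, I would invoke Lemma \ref{basic_transversality} --- whose proof applies verbatim with $Y$ in place of $X$ --- to conclude that the general $\bar{f}$ in the component satisfies $H^1(C,\bar{f}^{*}T_Y)=0$. Concretely, forgetting the non-regular marked points carries the component to one in $\cM_{g,n_{\reg}}(Y,\overline{\beta})$ dominating $\cM_{g,n_{\reg}}\times Y^{n_{\reg}}$ with $n_{\reg}\ge g+1$, to which Lemma \ref{basic_transversality} applies directly; since the obstruction $H^1(C,\bar{f}^{*}T_Y)$ depends only on the underlying map, its vanishing transfers back, and the corresponding bookkeeping of dimensions (each forgotten point contributes equally to the actual and the expected dimension) yields the claim for the original component.

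The step I expect to be the main obstacle is the bookkeeping in the degree-drop count: one must verify that each non-regular $p_i$ is charged exactly once, for a drop of at least $1$, and that these charges are genuinely disjoint from one another and from the $C^\circ$-twists of steps 1--3. In particular, an inactive point must be charged in step 4 or 5 (even when steps 6--7 are subsequently run to clear residual base-points), and a wild point in step 6. A minor additional subtlety is the genus-zero case, where forgetting down to $n_{\reg}$ points may violate the stability of $\cM_{0,n_{\reg}}$; there one retains enough marked points, or argues the vanishing $H^1(\P^1,\bar{f}^{*}T_Y)=0$ directly, before applying Lemma \ref{basic_transversality}.
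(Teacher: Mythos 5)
Your proposal is correct and follows essentially the same route as the paper: the paper likewise observes that every non-regular $p_i$ is a base-point of the original $f_{\init}$, each costing at least $1$ of the degree $d$, so that $n-d\ge g+1$ guarantees at least $g+1$ regular points, and then concludes directly by Lemma \ref{basic_transversality}. Your bookkeeping of the localized degree drops in steps 4--6 is just a more explicit formalization of the paper's one-line count, and the final appeal to Lemma \ref{basic_transversality} (forgetting the non-regular points) is the intended application.
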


\begin{proof}

We claim that there are at least $g+1$ regular points among the $p_i$. Indeed, every point $p_i$ which is a base-point for our original $f\in \P$ contributes at least 1 to the degree $d$ of our original line bundle $\cL$, but by assumption, we have $d\le n- (g+1)$, so at least $ g+1$ regular points remain after our twisting algorithm.

Therefore, the conclusion follows from Lemma \ref{basic_transversality}. 
\end{proof}

\begin{proof}[Proof of Proposition \ref{Understanding V(x1,...,xn)}]
Lemmas \ref{exp_dim_after_twisting} and \ref{actual_dim_after_twisting} at long last imply that $V(x_1,\ldots,x_n)$ is contained in $\P^\circ$. Indeed, if this is not the case for some $f\in V(x_1,\ldots,x_n)$, then applying the twisting algorithm to $f$ yields a point on $\cM_{g,n}(Y,\bar{\beta})$ moving in a family dominating the target of $\tau'$, but this is a contradiction due to dimension reasons.

Similarly, for dimension reasons, we see that $f\in V(x_1,\ldots,x_n)$ must define a (base-point free) map $f:C\to X$ of curve class \emph{exactly} $\beta$, as unexpected vanishing of the $f_j$ can only decrease the degree of $\beta$. 

It remains to check that any $f\in V(x_1,\ldots,x_n)$ has no non-trivial tangent vectors. It is elementary to check that such a tangent vector would give rise to a non-trivial relative tangent vector of the map $\tau:\cM_{g,n}(X,\beta)\to\cM_{g,n}\times X^n$ over a general point, which contradicts Lemma \ref{basic_transversality}.
\end{proof}

\subsection{Integral formula on $S$}

In this section, we push forward the formula of Theorem \ref{thm:integral formula} to obtain Theorem \ref{thm:grr}. Up to setting $k_i=0$ for $i>\ell$, we can assume that $\ell=r+1$.

We start with a change of variables. Define
$$
\widetilde{\cE}:= \cE \otimes \mathcal{O}(N) = \bigoplus_{i=1}^{r+1} \nu_*(\mathcal{P}(-\mathcal{D}+N +\mathcal{D}_i))=\bigoplus_{i=1}^{r+1} \nu_*(\mathcal{P}(\Delta_i)),
$$
where we have written
$$
N=\sum_{i=1}^{r+1} N_i, \ \mathcal{D}= \sum_{i=1}^{r+1} \mathcal{D}_i \text{ and } \Delta_i=-\mathcal{D}+N +\mathcal{D}_i
\text{ for } i=1,\ldots,r+1.
$$
Using $\widetilde{\cE}$, we can rewrite the formula \eqref{eqn:integral formula} as
$$
    \mathsf{Tev}_{g,n,\beta}^X= \int_{\P(\widetilde{\cE})} (\widetilde{\H}^r+\sigma_1 \widetilde{\H}^{r-1}+\cdots.+\sigma_r)^n
$$
where, by an abuse of notation,  $\widetilde{\H}=c_1(\mathcal{O}_{\P(\widetilde{\cE})}(1))$. Writing 
\begin{align*}
    (\widetilde{\H}^r+\sigma_1 \widetilde{\H}^{r-1}+\cdots.+\sigma_r)^n&= \left( \frac{\prod_{i=1}^{r+1}(\widetilde{\H}+\eta_i)-\sigma_{r+1}}{\widetilde{\H}} \right)^n \\
    &= \sum_{m=0}^n \binom{n}{m} (-1)^m \frac{\prod_{i=1}^{r+1}(\widetilde{\H}+\eta_i)^{n-m} \sigma_{r+1}^m}{\widetilde{\H}^n} \\
    &= \sum_{m=0}^n \binom{n}{m} (-1)^m \frac{\prod_{i=1}^{r+1}(\sum_{a_i=0}^{n-m} \binom{n-m}{a_i} \widetilde{\H}^{n-m-a_i}\eta_i^{a_i+m}) }{\widetilde{\H}^n},
\end{align*}
we obtain
\begin{equation} \label{eqn:integral to compute before GRR}
\begin{aligned}
    \mathsf{Tev}_{g,n,\beta}^X&=\sum_{m=0}^n \binom{n}{m} (-1)^m \int_{\P(\widetilde{\cE})}\frac{\prod_{i=1}^{r+1}(\sum_{a_i=0}^{n-m} \binom{n-m}{a_i} \widetilde{\H}^{n-m-a_i}\eta_i^{a_i+m}) }{\widetilde{\H}^n} \\
    &=\sum_{m=0}^{\min(n,k_1,\ldots,k_{r+1})} \binom{n}{m}(-1)^m \int_{S} \left( \prod_{i=1}^{r+1}(1+\eta_i)^{n-m} \eta_i^m \right) c(\widetilde{\cE})^{-1},
\end{aligned}
\end{equation}
where in the last equality we have pushed forward to $S$. It remains to compute $c(\widetilde{\cE})$; this will be an application of Grothendieck-Riemann-Roch.

\subsubsection{Cohomology of $\J(C) \times \Sym^{k_1}(C) \times \cdots \times \Sym^{k_{r+1}}(C)$}\label{coh_of_S}

We now fix the notation necessary to compute the integral \eqref{eqn:integral to compute before GRR}.

Let $e_1,\ldots,e_{2g}$
be a symplectic basis of $H^1(C,\Z)$, and denote by $e_1',\ldots,e_{2g}'$ the corresponding classes in $H^1(\J(C),\Z)$ under the natural isomorphism
$$
H^1(\J(C),\Z) \rightarrow H^1(C,\Z).
$$
Let also 
$$
\Theta= \sum_{\alpha=1}^g e_\alpha' e_{\alpha+g}'
$$
be the class of the theta divisor on $\J(C)$ and set 
$$
\gamma=- \left( \sum_{\alpha=1}^g e_\alpha' e_{\alpha+g}-e_{\alpha+g}'e_{\alpha} \right) \in H^1(\J(C),\Z) \otimes H^1(C,\Z) .
$$
By \cite[Chapter VIII]{ACGH}, we have
\begin{equation}\label{Chern class of Poicare line bundle}
\mathrm{ch}(\mathcal{P})=1+d \p +\gamma-\Theta \p
\end{equation}
where $\p \in H^2(C,\Z)$ is the point class.

Next, we recall MacDonald's description \cite{MacDonald} of the cohomology ring of $\Sym^{k}(C)$. For $i=1,\ldots,r+1$, define classes $\zeta_{i,1},\ldots,\zeta_{i,2g} \in H^1(\Sym^{k_i}(C),\Z)$ for $\eta_i \in H^2(\Sym^{k_i}(C),\Z)$ as the Künneth components of the universal divisor $\mathcal{D}_i$:
$$
\mathcal{D}_i= \eta_i +\sum_{\alpha=1}^g (\zeta_{i,\alpha}e_{\alpha+g}-\zeta_{i,\alpha+g}e_\alpha)+k_i \p.
$$
Note that $\eta_i=[N_i]$ as before. These generate the ring $H^*(\Sym^{k_i}(C),\Z)$, and setting $\tau_{i,\alpha}=\zeta_{i,\alpha} \zeta_{i,\alpha+g}$, for every multiindex $I$ without repetitions,  we have 
$$
\int_{\Sym^{k_i}(C)} \eta_i^{k_i-|I|} \tau_{i,I}= 1
$$
where $\tau_{i,I}=\prod_{\alpha \in I} \tau_{i,\alpha}$.

Set also 
\begin{align*}
&\overline{y}_i=\sum_{\alpha=1}^g \sum_{\substack{1 \leq j \leq r+1 \\ j \neq i}} (\zeta_{j,\alpha}e_{\alpha+g}-\zeta_{j,\alpha+g} e_\alpha), \\
& \overline{\tau}_i= \sum_{\alpha=1}^g \sum_{\substack{1 \leq j_1,j_2 \leq r+1 \\ j_1,j_2 \neq i}} \zeta_{j_1,\alpha} \zeta_{j_2,\alpha+g}, \\
& \overline{x}_i=\sum_{\alpha=1}^g \sum_{\substack{1 \leq j \leq r+1 \\ j \neq i}} e_\alpha' \zeta_{j,\alpha+g}-e_{\alpha+g}' \zeta_{j,\alpha}, \\
\text{and } &\overline{k}_i= \sum_{\substack{1 \leq j \leq r+1 \\ j \neq i}}k_j,
\end{align*}
and write
$$
\Delta_i= \eta_i-\overline{y}_i-\overline{k}_i \p.
$$
The following lemma will be used in Section \S\ref{Section: computation c(tilde(E))}.

\begin{lemma}\label{lemma:cohomological identities}
    The following identities hold.
\begin{enumerate}
    \item[(i)] 
    $\Delta_i^h= \eta_i^h-\eta_i^{h-1}(h \overline{y}_i+h \overline{k}_i \p)+ \eta_i^{h-2}(-h(h-1) \overline{\tau}_i \p) $
    \item[(ii)]
    $\gamma \overline{y}_i= \overline{x}_i \p$ 
    \end{enumerate}
\end{lemma}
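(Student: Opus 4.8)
The plan is to prove both identities by working directly in the cohomology ring $H^*(S\times C)$, using its Künneth decomposition together with the intersection relations in $H^*(C)$. All of the degree-one classes in play — the $e_\alpha,e_{\alpha+g}\in H^1(C)$, their Jacobian counterparts $e_\alpha',e_{\alpha+g}'$, and the $\zeta_{j,\alpha},\zeta_{j,\alpha+g}$ coming from the symmetric products — are odd, so they graded-commute (a transposition costs a sign), whereas $\eta_i$ and $\p$ are even and central. The only $C$-factor relations I will invoke are $\p^2=0$ (since $H^4(C)=0$), the vanishing $e_\gamma\,\p=0$ for every $\gamma$ (since $H^3(C)=0$), and the symplectic pairing $e_\alpha e_{\alpha+g}=\p=-e_{\alpha+g}e_\alpha$, with $e_\gamma e_\delta=0$ for all other pairs. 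Every claimed identity reduces to these.

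For (i), I would first observe that $\Delta_i=\eta_i-\overline{y}_i-\overline{k}_i\,\p$ with $\eta_i$ central, so the binomial theorem applies once the nilpotency of $b:=-\overline{y}_i-\overline{k}_i\p$ is understood. Because every summand of $\overline{y}_i$ carries a factor $e_\bullet$, the relation $e_\gamma\,\p=0$ gives $\overline{y}_i\,\p=0$, so the mixed terms $\overline{y}_i(\overline{k}_i\p)$ and $(\overline{k}_i\p)^2$ all vanish and $b^2=\overline{y}_i^2$. The crux is the single computation
\begin{equation*}
\overline{y}_i^2=-2\,\overline{\tau}_i\,\p,
\end{equation*}
obtained by squaring $\overline{y}_i=\sum_{\alpha}\sum_{j\neq i}(\zeta_{j,\alpha}e_{\alpha+g}-\zeta_{j,\alpha+g}e_\alpha)$ and retaining only the pairings $e_\alpha e_{\alpha+g}$ and $e_{\alpha+g}e_\alpha$: the two surviving families each reduce to $-\overline{\tau}_i\,\p$ after relabelling the summation indices $j\leftrightarrow j'$ and using anticommutativity of the $\zeta$'s. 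Since then $b^3=b\cdot(-2\overline{\tau}_i\p)=0$ (again $\overline{y}_i\,\p=0$ and $\p^2=0$), the expansion truncates to $(\eta_i+b)^h=\eta_i^h+h\,\eta_i^{h-1}b+\binom{h}{2}\eta_i^{h-2}b^2$, which is exactly the stated formula after substituting $b=-\overline{y}_i-\overline{k}_i\p$ and $b^2=-2\overline{\tau}_i\p$.

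For (ii), I would expand $\gamma\,\overline{y}_i$ using $\gamma=-\sum_\alpha(e_\alpha'e_{\alpha+g}-e_{\alpha+g}'e_\alpha)$ and the expression for $\overline{y}_i$. In each product of a $\gamma$-summand with a $\overline{y}_i$-summand, the two $C$-classes either pair to a multiple of $\p$ or annihilate; the ``upper--upper'' products $e_{\alpha+g}e_{\beta+g}$ and ``lower--lower'' products $e_\alpha e_\beta$ vanish, forcing $\beta=\alpha$ in every surviving term. Carrying the Koszul signs as the odd $\zeta$-classes are moved past the $e$-classes, the survivors assemble into $\p\sum_\alpha\sum_{j\neq i}(e_\alpha'\zeta_{j,\alpha+g}-e_{\alpha+g}'\zeta_{j,\alpha})=\overline{x}_i\,\p$, which is the claim.

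The only genuine obstacle is the sign bookkeeping: since $e_\bullet,e_\bullet',\zeta_{j,\bullet}$ are all odd, each transposition contributes a $-1$, and both the factor of $2$ in $\overline{y}_i^2=-2\overline{\tau}_i\p$ and the relative sign between the two families of surviving terms in $\gamma\,\overline{y}_i$ arise entirely from graded-commutativity. Everything beyond this is a finite, purely formal manipulation.
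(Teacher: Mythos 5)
Your proof is correct and follows essentially the same route as the paper: the paper's (very terse) proof also hinges on the single identity $\overline{y}_i^2=-2\,\overline{\tau}_i\,\p$, proving (i) by induction on $h$ (equivalent to your truncated binomial expansion, since $b^3=0$) and dismissing (ii) as straightforward, which is exactly the Künneth/sign computation you carry out. Your write-up is in fact more detailed than the paper's, and the sign bookkeeping, including $\overline{y}_i\,\p=0$ and the relabelling $j\leftrightarrow j'$, checks out.
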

\begin{proof}
 Identity (i) is proved by induction on $h$ using $\overline{y}_i^2=-2 \overline{\tau}_i \p$. Identity (ii) is straightforward.
\end{proof}

\subsubsection{Computation of $c(\widetilde{\cE})$} \label{Section: computation c(tilde(E))}

We now express $c(\widetilde{\cE})$ in terms of the classes defined above. Fix an index $i$ with $1\le i\le r+1$.
\begin{proposition}
 We have
 $$
 \mathrm{ch}( \nu_*(\mathcal{P}(\Delta_i))= \mathrm{exp}(\eta_i) (1-g+d- \overline{k}_i-\overline{\tau}_i- \Theta+\overline{x}_i).
 $$
\end{proposition}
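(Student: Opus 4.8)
The plan is to apply Grothendieck--Riemann--Roch (GRR) to the projection $\pi:S\times C\to S$ and the line bundle $\mathcal{P}(\Delta_i)$, and then to read off the answer by extracting the coefficient of the fiber point class $\p$ using the cohomological identities of Lemma \ref{lemma:cohomological identities}. Before invoking GRR I would first check that $\pi_{*}(\mathcal{P}(\Delta_i))$ computes the entire derived pushforward. Restricted to a fiber $\{s\}\times C$, the bundle $\mathcal{P}(\Delta_i)$ has degree $d-\overline{k}_i=d-\sum_{j\neq i}k_j$; since the index set $\{1,\dots,r+1\}\setminus\{i\}$ has cardinality $r$, condition \eqref{stronger inequality d, ks} gives $d-\overline{k}_i>2g-1>2g-2$. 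Hence $H^1$ vanishes on every fiber, so $R^1\pi_{*}\mathcal{P}(\Delta_i)=0$ and $\mathrm{ch}(\pi_{!}\mathcal{P}(\Delta_i))=\mathrm{ch}(\pi_{*}\mathcal{P}(\Delta_i))$ is the Chern character of an honest vector bundle (as already asserted in the setup).

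Now GRR reads $\mathrm{ch}(\pi_{*}\mathcal{P}(\Delta_i))=\pi_{*}\big(\mathrm{ch}(\mathcal{P}(\Delta_i))\cdot\mathrm{td}(T_\pi)\big)$. The relative tangent sheaf $T_\pi$ is the pullback of $T_C$, and since $C$ is a curve $\mathrm{td}(T_\pi)=1+(1-g)\p$. For the Chern character I would write $\mathrm{ch}(\mathcal{P}(\Delta_i))=\mathrm{ch}(\mathcal{P})\cdot\exp(\Delta_i)$, insert \eqref{Chern class of Poicare line bundle} for $\mathrm{ch}(\mathcal{P})=1+d\p+\gamma-\Theta\p$, and compute $\exp(\Delta_i)$ from Lemma \ref{lemma:cohomological identities}(i): summing $\Delta_i^h/h!$ against the three displayed terms collapses each partial sum to $\exp(\eta_i)$, so that $\exp(\Delta_i)=\exp(\eta_i)\big(1-\overline{y}_i-\overline{k}_i\p-\overline{\tau}_i\p\big)$.

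The final step is to multiply out $\big(1+d\p+\gamma-\Theta\p\big)\big(1-\overline{y}_i-\overline{k}_i\p-\overline{\tau}_i\p\big)\big(1+(1-g)\p\big)$, factor the pure $S$-class $\exp(\eta_i)$ outside, and apply $\pi_{*}$, which simply records the coefficient of the fiber class $\p$ (using $\pi_{*}\p=1$ and $\p^2=0$). Most products vanish for degree reasons in the $C$-direction; the only genuinely interesting contribution is the cross term $\gamma\cdot(-\overline{y}_i)$, a product of two $H^1(C)$-classes, which Lemma \ref{lemma:cohomological identities}(ii) evaluates via $\gamma\,\overline{y}_i=\overline{x}_i\,\p$ and which supplies the $\overline{x}_i$ term. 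Collecting the surviving scalar-times-$\p$ contributions $d\p$, $-\Theta\p$, $-\overline{k}_i\p$, $-\overline{\tau}_i\p$, the cross term, and the Todd contribution $(1-g)\p$, and comparing with \eqref{dim constraint}, then yields the asserted factor $1-g+d-\overline{k}_i-\overline{\tau}_i-\Theta+\overline{x}_i$.

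The hard part will be the bookkeeping in this last expansion rather than any conceptual difficulty: one must track carefully which products land in $H^2(C)$ (and so survive $\pi_{*}$) versus $H^{\ge 3}(C)=0$, and in particular must use the Koszul sign rule to evaluate the mixed cup product $\gamma\,\overline{y}_i$ of the odd classes living respectively on the $\J(C)$ and $\Sym^{k_j}(C)$ factors. This is exactly the content of Lemma \ref{lemma:cohomological identities}(ii), and pinning down its sign is what fixes the sign of the $\overline{x}_i$ term in the final formula.
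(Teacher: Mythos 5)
Your proposal is correct and follows essentially the same route as the paper's own proof: Grothendieck--Riemann--Roch for $\pi$ with $\mathsf{Td}_C=1+(1-g)\p$, the formula \eqref{Chern class of Poicare line bundle}, the expansion $\mathrm{exp}(\Delta_i)=\mathrm{exp}(\eta_i)\left(1-\overline{y}_i-\overline{k}_i\p-\overline{\tau}_i\p\right)$ coming from Lemma \ref{lemma:cohomological identities}(i), identity (ii) for the cross term $\gamma\cdot\overline{y}_i$, and extraction of the coefficient of $\p$ under $\pi_*$ (your preliminary vanishing of $R^1\pi_*$ is exactly what assumption \eqref{stronger inequality d, ks} guarantees in the paper's setup, where it ensures $\cE$ is a vector bundle).

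One remark on the sign you flag but do not pin down: with the paper's displayed conventions (consistency of $\gamma^2=-2\Theta\p$ with $\mathrm{ch}(\mathcal{P})=1+d\p+\gamma-\Theta\p$ forces $e_\alpha e_{\alpha+g}=\p$, under which Lemma \ref{lemma:cohomological identities}(ii) holds as stated), the cross term is $\gamma\cdot(-\overline{y}_i)=-\overline{x}_i\p$, so the computation literally produces $-\overline{x}_i$ rather than $+\overline{x}_i$ --- the paper's own proof compensates silently by writing $\pi_*(-\gamma\,\mathrm{exp}(\Delta_i))$ where the term supplied by $\mathrm{ch}(\mathcal{P})$ is $+\gamma\,\mathrm{exp}(\Delta_i)$ --- but this discrepancy is purely conventional and harmless, since the substitution $e_\alpha'\mapsto -e_\alpha'$ fixes $\Theta$, $\overline{\tau}_i$, $\eta_i$ while flipping every $\overline{x}_i$ simultaneously, so all integrals in Theorem \ref{thm:grr} and its specializations are unchanged.
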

\begin{proof}
By Grothendieck-Riemann-Roch and \eqref{Chern class of Poicare line bundle}, we have
\begin{align*}
\mathrm{ch}(\nu_*(\mathcal{P}(\Delta_i))&=\nu_*(\mathrm{ch}(\mathcal{P}(\Delta_i))\cdot \mathsf{Td}_C)\\
&=\nu_*((1+d \p+\gamma-\p \Theta) \cdot\mathrm{exp}(\Delta_i)\cdot(1+(1-g) \p).
\end{align*}
We compute
\begin{align*}
\nu_*( \mathrm{exp}(\Delta_i))&= \nu_*\left( \sum_{h=0}^\infty \frac{\Delta_i^h}{h!}\right) \\
&=\nu_*\left( \sum_{h=0}^\infty \frac{-h \overline{k}_i}{h!} \eta_i^{h-1} \p - \frac{h(h-1)}{h!} \eta_i^{h-2}\overline{\tau}_i \p\right) \\
&= \mathrm{exp}(\eta_i)(-\overline{k}_i-\overline{\tau}_i)
\end{align*}
where in the second equality we used identity (i) in Lemma \ref{lemma:cohomological identities}. Similarly, we have
\begin{align*}
    &\nu_*((d-\Theta) \p \ \mathrm{exp}(\Delta_i))= \mathrm{exp}(\eta_i)(d-\Theta), \\
    &\nu_*(-\gamma \ \mathrm{exp}(\Delta_i))= \mathrm{exp}(\eta_i) \overline{x}_i, \\
    & \nu_*((1-g) \p \ \mathrm{exp}(\Delta_i) \mathrm{ch}(\mathcal{P}))=(1-g) \mathrm{exp}(\eta_i).
\end{align*}
Here, in the second equality we have used identity (ii) in Lemma \ref{lemma:cohomological identities}. After summing all the obtained contributions, we obtain the claim. 
\end{proof}

\begin{corollary}\label{chern_formula}
$$
c(\widetilde{\cE})= \prod_{i=1}^{r+1} (1+\eta_i)^{1-g+d-\overline{k}_i}\cdot \mathrm{exp}\left( \frac{- \overline{\tau}_i- \Theta+ \overline{x}_i}{1+\eta_i} \right)
$$
\end{corollary}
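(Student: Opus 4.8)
The plan is to deduce the corollary from the preceding Proposition by a formal manipulation converting the Chern \emph{character} of each summand into its total Chern \emph{class}, and then multiplying over $i$. Since Chern classes are multiplicative over direct sums, $c(\widetilde{\cE})=\prod_{i=1}^{r+1}c\bigl(\pi_*(\mathcal{P}(\Delta_i))\bigr)$, so it suffices to treat a single index $i$. Write $\rho_i:=1-g+d-\overline{k}_i$ (the rank) and $\omega_i:=-\overline{\tau}_i-\Theta+\overline{x}_i\in H^2$, so that the Proposition reads $\mathrm{ch}(\pi_*(\mathcal{P}(\Delta_i)))=e^{\eta_i}(\rho_i+\omega_i)$. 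Expanding $e^{\eta_i}=\sum_{k\ge0}\eta_i^k/k!$ and collecting the degree-$2m$ part, I would record, for all $m\ge1$,
$$
\mathrm{ch}_m\bigl(\pi_*(\mathcal{P}(\Delta_i))\bigr)=\rho_i\,\frac{\eta_i^m}{m!}+\omega_i\,\frac{\eta_i^{m-1}}{(m-1)!},
$$
the two terms coming respectively from pairing $\eta_i^m/m!$ with $\rho_i$ and $\eta_i^{m-1}/(m-1)!$ with the degree-$2$ class $\omega_i$.

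The key input is the universal identity, valid for any vector bundle $V$ by the splitting principle and Newton's identities,
$$
\log c(V)=\sum_{m\ge1}(-1)^{m-1}(m-1)!\,\mathrm{ch}_m(V),
$$
which follows from $\log\prod_\alpha(1+x_\alpha)=\sum_{m\ge1}\tfrac{(-1)^{m-1}}{m}p_m$ together with $p_m=m!\,\mathrm{ch}_m$. Substituting the expression for $\mathrm{ch}_m$ above and separating the two contributions, the $\rho_i$-term produces $\rho_i\sum_{m\ge1}\tfrac{(-1)^{m-1}}{m}\eta_i^m=\rho_i\log(1+\eta_i)$, while the $\omega_i$-term produces $\omega_i\sum_{m\ge1}(-1)^{m-1}\eta_i^{m-1}=\omega_i\,(1+\eta_i)^{-1}$. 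Hence
$$
\log c\bigl(\pi_*(\mathcal{P}(\Delta_i))\bigr)=(1-g+d-\overline{k}_i)\log(1+\eta_i)+\frac{-\overline{\tau}_i-\Theta+\overline{x}_i}{1+\eta_i},
$$
and exponentiating, then taking the product over $i$, gives exactly the claimed formula for $c(\widetilde{\cE})$.

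This argument is essentially bookkeeping, and the only point requiring a word of justification is the legitimacy of the power-series manipulations: all of the relevant classes $\eta_i,\Theta,\overline{\tau}_i,\overline{x}_i$ lie in even degree (so commutativity causes no sign issues), and $\eta_i$ is nilpotent on $\Sym^{k_i}(C)$ (its powers vanish beyond degree $2k_i$), so each infinite sum above is in fact a finite sum and both $\log(1+\eta_i)$ and $(1+\eta_i)^{-1}$ are well-defined polynomials in $\eta_i$. Thus the only genuine obstacle is invoking the universal $\log c$–$\mathrm{ch}$ identity correctly; since $\widetilde{\cE}=\cE\otimes\mathcal{O}(N)$ is an honest vector bundle under hypothesis \eqref{stronger inequality d, ks}, each summand $\pi_*(\mathcal{P}(\Delta_i))$ is a bundle and the identity applies verbatim.
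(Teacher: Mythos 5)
Your proof is correct and is essentially the paper's own argument: the paper deduces the corollary in one line from the universal identity relating Chern class and Chern character together with the Whitney formula, which is exactly your route (note that the identity as displayed in the paper, $c(V)=\sum_{h\ge1}(-1)^{h-1}(h-1)!\,\mathrm{ch}_h(V)$, is missing a $\log$ on the left-hand side — your statement $\log c(V)=\sum_{h\ge1}(-1)^{h-1}(h-1)!\,\mathrm{ch}_h(V)$ is the correct one). Your write-up simply makes explicit the series manipulations and the nilpotence/even-degree justifications that the paper treats as immediate.
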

\begin{proof}
    This follows immediately from the formula 
    $$
    c(V)= \mathrm{exp}\bigg( \sum_{h=1}^\infty (-1)^{h-1}(h-1)!\cdot\mathrm{ch}_h(V) \bigg)
    $$
    valid for any vector bundle $V$ on any scheme.
\end{proof}

Substituting \ref{chern_formula} into \eqref{eqn:integral to compute before GRR} completes the proof of Theorem \ref{thm:grr}, namely that
 $$\mathsf{Tev}_{g,n,\beta}^X=\sum_{m=0}^{\min(n,k_1,\ldots,k_{r+1})} \binom{n}{m}(-1)^m \int_{S} \prod_{i=1}^{r+1}(1+\eta_i)^{n-m-1+g-d+\overline{k}_i} \eta_i^m   \cdot\mathrm{exp}\left( \frac{ \overline{\tau}_i+ \Theta- \overline{x}_i}{1+\eta_i} \right).
$$

\subsection{Specialization to genus 0}\label{Computations in genus 0}

We now specialize Theorem \ref{thm:grr} to $g=0$ to prove Theorem \ref{explicit formulas in genus 0}.

We continue to assume that $\ell=r+1$. Identify
$$
\Sym^{k_i}(\P^1)=\P^{k_i} \text{ for } i=1,\ldots,r+1.
$$
Then, the class $\eta_i$ is the first Chern class of $\mathcal{O}_{\P^{k_i}}(1)$ and Theorem \ref{thm:grr} gives:
\begin{align*}
   \mathsf{Tev}_{0,n,\beta}^X&=\sum_{m=0}^{\min(n,k_1,\ldots,k_{r+1})} \binom{n}{m}(-1)^m \int_{\prod_{i=1}^r \P^{k_i}} \prod_{i=1}^{r+1}(1+\eta_i)^{n-m-1-d+\overline{k}_i} \eta_i^m \\
   &= \sum_{m=0}^{\min(n,k_1,\ldots,k_{r+1})} \binom{n}{m}(-1)^m  \prod_{i=1}^{r+1}\mathrm{Coeff}\left( (1+\eta_i)^{n-m-1-d+\overline{k}_i};\eta_i^{k_i-m}\right)\\
   &=\sum_{m=0}^{\min(n,k_1,\ldots,k_{r+1})} \binom{n}{m}(-1)^m  \prod_{i=1}^{r+1}\binom{n-m-d-1+\overline{k}_i}{k_i-m}.
\end{align*}

This concludes the proof of Theorem \ref{explicit formulas in genus 0}.

\subsection{Specialization to $X=\Bl_q(\P^r)$}\label{Computations for blow-ups at 1 point}

 Theorem \ref{thm:grr} also admits a reasonably elegant specialization to $X=\Bl_q(\P^r)$. We take now $k_2=\ldots=k_{r+1}=0$ and write $k=k_1$, $\eta=\eta_1$, and $\E=\E_1$, so that $\tau=\overline{\tau}_2=\cdots=\overline{\tau}_{r+1}$ and $x=\overline{x}_2=\cdots=\overline{x}_{r+1}$.

We have 
\begin{align*}
\mathsf{Tev}_{g,n,d\H^\vee+k \E^\vee}^X&= \int_{\J(C) \times \Sym^{k}(C)} (1+\eta)^{n-1+g-d} \cdot \mathrm{exp} \left(\frac{\Theta}{1+\eta}\right) \cdot \mathrm{exp}(\tau+\Theta-x)\\
&=\int_{\J(C) \times \Sym^{k}(C)} A(\eta) \cdot\mathrm{exp}(\tau B(\eta)+\Theta C(\eta)+xD(\eta)),
\end{align*}
where 
\begin{equation*}
\begin{cases}
A(\eta)&=(1+\eta)^{n-1+g-d},
\\B(\eta)&= -r,
\\C(\eta)&= \frac{r\eta+(r+1)}{1+\eta},
\\D(\eta)&= r.
\end{cases}
\end{equation*}

\begin{lemma}\label{ABCDintegral}
Let $A(\eta),B(\eta),C(\eta)$ and $D(\eta)$ be polynomials in $\eta$. Then,
\begin{align*}
\int_{\J(C) \times \Sym^{k}(C)} A(\eta) \cdot\mathrm{exp}(\tau B(\eta)+ & \Theta C(\eta)+ xD(\eta)) \\
&= \mathrm{Coeff}\left( A(\eta) ( C(\eta)(1+\eta B(\eta))-\eta D(\eta)^2 )^g;\eta^k \right)
\end{align*}
\end{lemma}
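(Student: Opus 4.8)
The plan is to exploit the decomposition of $\Theta$, $\tau$, and $x$ into $g$ mutually disjoint symplectic blocks. Writing $\zeta_\alpha:=\zeta_{1,\alpha}$, recall that on $\J(C)\times\Sym^k(C)$ we have $\Theta=\sum_\alpha e_\alpha'e_{\alpha+g}'$, $\tau=\sum_\alpha\zeta_\alpha\zeta_{\alpha+g}$, and $x=\sum_\alpha(e_\alpha'\zeta_{\alpha+g}-e_{\alpha+g}'\zeta_\alpha)$. Since $A(\eta),B(\eta),C(\eta),D(\eta)$ are polynomials in the even class $\eta$, they are central, so the exponent splits as $\tau B+\Theta C+xD=\sum_{\alpha=1}^g\omega_\alpha$, where $\omega_\alpha=B\,\zeta_\alpha\zeta_{\alpha+g}+C\,e_\alpha'e_{\alpha+g}'+D(e_\alpha'\zeta_{\alpha+g}-e_{\alpha+g}'\zeta_\alpha)$ involves only the four odd generators $e_\alpha',e_{\alpha+g}',\zeta_\alpha,\zeta_{\alpha+g}$ of the $\alpha$-th block. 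As each $\omega_\alpha$ is even and different blocks share no generators, the $\omega_\alpha$ commute and $\exp(\sum_\alpha\omega_\alpha)=\prod_\alpha\exp(\omega_\alpha)$.

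Next I would compute each factor. Since $\omega_\alpha$ is built from four odd classes, $\omega_\alpha^3=0$ and $\exp(\omega_\alpha)=1+\omega_\alpha+\tfrac12\omega_\alpha^2$. Expanding $\omega_\alpha^2$ and tracking the signs from anticommuting the degree-$1$ classes, the only surviving degree-$4$ monomials come from the cross-terms $(C\,e_\alpha'e_{\alpha+g}')(B\,\zeta_\alpha\zeta_{\alpha+g})$ and $(D\,e_\alpha'\zeta_{\alpha+g})(-D\,e_{\alpha+g}'\zeta_\alpha)$, the latter carrying the sign producing $-D^2$, so that $\tfrac12\omega_\alpha^2=(BC-D^2)\,e_\alpha'e_{\alpha+g}'\zeta_\alpha\zeta_{\alpha+g}$.

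Then I would extract the term of top degree along $\J(C)$. Because $e_\alpha'$ and $e_{\alpha+g}'$ occur only in the $\alpha$-th block, the class $\prod_\alpha e_\alpha'e_{\alpha+g}'$ can be assembled only by taking from each $\exp(\omega_\alpha)$ its part containing $e_\alpha'e_{\alpha+g}'$, namely $e_\alpha'e_{\alpha+g}'\bigl(C+(BC-D^2)\zeta_\alpha\zeta_{\alpha+g}\bigr)$. Integrating over $\J(C)$ via $\int_{\J(C)}\prod_\alpha e_\alpha'e_{\alpha+g}'=1$ (equivalently $\int\Theta^g/g!=1$) leaves the class $\prod_{\alpha=1}^g\bigl(C+(BC-D^2)\zeta_\alpha\zeta_{\alpha+g}\bigr)$ on $\Sym^k(C)$, multiplied by $A(\eta)$. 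Expanding this product over subsets $I\subseteq\{1,\ldots,g\}$ and integrating over $\Sym^k(C)$ using MacDonald's relation $\int_{\Sym^k(C)}\eta^{k-|I|}\tau_I=1$ — which gives $\int_{\Sym^k(C)}P(\eta)\tau_I=\mathrm{Coeff}(P(\eta);\eta^{k-|I|})$ for any polynomial $P$ — collapses everything to $\sum_{s=0}^g\binom{g}{s}\mathrm{Coeff}\bigl(A\,C^{g-s}(BC-D^2)^s;\eta^{k-s}\bigr)$. Recognizing this as a single binomial expansion of $\bigl(C(1+\eta B)-\eta D^2\bigr)^g$ then yields $\mathrm{Coeff}\bigl(A(\eta)\bigl(C(\eta)(1+\eta B(\eta))-\eta D(\eta)^2\bigr)^g;\eta^k\bigr)$, the claimed formula (I read the parenthesization in the displayed statement in this, the only dimensionally consistent, way).

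The main obstacle is purely the sign bookkeeping: getting $\omega_\alpha^2$ right, in particular the relative sign that makes the coefficient $BC-D^2$ rather than $BC+D^2$, and checking that the reorderings needed to factor $\prod_\alpha e_\alpha'e_{\alpha+g}'$ out of the product, together with the MacDonald normalization, introduce no stray signs. Everything past that is a formal manipulation of power series in $\eta$.
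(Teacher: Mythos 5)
Your proof is correct, and it takes a genuinely different route from the paper's, via a different decomposition of the same exponential. The paper works globally in powers of $\Theta$, $x$, $\tau$: it first invokes the identity $\Theta^i x^{2j}=\sum_{|I|=j} i!(-1)^j(2j)!\, e_1'\cdots e_{2g}'\,\tau_I$ (for $i+j=g$) to extract the top class along $\J(C)$, reducing the integral to $\sum_{m}(-1)^{g-m}\sum_{|I|=g-m}\int_{\Sym^k(C)} A(\eta) C(\eta)^m D(\eta)^{2(g-m)}\exp(\tau B(\eta))\,\tau_I$; it then evaluates each such integral via MacDonald's relations as a residue, $\mathrm{Res}_{\eta=0}\bigl\{\widetilde A(\eta)(1+\eta B(\eta))^{g-|I|}\eta^{-(k-|I|+1)}\,d\eta\bigr\}$, and finally recombines everything using exactly the binomial identity $\sum_m\binom{g}{m}\bigl(C(\eta)(1+\eta B(\eta))\bigr)^m\bigl(-\eta D(\eta)^2\bigr)^{g-m}=\bigl(C(\eta)(1+\eta B(\eta))-\eta D(\eta)^2\bigr)^g$ that also closes your argument. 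Your blockwise factorization $\exp(\tau B+\Theta C+xD)=\prod_\alpha\exp(\omega_\alpha)$, with each factor terminating as $1+\omega_\alpha+\tfrac12\omega_\alpha^2$ and $\tfrac12\omega_\alpha^2=(BC-D^2)\,e_\alpha'e_{\alpha+g}'\zeta_\alpha\zeta_{\alpha+g}$, replaces both the $\Theta^ix^{2j}$ sign identity and the residue computation by a single local calculation; I have checked your sign (it is consistent with the paper's identity, e.g. $x^2=-2\,e_1'e_2'\tau_1$ when $g=1$), and the remaining steps (extraction of $\prod_\alpha e_\alpha'e_{\alpha+g}'$ with no stray signs since all rearranged factors are even, MacDonald's relations, and the final binomial collapse) are all sound. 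What your approach buys is transparent, strictly local sign bookkeeping and no intermediate residue formula; what the paper's buys is that the evaluation of $\int_{\Sym^k(C)}\widetilde{A}(\eta)\exp(\tau B(\eta))\tau_I$ is isolated as a reusable standalone step. Finally, your reading of the parenthesization, namely $\bigl(C(\eta)(1+\eta B(\eta))-\eta D(\eta)^2\bigr)^g$, is indeed what the paper's own proof establishes; the displayed statement of the lemma contains a typo.
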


\begin{proof}
Notice that for $i+j=g$, we have 
$$
\Theta^i x^{2j}=\sum_{\substack{I=(i_1,\ldots,i_j)\\ 1 \leq i_1<\ldots<i_j\leq g}} i!(-1)^j(2j)! e_1'\cdots e_{2g}' \tau_{I},
$$
where $\tau_I= \prod_{i \in I} \tau_{i}$ and $\tau_{\alpha}=\zeta_{1,\alpha}\zeta_{1,\alpha+g}$ for $\alpha=1,\ldots,g$. Therefore, we can expand
\begin{align*}
&\int_{\J(C) \times  \Sym^{k}(C)} A(\eta) \cdot\mathrm{exp}(\tau B(\eta)+\Theta C(\eta)+xD(\eta))\\&=\sum_{m=0}^g (-1)^{g-m} \sum_{\substack{I=(i_1,\ldots,i_{g-m})\\ 1 \leq i_1<\cdots<i_{g-m}\leq g}} \int_{\Sym^k(C)}A(\eta)C(\eta)^mD(\eta)^{2(g-m)} \cdot \mathrm{exp}(\tau B(\eta)) \tau_I.
\end{align*}
Call
\begin{equation}\label{tilde(A)}
\widetilde{A}(\eta)=A(\eta)C(\eta)^mD(\eta)^{2(g-m)}.
\end{equation}
We have 
\begin{align*}
\int_{\Sym^k(C)}\widetilde{A}(\eta) \cdot\mathrm{exp}(\tau B(\eta)) \tau_I&= \sum_{h=0}^\infty \int_{\Sym^k(C)} \widetilde{A}(\eta) \frac{\tau_I \tau^h B(\eta)^h }{h!}\\
&=\sum_{h=0}^\infty \int_{\Sym^k(C)} \widetilde{A}(\eta) \left( \sum_{\substack{J: |J|=h \\ J \cap I =\emptyset}}\tau_{I}\tau_J \right)B(\eta)^h \\
&=\sum_{h=0}^{g-|I|} \binom{g-|I|}{|J|} \cdot\mathrm{Res}_{\eta=0} \left\{ \frac{\widetilde{A}(\eta) B(\eta)^h}{\eta^{k-h-|I|+1}} d\eta \right\} \\
&=\mathrm{Res}_{\eta=0} \left\{ \frac{\widetilde{A}(\eta)}{\eta^{k-|I|+1}} (1+\eta B(\eta))^{g-|I|} d\eta \right\}.
\end{align*}
The claim follows then by substituting \eqref{tilde(A)} and using the combinatorial identity
$$
\sum_{m=0}^g \binom{g}{m} (C(\eta)(1+\eta B(\eta))^m(-D(\eta)^2\eta)^{g-m}= \left( C(\eta)(1+\eta B(\eta)) -\eta D(\eta)^2 \right)^g
$$
\end{proof}
From Lemma \ref{ABCDintegral}, we obtain the residue formula
\begin{align*}
\mathsf{Tev}_{g,n,d\H^\vee+k \E^\vee}^X&= \mathrm{Coeff} \left(  (1+\eta)^{n-1-d} (2r \eta +(r+1))^g ; \eta^k \right), 
\end{align*}
and the right hand side can be expanded as 
\begin{align*}
 &\sum_{m=0}^m \binom{g}{m} (2r)^m (r+1)^{g-m} \binom{n-1-d}{k-m}\\
 = &\sum_{m=0}^m \binom{g}{m} (2r)^m \sum_{h=0}^{g-m} \binom{g-m}{h}(1-r)^{g-m-h}(2r)^{h} \binom{n-1-d}{k-m} \\
 =&\sum_{a=0}^g \binom{g}{a}(2r)^a (1-r)^{g-a} \sum_{m=0}^a \binom{a}{m} \binom{n-1-d}{k-m} \\
 =&\sum_{a=0}^g \binom{g}{a} (2r)^a (1-r)^{g-a} \binom{g}{a} \binom{n-1-d+a}{k},
\end{align*}
where in the second equality we have set $a=m+h$ and in the third equality we have applied the Vandermonde identity
$$
{\displaystyle {m+n \choose r}=\sum _{k=0}^{r}{m \choose k}{n \choose r-k}}
 \text{ for all }r,n,m \in \Z_{\geq 0}.
 $$
Finally, replacing $a$ with $g-m$, we obtain Theorem \ref{explicit formulas for 1 pt blow-up}.

\section{Virtual counts for $X=\Bl_q(\P^r)$}\label{the virtual count}

In this section, we establish the virtual count of Theorem \ref{virtual formulas for 1 pt blow-up}. 

\subsection{Preliminaries on $QH^*(X)$}
 
The computation of the invariants $\vTev^X_{g,n,\beta}$ is reduced to a calculation in the quantum cohomology ring $QH^*(X,\Q)$ in \cite{BP}. See \cite{FP} for an introduction. We now take $X=\Bl_q(\P^r)$.

Because $X$ is Fano, by \cite[Proposition 2.2]{ST}, we have 
$$
QH^*(X, \Q) \cong \Q[\mathsf{C}] \otimes_\Q H^*(X,\Q)
$$
as $\Q[\mathsf{C}]$-modules. Here, $\mathsf{C}\subseteq H_2(X,\Z)$ is the cone of effective curves in $X$, and by \cite[Proposition 4.1]{CLO}, we have 
$$
\mathsf{C}= \Z_{\geq 0}(\H^\vee+\E^\vee) \oplus \Z_{\geq 0} (-\E^\vee).
$$
We will denote by $\star$ the product in $QH^*(X,\Q)$.

Let $\Sigma \subseteq \mathbb{R}^{r}$ be toric fan of $X$. Its generators are $v_j=e_{j+1}$ for $j=0,1,\ldots,r-1$, along with $v_r=-(e_1+\cdots+e_r)$ and $v=-v_r$.

The sets 
$$
S=\{v_0,\ldots,v_{r-1} \} \text{ and } T=\{v,v_{r-1}\}
$$
are the only two primitive sets in the sense of \cite[Definition 1.1]{Kresch} and, using \cite[Theorem 1.2]{Kresch}, from these two sets we respectively obtain the following two relations in $QH^*(X,\Q)$.
\begin{equation}\label{relations}
\begin{aligned}
(\H-\E)^{\star r}&=q^{-\E^\vee}\E; \\
\H \star \E&=q^{\H^\vee+\E^\vee}.
\end{aligned}
\end{equation}

\begin{lemma}\label{lemma: basis quantum cohomology}
The set
$$
1,(\H-\E),\ldots,(\H-\E)^{\star r-1}, \E \star (\H-\E),\ldots.,\E \star (\H-\E)^{r-1}
$$
is a basis of $QH^*(X,\Q)$ as $\Q[\mathsf{C}]$-module.
\end{lemma}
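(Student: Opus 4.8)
The plan is to reduce everything to the classical ($q=0$) cohomology ring by a graded Nakayama argument. First I would identify the coefficient ring. By \cite[Proposition 4.1]{CLO} the cone $\mathsf{C}$ is freely generated as a commutative monoid by $\H^\vee+\E^\vee$ and $-\E^\vee$, so $\Q[\mathsf{C}]=\Q[s,t]$ is genuinely a polynomial ring in $s=q^{\H^\vee+\E^\vee}$ and $t=q^{-\E^\vee}$. Combined with the isomorphism $QH^*(X,\Q)\cong\Q[\mathsf{C}]\otimes_\Q H^*(X,\Q)$ of $\Q[\mathsf{C}]$-modules from \cite[Proposition 2.2]{ST}, this exhibits $M:=QH^*(X,\Q)$ as a \emph{free} $R:=\Q[s,t]$-module of rank $\dim_\Q H^*(X,\Q)=2r$.

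Next I would work in the graded category. Since $X$ is Fano, $K_X^\vee\cdot\beta>0$ for every nonzero effective $\beta$; explicitly $(\H^\vee+\E^\vee)\cdot K_X^\vee=2$ and $(-\E^\vee)\cdot K_X^\vee=r-1$, so with the standard grading $\deg q^\beta=2K_X^\vee\cdot\beta$ the variables $s,t$ have strictly positive degrees $4$ and $2(r-1)$ (one checks this is consistent with \eqref{relations}). Thus $R$ is nonnegatively graded with $R_0=\Q$ and irrelevant maximal ideal $\mathfrak m=(s,t)$, and $M$ is a graded free $R$-module with $M/\mathfrak m M = H^*(X,\Q)$, the quantum product $\star$ specializing to the cup product. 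Each proposed element is a $\star$-monomial in the divisor classes $\H-\E$ and $\E$, hence homogeneous, and there are exactly $2r$ of them. By the graded Nakayama lemma, $2r$ homogeneous elements form an $R$-basis of the rank-$2r$ free module $M$ as soon as their images form a $\Q$-basis of $M/\mathfrak m M$; so it suffices to prove the statement for the classical cup product.

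For the classical computation I would reduce the relations \eqref{relations} modulo $\mathfrak m$ (set $q=0$), obtaining $\H\cdot\E=0$ and $(\H-\E)^{r}=0$ in $H^*(X,\Q)$. Together with the Betti-number count $\dim_\Q H^*(X,\Q)=2r$, the evident surjection $\Q[\H,\E]/(\H\E,(\H-\E)^r)\twoheadrightarrow H^*(X,\Q)$ is then an isomorphism by a dimension count, giving the $\Q$-basis $1$, the pairs $\H^i,\E^i$ for $1\le i\le r-1$, and the single top class $\H^r=(-1)^{r-1}\E^r$. Using $\H\E=0$ one finds $(\H-\E)^i=\H^i+(-1)^i\E^i$ and $\E(\H-\E)^i=(-1)^i\E^{i+1}$. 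Reading these off by degree: degree $0$ is spanned by $1$; in each degree $i$ with $1\le i\le r-1$ the pair $(\H-\E)^i,\ \E(\H-\E)^{i-1}$ has invertible coordinate matrix $\left(\begin{smallmatrix}1&(-1)^i\\0&(-1)^{i-1}\end{smallmatrix}\right)$ against $\H^i,\E^i$; and the top degree $r$ is spanned by $\E(\H-\E)^{r-1}=(-1)^{r-1}\E^r$. Hence the images are a $\Q$-basis, and Nakayama finishes the proof. (Note the list must be read with the second block beginning at $\E=\E\star(\H-\E)^{\star 0}$, so that there are precisely $2r$ elements.)

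I expect the only delicate bookkeeping to be what makes Nakayama applicable, namely verifying that $s$ and $t$ carry strictly positive degrees so that $\mathfrak m$ really is the irrelevant ideal and $M$ is graded free over $R$; once this is in place the classical linear-independence check is entirely elementary. An alternative, if one prefers to avoid Nakayama, is to express the $2r$ elements in the $\Q[\mathsf{C}]$-basis of \cite{ST} and observe that the resulting $2r\times 2r$ matrix is homogeneous of total degree $0$, hence has constant determinant, which is nonzero precisely because the classical reduction computed above is invertible.
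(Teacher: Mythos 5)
Your proof is correct, but it takes a genuinely different route from the paper's. The paper's argument is two lines: spanning is deduced from the second relation of \eqref{relations}, which gives $\E^{\star 2}=-(\H-\E)\star\E+q^{\H^\vee+\E^\vee}$ (together with the first relation, which caps powers of $\H-\E$ at $r-1$), and linear independence is quoted directly from \cite[Theorem 1.2]{Kresch}, i.e., from Kresch's presentation of the quantum cohomology of this class of toric varieties. You instead combine the $\Q[\mathsf{C}]$-module freeness of $QH^*(X,\Q)$ from \cite[Proposition 2.2]{ST} with the strict positivity of the $q$-degrees (here is where the Fano condition enters, via $(\H^\vee+\E^\vee)\cdot K_X^\vee=2$ and $(-\E^\vee)\cdot K_X^\vee=r-1$) to run a graded Nakayama argument, reducing the lemma to the classical statement that the reductions mod $q$ form a $\Q$-basis of $H^*(X,\Q)$, which you verify by an elementary computation in $\Q[\H,\E]/(\H\E,(\H-\E)^r)$; your degree-by-degree coordinate matrices are correct. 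What your approach buys: spanning and independence are handled in one stroke, and the appeal to Kresch's structure theorem is replaced by standard commutative algebra --- indeed, since the classical relations $\H\E=0$ and $(\H-\E)^r=0$ can be checked directly in $H^*(X,\Q)$, your proof makes the lemma essentially independent of \cite{Kresch}. What the paper's approach buys: brevity, and it stays inside Kresch's framework, which is needed anyway for the rest of the section (notably Lemma \ref{lemma: cohomology classes in quantum cohomology}). Finally, your parenthetical remark that the displayed list must be read as beginning with $\E=\E\star(\H-\E)^{\star 0}$ is an accurate correction of a typo in the statement: as literally written the list has only $2r-1$ elements, one fewer than the rank $2r$ of $QH^*(X,\Q)$ as a free $\Q[\mathsf{C}]$-module, and it could not span (its degree-$2$ part is too small), while the paper's later computations, e.g., the dual-basis expansion in Theorem \ref{Quantum Euler Class blowup 1 pt}, use $\E$ itself as a basis element.
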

\begin{proof}
From the second relation of \eqref{relations}, we have
$$
\E^{\star 2}=-(\H-\E) \star \E +q^{\H^\vee+\E^\vee},
$$ so the above collection spans $QH^*(X,\Q)$. The fact that they are linearly independent follows from \cite[Theorem 1.2]{Kresch}.
\end{proof}
Consider the homogeneous basis 
\begin{equation}\label{eqn: basis cohomology}
\H^r,\H^{r-1},\ldots,1,\E^{r-1},\ldots,\E
\end{equation}
of $H^*(X,\Q)$. We now express each of these classes in $QH^*(X,\Q)$ as linear combinations of the basis elements of Lemma \ref{lemma: basis quantum cohomology} using \cite[Theorem 1.6]{Kresch}. In order for \cite[Theorem 1.6]{Kresch} to apply, we need the following result.

\begin{lemma}
Every toric subvariety of $X$ is Fano. Furthermore, for every map $\pi:X \to X'$, where $X'$ is toric and $\pi$ is the blowuup of an irreducible toric subvariety, we have that $X'$ is Fano.
\end{lemma}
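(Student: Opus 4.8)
The plan is to read off both assertions directly from the fan $\Sigma$ of $X=\Bl_q(\P^r)$, whose rays are $v_0,\ldots,v_{r-1},v_r,v$ with $v_j=e_{j+1}$, $v_r=-(e_1+\cdots+e_r)$, and $v=-v_r=e_1+\cdots+e_r$. Recall that the torus-invariant closed subvarieties of a smooth complete toric variety are exactly the orbit closures $V(\sigma)$, $\sigma\in\Sigma$, each again a smooth complete toric variety of dimension $r-\dim\sigma$. First I would record the maximal cones: the star subdivision producing $X$ from $\P^r$ leaves untouched the cones $\langle v_0,\ldots,\widehat{v_j},\ldots,v_{r-1},v_r\rangle$ (those containing $v_r$) and replaces $\langle v_0,\ldots,v_{r-1}\rangle$, the cone of the blown-up point $q$, by the cones $\langle v_0,\ldots,\widehat{v_j},\ldots,v_{r-1},v\rangle$. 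In particular $v$ and $v_r$ never lie in a common cone, and $q$ corresponds to $\langle v_0,\ldots,v_{r-1}\rangle$, so with the convention that $v_j$ indexes $\{x_{j+1}=0\}$ and $v_r$ indexes $\{x_0=0\}$ we have $q=[1:0:\cdots:0]$.

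For the first assertion I would classify the orbit closures according to which of the distinguished rays $v,v_r$ the defining cone $\sigma$ contains. If $v\in\sigma$, then $V(\sigma)$ is a torus-invariant subvariety of the exceptional divisor $\E\cong\P^{r-1}$, hence a coordinate linear subspace $\cong\P^{m}$. If $v_r\in\sigma$, then $V(\sigma)$ is the strict transform of a coordinate subspace of $\P^r$ not passing through $q$, hence again $\cong\P^{m}$. In the remaining case $\sigma=\langle v_j:j\in I\rangle$ with $I\subsetneq\{0,\ldots,r-1\}$, the subvariety $V(\sigma)$ is the strict transform of $\bigcap_{j\in I}\{x_{j+1}=0\}\cong\P^{\,r-|I|}$, which passes through $q$, so $V(\sigma)\cong\Bl_{\mathrm{pt}}(\P^{\,r-|I|})$ (taking $I=\emptyset$ recovers $V(\sigma)=X$). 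Thus every orbit closure is isomorphic to some $\P^m$ or $\Bl_{\mathrm{pt}}(\P^m)$. Each is Fano: $\P^m$ is clear, and for the blow-up of $\P^m$ at a point (with $m\ge2$) one checks that the anticanonical class $(m+1)\H-(m-1)\E$ has positive degree on the two generators of the Mori cone, namely $+2$ on the strict transform of a line through the center and $m-1$ on a line of the exceptional $\P^{m-1}$; the degenerate cases $\Bl_{\mathrm{pt}}(\P^1)\cong\P^1$ and $\Bl_{\mathrm{pt}}(\P^0)$ are trivial.

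For the second assertion I would use that a toric blow-up along an irreducible toric center is a star subdivision, so it adds exactly one ray, the primitive generator equal to the sum of the generators of the subdivided cone; reconstructing $X'$ therefore amounts to deleting a single ray $\rho$ of $\Sigma$ that is a positive sum of the generators of a smooth subcone. Among $v_0,\ldots,v_{r-1},v_r,v$ the only such ray is $v=v_0+\cdots+v_{r-1}$, since neither $v_r=-v$ nor any $v_j$ is a nonnegative combination of the remaining generators. Deleting $v$ merges the new cones back into $\langle v_0,\ldots,v_{r-1}\rangle$ and returns the fan of $\P^r$, so $X'\cong\P^r$, which is Fano; the other extremal contraction of $X$, the projection to $\P^{r-1}$, is a $\P^1$-fibration rather than a blow-up and is thus excluded. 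The points requiring the most care are the completeness of the case analysis, i.e.\ that the three families exhaust all orbit closures and that $v$ is the unique blow-down ray, together with the standard but genuinely needed verification that the blow-up of $\P^m$ at a point is Fano in every dimension.
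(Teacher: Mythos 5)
Your proof is correct, but it takes a genuinely different route from the paper's. The paper disposes of both assertions in a single line, declaring them an immediate consequence of Kresch's Theorem 3.9, i.e.\ it invokes a general result about the class of toric varieties obtained from projective space by invariant blow-ups; you instead verify everything directly on the fan. Concretely, you classify the orbit closures of $X=\Bl_q(\P^r)$ according to whether the defining cone contains $v$, contains $v_r$, or neither, finding that every toric subvariety is a $\P^m$ or a $\Bl_{\mathrm{pt}}(\P^m)$; you then check Fano-ness of $\Bl_{\mathrm{pt}}(\P^m)$ against the two generators of its Mori cone (degrees $2$ and $m-1$, both positive for $m\ge 2$); and you identify $v=v_0+\cdots+v_{r-1}$ as the unique ray that can arise from a star subdivision, so that the only toric blow-down is $X\to\P^r$, the projection to $\P^{r-1}$ being a $\P^1$-fibration and hence out of scope. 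Your computations are accurate throughout (the coordinates of $q$, the three-case classification, the Mori-cone degrees, and the nonnegativity argument ruling out $v_j$ and $v_r$ as blow-down rays all check out). What your approach buys is a self-contained, elementary argument that additionally makes the toric stratification of $X$ completely explicit---information the citation hides---at the cost of length and of the implicit (and here harmless, since it is the intended reading) assumption that ``blow-up of an irreducible toric subvariety'' means a smooth blow-up, i.e.\ a star subdivision. The paper's citation is shorter and inherits whatever generality Kresch's theorem provides, which is what is actually needed to apply his Theorem 1.6 in the surrounding argument.
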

\begin{proof}
Immediate consequence of \cite[Theorem 3.9]{Kresch}.
\end{proof}

\begin{lemma}\label{lemma: cohomology classes in quantum cohomology}
We have
\begin{align*}
\H^i&=\H \star (\H- \E)^{\star (i-1)} \text{ for } i=1,\ldots,r, \text{ and} \\
\E^i&=(-1)^{i-1} \E \star (\H-\E)^{\star (i-1)} \text{ for } i=1,\ldots,r-1
\end{align*}
in $QH^*(X,\Q)$.
\end{lemma}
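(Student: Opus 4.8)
The plan is to prove both families of identities simultaneously by induction on $i$, with quantum multiplication by the divisors $\H$ and $\E$ as the engine and the $\P^1$-bundle structure of $X$ as the geometric input. Write $L=\H-\E$, and let $p:X\to\P^{r-1}$ be the morphism resolving the projection of $\P^r$ away from $q$. This $p$ is a $\P^1$-bundle; one has $L=p^*c_1(\cO_{\P^{r-1}}(1))$, the exceptional divisor $\E$ is a section, and the fibers are exactly the strict transforms of lines through $q$, so the fiber class is $\H^\vee+\E^\vee$. The two facts I would exploit are that $L$ is trivial on fibers, $(\H^\vee+\E^\vee)\cdot L=0$, and that every curve of class a multiple of $-\E^\vee$ lies inside $\E$.

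Next I would record the classical intersection theory: since $\H\cdot\E=0$, we have $\H\cdot L^{i-1}=\H^i$ and $\E\cdot L^{i-1}=(-1)^{i-1}\E^i$ as ordinary cup products, with $\E^i\ne 0$ for $i\le r$. The target identities then assert that the corresponding \emph{quantum} products carry no positive-$q$ corrections in the stated ranges. I would grade $QH^*(X,\Q)$ so that $q^{\H^\vee+\E^\vee}$ has complex degree $2$ and $q^{-\E^\vee}$ has complex degree $r-1$, and use the divisor axiom $D\star\gamma=D\cdot\gamma+\sum_{\beta\ne 0}(D\cdot\beta)\,q^\beta\,Y_\beta(\gamma)$, where $Y_\beta(\gamma)$ is the two-point class defined by $\langle Y_\beta(\gamma),\mu\rangle=\langle\gamma,\mu\rangle_{0,2,\beta}$. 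Writing a general effective class as $\beta=a(\H^\vee+\E^\vee)+b(-\E^\vee)$ with $a,b\ge 0$, note $\beta\cdot K_X^\vee=2a+(r-1)b$, so any $\beta$ with both $a\ge1$ and $b\ge1$ has anticanonical degree $\ge r+1$.

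The heart of the argument is a single vanishing: for every $j\ge 1$ and every $\beta=k(\H^\vee+\E^\vee)$ with $k\ge 1$, one has $Y_\beta(L^j)=0$. Indeed, since $L\cdot\beta=0$ such $\beta$ are $p$-contracted, so any stable map of class $\beta$ has image in a single fiber of $p$; hence $p\circ\mathrm{ev}_1=p\circ\mathrm{ev}_2$ on $\oM_{0,2}(X,\beta)$, giving $\mathrm{ev}_1^*L^j=\mathrm{ev}_2^*L^j$ and therefore $\langle L^j,\mu\rangle_{0,2,\beta}=\int_{[\oM_{0,2}(X,\beta)]^{\mathrm{vir}}}\mathrm{ev}_2^*(L^j\cup\mu)$. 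For this to lie in the range where the invariant can be nonzero, $L^j\cup\mu$ must have complex codimension $\mathrm{vdim}\,\oM_{0,2}(X,\beta)=r-1+2k>r$, whence $L^j\cup\mu=0$ and the invariant vanishes. A companion, easier, vanishing handles exceptional curves: any two-point invariant in a class $b(-\E^\vee)$ with a general-point insertion (or an insertion supported away from $\E$) is zero, since the curve lies in $\E$. With these I would run the induction: first \emph{Step (L)}, that $L^{\star i}=L^i$ classically for $i\le r-1$, because $L\cdot\beta=b$ restricts corrections to classes of anticanonical degree $\ge r-1$, excluded by degree for $i\le r-2$ and, at $i=r-1$, contributed only by $-\E^\vee$ through $Y_{-\E^\vee}(L^{r-2})$, a point-class multiple killed by the companion vanishing. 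Then \emph{Steps (H)/(E)}: using (L) to replace $L^{\star(i-1)}$ by the classical $L^{i-1}$, I compute $\H\star L^{i-1}$ and $\E\star L^{i-1}$ by the divisor axiom. For $D=\H$ the coefficient $\H\cdot\beta=a$ forces $a\ge1$, and with $\beta\cdot K_X^\vee\le i\le r$ this rules out any $-\E^\vee$-component, leaving only fiber multiples $k(\H^\vee+\E^\vee)$, all killed by the key vanishing; hence $\H\star L^{i-1}=\H^i$. The case $D=\E$ is identical except that $-\E^\vee$ itself becomes admissible at the top $i=r-1$, where it dies by the general-point vanishing, giving $\E\star L^{i-1}=(-1)^{i-1}\E^i$.

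I expect the main obstacle to be organizational rather than deep: carefully enumerating which effective $\beta$ can contribute to each product within the prescribed ranges, and verifying that $i\le r$ and $i\le r-1$ are exactly the thresholds below which the exceptional ($q^{-\E^\vee}$) and mixed contributions are forced out by anticanonical degree. These bounds, together with the fiber-contraction vanishing, are precisely the content packaged by the cited \cite[Theorem 1.6]{Kresch}, which could be invoked in place of the direct argument; the relations \eqref{relations}, in particular $L^{\star r}=q^{-\E^\vee}\E$, serve as consistency checks at the boundary of the range.
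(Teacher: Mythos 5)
Your proposal is correct, but it takes a genuinely different route from the paper. The paper's proof is a direct application of Kresch's structure theorem for the quantum cohomology of this class of toric varieties: it identifies $\H^i$ and $(-1)^{i-1}\E^i$ as classes of torus-invariant subvarieties (intersections of the toric divisors attached to cones of the fan), checks that the only exceptional set special for each such cone is the empty one, and reads off the identities from \cite[Theorem 1.6]{Kresch}. You instead rule out quantum corrections by hand, via the divisor axiom together with two geometric vanishings tied to the $\P^1$-bundle $p\colon X\to\P^{r-1}$: two-point invariants of $L^j$ in fiber classes $k(\H^\vee+\E^\vee)$ vanish because such classes are $p$-contracted (so $\mathrm{ev}_1^*L^j=\mathrm{ev}_2^*L^j$ and the integrand is pulled back from $X$, forcing degree $>r$), and two-point invariants in classes $b(-\E^\vee)$ with a point insertion vanish because those curves lie in $\E$; the effective-cone degree count $2a+(r-1)b$ then shows these are the only possible contributors in the stated ranges, and your case analysis (fiber classes killed by the first vanishing, mixed classes excluded since their anticanonical degree is at least $r+1$, the single boundary case $\beta=-\E^\vee$ at $i=r-1$ killed by the second) is sound. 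The one slip is terminological: $Y_{-\E^\vee}(L^{r-2})$ has degree $0$, hence is a multiple of the \emph{fundamental} class, not a ``point-class multiple''; its coefficient is $\langle L^{r-2},\mathsf{P}\rangle_{0,2,-\E^\vee}$, which is exactly what your companion vanishing kills, so the argument goes through. The trade-off: the paper's proof is much shorter but leans on Kresch's machinery (whose hypotheses occupy the preceding lemma of the paper), while yours is self-contained in standard Gromov--Witten formalism, makes visible precisely which curve classes could contribute and why they do not, and in effect reproves the special case of Kresch's theorem that the paper cites.
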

\begin{proof}
We start with $\H^i$. Let $\alpha_i$ be the cone generated by $v_r,\ldots,v_{i+1}$ for $i=1,\ldots,r$. We have that $\alpha_i$ corresponds to a toric subvariety of class $\H^{r-i}$. Then, \cite[Theorem 1.6]{Kresch} and the fact the only exceptional set special for $\alpha_i$ is the empty set yields the desired equality.

The proof for $\E^i$ is similar. This time, $\alpha_i$ is the cone generated by $v$ and  $v_{r-2},\ldots,v_{r-i}$ for $i=1,\ldots,r-1$ (when $i=1$, we just have $\alpha_1=\langle v \rangle$), so $\alpha_i$ corresponds to $(-1)^{i+1}\E^i$. 
\end{proof}

\subsection{The quantum Euler class of $X$}

By definition, the \textbf{quantum Euler class} of $X$ is the image of the Künneth decomposition of the diagonal in $X \times X$ under the natural product map
$$
H^*(X,\Q) \otimes H^*(X,\Q) \xrightarrow{\star} QH^*(X,\Q).
$$
This is a canonically defined element of $QH^*(X,\Q)$, first introduced by Abrams in \cite{Abr} (see also \cite{BP,Cela}).

The computation of the virtual invariants $\vTev^X_{g,n,d \H^\vee +k \E^\vee}$ is related to $\Delta$ in \cite[Theorem 1.3]{BP} by the formula:

$$
\vTev^X_{g,n,d \H^\vee +k \E^\vee}=\mathrm{Coeff}(\mathsf{P}^{\star n}\star \Delta^{\star g},\mathsf{P} q^{d \H^\vee +k \E^\vee}).
$$
where $\mathsf{P}$ is the point class in $X$.
\begin{theorem}\label{Quantum Euler Class blow-up 1 pt}
We have
$$
\Delta=(2r)\mathsf{P}-(r-1)q^{-\E^\vee}\E.
$$
in $QH^*(X,\Q)$.
\end{theorem}

\begin{proof}
Let 
$$
1,\H,\ldots,\H^r,(-1)^{r-1}\E,\ldots.,(-1)^{r-1}\E^{r-1}.
$$
be the dual basis to \eqref{eqn: basis cohomology} with respect to the Poincar\'{e} pairing. Then 
\begin{align*}
\Delta=& \mathsf{P} \star 1 +\cdots+\H^{r-i} \star \H^i +\cdots+1 \star \mathsf{P} +(-1)^{r-1} ( \E \star \E^{r-1}+\cdots+\E^{r-1} \star \E)\\
=& 2 \mathsf{P}+(r-1) \H^{\star 2} \star (\H-\E)^{\star (r-2)}- (r-1)\E^{\star 2} \star (\H-\E)^{\star (r-2)} \\
=& 2 \mathsf{P}+(r-1) \left( (\H-\E)^{r}+\E^2 \star (\H-\E)^{r-2}+2 \E \star (\H- \E)^{\star (r-1)} \right) \\
&- (r-1)\E^{\star 2} \star (\H-\E)^{\star (r-2)}\\
=& (2r) \mathsf{P}-(r-1) q^{-\E^\vee}\E,
\end{align*}
where in the second equality we have used Lemma \ref{lemma: cohomology classes in quantum cohomology}, in the third equality we have written $\H=\H-\E+\E$, and in the fourth equality we have used the first relation in \eqref{relations} and the fact that, by Lemma \ref{lemma: cohomology classes in quantum cohomology}, we have
$$
\mathsf{P}=\H \star (\H-\E)^{\star (r-1)} =(\H-\E)^{\star r}+ \E \star (\H-\E)^{\star (r-1)}.
$$
This concludes the proof.
\end{proof}

\subsection{Proof of Theorem \ref{virtual formulas for 1 pt blow-up}}
Finally, we can compute
\begin{align*}
\vTev^X_{g,n,d \H^\vee +k \E^\vee}&=\mathrm{Coeff}(\mathsf{P}^{\star n}\star \Delta^{\star g},\mathsf{P} q^{d \H^\vee +k \E^\vee}) \\
&=\sum_{m=0}^g (2r)^{g-m}(1-r)^m \binom{g}{m} \mathrm{Coeff}(\mathsf{P}^{\star n+g-m} \star \E^{\star m},\mathsf{P} q^{d \H^\vee+(k+m)\E^\vee}),
\end{align*}
and the proof of Theorem \ref{virtual formulas for 1 pt blow-up} is reduced to the following.
\begin{lemma}\label{QH_coeff}
For $m,k \in \Z_{\geq 0}$, $\ell,d \in \Z_{>0}$ with $\ell -d-m > 0$, $d \geq k$ and satisfying
\begin{equation}\label{eqn: equivalent of dim constraint}
(r+1)d-(r-1)k=r(\ell-1),
\end{equation}
we have
$$
\mathrm{Coeff}(\mathsf{P}^{\star \ell-m} \star \E^{\star m},\mathsf{P} q^{d \H^\vee+(k+m)\E^\vee})=\binom{\ell-d-m-1}{k}.
$$
\end{lemma}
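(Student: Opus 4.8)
The plan is to pass to the one-variable presentation of $QH^*(X,\Q)$ and compute the coefficient as a residue. Write $s=q^{\H^\vee+\E^\vee}$ and $t=q^{-\E^\vee}$, so that $\Q[\mathsf{C}]=\Q[s,t]$ and $q^{d\H^\vee+k'\E^\vee}=s^d t^{d-k'}$, and set $u:=\H-\E$. The relations \eqref{relations} read $u^{\star r}=t\E$ and $(u+\E)\star\E=s$, and I first record the two facts I will use: $\E=t^{-1}u^{\star r}$, and, since $\mathsf{P}=\H\star u^{\star(r-1)}$ by Lemma \ref{lemma: cohomology classes in quantum cohomology},
$$\mathsf{P}\star u=\H\star u^{\star r}=\H\star(t\E)=t(\H\star\E)=ts=q^{\H^\vee}.$$
Eliminating $\E$ from $(u+\E)\star\E=s$ gives the single relation $u^{\star 2r}+t\,u^{\star(r+1)}-st^2=0$, so after inverting $st=q^{\H^\vee}$ the algebra $QH^*(X)\otimes_{\Q[\mathsf{C}]}\Q(s,t)$ is $\Q(s,t)[u]/(P)$ with $P(u)=u^{2r}+tu^{r+1}-st^2$ and $u$ a unit. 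Using $\mathsf{P}\star u=st$ and $\E=t^{-1}u^{\star r}$, the class in question becomes purely a power of $u$:
$$\mathsf{P}^{\star(\ell-m)}\star\E^{\star m}=s^{\ell-m}\,t^{\ell-2m}\,u^{\star e},\qquad e:=(r+1)m-\ell.$$

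Next I reduce the coefficient to an integral. Since $\mathsf{P}$ is the top class, $\mathrm{Coeff}(A,\mathsf{P}q^\beta)$ equals the coefficient of $q^\beta$ in $\int_X A$, where $\int_X\colon QH^*(X)\to\Q[\mathsf{C}]$ is the $\Q[\mathsf{C}]$-linear extension of classical integration (it extracts the $\mathsf{P}$-component). Thus I must compute $[\,s^{a_0}t^{b_0}\,]\int_X u^{\star e}$ with $a_0=d-\ell+m$ and $b_0=d-k-\ell+m$. To evaluate $\int_X$ on the $u$-presentation I invoke the quantum Euler class. Over $\overline{\Q(s,t)}$ the polynomial $P$ is separable, so $QH^*(X)$ is semisimple; writing $e_\zeta$ for the idempotent at a root $u=\zeta$ of $P$, the standard description of $\Delta$ as the handle element (see \cite{Abr}) gives $\int_X A=\sum_{\zeta}A(\zeta)/\Delta(\zeta)$. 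By Theorem \ref{Quantum Euler Class blowup 1 pt}, $\Delta=2r\mathsf{P}-(r-1)t\E$ acts on the $\zeta$-factor by $\Delta(\zeta)=2rst/\zeta-(r-1)\zeta^r$, and a short computation using $P(\zeta)=0$ shows $P'(\zeta)=t\,\Delta(\zeta)$. Hence
$$\int_X A=t\sum_{\zeta}\frac{A(\zeta)}{P'(\zeta)}=t\cdot\Big(\text{sum of residues of }\tfrac{A(u)\,du}{P(u)}\text{ at the finite roots of }P\Big),$$
which correctly gives $\int_X\mathsf{P}=1$ as a check.

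Finally I compute the residue. By the residue theorem on $\P^1$, $\int_X u^{\star e}=-t\big(\mathrm{Res}_{u=0}+\mathrm{Res}_{u=\infty}\big)\tfrac{u^e\,du}{P}$. Expanding $1/P$ in powers of $u$ about $u=0$,
$$\frac{1}{P(u)}=-\sum_{j\ge 0}\sum_{i=0}^{j}\binom{j}{i}\frac{u^{(r+1)j+(r-1)i}}{s^{j+1}\,t^{j+i+2}},$$
the $\mathrm{Res}_{u=0}$ contribution to $\int_X u^{\star e}$ is $\sum\binom{j}{i}\,s^{-(j+1)}t^{-(j+i+1)}$ over pairs $(j,i)$ with $(r+1)j+(r-1)i=-e-1$, while the $\mathrm{Res}_{u=\infty}$ contribution involves only nonnegative powers of $s$. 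Since $\ell-d-m>0$ forces $a_0=d-\ell+m<0$, only $\mathrm{Res}_{u=0}$ can contribute to $s^{a_0}t^{b_0}$. Matching exponents determines $(j,i)$ uniquely as $j=\ell-d-m-1$ and $i=k$, and this pair satisfies the resonance equation $(r+1)j+(r-1)i=-e-1$ precisely because of the dimension constraint \eqref{eqn: equivalent of dim constraint} $(r+1)d-(r-1)k=r(\ell-1)$. The desired coefficient is therefore $\binom{j}{i}=\binom{\ell-d-m-1}{k}$, with the convention that it vanishes for $k>\ell-d-m-1$; this also reproduces the forced vanishing when $d\H^\vee+(k+m)\E^\vee$ is not effective.

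The step I expect to be most delicate is the passage to the residue formula: justifying semisimplicity and the identity $\int_X A=\sum_\zeta A(\zeta)/\Delta(\zeta)$ linking the geometric integral to the quantum Euler class, together with the clean identity $P'(\zeta)=t\,\Delta(\zeta)$ that makes the residue evaluation possible. The remaining bookkeeping — checking that only the $u=0$ residue contributes (because $a_0<0$) and that the dimension constraint is exactly the resonance condition singling out $\binom{\ell-d-m-1}{k}$ — is then routine.
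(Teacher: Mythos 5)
Your proof is correct, and it takes a genuinely different route from the paper's. The paper proves Lemma \ref{QH_coeff} by induction on $m$: the base case $m=0$ is not computed inside $QH^*(X)$ at all, but is obtained by identifying $\mathrm{Coeff}(\mathsf{P}^{\star \ell},\mathsf{P}q^{d\H^\vee+k\E^\vee})$ with $\vTev^X_{0,\ell,d\H^\vee+k\E^\vee}$ via \cite[Theorem 1.3]{BP} and then invoking genus-$0$ enumerativity (Theorem \ref{SAE for blow-ups at 1 point}) together with the geometric count of Theorem \ref{explicit formulas in genus 0}; the cases $m=1$ and $m\ge2$ are then short manipulations with the relations \eqref{relations}. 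You instead stay entirely inside the quantum ring: you pass to the presentation $\Q(s,t)[u]/(P)$ with $P=u^{2r}+tu^{r+1}-st^2$, use semisimplicity and Abrams' handle-element description \cite{Abr} of the quantum Euler class to get $\int_X A=\sum_\zeta A(\zeta)/\Delta(\zeta)$, and reduce to a residue computation. I checked the key points: the relation $P$ and the identities $\mathsf{P}\star u=st$, $\E=t^{-1}u^{\star r}$ are right; the identity $P'(\zeta)=t\,\Delta(\zeta)$ holds, since $P(\zeta)=0$ gives $st^2/\zeta=\zeta^{2r-1}+t\zeta^r$, so $t\Delta(\zeta)=2rst^2/\zeta-(r-1)t\zeta^r=2r\zeta^{2r-1}+(r+1)t\zeta^r=P'(\zeta)$; the expansion of $1/P$ at $u=0$ is correct; the $u=\infty$ contribution is harmless (in fact it vanishes identically in your range, since the hypotheses force $e\le-1$ and hence $2r-e-2>0$, which is stronger than your ``nonnegative powers of $s$'' observation, though the latter suffices); and the exponent matching forces $(j,i)=(\ell-d-m-1,k)$, with the resonance condition $(r+1)j+(r-1)i=-e-1$ being exactly \eqref{eqn: equivalent of dim constraint}. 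As for trade-offs: the paper's route needs no Frobenius-algebra machinery beyond what is already set up, but it makes the virtual count of Theorem \ref{virtual formulas for 1 pt blowup} logically dependent on the geometric results, so the agreement with Theorem \ref{explicit formulas for 1 pt blowup} is partly built in; your route, given only Theorem \ref{Quantum Euler Class blowup 1 pt}, is self-contained on the virtual side, handles all $m$ uniformly rather than by induction, turns the geometric/virtual agreement into a genuine independent consistency check, and explains structurally why a single binomial coefficient appears (it is the residue of the unique resonant term). Two harmless remarks: your argument never actually uses the hypothesis $d\ge k$, and $\ell-d-m>0$ enters exactly where you say it does, to give $j\ge0$ and to rule out contributions with nonnegative $s$-exponent.
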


\begin{remark}\label{rmk:QH_coeff}
Note that 
$$
\binom{\ell-d-m-1}{k}=0
$$
unless $\ell-d-m-1 \ge k$, which, using \eqref{eqn: equivalent of dim constraint}, is equivalent to $d \ge (2r-1)k$. In particular, we get $0$ unless $\ell \geq r+2$.
\end{remark}

\begin{proof}[Proof of Lemma \ref{QH_coeff} ]
We proceed by induction on $m$. Suppose that $m=0$. Then, conditions \eqref{dim constraint}, \eqref{stronger inequality d, ks}, and \eqref{eqn: range for d to have enumerativity} are all satisfied with $n=\ell$, so
$$
\mathrm{Coeff}(\mathsf{P}^{\star \ell},\mathsf{P}  q^{d \H^\vee+k\E^\vee})=\vTev^X_{0,\ell,d \H^\vee +k \E^\vee} =\binom{\ell-d-1}{k}
$$
by Theorems \ref{explicit formulas in genus 0} and \ref{SAE for blow-ups at 1 point}. 

Suppose that $m=1$. If $\ell > r+1$, then we can write
\begin{align*}
\mathsf{P}^{\star (\ell-1)} \star \E&=\mathsf{P}^{\star (\ell-2)} \star (\H-\E)^{\star (r-1)} q^{\H^\vee+\E^\vee} \\
&=\mathsf{P}^{\star (\ell-3)} \star \H \star(\E q^{-\E^\vee}) \star (\H-\E)^{\star (r-2)} q^{\H^\vee+\E^\vee} \\
&=\mathsf{P}^{\star (\ell-3)} \star (\H-\E)^{\star (r-2)} q^{2 \H^\vee+\E^\vee}\\
&\ \vdots \\
&= \mathsf{P}^{\star (\ell-r-1)}  q^{r \H^\vee+\E^\vee}.
\end{align*}
Taking the coefficient of $\mathsf{P}q^{d \H^\vee+(k+1)\E^\vee}$ on both sides and applying the $m=0$ case gives the claim. If instead $\ell < r+2$, then the same chain of equalities yields
$$
\mathsf{P}^{\star (\ell-1)} \star \E= (\H -\E)^{\star (r+1-\ell)}q^{(\ell-1)\H^\vee+\E^\vee}.
$$
In particular, the coefficient of $\mathsf{P}q^{d \H^\vee+(k+1)\E^\vee}$ is $0$. The claimed equality then follows from Remark \ref{rmk:QH_coeff}.

Finally, suppose that $m \geq 2$. Write 
\begin{align*}
\mathsf{P}^{\star (\ell-m)} \E^{\star m}&= \mathsf{P}^{\star (\ell-m-1)} \E^{\star (m-1)}(\H-\E)^{\star (r-1)} q^{\H^\vee +\E^\vee} \\
&= \mathsf{P}^{\star (\ell-m)} \E^{\star (m-2)} q^{\H^\vee+\E^\vee}- \mathsf{P}^{\star (\ell-m-1)}\E^{\star (m-1)}q^{\H^\vee}.
\end{align*}
Taking the coefficient of $\mathsf{P}q^{d \H^\vee+(k+1)\E^\vee}$ on both sides and using the inductive hypothesis completes the proof.
\end{proof}

This concludes the proof of Theorem \ref{virtual formulas for 1 pt blow-up}.

\begin{remark}
   The condition $n-d \geq 1$ in Theorem \ref{virtual formulas for 1 pt blow-up} (i.e., the condition $\ell -m -d \geq 1$ in Lemma \ref{QH_coeff} ) is necessary. For example, when $r=2$ and $n=g=d=k=1$, we have

    \begin{align*}
    \vTev^X_{g,n,d\H^\vee+k \E^\vee}&= \mathrm{Coeff}(\mathsf{P} \star \Delta, q^{d \H^\vee + k\E^\vee} \mathsf{P}) \\
    &=4\cdot \mathrm{Coeff}(q^{\H^\vee} \H, q^{ \H^\vee +\E^\vee} \mathsf{P}) - \mathrm{Coeff}(q^{\H^\vee +\E^\vee }  (\H-\E), q^{\H^\vee +2 \E^\vee} \mathsf{P}) \\
    &=0
    \end{align*}
    while the right-hand side of Theorem \ref{virtual formulas for 1 pt blow-up} is equal to $1$.
\end{remark}

\begin{remark}
    When $k=0$, in Lemma \ref{QH_coeff} we have 
    $$
    d=\frac{r (\ell-1)}{r+1} \in \mathbb{Z},
    $$
    and so it must be that $\ell>r+1$. In particular, as one would expect, the same proof gives 
    $$
    \mathsf{vTev}^X_{g,n,d \H^\vee}= (r+1)^g =\mathsf{vTev}^{\P^r}_{g,n,d \H^\vee}
    $$
    for all $g \ge 0$ and $n,d>1$ satisfying \eqref{dim constraint}.
\end{remark}

\bibliographystyle{plain} 
\bibliography{tev_blowup_final_arxiv}

\end{document}